\theoremstyle{plain}
\newtheorem{corollary}{Corollary}[section]
\newtheorem{lemma}{Lemma}[section]
\newtheorem{proposition}{Proposition}[section]
\theoremstyle{definition}
\theoremstyle{remark}
\newtheorem{remark}{Remark}[section]
\newcommand{\CC}{\mathbb C}
\newcommand{\RR}{\mathbb R}
\newcommand{\ZZ}{\mathbb Z}
\newcommand{\NN}{\mathbb N}
\newcommand{\ScSt}{\scriptstyle}
\newcommand{\PV}{${\rm P}_{\rm V}\:$}
\newcommand{\PIII}{${\rm P}_{\rm III}\:$}
\newcommand{\PIIIprime}{${\rm P}_{\rm III^{\prime}}\:$}
\newcommand{\IIId}{${\rm III^{\prime}}\;$}
\newcommand{\half}{
        {\lower0.00ex\hbox{\raise.6ex\hbox{\the\scriptfont0 1}
                           \kern-.5em\slash\kern-.1em\lower.45ex
                                     \hbox{\the\scriptfont0 2}}}}
\newcommand{\quarter}{
        {\lower0.00ex\hbox{\raise.6ex\hbox{\the\scriptfont0 1}
                           \kern-.5em\slash\kern-.1em\lower.45ex
                                     \hbox{\the\scriptfont0 4}}}}
\newcommand{\tquarter}{
        {\lower0.00ex\hbox{\raise.6ex\hbox{\the\scriptfont0 3}
                           \kern-.5em\slash\kern-.1em\lower.45ex
                                     \hbox{\the\scriptfont0 4}}}}
\newcommand{\eighth}{
        {\lower0.00ex\hbox{\raise.6ex\hbox{\the\scriptfont0 1}
                           \kern-.5em\slash\kern-.1em\lower.45ex
                                     \hbox{\the\scriptfont0 8}}}}
\newcommand{\othird}{
        {\lower0.00ex\hbox{\raise.6ex\hbox{\the\scriptfont0 1}
                           \kern-.5em\slash\kern-.1em\lower.45ex
                                     \hbox{\the\scriptfont0 3}}}}
\begin{document}

\title[Distribution of the first Eigenvalue Spacing ...]
{The Distribution of the first Eigenvalue Spacing at the Hard Edge of the Laguerre Unitary Ensemble}

\author{Peter J. Forrester and Nicholas S. Witte}
\address{Department of Mathematics and Statistics,
University of Melbourne,Victoria 3010, Australia}
\email{\tt p.forrester@ms.unimelb.edu.au}\email{\tt n.witte@ms.unimelb.edu.au}

\begin{abstract}
The distribution function for the first eigenvalue spacing in the Laguerre
unitary ensemble of finite rank random matrices is found in terms of a Painlev\'e V
system, and the solution of its associated linear isomonodromic system. In particular
it is characterised by the polynomial solutions to the isomonodromic equations which are
also orthogonal with respect to a deformation of the Laguerre weight. In the scaling to
the hard edge regime we find an analogous situation where a certain Painlev\'e \IIId  
system and its associated linear isomonodromic system characterise the scaled distribution.
We undertake extensive analytical studies of this system and use this knowledge to 
accurately compute the distribution and its moments for various values of the parameter 
$ a $. In particular choosing $ a=\pm 1/2 $ allows the first eigenvalue spacing 
distribution for random real orthogonal matrices to be computed.
\end{abstract}

\subjclass[2000]{15A52, 33C45, 33E17, 42C05, 60K35, 62E15}
\keywords{random matrices, eigenvalue distribution, Wishart matrices, Painlev\'e equations, isomonodromic deformations}
\maketitle

\section{Introduction}
\setcounter{equation}{0}
The Laguerre unitary ensemble (${\rm LUE}_{n,a}$) of random matrices is specified
by the eigenvalue probability density function (p.d.f.)
\begin{multline}
 p(\lambda_1,\ldots,\lambda_n) \\
 := \frac{1}{n!c_{n,n+a}}
 \prod^{n}_{j=1}e^{-\lambda_j}\lambda_j^a\prod_{1 \leq j<k \leq n}(\lambda_k-\lambda_j)^2 ,
 \quad \lambda_1,\ldots,\lambda_n \in [0,\infty) ,
\label{LUE_pdf}
\end{multline}
where
\begin{equation}
  c_{m,n} := \frac{1}{m!}\prod^{m}_{j=1}\Gamma(n-j+1)\Gamma(m-j+2) .
\label{LUE_norm}
\end{equation}
The naming relates to the fact that (\ref{LUE_pdf}) is the eigenvalue p.d.f. of
complex Hermitian matrices $ X $ with measure invariant under unitary conjugation
$ X \mapsto UXU^{-1} $, proportional to the generalised Laguerre form
\begin{equation}
  \left(\det X\right)^ae^{-{\rm Tr}(X)} . 
\end{equation}

In multivariate statistics (\ref{LUE_pdf}) is realised as the eigenvalue p.d.f.
for the complex case of the so-called Wishart matrices $ X=Y^{\dagger}Y $.
Here $ Y $ is an $ N\times n \;(N \geq n)$ rectangular matrix of i.i.d. entries
with distribution $ {\rm N}[0,1]+i{\rm N}[0,1] $.
In this setting $ a = N-n $, and so $ a $ is naturally a non-negative integer. 
The spectrum of complex Wishart matrices has found recent application in studies
of wireless communication systems \cite{TV_2004}, where the matrix 
$ Y $ consists of the complex amplitudes of various channels of transmitted 
waves as received by the antennas. 

The matrix structure $ Y^{\dagger}Y $ is relevant to the study of the eigenvalues
of the $ (n+N)\times(n+N) $ Hermitian matrix
\begin{equation}
  \tilde{X} := \begin{pmatrix}
               0_{N\times N} & Y \\ Y^{\dagger} & 0_{n\times n} \end{pmatrix} .
\end{equation} 
Thus one has that $ \tilde{X} $ has in general $ N-n $ zero eigenvalues, with
the remaining eigenvalues given by $ \pm $ the positive square roots of the
eigenvalues of $ Y^{\dagger}Y $ (see e.g. \cite{rmt_Fo}). This matrix 
structure has application to the study of the Dirac equation in the context of
quantum chromodynamics \cite{Ve_1994}. There most interest is in the scaling
behaviour of the smallest eigenvalues.

In the study of matrix Lie algebras one encounters antisymmetric matrices 
($ X^{T}=-X $) with pure imaginary complex elements. Specifically, such matrices
are the Hermitian part of the matrix Lie algebra
\begin{equation}
   i\times( so(n,\mathbb{C}) ) :=
   \{\text{$i$ times $n\times n$ skew symmetric complex matrices}\} .
\end{equation} 
If the independent imaginary complex elements are i.i.d with distribution 
$ i{\rm N}[0,1] $, then the p.d.f. of the positive eigenvalues is proportional 
to
\begin{align}
  \text{$ n=2m $ even:} & \qquad
  \prod^m_{j=1}\exp(-\lambda^2_j) \prod_{1 \leq j<k \leq m}(\lambda^2_k-\lambda^2_j)^2,
  \label{LUE_minushalf} \\
  \text{$ n=2m+1 $ odd:} & \qquad
  \prod^m_{j=1}\lambda^2_j\exp(-\lambda^2_j) \prod_{1 \leq j<k \leq m}(\lambda^2_k-\lambda^2_j)^2.
  \label{LUE_plushalf} 
\end{align}
This ensemble will be denoted by $ AS(n) $. Under the change of variables 
$ \lambda^2_j \mapsto \lambda_j $ these reduce to the ${\rm LUE}_{n,a}$ with parameters 
$ a=-1/2,1/2 $ respectively.

Antisymmetric Hermitian matrices $ X $ can be used to parameterise real orthogonal 
matrices $ R $ with determinant $+1$ (and thus, by definition, members of the 
classical group $ O^{+}(n) $). Thus we can write $ R $ according to a Cayley
transformation
\begin{equation}
   R = \frac{\mathbb{I}_n+iX}{\mathbb{I}_n-iX} .
\end{equation}
Note from this that the property that the eigenvalues of $ X $ come in 
$ \pm $ pairs is consistent with the property that the eigenvalues of $ R $
come in complex conjugate pairs $ e^{\pm i\theta} $. This can be used (see e.g.
\cite{rmt_Fo}) to show that with the matrix $ R $ chosen with uniform (Haar)
measure, the eigenvalue p.d.f for the eigenvalues with angles $ 0 \leq \theta \leq \pi $
is proportional to 
\begin{align}
  \text{$ n=2m $ even:} & \qquad
  \prod_{1 \leq j<k \leq m}(\cos\theta_k-\cos\theta_j)^2,
  \\
  \text{$ n=2m+1 $ odd:} & \qquad
  \prod^m_{j=1}(1-\cos\theta_j)\prod_{1 \leq j<k \leq m}(\cos\theta_k-\cos\theta_j)^2.
\end{align}
Note that for $ \theta_l \to 0 $ these have the same leading behaviour as 
(\ref{LUE_minushalf}), (\ref{LUE_plushalf}) with $ \lambda_l \to 0 $. This is
consistent with the fact that the $ m \to \infty $ scaled joint distribution
function for the $ k $ smallest eigenvalues, $ p_{(k)} $ say, is the same for
both ensembles,
\begin{equation}
  \lim_{m\to \infty}\left(\frac{\pi}{2\sqrt{m}}\right)^k
  p_{(k)}^{AS(n)}(\frac{\pi X_1}{2\sqrt{m}},\ldots,\frac{\pi X_k}{2\sqrt{m}})
 =\lim_{m\to \infty}\left(\frac{\pi}{m}\right)^k
  p_{(k)}^{O^+(n)}(\frac{\pi X_1}{m},\ldots,\frac{\pi X_k}{m}) ,
\label{scaled_rho}
\end{equation}
where $ n=2m,2m+1 $. This scaling is such that the average spacing between 
eigenvalues approaches unity as $ k \to \infty $. The distribution (\ref{scaled_rho})
is of primary importance in the study of the spectral interpretation of 
$L$-functions \cite{KS_1999},\cite{Ru_2005}. From the remark below (\ref{LUE_plushalf})
we know that
\begin{equation}
  \left(\frac{1}{2\sqrt{x_1}}\right) \cdots \left(\frac{1}{2\sqrt{x_k}}\right)
  p_{(k)}^{AS(n)}(\sqrt{x_1},\ldots,\sqrt{x_k})
 = p_{(k)}^{{\rm LUE}_{m,a}}(x_1,\ldots,x_k) ,
\label{}
\end{equation}
where for $ n=2m $, $ a=-1/2 $, while for $ n=2m+1 $, $ a=1/2 $. Consequently
\begin{multline}
  \lim_{m\to \infty}\left(\frac{\pi}{m}\right)^k
  p_{(k)}^{O^+(n)}(\frac{\pi X_1}{m},\ldots,\frac{\pi X_k}{m}) \\
 =\lim_{m\to \infty}\left(\frac{\pi^2}{2m}\right)^k
  X_1\ldots X_k\,p_{(k)}^{{\rm LUE}_{m,a}}(\frac{\pi^2X^2_1}{4m},\ldots,\frac{\pi^2X^2_k}{4m}) .
\label{scaled_On}
\end{multline}
Thus knowledge of the distribution $ p_{(k)}^{{\rm LUE}_{m,a}} $ for 
$ a=\pm 1/2 $ suffices to compute the scaled limit of $ p_{(k)}^{O^+(n)} $.

In the case $ k=1 $ a number of different characterisations of 
$ p_{(1)}^{{\rm LUE}_{m,a}} $, which is the distribution of the smallest 
eigenvalue in $ {\rm LUE}_{m,a} $, are known. First, with $ n \mapsto n+1 $ in 
(\ref{LUE_pdf}) for convenience, the p.d.f. of the smallest eigenvalue is given
by fixing one of the coordinates at $ x_1 $, and integrating the remaining
over $ [x_1,\infty) $. Thus
\begin{multline}
 p_{(1)}^{{\rm LUE}_{n+1,a}}(x_1) = 
 \frac{1}{n!c_{n+1,n+1+a}}e^{-x_1}x_1^a \\
 \times 
 \int^{\infty}_{x_1}d\lambda_1 \ldots \int^{\infty}_{x_1}d\lambda_n\;
 \prod^{n}_{j=1}e^{-\lambda_j}\lambda_j^a(\lambda_j-x_1)^2
 \prod_{1 \leq j<k \leq n}(\lambda_k-\lambda_j)^2 .
\label{LUE_3-pdf}
\end{multline}
One has that
\begin{equation}
   p_{(1)}^{{\rm LUE}_{n+1,a}}(x_1) = -\frac{d}{dx_1}E^{{\rm LUE}_{n+1,a}}(x_1) ,
\label{}
\end{equation}
where
\begin{equation}
   E^{{\rm LUE}_{n+1,a}}(t) :=
   \int^{\infty}_{t}d\lambda_1\cdots  \int^{\infty}_{t}d\lambda_{n+1}
    p(\lambda_1,\ldots,\lambda_{n+1}) ,
\label{}
\end{equation}
is the probability (gap probability) that no eigenvalues are in the interval
$ (0,t) $. 

For integer values of the parameter $ a $ in (\ref{LUE_pdf}), 
$ E^{{\rm LUE}_{n,a}}(t) $ was studied by orthogonal polynomial techniques in 
\cite{Fo_1994a}, where it was evaluated as an $ a\times a $ determinant, and by
the method of Jack polynomials in \cite{Fo_1993}, giving an $a$-dimensional
integral form. In \cite{TW_1994} (see also \cite{FW_2002a}), for general 
$ {\rm Re}(a) > -1 $, it was expressed in terms of a fifth Painlev\'e transcendent.
Explicitly, it was found that
\begin{equation}
   E^{{\rm LUE}_{n,a}}(t) = \exp\left( \int^t_0 \frac{ds}{s}\;U_{\rm V}(s) \right) ,
\label{LUE_PVgap}
\end{equation}
where $ U_{\rm V}(s) $ satisfies the Jumbo-Miwa-Okamoto $\sigma$-form of the
Painlev\'e V equation
\begin{multline}
  (t\sigma'')^2-\left(\sigma-t\sigma'+2(\sigma')^2+(\nu_0+\nu_1+\nu_2+\nu_3)\sigma'\right)^2
  \\
   +4(\nu_0+\sigma')(\nu_1+\sigma')(\nu_2+\sigma')(\nu_3+\sigma') = 0 ,
\label{}
\end{multline}
with parameters
\begin{equation}
   \nu_0=\nu_1 = 0, \quad \nu_2 = n+a, \quad \nu_3 = n .
\label{}
\end{equation}
Alternatively the conventional Painlev\'e V parameters are 
\begin{equation}
   \alpha = \frac{1}{2}a^2, \quad \beta = 0, \quad \gamma = -2n-a-1, \quad \delta = -\frac{1}{2},
\label{}
\end{equation}
and in terms of the Okamoto parameters they are
\begin{equation}
   v_2-v_1 = 0, \quad v_3-v_1 = n+a, \quad v_4-v_1 = n, \quad v_3-v_4 = a.
\label{}
\end{equation}
Because the eigenvalue density is strictly zero for $ \lambda < 0 $, the 
neighbourhood of the smallest eigenvalue is referred to as the hard edge, and is
denoted by $ {\rm HE}_a $. As is consistent with (\ref{scaled_On}), a well
defined limit of (\ref{LUE_PVgap}) is obtained by the scaling $ t \mapsto t/4n $ and
$ n \to \infty $. Thus \cite{TW_1994b}
\begin{equation}
   \lim_{n\to\infty}E^{{\rm LUE}_{n,a}}(t/4n) :=
   E^{{\rm HE}_a}(t) = \exp\left( \int^t_0 \frac{ds}{s}\;U_{\rm III'}(s) \right) ,
\label{}
\end{equation}
where $ U_{\rm III'}(t) $ satisfies the Jimbo-Miwa-Okamoto $\sigma$-form of the 
Painlev\'e \IIId equation 
\begin{equation}
  (t\sigma'')^2-v_1v_2(\sigma')^2+\sigma'(4\sigma'-1)(\sigma-t\sigma')
  -\frac{1}{4^3}(v_1-v_2)^2 = 0 ,
\label{}
\end{equation}
with parameters
\begin{equation}
   v_1 = v_2 = a ,
\label{}
\end{equation}
and subject to the boundary condition
\begin{equation}
  U_{\rm III'}(t) \mathop{\sim}\limits_{t\to 0}
  -\frac{1}{2^{2a+2}\Gamma(a+1)\Gamma(a+2)}t^{a+1} .
\label{}
\end{equation}

Our interest in this paper is the distribution $ p_{(2)}^{{\rm LUE}_{m,a}} $
and its hard edge scaled limit. Analogous to (\ref{LUE_3-pdf}), we see from 
(\ref{LUE_pdf}) that
\begin{multline}
 p^{{\rm LUE}_{n+2,a}}_{(2)}(x_1,x_2) = 
 \frac{1}{n!c_{n+2,n+2+a}}e^{-x_1-x_2}(x_1-x_2)^2(x_1x_2)^a \\
 \times 
 \int^{\infty}_{x_2}d\lambda_1 \ldots \int^{\infty}_{x_2}d\lambda_n\;
 \prod^{n}_{j=1}e^{-\lambda_j}\lambda_j^a(\lambda_j-x_1)^2(\lambda_j-x_2)^2
 \prod_{1 \leq j<k \leq n}(\lambda_k-\lambda_j)^2 ,
\label{LUE_2-pdf}
\end{multline}
where $ x_1 $ denotes the smallest eigenvalue and $ x_2 $ the second smallest 
eigenvalue. In \cite{FH_1994}, for $ a $ integer, this was expressed as an
$ (a+2)\times(a+2) $ determinant. In the hard edge scaled limit this gave
\begin{multline}
   \lim_{n \to \infty}\frac{1}{16n^2}p^{{\rm LUE}_{n+2,a}}_{(2)}(\frac{x_1}{4n},\frac{x_2}{4n})
  =: p^{{\rm HE}_a}_{(2)}(x_1,x_2) 
   \\ 
  = 2^{-4}\left(\frac{x_2}{x_1}\right)^{a}e^{-x_2/4}      
   \\ \times
              \det\left[ \begin{array}{c}
   \left[ I_{j+2-k}(\sqrt{x_2}) \right]_{{\ScSt j=1,\ldots,a}\atop{\ScSt k=1,\ldots a+2}} \\
   \left[ \left(\frac{\displaystyle x_2-x_1}{\displaystyle x_2}\right)^{(k-j)/2}
          I_{j+2-k}(\sqrt{x_2-x_1})
   \right]_{{\ScSt j=1,2}\atop{\ScSt k=1,\ldots a+2}}
                         \end{array} \right] .
\label{}
\end{multline} 
We seek a Painlev\'e type characterisation of (\ref{LUE_2-pdf}) and its scaled
limit, valid for general $ {\rm Re}(a) > -1 $.

One use of knowledge of $ p_{(2)}^{{\rm LUE}_{m,a}} $ is the computation of
the distribution of the spacing between the smallest and the second smallest 
eigenvalues. Denoting this distribution by $ A_{n,a} $ for $ {\rm LUE}_{n+2,a} $,
we have
\begin{equation}
  A_{n,a}(y) := \int^{\infty}_{0} dx_1\; p^{{\rm LUE}_{n+2,a}}_{(2)}(x_1,x_1+y),
 \quad y\in \RR_{+} .
\label{LUE_2-1dbn}
\end{equation}
Important to our subsequent working is a rewrite of (\ref{LUE_2-pdf}) and 
(\ref{LUE_2-1dbn}) in terms of an integral of the form
\begin{multline}
   D_n(x_1,x_2)[w(\lambda)] \\
  := \frac{1}{n!} \int_{I} d\lambda_1 \ldots \int_{I} d\lambda_n\; \prod^n_{l=1}w(\lambda_l)
                \prod^n_{l=1}(\lambda_l-x_1)(\lambda_l-x_2) 
                \prod_{1\leq j<k \leq n}(\lambda_k-\lambda_j)^2 ,
\label{UE_deform_Hankel}
\end{multline}
where $ I $ denotes the support of the weight $ w(\lambda) $. We have
\begin{multline}
   p_{(2)}(x,x+y) = \\
   \frac{1}{c_{n+2,n+2+a}} e^{-(n+1)(x+y)-x}y^2[x(x+y)]^a
    D_n(-y,-y)[\lambda^2(\lambda+x+y)^ae^{-\lambda}\chi_{>0}] ,
\label{LUE_2-pdf.b}
\end{multline}
and 
\begin{equation}
   A_{n,a}(y) = \frac{y^2e^{y}}{c_{n+2,n+2+a}}
   \int^{\infty}_{y}dt\; t^a(t-y)^ae^{-(n+2)t}
        D_n(-y,-y)[\lambda^2(\lambda+t)^ae^{-\lambda}\chi_{>0}] .
\label{LUE_2-1dbn.b}
\end{equation}
These equations exhibit the occurrence of a deformation of the Laguerre weight
\begin{equation}
  w(x;t) := x^2(x+t)^a e^{-x}, \quad x\in \RR_{+} .
\label{deform_Lwgt}
\end{equation}
This deformed weight actually interpolates between two Laguerre weights -
when $ t \to 0 $ then we have the general parameter $ a+2 $ case, whilst if
$ t \to \infty $ in the sector $ -\pi < \arg(t) \leq \pi $
we have the special parameter situation with an exponent of $ 2 $. In fact virtually
all of our analysis can be carried over to the more general situation where the
exponent $ 2 $ is an arbitrary complex parameter suitably restricted.

We begin in Section 2 by revising appropriate results from orthogonal polynomial
system theory and apply this to the particular deformed Laguerre weight
(\ref{deform_Lwgt}). This allows us, in Section 3, to characterise the 
distributions (\ref{LUE_2-pdf.b}) and (\ref{LUE_2-1dbn.b}) by a solution of the 
fifth Painlev\'e equation and its associated linear isomonodromic system (see 
Proposition 3.2). Section 4 is devoted to the determinant evaluations of those
distributions for positive integer values of the parameter $ a $. We proceed in
Section 5 to the study of the hard edge limits
\begin{gather}
  p^{{\rm HE}_a}_{(2)}(x_1,x_2) 
  := \lim_{n\to \infty}\frac{1}{16n^2}
           p^{{\rm LUE}_{n,a}}_{(2)}(\frac{x_1}{4n},\frac{x_2}{4n}) ,
  \label{HE_2pdf}\\
  A_a(z) := \lim_{n\to \infty}\frac{1}{4n}A_{n,a}(\frac{z}{4n}) .
\end{gather}
It is found that these scaled distributions can be characterised by 
the solution of a certain Painlev\'e \IIId equation and its associated linear 
isomonodromic system (see Propositions 5.2, 5.5 and Remark 5.3). In Section 6
this characterisation is used to obtain the high precision numerical values of
statistical characteristics of $ A_a(z) $ for various integer values of 
$ a $ and for the values $ a =\pm 1/2 $, the latter being relevant to 
(\ref{LUE_minushalf}), (\ref{LUE_plushalf}) with the change of variable 
$ \lambda^2_j \mapsto \lambda_j $. Let $ p^{\pm}_{(2)}(x_1,x_2) $ denote the
scaled distribution of the eigenvalues 
$ e^{i\theta_1},e^{i\theta_2} (\theta_1,\theta_2 > 0) $ closest to the origin
in $ O^+(2n+1) $ and $ O^+(2n) $ respectively. With the scaling chosen so that
the bulk density is unity, it follows from (\ref{scaled_On}) and (\ref{HE_2pdf}) that
\begin{equation}
  p^{\pm}_{(2)}(x_1,x_2) = 4\pi^2x_1x_2p^{{\rm HE}_{\pm 1/2}}_{(2)}(\pi^2x^2_1,\pi^2x^2_2) . 
\label{HE_jpdf_pm}
\end{equation}
Consequently
\begin{align}
   A^{\pm}(y) :=& \int^{\infty}_0 dx\, p^{\pm}_{(2)}(x,x+y)
   \nonumber\\
   =&\; 4\pi^2\int^{\infty}_0 dx\, x(x+y)p^{{\rm HE}_{\pm 1/2}}_{(2)}(\pi^2x^2,\pi^2(x+y)^2) .
\label{HE_A_pm}
\end{align}
We use our results for $ p^{{\rm HE}_{\pm 1/2}}_{(2)} $ to provide the high 
precision numerical values of statistical characteristics of $ A^{\pm}(y) $.

\section{Orthogonal Polynomial System}
\setcounter{equation}{0}
\subsection{Semi-classical Orthogonal Polynomials}

Consider the general {\it orthogonal polynomial system} $ \{p_n(x)\}^{\infty}_{n=0} $
defined by the orthogonality relations
\begin{equation}
  \int_{I} dx\; w(x)p_n(x)x^m = \begin{cases} 0 \quad 0\leq m<n \\ h_n \quad m=n \end{cases} ,
\label{ops_orthog}
\end{equation}
with $ I $ denoting the support of the {\it weight} $ w(x) $. 
We give special notation for the coefficients of $ x^n $ and $ x^{n-1} $ in $ p_n(x) $,
\begin{equation}
   p_n(x) = \gamma_n x^n + \gamma_{n,1}x^{n-1} + \ldots .
\label{ops_poly}
\end{equation}
The corresponding {\it monic} polynomials are then
\begin{equation}
   \pi_n(x) = \frac{1}{\gamma_n}p_n(x) .
\label{ops_monic}
\end{equation}
It follows from (\ref{ops_orthog}) that
\begin{equation*}
  \int_{I} dx\; w(x)(p_n(x))^2 = \gamma_nh_n ,
\end{equation*}
and thus for $ p_n(x) $ to be normalised as well as orthogonal we set $ \gamma_nh_n=1 $. 
A consequence of the orthogonality relation is the three term recurrence relation
\begin{equation}
   a_{n+1}p_{n+1}(x) = (x-b_n)p_n(x) - a_np_{n-1}(x), \quad n \geq 1,
\label{ops_threeT}
\end{equation}
and we consider the set of orthogonal polynomials with initial values $ p_{-1} = 0 $ 
and $ p_0 = \gamma_0 $.
The three term recurrence coefficients are related to the polynomial coefficients
by \cite{ops_Sz}, \cite{ops_Fr}
\begin{equation}
   a_n = \frac{\gamma_{n-1}}{\gamma_n}, \quad
   b_n = \frac{\gamma_{n,1}}{\gamma_n}-\frac{\gamma_{n+1,1}}{\gamma_{n+1}}, \quad n \geq 1,
\label{ops_coeffRn}
\end{equation}
along with
\begin{equation}
   b_0 = -\frac{\gamma_{1,1}}{\gamma_{1}},\quad a_0 = 0, \quad \gamma_{0,1}=0 .
\label{ops_coeffRn1}
\end{equation}
A well known consequence of (\ref{ops_threeT}) is the Christoffel-Darboux summation
\begin{equation}
   \sum^{n-1}_{j=0}p_{j}(x)p_{j}(y) 
   = a_n\frac{[p_{n}(x)p_{n-1}(y)-p_{n-1}(x)p_{n}(y)]}{x-y} .  
\label{ops_C-D}
\end{equation}

Central objects in our probabilistic model are the {\it Hankel determinants}
\begin{equation}
   \Delta_n := \det[ \mu_{j+k-2} ]_{j,k=1,\ldots,n} , n\geq 1,
   \quad \Delta_0 := 1 ,
   \label{ops_Hdet}
\end{equation}
and
\begin{equation}
   \Sigma_n := \det\left[
               \begin{array}{cccc}
               \mu_{0} & \cdots & \mu_{n-2} & \mu_{n} \\
               \vdots  & \vdots & \vdots    & \vdots  \\
               \mu_{n-1} & \cdots & \mu_{2n-3} & \mu_{2n-1} \\
               \end{array} \right] , n\geq 1,
   \quad \Sigma_0 := 0 ,
   \label{ops_Sdet}
\end{equation}
defined in terms of the {\it moments} $ \{\mu_n\}_{n=0,1,\ldots,\infty} $ of 
the weight, 
\begin{equation}
   \mu_n := \int_{I} dx\; w(x)x^{n} .
   \label{ops_moment}
\end{equation}
We have integral representations for $ \Delta_n $
\begin{equation}
   \Delta_n = \frac{1}{n!} \int_{I} dx_1 \ldots \int_{I} dx_n\; \prod^n_{l=1}w(x_l) 
                 \prod_{1\leq j<k \leq n}(x_k-x_j)^2 ,\quad n\geq 1 ,
   \label{ops_Hint}
\end{equation}
and $ \Sigma_n $
\begin{equation}
   \Sigma_n = \frac{1}{n!} \int_{I} dx_1 \ldots \int_{I} dx_n\; \prod^n_{l=1}w(x_l) 
              \left(\sum^{n}_{j=1}x_j\right)\prod_{1\leq j<k \leq n}(x_k-x_j)^2 ,
   \quad n \geq 1 .
   \label{ops_Sint}
\end{equation}
The three-term recurrence coefficients are related to these determinants by standard 
result in orthogonal polynomial theory \cite{ops_Sz}, \cite{ops_Fr}
\begin{align}
   a^2_n & = \frac{\Delta_{n+1}\Delta_{n-1}}{\Delta^2_{n}} , \quad n\geq 1 ,
   \label{ops_aSQDelta}\\
   b_n & = \frac{\Sigma_{n+1}}{\Delta_{n+1}}-\frac{\Sigma_n}{\Delta_n} , \quad n\geq 0 , 
   \label{ops_bDelta}\\
   \gamma^2_n & = \frac{\Delta_{n}}{\Delta_{n+1}} , \quad n\geq 0 ,
   \label{ops_gammaDelta}
\end{align}
with initial values
\begin{equation}
   a^2_1 = \frac{\mu_{0}\mu_{2}-\mu^2_{1}}{\mu^2_{0}}, \quad
   b_0 = \frac{\mu_{1}}{\mu_{0}}, \quad \mu_0\gamma^2_0 = 1 .
   \label{TTcoeff_initial}
\end{equation}
The orthogonal polynomials themselves also have a determinantal representation
\begin{equation}
  \sqrt{\Delta_n\Delta_{n+1}}p_n(x) = 
  \det\left[ \begin{array}{ccc}
               \mu_{0} & \cdots & \mu_{n} \\
               \vdots  & \vdots & \vdots  \\
               \mu_{n-1} & \cdots & \mu_{2n-1} \\
               1 & \cdots & x^{n} \\
             \end{array} \right] , \quad n\geq 1,
  \label{ops_Pdet}
\end{equation}
and the integral representation
\begin{equation}
  \sqrt{\Delta_n\Delta_{n+1}}p_n(x) = 
  \frac{1}{n!} \int_{I} dx_1 \ldots \int_{I} dx_n\; \prod^n_{l=1}w(x_l)(x-x_l) 
                 \prod_{1\leq j<k \leq n}(x_k-x_j)^2 .
  \label{ops_Pint}
\end{equation}

Another set of polynomial solutions to the three term recurrence relation are the 
{\it associated polynomials} $ \{p^{(1)}_n(x)\}^{\infty}_{n=0} $, defined by
\begin{equation}
   p^{(1)}_{n-1}(x) := \int_{I} ds\; w(s)\frac{p_n(s)-p_n(x)}{s-x}, \quad n \geq 0 .
\label{ops_assoc}
\end{equation}
In particular these polynomials satisfy
\begin{equation}
   a_{n+1}p^{(1)}_{n}(x) = (x-b_n)p^{(1)}_{n-1}(x)-a_np^{(1)}_{n-2}(x) ,
\label{}
\end{equation}
with the initial conditions $ p^{(1)}_{-1}(x)=0 $, $ p^{(1)}_{0}(x)=\mu_{0}\gamma_{1} $.
Note the shift by one decrement in comparison to the three-term recurrence 
(\ref{ops_threeT}) for the polynomials $ \{p_n(x)\}^{\infty}_{n=0} $. 
We also need the definition of the moment generating function or {\it Stieltjes function}
\begin{align}
   f(x) :&= \int_{I} ds\;\frac{w(s)}{x-s} , \quad x \notin I ,\\
         &= \sum^{\infty}_{n=0} \frac{\mu_n}{x^{n+1}}, \quad x \notin I,\quad x \to \infty .
\label{ops_stieltjes}
\end{align}
We define non-polynomial {\it associated functions} 
$ \{\epsilon_n(x)\}^{\infty}_{n=0} $ by
\begin{equation}
   \epsilon_n(x) := f(x)p_n(x)-p^{(1)}_{n-1}(x) ,
\label{ops_eps}
\end{equation}
which also satisfy the three term recurrence relation (\ref{ops_threeT}), namely
\begin{equation}
   a_{n+1}\epsilon_{n+1}(x) = (x-b_n)\epsilon_{n}(x)-a_n\epsilon_{n-1}(x) ,
\label{}
\end{equation}
subject to the initial values $ \epsilon_{-1}(x)=0 $, $ \epsilon_{0}(x)=\gamma_0f(x) $.
The associated functions have an integral representation analogous to (\ref{ops_Pint})
\begin{multline}
  \sqrt{\Delta_n\Delta_{n+1}}\epsilon_n(x) \\ = 
  \frac{1}{(n+1)!} \int_{I} dx_1 \ldots \int_{I} dx_{n+1}\; \prod^{n+1}_{l=1}\frac{w(x_l)}{x-x_l} 
                 \prod_{1\leq j<k \leq n+1}(x_k-x_j)^2 , \quad x \notin I .
  \label{ops_Eint}
\end{multline}
The polynomials and their associated functions satisfy the Casoratian relation
\begin{equation}
   p_n(x)\epsilon_{n-1}(x)-p_{n-1}(x)\epsilon_{n}(x) = \frac{1}{a_n} , \quad n \geq 1.
\label{ops_casoratian}
\end{equation}

Extending (\ref{ops_poly}) and (\ref{ops_coeffRn}) we have
\begin{multline}
   p_n(x) = \gamma_n \Bigg[ x^n - \left( \sum^{n-1}_{i=0}b_i \right)x^{n-1} \\
   + \left( \sum_{0\leq i<j<n}b_ib_j-\sum^{n-1}_{i=1}a^2_i \right)x^{n-2} 
   + {\rm O}(x^{n-3}) \Bigg] ,
   \label{ops_pExp}
\end{multline}
valid for $ n \geq 1 $, while for the associated functions
\begin{multline}
   \epsilon_n(x) = \gamma^{-1}_n \Bigg[ x^{-n-1} + \left( \sum^{n}_{i=0}b_i \right)x^{-n-2} \\
   + \left( \sum_{0\leq i\leq j\leq n}b_ib_j+\sum^{n+1}_{i=1}a^2_i \right)x^{-n-3} 
   + {\rm O}(x^{-n-4}) \Bigg] ,
   \label{ops_eExp}
\end{multline}
valid for $ n \geq 0 $.

\begin{proposition}[\cite{BC_1990},\cite{Ba_1990},\cite{Ma_1995a}]\label{ops_spectralM}
Let 
\begin{equation}
    \frac{1}{w(x)}\frac{d}{dx}w(x) = \frac{2V(x)}{W(x)} ,
\label{ops_logDerw}
\end{equation}
for $ V,W $ irreducible.
The orthogonal polynomials and associated functions satisfy a system of coupled first 
order linear differential equations with respect to $ x $ ( $ ' \equiv d/dx$)
\begin{align}
  W p'_n & = (\Omega_n-V)p_n-a_n\Theta_np_{n-1} ,\quad n \geq 1,
  \label{ops_spectralD:a} \\
  W p'_{n-1} & = a_n\Theta_{n-1}p_{n}-(\Omega_n+V)p_{n-1} ,\quad n \geq 0,
  \label{ops_spectralD:b}
\end{align}
for certain coefficient functions $ V(x), W(x), \Theta_n(x), \Omega_n(x) $. The associated 
functions $ \epsilon_n, \epsilon_{n-1} $ satisfy precisely the same set of equations.
\end{proposition}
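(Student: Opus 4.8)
The plan is to derive the first relation \eqref{ops_spectralD:a} as a \emph{lowering operator} for the semi-classical system, to obtain \eqref{ops_spectralD:b} from it by one shift of the three-term recurrence, and finally to argue that the associated functions solve the identical system because they constitute the second solution of the same recurrence. Write $\langle f,g\rangle:=\int_I w\,fg\,dx$, so that \eqref{ops_orthog} together with $\gamma_nh_n=1$ reads $\langle p_j,p_k\rangle=\delta_{jk}$. Since $p_n'$ has degree $n-1$, I first expand $p_n'=\sum_{k=0}^{n-1}\langle p_n',p_k\rangle p_k$ and evaluate each coefficient by integration by parts, discarding the boundary terms (which vanish for the weights at hand owing to exponential decay at $+\infty$ and the vanishing factor at the origin). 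Using $\langle p_n',p_k\rangle=-\int_I (wp_k)'p_n$ together with $w'=\tfrac{2V}{W}w$ from \eqref{ops_logDerw}, the $p_k'$ contribution drops by orthogonality, leaving an integral against $w'$; resumming with the kernel $K_n(x,y)=\sum_{j=0}^{n-1}p_j(x)p_j(y)$ gives $p_n'(x)=\int_I K_n(x,y)\,p_n(y)\,(-w'(y))\,dy$, where by \eqref{ops_C-D} one has $K_n(x,y)=a_n\frac{p_n(x)p_{n-1}(y)-p_{n-1}(x)p_n(y)}{x-y}$.

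The heart of the argument is to multiply by $W(x)$ and force a collapse to two terms. Writing $W(x)=\bigl(W(x)-W(y)\bigr)+W(y)$ and using $W(y)w'(y)=2V(y)w(y)$, the factor $\frac{W(x)-W(y)}{x-y}$ that arises through $K_n$ becomes a polynomial in $x$ and $y$, so that after a further integration by parts the entire $x$-dependence factors through $p_n(x)$ and $p_{n-1}(x)$ alone. Their coefficients are polynomials in $x$ defined by moment integrals of $V$, $W'$ and the recurrence data; naming these coefficients $\Omega_n-V$ and $-a_n\Theta_n$ yields \eqref{ops_spectralD:a}. I expect this collapse --- verifying that all intermediate terms cancel and that what survives is genuinely polynomial of the correct degree --- to be the main obstacle, since it is precisely here that the semi-classical hypothesis, namely the rationality of $w'/w$ in \eqref{ops_logDerw}, is indispensable.

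To obtain \eqref{ops_spectralD:b} I would take \eqref{ops_spectralD:a} at index $n-1$ and use the three-term recurrence \eqref{ops_threeT} to replace $a_{n-1}p_{n-2}$ by $(x-b_{n-1})p_{n-1}-a_np_n$. Collecting terms, the coefficient of $p_n$ emerges as $a_n\Theta_{n-1}$ automatically, while the coefficient of $p_{n-1}$ matches $-(\Omega_n+V)$ provided the compatibility identity $\Omega_n+\Omega_{n-1}=(x-b_{n-1})\Theta_{n-1}$ holds. This identity can be read off from the explicit integral forms of $\Omega_n,\Theta_n$, or equivalently forced by consistency of the two relations across $n$, and should be routine once the collapse of the previous paragraph is in hand.

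Finally, for the associated functions I would note that $p_n$ and $\epsilon_n$ both solve the recurrence \eqref{ops_threeT}. Letting $B_n$ denote the coefficient matrix of the system \eqref{ops_spectralD:a}--\eqref{ops_spectralD:b} and setting $\Phi_n=(\epsilon_n,\epsilon_{n-1})^{T}$, the defect $G_n:=W\Phi_n'-B_n\Phi_n$ satisfies $G_{n+1}=M_nG_n$, where $M_n$ is the invertible transfer matrix of the recurrence; the zero-curvature relation $WM_n'=B_{n+1}M_n-M_nB_n$ needed here is equivalent to the compatibility identities already used. It therefore suffices to check $G_n=0$ in a single case, which reduces to verifying that the Stieltjes function \eqref{ops_stieltjes} satisfies the scalar form of the system up to a polynomial inhomogeneity, the subtraction of $p^{(1)}_{n-1}$ in the definition \eqref{ops_eps} being exactly what removes that inhomogeneity. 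Alternatively one may simply repeat the integration-by-parts derivation on the integral representation \eqref{ops_Eint}, which depends on the weight only through $Ww'=2Vw$ and so produces the same $\Omega_n,\Theta_n$; either route establishes the claim for $\epsilon_n,\epsilon_{n-1}$.
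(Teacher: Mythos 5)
Your first two paragraphs reconstruct the standard ladder-operator derivation from the cited literature (Bonan--Clark, Bauldry, Magnus) --- note the paper itself offers no proof of this proposition, only citations --- and they are essentially sound: expanding $p_n'$ over the orthonormal basis, integrating by parts, resumming with the Christoffel--Darboux kernel \eqref{ops_C-D}, and splitting $W(x)=[W(x)-W(y)]+W(y)$ is exactly how the lowering relation \eqref{ops_spectralD:a} is obtained. One correction of detail: for the piece carrying $W(y)w'(y)=2V(y)w(y)$ the collapse is achieved not by a further integration by parts but by writing $V(y)=V(x)-[V(x)-V(y)]$ and using $\int_I K_n(x,y)p_n(y)w(y)\,dy=0$; only the piece carrying $[W(x)-W(y)]w'(y)$ is integrated by parts a second time. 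Your third paragraph is structurally right as well, but the identity $\Omega_n+\Omega_{n-1}=(x-b_{n-1})\Theta_{n-1}$ you invoke is precisely the nontrivial content (it says the coefficient matrix has trace $-2V$), and it is not ``routine'' from the integral forms; the clean proof differentiates the Casoratian \eqref{ops_casoratian} using the system satisfied by the associated functions, which makes your final paragraph load-bearing rather than a formality. Note also that the zero-curvature relation your induction needs is equivalent to the \emph{pair} \eqref{ops_RR:a}--\eqref{ops_RR:b}, of which your argument only produces the second.

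The genuine error is in that final paragraph: the bare functions $\epsilon_n$ of \eqref{ops_eps} do \emph{not} satisfy ``precisely the same'' equations. Running your own integration-by-parts argument on the Cauchy-transform representation $\epsilon_n(x)=\int_I w(s)p_n(s)(x-s)^{-1}\,ds$ (or on \eqref{ops_Eint}) produces the system with the sign of $V$ reversed on the diagonal, $W\epsilon_n'=(\Omega_n+V)\epsilon_n-a_n\Theta_n\epsilon_{n-1}$ and $W\epsilon_{n-1}'=a_n\Theta_{n-1}\epsilon_n-(\Omega_n-V)\epsilon_{n-1}$, because the $x$-derivative now also reaches $w$ through the kernel. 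Equivalently, your defect is $G_n=2V\Phi_n\neq 0$, and the base case you propose fails: the Stieltjes function obeys $Wf'=2Vf+(\text{polynomial})$ --- for the Hermite weight, $f'=-2xf+2\sqrt{\pi}$ --- and subtracting $p^{(1)}_{n-1}$ kills the polynomial inhomogeneity but not the $2V$ discrepancy. This sign flip is exactly why the paper's matrix variable \eqref{ops_Ydefn} carries $\epsilon_n/w$, not $\epsilon_n$, in its second column, and why \eqref{ops_theta}, \eqref{ops_omega} contain explicit $V$-corrections: since $W(1/w)'=-2V\,(1/w)$ by \eqref{ops_logDerw}, division by $w$ shifts the diagonal by $-2V$ and turns the $+V$ system for $\epsilon_n$ into the $-V$ system satisfied by the polynomials. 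So the statement to prove (and the one the proposition intends, as its own matrix form \eqref{ops_YspD} shows) is that $\epsilon_n/w$, $\epsilon_{n-1}/w$ satisfy \eqref{ops_spectralD:a}--\eqref{ops_spectralD:b}; once the $+V$ system for the bare $\epsilon_n$ is established this is immediate, and feeding it back through the Casoratian also supplies the trace identity that your third paragraph left open.
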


If we define the $ 2\times 2 $ matrix variable
\begin{equation}
   Y_n(x;t) = \begin{pmatrix} p_n(x) & \frac{\displaystyle\epsilon_{n}(x)}{\displaystyle w(x)} \\
                              p_{n-1}(x) & \frac{\displaystyle\epsilon_{n-1}(x)}{\displaystyle w(x)}
              \end{pmatrix}
\label{ops_Ydefn}
\end{equation}
then the above coupled system can be written as
\begin{equation}
   \frac{d}{dx}Y_n(x)
   = \frac{1}{W(x)}\begin{pmatrix} \Omega_n(x)-V(x) & -a_n\Theta_n(x) \\
                                   a_n\Theta_{n-1}(x) & -\Omega_n(x)-V(x)
                   \end{pmatrix}Y_n(x)
\label{ops_YspD}
\end{equation}
It follows that the coefficient functions are specified by
\begin{align}
  \Theta_n & = W\left[ \epsilon_{n}p'_{n} - \epsilon'_{n}p_{n} \right] + 2V\epsilon_{n}p_{n} ,
  \quad n \geq 0, \quad \Theta_{-1}=0 ,
  \label{ops_theta} \\
  \Omega_n & = a_{n}W\left[ \epsilon_{n-1}p'_{n} - \epsilon'_{n}p_{n-1} \right] 
               + a_{n}V\left[ \epsilon_{n}p_{n-1}+\epsilon_{n-1}p_{n} \right] ,
  \quad n \geq 1, \quad \Omega_{0}=0 . 
  \label{ops_omega}
\end{align}

\begin{proposition}[\cite{Ma_1995a}]
The coefficient functions arising in Proposition \ref{ops_spectralM} satisfy the recurrence 
relations
\begin{gather}
 (\Omega_{n+1}-\Omega_n)(x-b_n) = W + a^2_{n+1}\Theta_{n+1}-a^2_n\Theta_{n-1} ,
 \quad n \geq 0 ,
 \label{ops_RR:a} \\
 \Omega_{n+1}+\Omega_n = (x-b_n)\Theta_{n} ,
 \quad n \geq 0
 \label{ops_RR:b}
\end{gather}
\end{proposition}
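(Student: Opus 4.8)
The plan is to extract both identities at once from the compatibility of the spectral differential system (\ref{ops_YspD}) with the three-term recurrence (\ref{ops_threeT}). The starting observation is that (\ref{ops_threeT}) is obeyed both by the $p_n$ and by the associated functions $\epsilon_n$, so in terms of the matrix (\ref{ops_Ydefn}) it is equivalent to the single shift relation
\begin{equation}
  Y_{n+1}(x) = M_n(x)Y_n(x), \qquad
  M_n(x) = \frac{1}{a_{n+1}}\begin{pmatrix} x-b_n & -a_n \\ a_{n+1} & 0 \end{pmatrix},
\end{equation}
valid for $n\geq 1$, in which the first row of $Y_{n+1}$ is produced from the two rows of $Y_n$ by (\ref{ops_threeT}) and the second row of $Y_{n+1}$ is the first row of $Y_n$. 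The key structural input is the Casoratian (\ref{ops_casoratian}), which gives $\det Y_n = (p_n\epsilon_{n-1}-p_{n-1}\epsilon_n)/w = 1/(a_nw)\neq 0$, so that $Y_n$ is invertible for $n\geq 1$.

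Next I would impose consistency. Writing (\ref{ops_YspD}) as $Y_n'=W^{-1}A_nY_n$, with $A_n$ the coefficient matrix displayed there, I differentiate the shift relation to obtain $Y_{n+1}'=M_n'Y_n+M_nY_n'=(M_n'+W^{-1}M_nA_n)Y_n$ and equate this with $Y_{n+1}'=W^{-1}A_{n+1}Y_{n+1}=W^{-1}A_{n+1}M_nY_n$. Cancelling the invertible factor $Y_n$ gives the matrix compatibility condition
\begin{equation}
  A_{n+1}M_n = WM_n' + M_nA_n .
\end{equation}
Since $a_n,a_{n+1},b_n$ are independent of $x$, the matrix $M_n'$ has its only nonzero entry, $1/a_{n+1}$, in the $(1,1)$ slot, so $WM_n'$ supplies precisely the isolated source term $W$ (after clearing $a_{n+1}$) that appears on the right of (\ref{ops_RR:a}). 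Reading off entries I expect the $(1,1)$ component to return exactly (\ref{ops_RR:a}), the $(2,1)$ component to return (\ref{ops_RR:b}), the $(1,2)$ component to reproduce (\ref{ops_RR:b}), and the $(2,2)$ component to collapse to the trivial identity $-a_n\Theta_n=-a_n\Theta_n$, confirming the system is consistent and yields nothing beyond the two stated recurrences.

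The step I expect to require the most care is the passage from a scalar relation to an identity among the coefficient functions, and this is exactly where the associated functions are indispensable. An equivalent, more pedestrian route differentiates (\ref{ops_threeT}), substitutes (\ref{ops_spectralD:a}) and (\ref{ops_spectralD:b}) for $p_{n+1}',p_n',p_{n-1}'$, and uses (\ref{ops_threeT}) once more to fold the index-$(n\+1)$ and index-$(n\m1)$ terms back onto $p_n$ and $p_{n-1}$; this produces a relation $C_1p_n+C_2p_{n-1}=0$ whose two coefficients are precisely (\ref{ops_RR:a}) and (\ref{ops_RR:b}). One cannot conclude $C_1=C_2=0$ from this alone, since $p_n,p_{n-1}$ need not be linearly independent over the field of rational coefficients. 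However Proposition \ref{ops_spectralM} guarantees that $\epsilon_n,\epsilon_{n-1}$ satisfy the identical equations, yielding a second relation $C_1\epsilon_n+C_2\epsilon_{n-1}=0$ with the same coefficients; the determinant of this homogeneous $2\times2$ system is the nonvanishing Casoratian $1/(a_nw)$, forcing $C_1=C_2=0$. This pairing is automatic in the matrix formulation, which is why I would present that route as primary. Finally, the boundary case $n=0$, where $Y_0$ and $M_0$ degenerate because $a_0=0$, I would dispatch directly from the definitions (\ref{ops_theta}), (\ref{ops_omega}) together with the initial data $\Omega_0=0$ and $\Theta_{-1}=0$.
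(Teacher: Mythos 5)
The paper gives no proof of this proposition at all --- it is quoted from \cite{Ma_1995a} --- so your argument has to stand on its own, and its core does. I checked the compatibility condition $A_{n+1}M_n = WM_n' + M_nA_n$ entry by entry: the $(1,1)$ entry is exactly (\ref{ops_RR:a}), the $(2,1)$ and $(1,2)$ entries each reduce to (\ref{ops_RR:b}), and the $(2,2)$ entry is trivial, precisely as you predict. The Casoratian (\ref{ops_casoratian}) does give $\det Y_n = 1/(a_n w)\neq 0$, and your fallback scalar route (pairing the $p$-relation with the $\epsilon$-relation so that the nonvanishing Casoratian forces $C_1=C_2=0$) is an equally valid way to close that step. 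All of this is rigorous for $n\geq 1$, where the shift relation, the invertibility of $Y_n$, and the differential system all hold.

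The gap is the endpoint $n=0$, which the proposition asserts and which you propose to dispatch ``directly from the definitions (\ref{ops_theta}), (\ref{ops_omega}) together with $\Omega_0=0$ and $\Theta_{-1}=0$.'' That computation does not close: with those conventions the $n=0$ identities are actually false. Using $p_0=\gamma_0$ (so $p_0'=0$), the recurrence $a_1p_1=(x-b_0)p_0$, and the relation $a_1\epsilon_1=(x-b_0)\epsilon_0-a_1\mu_0\gamma_1$ that follows from (\ref{ops_eps}) and (\ref{ops_assoc}) (the $\epsilon$'s fail (\ref{ops_threeT}) at $n=0$, which is the same reason your shift relation starts at $n=1$), together with $a_1\mu_0\gamma_1\gamma_0=\mu_0\gamma_0^2=1$ from (\ref{ops_coeffRn}) and (\ref{TTcoeff_initial}), one finds
\begin{equation*}
  \Omega_1 = (x-b_0)\Theta_0 - V, \qquad
  W + a_1^2\Theta_1 = (x-b_0)^2\Theta_0 - 2V(x-b_0) ,
\end{equation*}
so with $\Omega_0=0$ the two claimed identities at $n=0$ are off by $-V$ and $V(x-b_0)$ respectively. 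The paper's own deformed Laguerre data makes this visible at a glance: by (\ref{dL_thetaEq}) and (\ref{dL_omegaEq}), $\Omega_1+\Omega_0$ has leading term $-\tfrac12 x^2$ while $(x-b_0)\Theta_0$ has leading term $-x^2$. The identities at $n=0$ do hold under the convention $\Omega_0=V$, which is the value forced by consistency of (\ref{ops_spectralD:a}) at $n=0$ (there $0=Wp_0'=(\Omega_0-V)\gamma_0$). So you should either state and prove the proposition for $n\geq 1$ only --- which your matrix argument already accomplishes --- or replace the initialisation $\Omega_0=0$ by $\Omega_0=V$, after which your direct $n=0$ verification goes through; as literally stated, with $\Omega_0=0$, the $n=0$ case cannot be rescued.
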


We will find it necessary to study the {\it zeros} of the orthogonal polynomial $ p_n(x) $
which we denote $ \{x_{1,n} < \ldots < x_{j,n} < \ldots < x_{n,n}\} $. 
They have an electrostatic 
interpretation as the equilibrium positions of the mobile unit charges, and 
there is a set of equations governing these equilibrium positions known as 
the {\it Bethe Ansatz equations}.
\begin{proposition}[\cite{Is_2001}]\label{BAE}
The zeros $ \{x_{j,n}\}^{n}_{j=1} $ of the polynomial $ p_n(x) $ satisfy the coupled 
functional equations
\begin{equation}
  2\sum_{k\neq j}\frac{1}{x_{j,n}-x_{k,n}} 
  = \frac{\Theta'_n(x_{j,n})}{\Theta_n(x_{j,n})}
    - \frac{W'(x_{j,n})+2V(x_{j,n})}{W(x_{j,n})} ,
\end{equation}
for all $ 1 \leq j \leq n $.
\end{proposition}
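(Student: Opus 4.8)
The plan is to collapse the coupled first-order system of Proposition~\ref{ops_spectralM} into a single relation at the zeros of $p_n$, using the fact that $p_n(x_{j,n})=0$ to annihilate most of the terms. Rather than building the full second-order ODE for $p_n$, I would differentiate only the first spectral equation (\ref{ops_spectralD:a}), obtaining
\begin{equation*}
  W'p'_n + Wp''_n = (\Omega'_n-V')p_n + (\Omega_n-V)p'_n - a_n\Theta'_np_{n-1} - a_n\Theta_np'_{n-1},
\end{equation*}
and then evaluate at a zero $x_{j,n}$. Every term carrying a factor $p_n$ vanishes, so the surviving relation involves only $p'_n$, $p''_n$, $p_{n-1}$ and $p'_{n-1}$ at the point $x_{j,n}$.

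The next step is to remove $p_{n-1}(x_{j,n})$ and $p'_{n-1}(x_{j,n})$ using the two spectral equations read at the zero. From (\ref{ops_spectralD:a}) with $p_n=0$ one gets $a_n\Theta_np_{n-1}=-Wp'_n$, hence $a_np_{n-1}(x_{j,n})=-Wp'_n/\Theta_n$; inserting this into (\ref{ops_spectralD:b}) at the zero gives $a_np'_{n-1}(x_{j,n})=(\Omega_n+V)p'_n/\Theta_n$. Feeding these back, the terms $-a_n\Theta'_np_{n-1}$ and $-a_n\Theta_np'_{n-1}$ become $\Theta'_nWp'_n/\Theta_n$ and $-(\Omega_n+V)p'_n$ respectively, the $\Omega_n$ contributions cancel, and the equation reduces to
\begin{equation*}
  W'p'_n+Wp''_n = \Big(\frac{\Theta'_n}{\Theta_n}W-2V\Big)p'_n .
\end{equation*}
Dividing through by $Wp'_n$ then yields $p''_n(x_{j,n})/p'_n(x_{j,n}) = \Theta'_n(x_{j,n})/\Theta_n(x_{j,n}) - \big(W'(x_{j,n})+2V(x_{j,n})\big)/W(x_{j,n})$.

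It remains to identify the left-hand side with the electrostatic sum. I would write $p_n(x)=\gamma_n\prod_{k=1}^n(x-x_{k,n})$ and factor out the simple zero as $p_n(x)=(x-x_{j,n})q_j(x)$; then $p'_n(x_{j,n})=q_j(x_{j,n})$ and $p''_n(x_{j,n})=2q'_j(x_{j,n})$, so that
\begin{equation*}
  \frac{p''_n(x_{j,n})}{p'_n(x_{j,n})} = 2\,\frac{q'_j(x_{j,n})}{q_j(x_{j,n})} = 2\sum_{k\neq j}\frac{1}{x_{j,n}-x_{k,n}},
\end{equation*}
which combines with the preceding display to give the claim. The main obstacle is not algebraic but lies in justifying the two divisions: one needs the zeros to be simple so that $p'_n(x_{j,n})\neq0$, and needs $\Theta_n$ and $W$ to be nonzero at $x_{j,n}$. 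Simplicity of the zeros is automatic for orthogonal polynomials with respect to a positive weight on an interval, while in the cases of interest $W$ vanishes only at the endpoints or singular points of the weight and $\Theta_n$ shares no zero with $p_n$ in the interior; hence all three conditions hold and the derivation is valid at each interior zero.
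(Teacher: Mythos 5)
Your proof is correct. Note that the paper itself gives no argument for this proposition --- it is quoted from \cite{Is_2001} --- so there is nothing internal to compare against; your derivation (differentiate the first spectral equation, evaluate at a zero, eliminate $p_{n-1}$ and $p'_{n-1}$ via the spectral pair, and identify $p''_n/p'_n$ at a simple zero with the electrostatic sum) is essentially the standard ladder-operator argument behind Ismail's result, and every algebraic step checks out: the $\Omega_n$ terms do cancel, leaving $W'p'_n+Wp''_n=\bigl(\tfrac{\Theta'_n}{\Theta_n}W-2V\bigr)p'_n$ at $x_{j,n}$. One small strengthening would make your final paragraph airtight: the non-vanishing of $\Theta_n(x_{j,n})$ need not be assumed as a separate structural fact about the weight, since it follows in one line from the first spectral equation itself --- at a zero, $W(x_{j,n})\,p'_n(x_{j,n})=-a_n\Theta_n(x_{j,n})\,p_{n-1}(x_{j,n})$, and the left side is nonzero because $W$ vanishes only at the finite singularities (which lie on the boundary of the support, e.g.\ $x=0,-t$ for the weight (\ref{deform_Lwgt})) while the zeros are simple and interior; hence $\Theta_n(x_{j,n})\neq 0$ and, as a bonus, $p_{n-1}(x_{j,n})\neq 0$ as well, so both divisions you perform are legitimate without any appeal to interlacing or case-by-case inspection.
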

One can also represent many useful quantities in terms of sums over the zeros and 
we illustrate this with an example. Firstly the consecutive ratios of 
the orthogonal polynomials have a partial fraction decomposition
\begin{equation} 
  \frac{p_{n-1}(x)}{p_n(x)} 
 = -\frac{1}{a_n}\sum^n_{j=1}\frac{W(x_{j,n})}{\Theta_n(x_{j,n})}\frac{1}{x-x_{j,n}} ,
\label{ops_ParFrac}
\end{equation} 
along with
\begin{equation} 
 a^2_n = -\sum^n_{j=1}\frac{W(x_{j,n})}{\Theta_n(x_{j,n})} .
\end{equation}
 
Of particular relevance to our application are the {\it semi-classical class} of orthogonal
polynomial systems \cite{Ma_1987} defined by the property that $ V(x) $ and $ W(x) $ 
in (\ref{ops_logDerw}) are polynomials in $ x $. The zeros of $ W(x) $ define 
finite {\it singularities} of the system of ordinary differential equations (\ref{ops_spectralD:a}),
(\ref{ops_spectralD:b}) and will feature prominently in this study. Let $ x_{r} $ be such
a point with $ r \geq 1 $. Then at $ x_{r} $ the relations (\ref{ops_RR:a}) and 
(\ref{ops_RR:b}) can be combined and integrated to yield
\begin{equation}        
  \Omega^2_n(x_{r}) - V^2(x_{r}) = a^2_n\Theta_n(x_{r})\Theta_{n-1}(x_{r}),
  \quad n \geq 1.
\label{ops_RR:c}
\end{equation}
In fact, at a given finite singular point $ x_{r} $, we can deduce the following bi-linear 
identities, that factorise the one above.
\begin{corollary}
The coefficient functions evaluated at a finite singular point $ x_{r} $ are related to
evaluations of the orthogonal polynomials and associated functions by the 
relations
\begin{align}
  \Omega_{n}(x_{r})+V(x_{r}) & = 2a_nV(x_{r})p_{n}(x_{r})\epsilon_{n-1}(x_{r}) ,
  \quad n \geq 1 ,
  \label{ops_bL:a} \\
  \Omega_{n}(x_{r})-V(x_{r}) & = 2a_nV(x_{r})p_{n-1}(x_{r})\epsilon_{n}(x_{r}) ,
  \quad n \geq 1 ,
  \label{ops_bL:b} \\
  \Theta_{n}(x_{r}) & = 2V(x_{r})p_{n}(x_{r})\epsilon_{n}(x_{r}) .
  \quad n \geq 0 ,
  \label{ops_bL:c}
\end{align}
\end{corollary}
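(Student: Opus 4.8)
The plan is to exploit the defining property of a finite singular point, namely that $x_r$ is a zero of the polynomial $W(x)$, so that $W(x_r)=0$. Setting $W=0$ in the explicit expressions (\ref{ops_theta}) and (\ref{ops_omega}) annihilates the Wronskian-type terms $W[\epsilon_n p'_n-\epsilon'_n p_n]$ and $a_nW[\epsilon_{n-1}p'_n-\epsilon'_n p_{n-1}]$, leaving behind only the purely bilinear combinations of the $p$'s and $\epsilon$'s. The one additive $V(x_r)$ that must appear in (\ref{ops_bL:a}) and (\ref{ops_bL:b}) will then be supplied by the Casoratian identity (\ref{ops_casoratian}), which is the only non-trivial ingredient.

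First I would dispose of (\ref{ops_bL:c}): evaluating (\ref{ops_theta}) at $x_r$ and using $W(x_r)=0$ leaves $\Theta_n(x_r)=2V(x_r)\epsilon_n(x_r)p_n(x_r)$ directly, which is the claimed identity. Next I would evaluate (\ref{ops_omega}) at $x_r$, where the same vanishing of $W$ reduces it to $\Omega_n(x_r)=a_nV(x_r)\left[p_{n-1}(x_r)\epsilon_n(x_r)+p_n(x_r)\epsilon_{n-1}(x_r)\right]$, a symmetric bilinear form. The remaining task is to break this symmetric expression into the two asymmetric pieces on the right-hand sides of (\ref{ops_bL:a}) and (\ref{ops_bL:b}).

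The key step is to multiply the Casoratian relation (\ref{ops_casoratian}) through by $a_nV(x_r)$, giving the \emph{antisymmetric} bilinear combination $V(x_r)=a_nV(x_r)\left[p_n(x_r)\epsilon_{n-1}(x_r)-p_{n-1}(x_r)\epsilon_n(x_r)\right]$. Adding this to, and subtracting it from, the symmetric expression for $\Omega_n(x_r)$ obtained above, the complementary cross terms cancel in pairs and yield $\Omega_n(x_r)+V(x_r)=2a_nV(x_r)p_n(x_r)\epsilon_{n-1}(x_r)$ and $\Omega_n(x_r)-V(x_r)=2a_nV(x_r)p_{n-1}(x_r)\epsilon_n(x_r)$, which are precisely (\ref{ops_bL:a}) and (\ref{ops_bL:b}). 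As a consistency check one may multiply these two relations together and compare with the product of (\ref{ops_bL:c}) taken at indices $n$ and $n-1$; both sides reduce to $4a_n^2V^2(x_r)\,p_n p_{n-1}\epsilon_n\epsilon_{n-1}$, confirming that the three bilinear identities genuinely factorize the quadratic relation (\ref{ops_RR:c}).

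There is essentially no serious obstacle in this argument, since each step is a direct substitution or an elementary rearrangement. The only point requiring foresight is recognizing that the isolated $V(x_r)$ appearing in (\ref{ops_bL:a}) and (\ref{ops_bL:b}) cannot come from the defining formulas alone and must instead be generated by the Casoratian; once that substitution is made, the symmetric and antisymmetric bilinears combine so that the cross terms telescope cleanly.
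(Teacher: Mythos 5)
Your proof is correct and is essentially the derivation the paper intends: the corollary is stated there without an explicit proof, and the ingredients you use are exactly the ones the paper makes available — the explicit formulas (\ref{ops_theta}) and (\ref{ops_omega}) evaluated where $W(x_r)=0$, plus the Casoratian (\ref{ops_casoratian}) multiplied by $a_nV(x_r)$ to split the symmetric bilinear form into the two signed identities. Your closing consistency check, multiplying (\ref{ops_bL:a}) and (\ref{ops_bL:b}) and comparing with (\ref{ops_bL:c}) at indices $n$ and $n-1$, also confirms the paper's remark that these identities factorise the quadratic relation (\ref{ops_RR:c}).
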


From the theory of Uvarov the following general result for (\ref{UE_deform_Hankel}) is known.
\begin{proposition}[\cite{Uv_1959}]
The quantity $ D_n(x,x)[w(\lambda)] $, 
defined by the equal argument form of (\ref{UE_deform_Hankel}),
is evaluated in terms of the polynomials $ p_n(x) $ orthogonal with 
respect to $ w(x) $ and coefficients $ \gamma_n, \Delta_n $ of this system as 
\begin{equation}
   D_n(x,x)[w(\lambda)] = \frac{\Delta_n}{\gamma_n\gamma_{n+1}}
            [p_n(x)p'_{n+1}(x)-p_{n+1}(x)p'_{n}(x)] .
\label{ops_Uv2}
\end{equation}
\end{proposition}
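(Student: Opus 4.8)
The plan is to evaluate $D_n(x,x)[w]$ directly as the average of a squared characteristic polynomial and collapse it onto the Christoffel--Darboux kernel, rather than going through a two-argument Christoffel formula. Writing $\pi_m=p_m/\gamma_m$ for the monic polynomials (\ref{ops_monic}) and using that replacing the monomials $\lambda^{k-1}$ by $\pi_{k-1}(\lambda)$ is a unit-triangular column operation, the Vandermonde factor becomes $\prod_{1\le j<k\le n}(\lambda_k-\lambda_j)=\det[\pi_{k-1}(\lambda_j)]_{j,k=1}^{n}$. First I would augment this with the extra node $x$ to form the $(n+1)\times(n+1)$ confluent Vandermonde determinant
\[
  M:=\det[\pi_{k-1}(\xi_j)]_{j,k=1}^{n+1},\qquad (\xi_1,\dots,\xi_{n+1})=(\lambda_1,\dots,\lambda_n,x),
\]
so that $M=\prod_{1\le j<k\le n}(\lambda_k-\lambda_j)\prod_{l=1}^{n}(x-\lambda_l)$ and hence $M^2=\prod_{l=1}^{n}(\lambda_l-x)^2\prod_{1\le j<k\le n}(\lambda_k-\lambda_j)^2$. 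Comparing this with the equal-argument form of (\ref{UE_deform_Hankel}) gives at once $D_n(x,x)[w]=\tfrac1{n!}\int_{I^n}\prod_l w(\lambda_l)\,M^2\,d\lambda_1\cdots d\lambda_n$.

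Next I would expand the determinant $M$ along its $x$-row, $M=\sum_{m=0}^{n}\pi_m(x)A_m$ with $A_m=(-1)^{n+m}\det[\pi_{c}(\lambda_j)]_{j=1,\dots,n;\,c\in\{0,\dots,n\}\setminus\{m\}}$, whence $M^2=\sum_{m,m'}\pi_m(x)\pi_{m'}(x)A_mA_{m'}$. Each $\tfrac1{n!}\int_{I^n}\prod_l w(\lambda_l)A_mA_{m'}$ is then handled by the Andreief (Gram) identity, which converts it into $(-1)^{m+m'}\det[\int_I w\,\pi_{c}\pi_{c'}]$ with rows and columns indexed by $\{0,\dots,n\}\setminus\{m\}$ and $\{0,\dots,n\}\setminus\{m'\}$. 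Orthogonality $\int_I w\,\pi_a\pi_b=\gamma_a^{-2}\delta_{ab}$ (from the normalisation $\gamma_m h_m=1$) forces this determinant to vanish unless $m=m'$, and for $m=m'$ it collapses to the diagonal product $\prod_{k\ne m}\gamma_k^{-2}$. Using $\prod_{k=0}^{n}\gamma_k^{-2}=\Delta_{n+1}$ (a telescoping of (\ref{ops_gammaDelta})) together with $\pi_m^2\gamma_m^2=p_m^2$, the surviving diagonal terms yield the clean intermediate identity
\[
  D_n(x,x)[w]=\Delta_{n+1}\sum_{m=0}^{n}p_m(x)^2 .
\]

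Finally I would convert the kernel sum into the required Wronskian form by the confluent Christoffel--Darboux formula: letting $y\to x$ in (\ref{ops_C-D}) with $n\mapsto n+1$ gives $\sum_{m=0}^{n}p_m(x)^2=a_{n+1}\bigl[p_n(x)p'_{n+1}(x)-p_{n+1}(x)p'_n(x)\bigr]$. Substituting this and simplifying the prefactor through $a_{n+1}=\gamma_n/\gamma_{n+1}$ (from (\ref{ops_coeffRn})) and $\Delta_{n+1}=\Delta_n/\gamma_n^2$, one finds $\Delta_{n+1}a_{n+1}=\Delta_n/(\gamma_n\gamma_{n+1})$, which is exactly (\ref{ops_Uv2}). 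The conceptual core here is the squared-Vandermonde computation collapsing onto the diagonal of the reproducing kernel, so I expect the only delicate points to be bookkeeping ones: tracking the cofactor signs so that the off-diagonal $m\ne m'$ contributions drop cleanly, and consistently translating between the orthonormal polynomials $p_m$, their monic counterparts $\pi_m$, the norms $\gamma_m^{-2}$, and the Hankel determinants $\Delta_m$.
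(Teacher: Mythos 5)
Your proof is correct, but it takes a genuinely different route from the paper's. The paper simply specialises Uvarov's general theorem: it quotes the non-confluent two-point identity $D_n(x_1,x_2)[w(\lambda)] = \left(\Delta_{n+2}\Delta_n\right)^{1/2}\det[p_{n+k-1}(x_j)]_{j,k=1,2}/(x_2-x_1)$, i.e.\ (\ref{ops_Uv3}), and obtains (\ref{ops_Uv2}) by the confluence $x_2\to x_1$, the prefactor matching because $(\Delta_{n+2}\Delta_n)^{1/2}=\Delta_n/(\gamma_n\gamma_{n+1})$. You instead give a self-contained derivation of the equal-argument case from scratch: augmented Vandermonde, Laplace expansion along the $x$-row, the Andreief identity plus orthonormality $\int_I w\,\pi_a\pi_b = \gamma_a^{-2}\delta_{ab}$ to kill the off-diagonal contributions (the mixed Gram determinant for $m\neq m'$ indeed has the zero row indexed by $m'$), yielding the intermediate identity $D_n(x,x)=\Delta_{n+1}\sum_{m=0}^{n}p_m(x)^2$, and then the confluent Christoffel--Darboux formula; your sign and prefactor bookkeeping ($(-1)^{n+m}(-1)^{n+m'}=(-1)^{m+m'}$, $\prod_{k=0}^n\gamma_k^{-2}=\Delta_{n+1}$, and $\Delta_{n+1}a_{n+1}=\Delta_n/(\gamma_n\gamma_{n+1})$) all check out. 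What the paper's route buys is brevity and the two-variable formula (\ref{ops_Uv3}) as a recorded by-product; what your route buys is independence from the external citation \cite{Uv_1959} and a transparent structural explanation --- the deformed Hankel determinant collapses onto the diagonal of the reproducing (Christoffel--Darboux) kernel --- though as written it only delivers the confluent case, and recovering (\ref{ops_Uv3}) itself would require a two-row Laplace-expansion variant of your argument.
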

\begin{proof}
This is a specialisation of Uvarov's general result to the case $ k=0 $ and $ l=2 $ 
where the integral $ D_n(x_1,x_2)[w(\lambda)] $ is a Hankel determinant with respect to the 
weight $ w_{0,2}(x) $, defined by
\begin{equation}
    w_{0,2}(x)dx = d\rho_{0,2}(x), \qquad
    \rho_{0,2}(x) = \int^{x}(s-x_1)(s-x_2)w(s)ds .
\end{equation}
The non-confluent form of the corresponding identity states that
\begin{equation}
   D_n(x_1,x_2)[w(\lambda)] = \left( \Delta_{n+2}\Delta_{n} \right)^{1/2}
                \frac{\det[p_{n+k-1}(x_j)]_{j,k=1,2}}{x_2-x_1} ,
\label{ops_Uv3}
\end{equation}
and the result follows under the confluence $ x_2 \to x_1 $.
\end{proof}
We see from (\ref{ops_Uv2}) that our main task is to obtain appropriate
characterisations of the orthogonal polynomials and their derivatives associated
with the weight (\ref{deform_Lwgt}).

\subsection{Deformed Laguerre Orthogonal Polynomials}

As we noted in the introduction we see the appearance of a deformed Laguerre weight
(\ref{deform_Lwgt}) which is a member of the semi-classical class with the polynomials
$ V,W $ in (\ref{ops_logDerw}) specified by
\begin{equation}
  2V(x;t) = -x^2+(a+2-t)x+2t, \quad W(x;t) = x(x+t) ,
\label{dL_param}
\end{equation}
and has finite singularities at $ x=0,-t $.
The moments have the simple evaluation
\begin{equation}
  \mu_n(t) = t^{a+n+3}\Gamma(n+3)U(n+3,a+n+4;t) , 
  \quad n \geq 0 , \quad |{\rm arg}(t)| < \pi , 
\label{dL_moment}
\end{equation}
where $ U(\alpha,\gamma;z) $ is the confluent hypergeometric that is not analytic at
$ z=0 $. The moments can be written as a sum of two parts one of which is analytic and 
the other non-analytic about $ t=0 $
\begin{multline}
  \mu_n(t) = \Gamma(a+n+3){}_1F_1(-a;-a-n-2;t) \\
  +(-1)^{n+3}\frac{\Gamma(a+1)\Gamma(n+3)}{\Gamma(a+n+4)}t^{a+n+3}{}_1F_1(n+3;a+n+4;t) ,
\label{dL_momentExp}
\end{multline}
where we have to exclude the cases $ a\in \ZZ_{\geq 0} $.

Within the semi-classical class the coefficient functions $ \Theta_n(x), \Omega_n(x) $ 
are polynomials with degree fixed independently of the index $ n $. In particular
we can relate these polynomials to the coefficients of the orthogonal polynomials
themselves.
\begin{proposition}
The coefficient functions are
\begin{gather}
  \Theta_n(x) = 2n+a+3-t-b_n-x, \quad n \geq 0 ,
  \label{dL_thetaEq}\\
  \Omega_n(x) = -\frac{1}{2}x^2 + \frac{1}{2}(2n+a+2-t)x+(n+1)t-a^2_n-\frac{\gamma_{n,1}}{\gamma_n},
   \quad n \geq 1 .
  \label{dL_omegaEq}
\end{gather}
\end{proposition}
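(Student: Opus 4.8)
The plan is to exploit the fact, recorded just above the statement for the semi-classical class, that $\Theta_n(x)$ and $\Omega_n(x)$ are polynomials in $x$ of degree independent of $n$, and then to pin down their finitely many coefficients. First I would fix the degrees and leading coefficients from the large-$x$ behaviour of the defining formulas (\ref{ops_theta}) and (\ref{ops_omega}). Inserting the asymptotic expansions (\ref{ops_pExp}), (\ref{ops_eExp}) together with $a_n=\gamma_{n-1}/\gamma_n$ from (\ref{ops_coeffRn}), one finds $\epsilon_n p_n\sim x^{-1}$ while the Wronskian combination $\epsilon_n p_n'-\epsilon_n'p_n\sim(2n+1)x^{-2}$, so that in (\ref{ops_theta}) the term $W[\epsilon_n p_n'-\epsilon_n'p_n]$ stays bounded while $2V\epsilon_n p_n\sim -x$; hence $\Theta_n=-x+c_n$ has degree $1$ with leading coefficient $-1$. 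The same computation applied to (\ref{ops_omega}) gives $\Omega_n=-\tfrac12 x^2+d_n x+e_n$ of degree $2$, whose leading coefficient $-\tfrac12$ coincides with that of $V$ in (\ref{dL_param}); in particular $\Omega_n-V$ has degree at most $1$.

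With these normal forms in hand, the coefficients $d_n,e_n$ fall out of the first structure equation (\ref{ops_spectralD:a}), namely $Wp_n'=(\Omega_n-V)p_n-a_n\Theta_n p_{n-1}$. Writing $p_n=\gamma_n(x^n-(\sum_{i=0}^{n-1}b_i)x^{n-1}+\cdots)$ from (\ref{ops_pExp}) and the analogue for $p_{n-1}$, I would equate the coefficients of $x^{n+1}$ and $x^n$. The $x^{n+1}$ balance gives $d_n-\tfrac12(a+2-t)=n$, i.e. $d_n=\tfrac12(2n+a+2-t)$; the $x^n$ balance, after using $a_n\gamma_{n-1}/\gamma_n=a_n^2$ and $\sum_{i=0}^{n-1}b_i=-\gamma_{n,1}/\gamma_n$ (again from (\ref{ops_pExp}) and (\ref{ops_coeffRn})), yields $e_n=(n+1)t-a_n^2-\gamma_{n,1}/\gamma_n$. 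This establishes (\ref{dL_omegaEq}) for $n\ge1$.

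It remains to find the constant $c_n$ in $\Theta_n$. Matching the $x^{n-1}$ coefficient in (\ref{ops_spectralD:a}) would do this but drags in the $x^{n-2}$ coefficient of $p_n$, so it is cleaner to use the recurrence (\ref{ops_RR:b}), $\Omega_{n+1}+\Omega_n=(x-b_n)\Theta_n$. Substituting the degree-$2$ form of $\Omega$ and $\Theta_n=-x+c_n$ and comparing the coefficient of $x$ gives $c_n+b_n=d_{n+1}+d_n=2n+a+3-t$, hence $\Theta_n=2n+a+3-t-b_n-x$, which is (\ref{dL_thetaEq}) for $n\ge1$. The base case $n=0$ must be treated separately, since $\Omega_0=0$ makes (\ref{ops_RR:b}) degenerate there; here I would compute $\Theta_0=\gamma_0^2(2Vf-Wf')$ directly from (\ref{ops_theta}) using the Stieltjes expansion $f=\sum_k\mu_k x^{-k-1}$ of (\ref{ops_stieltjes}) and the initial data $\mu_0\gamma_0^2=1$, $b_0=\mu_1/\mu_0$ of (\ref{TTcoeff_initial}), recovering $c_0=a+3-t-b_0$ in agreement with the stated formula.

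I expect the main obstacle to be bookkeeping rather than conceptual: correctly extracting the exact leading coefficients from (\ref{ops_theta}) and (\ref{ops_omega}), in particular the cancellation producing $\deg(\Omega_n-V)\le1$, and keeping the various $\gamma$- and $b$-coefficient identities straight when matching powers. The only genuinely delicate point is the $n=0$ boundary behaviour of the recurrence, which is why $\Theta_0$ is best obtained from the defining formula directly.
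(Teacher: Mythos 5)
Your proposal is correct, but it takes a genuinely different route from the paper's for most of the work. The paper's proof is a single-tool computation: it inserts the large-$x$ expansions (\ref{ops_pExp}), (\ref{ops_eExp}) directly into the defining formulas (\ref{ops_theta}), (\ref{ops_omega}), expands the products far enough to read off \emph{all} coefficients at once, and invokes the degree bounds $\deg\Theta_n\leq 1$, $\deg\Omega_n\leq 2$ to truncate the tails. You instead use those asymptotics only to fix the degrees and leading coefficients ($-1$ for $\Theta_n$, $-\tfrac12$ for $\Omega_n$), and then pin down the sub-leading data from the linear structure of the system: $d_n$ and $e_n$ by matching $x^{n+1}$ and $x^n$ in (\ref{ops_spectralD:a}) (your use of $a_n\gamma_{n-1}=a_n^2\gamma_n$ and $\sum_{i=0}^{n-1}b_i=-\gamma_{n,1}/\gamma_n$ checks out), and $c_n$ from the $x$-coefficient of (\ref{ops_RR:b}) for $n\geq 1$, with $n=0$ done separately from the Stieltjes series. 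Both arguments are sound; yours trades second-order bookkeeping in products of asymptotic series for coefficient-matching in the ODE plus a boundary case. It is worth emphasising that your separate treatment of $n=0$ is not merely prudent but \emph{necessary}: with the convention $\Omega_0=0$ of (\ref{ops_omega}), the $n=0$ instance of (\ref{ops_RR:b}) would assert $\Omega_1=(x-b_0)\Theta_0$, which contradicts the leading coefficient $-\tfrac12$ of $\Omega_1$; the correct identity is $\Omega_1=(x-b_0)\Theta_0-V$, a consequence of the anomalous $n=0$ recurrence $a_1\epsilon_1=(x-b_0)\epsilon_0-\gamma_0\mu_0$ for the associated functions (the recurrence quoted below (\ref{ops_eps}) really only holds for $n\geq 1$). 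Your direct computation of $\Theta_0=\gamma_0^2\left(2Vf-Wf'\right)$ from (\ref{ops_stieltjes}) and (\ref{TTcoeff_initial}), giving $\Theta_0=-x+a+3-t-b_0$, is exactly what closes this gap, and it is also the one place where your argument and the paper's coincide.
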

\begin{proof}
From the theory of \cite{Ma_1995a} we note that the degrees of the coefficient functions
are $ {\rm deg}\Theta_n \leq \max\{{\rm deg}W-2,{\rm deg}V-1\}=1 $ and
$ {\rm deg}\Omega_n \leq \max\{{\rm deg}W-1,{\rm deg}V-1,{\rm deg}\Theta_n-1,{\rm deg}U+1\}=2 $.
To obtain explicit forms for these we use the definitions (\ref{ops_theta}) and 
(\ref{ops_omega}) and the large $ x \to \infty $ expansions of the polynomials
and associated functions given in (\ref{ops_pExp}) and (\ref{ops_eExp}).
The first equalities in (\ref{dL_thetaEq}) and (\ref{dL_omegaEq}) then follow.
\end{proof}

We will also find it convenient to make the following definitions motivated by 
the above result,
\begin{align}
   \theta_n & := 2n+a+3-t-b_n ,
  \label{dL_thetaDef}\\
   \kappa_n & := (n+1)t-a^2_n-\frac{\gamma_{n,1}}{\gamma_n} .
  \label{dL_kappaDef}
\end{align}

\begin{proposition}\label{dL_spectral}
The spectral derivatives of the polynomials $ p_n(x) $ are
( $'{\hbox{ }} \equiv \partial/\partial x$)
\begin{align}
  x(x+t)p'_{n} & = (nx+\kappa_n-t)p_{n}-a_n(\theta_n-x)p_{n-1}
  \label{dL_spectralD:a} \\
  x(x+t)p'_{n-1} & = a_n(\theta_{n-1}-x)p_{n}-(-x^2+(n+a+2-t)x+t+\kappa_n)p_{n-1}
  \label{dL_spectralD:b}
\end{align}
\end{proposition}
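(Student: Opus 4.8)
The plan is to specialise the general coupled first-order system of Proposition~\ref{ops_spectralM}, namely equations~(\ref{ops_spectralD:a}) and~(\ref{ops_spectralD:b}), to the deformed Laguerre weight, using the explicit polynomial data already assembled. No new machinery is needed: the statement is a direct substitution followed by an algebraic simplification, the only subtlety being the normalisation of $V$ (it is $2V$, not $V$, that is prescribed in~(\ref{dL_param})) and the cancellation of quadratic terms.

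First I would record the ingredients. From~(\ref{dL_param}) we have $W(x;t)=x(x+t)$ and $V(x;t)=\frac{1}{2}(-x^2+(a+2-t)x+2t)=-\frac{1}{2}x^2+\frac{1}{2}(a+2-t)x+t$. The explicit forms~(\ref{dL_thetaEq}) and~(\ref{dL_omegaEq}) give $\Theta_n(x)=2n+a+3-t-b_n-x$ and $\Omega_n(x)=-\frac{1}{2}x^2+\frac{1}{2}(2n+a+2-t)x+(n+1)t-a_n^2-\gamma_{n,1}/\gamma_n$. Invoking the definitions~(\ref{dL_thetaDef}) and~(\ref{dL_kappaDef}), these collapse to $\Theta_n(x)=\theta_n-x$ and $\Omega_n(x)=-\frac{1}{2}x^2+\frac{1}{2}(2n+a+2-t)x+\kappa_n$.

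The heart of the computation is to form the two coefficient polynomials $\Omega_n\mp V$ that appear on the right-hand sides of~(\ref{ops_spectralD:a}) and~(\ref{ops_spectralD:b}). For the first equation the quadratic terms of $\Omega_n$ and $V$ cancel in $\Omega_n-V$, the linear coefficients combine as $\frac{1}{2}(2n+a+2-t)-\frac{1}{2}(a+2-t)=n$, and the constant term is $\kappa_n-t$; hence $\Omega_n-V=nx+\kappa_n-t$. Feeding this together with $a_n\Theta_n=a_n(\theta_n-x)$ into $Wp'_n=(\Omega_n-V)p_n-a_n\Theta_np_{n-1}$ reproduces~(\ref{dL_spectralD:a}) at once. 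For the second equation the quadratic terms instead add, giving $\Omega_n+V=-x^2+(n+a+2-t)x+\kappa_n+t$, and substituting this with $a_n\Theta_{n-1}=a_n(\theta_{n-1}-x)$ into $Wp'_{n-1}=a_n\Theta_{n-1}p_n-(\Omega_n+V)p_{n-1}$ yields~(\ref{dL_spectralD:b}).

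Since every step is a substitution into already-established general identities, there is no genuine obstacle here; the one place that warrants care is the bookkeeping of the linear coefficient in $\Omega_n\pm V$, where the factor-of-two normalisation of $V$ must be tracked correctly in order to see that $\Omega_n-V$ degenerates from a quadratic to the linear polynomial $nx+\kappa_n-t$, while $\Omega_n+V$ retains its leading $-x^2$.
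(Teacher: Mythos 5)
Your proposal is correct and is essentially the paper's own proof: the paper likewise obtains the result by substituting the explicit forms (\ref{dL_thetaEq}), (\ref{dL_omegaEq}) into the general spectral system (\ref{ops_spectralD:a}), (\ref{ops_spectralD:b}). Your write-up simply makes explicit the algebra (the cancellation in $\Omega_n-V$ and the combination in $\Omega_n+V$) that the paper leaves to the reader, and your bookkeeping of the factor of two in $V$ is accurate.
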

\begin{proof}
This follows from the general form of the spectral derivatives (\ref{ops_spectralD:a})
and (\ref{ops_spectralD:a}), along with the explicit particular forms 
(\ref{dL_thetaEq}) and (\ref{dL_omegaEq}).
\end{proof}

\begin{proposition}\label{dL_deform}
The deformation derivatives of the orthogonal polynomials are 
( $\dot{\hbox{ }} \equiv \partial/\partial t$)
\begin{align}
  t(x+t)\dot{p}_{n} & = \left[(n+1)t-\kappa_n-\frac{1}{2}(x+t)(\theta_n+t)\right]p_{n}
                        +a_n(\theta_n+t)p_{n-1}
  \label{dL_deformD:a} \\
  t(x+t)\dot{p}_{n-1} & = -a_n(\theta_{n-1}+t)p_{n}
                          +\left[\frac{1}{2}(x+t)(\theta_{n-1}+t)+\kappa_n-(n+a+1)t\right]p_{n-1}
  \label{dL_deformD:b}
\end{align}
\end{proposition}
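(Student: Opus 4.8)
The plan is to expand the $t$-derivative $\dot p_n$ in the orthonormal basis $\{p_k\}_{k=0}^{n}$, evaluate the expansion coefficients by differentiating the orthonormality relations, and exploit the fact that the only feature the $t$-flow introduces is the logarithmic derivative $\partial_t\log w = a/(x+t)$, so that $\dot w = a\,w/(x+t)$ and every coefficient collapses to a boundary evaluation at the finite singularity $x=-t$ of (\ref{dL_param}). Concretely, since $\dot p_n = (\dot\gamma_n/\gamma_n)p_n + (\text{degree}\le n-1)$, I would write $\dot p_n = (\dot\gamma_n/\gamma_n)p_n + \sum_{k=0}^{n-1} c_{n,k}p_k$ and differentiate $\int w\,p_n p_k = \delta_{nk}$ with respect to $t$. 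For $k<n$ the term $\int w\,p_n\dot p_k$ drops out by orthogonality (as $\deg\dot p_k<n$), leaving $c_{n,k} = -a\int \frac{w}{x+t}p_n p_k\,dx$, while the diagonal case gives $\dot\gamma_n/\gamma_n = -\tfrac{a}{2}\int\frac{w}{x+t}p_n^2\,dx$.

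The key reduction is the identity $\int \frac{w(x)}{x+t}p_n(x)p_k(x)\,dx = -p_k(-t)\,\epsilon_n(-t)$, valid for $k\le n$. This follows by writing $p_k(x) = p_k(-t) + (x+t)s_k(x)$ with $\deg s_k = k-1 < n$, so that $\int w\,p_n s_k = 0$ and only the boundary term survives; the surviving integral $\int \frac{w(x)p_n(x)}{x+t}\,dx$ is recognised as $-\epsilon_n(-t)$ using $\epsilon_n(y) = \int \frac{w(s)p_n(s)}{y-s}\,ds$, itself immediate from the definition (\ref{ops_eps}) of $\epsilon_n$ and of the associated polynomial $p^{(1)}_{n-1}$. (Since $-t\notin\RR_+$ all integrals converge for $\mathrm{Re}(a)>-1$.) Substituting back, the sum $\sum_{k=0}^{n-1} p_k(-t)p_k(x)$ is collapsed by the Christoffel--Darboux formula (\ref{ops_C-D}) evaluated at $y=-t$ into a two-term combination of $p_n(x)$ and $p_{n-1}(x)$ over $(x+t)$; multiplying through by $t(x+t)$ clears the denominator and produces
\[
t(x+t)\dot p_n = \Big[\tfrac{a}{2}t(x+t)p_n(-t)\epsilon_n(-t) + a t\,a_n\,p_{n-1}(-t)\epsilon_n(-t)\Big]p_n - a t\,a_n\,p_n(-t)\epsilon_n(-t)\,p_{n-1}.
\]

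It then remains to identify these boundary evaluations with $\theta_n$ and $\kappa_n$, and here the bilinear identities of the Corollary enter, evaluated at $x_r=-t$ where $2V(-t)=-at$. Identity (\ref{ops_bL:c}) gives $\Theta_n(-t)=-at\,p_n(-t)\epsilon_n(-t)$, and since $\Theta_n(x)=\theta_n-x$ by (\ref{dL_thetaEq}), (\ref{dL_thetaDef}), this reads $\theta_n+t = -a t\,p_n(-t)\epsilon_n(-t)$; this simultaneously fixes the $p_{n-1}$ coefficient as $a_n(\theta_n+t)$ and the $(x+t)$-linear part of the $p_n$ coefficient as $-\tfrac12(x+t)(\theta_n+t)$. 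Identity (\ref{ops_bL:b}) gives $\Omega_n(-t)-V(-t)=-a t\,a_n p_{n-1}(-t)\epsilon_n(-t)$, and a short evaluation of (\ref{dL_omegaEq}) at $x=-t$ together with (\ref{dL_kappaDef}) shows $V(-t)-\Omega_n(-t)=(n+1)t-\kappa_n$, pinning the $x$-independent part of the $p_n$ coefficient to $(n+1)t-\kappa_n$. Collecting these reproduces (\ref{dL_deformD:a}) exactly.

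For (\ref{dL_deformD:b}) I would run the same argument with $n$ replaced by $n-1$: the Christoffel--Darboux sum now terminates at index $n-1$, producing $p_{n-1}$ and $p_{n-2}$, after which the three-term recurrence (\ref{ops_threeT}) re-expresses $p_{n-2}$ through $p_n$ and $p_{n-1}$, while the bilinear identities (\ref{ops_bL:a}) and (\ref{ops_bL:c}) at index $n-1$ supply the evaluations. Equivalently, both rows follow at once by positing $\partial_t Y_n = U_n Y_n$ with $t(x+t)U_n$ a matrix polynomial of degree one in $x$ (its degree dictated by the expansions (\ref{ops_pExp}), (\ref{ops_eExp}) and the fact that $\epsilon_n,\epsilon_{n-1}$ satisfy the same relations, as in Proposition \ref{dL_spectral}), whose entries are then pinned by the same bilinear identities at $x=-t$ and the leading large-$x$ asymptotics. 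The main obstacle is organisational rather than conceptual: the crux is the reduction of every off-diagonal inner product to the single evaluation $p_k(-t)\epsilon_n(-t)$ and its resummation by Christoffel--Darboux, after which one must carefully match the $(x+t)$-linear and $x$-independent pieces of the $p_n$ coefficient against each other (the former forcing $\theta_n+t=-at\,p_n(-t)\epsilon_n(-t)$, consistent with the $p_{n-1}$ coefficient) while tracking the normalisation $\gamma_n h_n=1$ and the signs arising from $2V(-t)=-at$; it is in this bookkeeping that errors are most likely to creep in.
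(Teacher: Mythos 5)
Your proposal is correct and follows essentially the same route as the paper's own proof: differentiating the orthonormality relations, collapsing the inner products to the boundary evaluations $p_k(-t)\epsilon_n(-t)$, resumming via Christoffel--Darboux, identifying the coefficients through the bilinear relations (\ref{ops_bL:a})--(\ref{ops_bL:c}) at $x=-t$, and obtaining (\ref{dL_deformD:b}) by the shift $n\mapsto n-1$ together with the three-term recurrence. Your sign bookkeeping (in particular $2V(-t)=-at$, $\theta_n+t=-at\,p_n(-t)\epsilon_n(-t)$ and $V(-t)-\Omega_n(-t)=(n+1)t-\kappa_n$) checks out exactly.
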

\begin{proof}
We will opt to establish this relation directly from the orthonormality conditions on the
polynomials
\begin{equation}
   \int_{I} dx\;w(x)p_n(x)p_{n-i}(x) = \delta_{i,0}, \quad 0\leq i\leq n .
\nonumber
\end{equation}
Differentiating this with respect to $ t $ leaves us with the relation
\begin{equation}
  0 = a\int_{I} dx\;w(x)\frac{p_np_{n-i}}{x+t}
      +\int_{I} dx\;w(x)\dot{p}_np_{n-i} + \int_{I} dx\;w(x)p_n\dot{p}_{n-i} ,
\label{DD_1}
\end{equation}
where use of the logarithmic derivative of $ w(x) $ has been made. Now we employ
\begin{equation}
  \dot{p}_{n-i} = \frac{\dot{\gamma}_{n-i}}{\gamma_{n-i}}p_{n-i} + \Pi_{n-i-1} , 
\nonumber
\end{equation}
to write the last term of (\ref{DD_1}) as
\begin{equation}
  \int_{I} dx\;w(x)p_n\dot{p}_{n-i} = \frac{\dot{\gamma}_{n-i}}{\gamma_{n-i}}\delta_{i,0}
  = \frac{\dot{\gamma}_{n}}{\gamma_{n}}\int_{I} dx\;w(x)p_np_{n-i} .
\nonumber
\end{equation}
Considering the first term of (\ref{DD_1}) we note 
\begin{align}
  \int_{I} dx\;w(x)\frac{p_n(x)p_{n-i}(x)}{x+t}
  = & \int_{I} dx\;w(x)p_n(x)\frac{p_{n-i}(x)-p_{n-i}(-t)}{x+t} 
\nonumber
  \\
    & \qquad + p_{n-i}(-t)\int_{I} dx\;w(x)\frac{p_{n}(x)}{x+t} ,
\nonumber
  \\
  = \, & p_{n-i}(-t)\int_{I} dx\;w(x)\frac{p_{n}(x)}{x+t} ,
\nonumber
  \\
  = & -p_{n-i}(-t)\epsilon_n(-t) .
\nonumber
\end{align}
But we can recast $ p_{n-i}(-t) $ as
\begin{align}
  p_{n-i}(-t)
  = & \sum^{n}_{j=0} p_{n-j}(-t)\delta_{i,j} ,
\nonumber
  \\
  = & \sum^{n}_{j=0} p_{n-j}(-t)\int_{I} dx\;w(x)p_{n-j}(x)p_{n-i}(x) ,
\nonumber
  \\
  = & \int_{I} dx\;w(x)p_{n-i}(x)\sum^{n}_{j=0}p_{n-j}(-t)p_{n-j}(x) ,
\nonumber
  \\
  = & \int_{I} dx\;w(x)p_{n-i}(x)\sum^{n}_{j=0}p_{j}(-t)p_{j}(x) .
\nonumber
\end{align}
Combining these we deduce that
\begin{equation}
  \int_{I} dx\;w(x)p_{n-i}(x) \left\{
   \dot{p}_n(x)+\frac{\dot{\gamma}_{n}}{\gamma_{n}}p_n(x) 
     -a\epsilon_n(-t)\sum^{n}_{j=0}p_{j}(-t)p_{j}(x) \right\} = 0 ,
\nonumber
\end{equation}
for $ i=0,\ldots,n $. The factor in curly brackets in the integrand must be a polynomial
in $ x $ with degree less than or equal to $ n $, and yet is orthogonal to all polynomials
$ p_j $ for $ j=0,\ldots,n $, and thus must be identically zero. This gives us our first
form for the deformation derivative of $ p_n $,
\begin{equation}
  \dot{p}_n(x) = -\frac{\dot{\gamma}_{n}}{\gamma_{n}}p_n(x)
  + a\epsilon_n(-t)\sum^{n}_{j=0}p_{j}(-t)p_{j}(x) .
\label{DD_2}
\end{equation}
Equating coefficients of $ p_n(x) $ in this relation we find
\begin{equation}
   2\frac{\dot{\gamma}_{n}}{\gamma_{n}} = ap_{n}(-t)\epsilon_n(-t) .
\label{DD_3}
\end{equation}
Furthermore using the Christoffel-Darboux formula (\ref{ops_C-D}), and the above equation, 
we can 
express this derivative solely as a linear combination of $ p_n $ and $ p_{n-1} $
\begin{multline}
  \dot{p}_n(x) 
  = a\epsilon_n(-t)\left[ \frac{1}{2}p_{n}(-t)+a_n\frac{p_{n-1}(-t)}{x+t} \right]p_n(x)
  \\
    -a a_n\epsilon_n(-t)\frac{p_{n}(-t)}{x+t}p_{n-1}(x) .
\label{dL_deformD:c}
\end{multline}
Using the bilinear product relations (\ref{ops_bL:b}) and (\ref{ops_bL:c}) we arrive at
the result (\ref{dL_deformD:a}). The second of the two relations can be found by 
shifting $ n \mapsto n-1 $ in (\ref{dL_deformD:c}) and using the three term recurrence relation.
\end{proof}

In the matrix formulation the spectral derivative take the particular form
\begin{equation}
   \partial_x Y_n(x;t)
  = \left\{ {\mathcal A}_{\infty}+\frac{{\mathcal A}_0}{x}+\frac{{\mathcal A}_t}{x+t}
    \right\} Y_n(x;t) .
\label{dL_YspD}
\end{equation}
The residue matrices are explicitly given by
\begin{align}
  {\mathcal A}_{0}
  & = \frac{1}{t}\begin{pmatrix}
     \kappa_n-t & -a_n\theta_n \\ a_n\theta_{n-1} & -\kappa_n-t
    \end{pmatrix}, \quad \chi_0=0,-2
\label{dL_A0} \\
  {\mathcal A}_{t}
  & = \frac{1}{t}\begin{pmatrix}
     (n+1)t-\kappa_n & a_n(\theta_n+t) \\ -a_n(\theta_{n-1}+t) & \kappa_n-(n+a+1)t
    \end{pmatrix}, \quad \chi_t=0,-a
\label{dL_At} \\
  {\mathcal A}_{\infty}
  & = \begin{pmatrix} 0 & 0 \\ 0 & 1
    \end{pmatrix}
\label{dL_Ainfinity}
\end{align}
Our linear system has two regular singularities at $ x=0,-t $ and an irregular 
singularity at $ x=\infty $ with Poincar\'e index $ 1 $.
In this formulation the deformation derivative is
\begin{equation}
   \partial_t Y_n(x;t)
  = \left\{ {\mathcal B}+\frac{{\mathcal A}_t}{x+t}
    \right\} Y_n(x;t)
\label{dL_YdefD}
\end{equation}
\begin{equation}
  {\mathcal B}
  = \frac{1}{2t}\begin{pmatrix}
     -\theta_n-t & 0 \\ 0 & \theta_{n-1}+t
    \end{pmatrix}
\label{dL_B}
\end{equation}
As we will see in Section \ref{PV_identity} (\ref{dL_YspD}) and (\ref{dL_YdefD}) form  
the monodromy preserving system corresponding to the fifth Painlev\'e
equation with a form equivalent to that discussed by Jimbo \cite{Ji_1982}, in contrast to
other forms studied in 
\cite{JM_1981b}, \cite{FN_1980}, \cite{FMZ_1992} or \cite{IN_1986}.

\begin{corollary}
The polynomial coefficients satisfy the following coupled, first order mixed deformation 
derivative and difference equations
\begin{align}
   2t\frac{\dot{a_n}}{a_n} & = 2+b_{n-1}-b_{n}, \quad n \geq 1,
   \label{dL_deformD:d} \\
   t\dot{b_n} & = a^2_{n}-a^2_{n+1}+b_n ,\quad n \geq 0 ,
   \label{dL_deformD:e}
\end{align}
with the initial $ t=0 $ values for $ b_n $ and $ a^2_n $ given by (\ref{bExpand}) 
and (\ref{aSQExpand}) respectively.
This system of differential equations is equivalent to the Schlesinger equations.
\end{corollary}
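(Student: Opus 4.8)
The plan is to read both differential--difference equations directly off the deformation derivative of Proposition~\ref{dL_deform}, by comparing the two highest powers of $x$ in (\ref{dL_deformD:a}) and then translating the resulting statements about the coefficients $\gamma_n,\gamma_{n,1}$ of $p_n(x)$ (as defined in (\ref{ops_poly})) into statements about the recurrence coefficients through $a_n=\gamma_{n-1}/\gamma_n$ and $b_n=\gamma_{n,1}/\gamma_n-\gamma_{n+1,1}/\gamma_{n+1}$ from (\ref{ops_coeffRn}). Throughout I abbreviate $s_n:=\gamma_{n,1}/\gamma_n$, so that $b_n=s_n-s_{n+1}$, and write $p_n(x)=\gamma_n x^n+\gamma_{n,1}x^{n-1}+\mathrm{O}(x^{n-2})$.

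First I would substitute this expansion into (\ref{dL_deformD:a}) and compare coefficients of $x^{n+1}$. Only $\dot p_n$ on the left and the term $-\tfrac12(x+t)(\theta_n+t)p_n$ on the right contribute at this order, so that
\begin{equation}
  t\,\frac{\dot\gamma_n}{\gamma_n}=-\tfrac12(\theta_n+t).
\label{plan:gammadot}
\end{equation}
The same relation follows independently from (\ref{DD_3}) together with the bilinear identity (\ref{ops_bL:c}) evaluated at the singular point $x=-t$, where $\Theta_n(-t)=\theta_n+t$; this gives a useful consistency check. Differentiating $\ln a_n=\ln\gamma_{n-1}-\ln\gamma_n$ and inserting (\ref{plan:gammadot}) at indices $n$ and $n-1$ gives $t\,\dot a_n/a_n=\tfrac12(\theta_n-\theta_{n-1})$; since $\theta_n-\theta_{n-1}=2+b_{n-1}-b_n$ by the definition (\ref{dL_thetaDef}), this is exactly (\ref{dL_deformD:d}).

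For the second equation I would compare coefficients of $x^n$ in (\ref{dL_deformD:a}). After using (\ref{plan:gammadot}) to cancel the $t^2\dot\gamma_n$ contribution, dividing by $\gamma_n$ leaves $t\,\dot\gamma_{n,1}/\gamma_n=(n+1)t-\kappa_n-\tfrac12(\theta_n+t)s_n$. Forming $t\dot s_n=t\dot\gamma_{n,1}/\gamma_n-s_n\,t\dot\gamma_n/\gamma_n$ and again invoking (\ref{plan:gammadot}) makes the $\theta_n$--terms cancel, leaving the clean relation $t\dot s_n=(n+1)t-\kappa_n$. The definition (\ref{dL_kappaDef}) of $\kappa_n$ identifies the right-hand side as $a_n^2+s_n$, and subtracting the corresponding identity at index $n+1$ from that at index $n$, using $b_n=s_n-s_{n+1}$, yields $t\dot b_n=a_n^2-a_{n+1}^2+b_n$, which is (\ref{dL_deformD:e}).

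Finally, the equivalence with the Schlesinger equations I would argue structurally rather than by computation. Since the pair (\ref{dL_YspD}) and (\ref{dL_YdefD}) is a compatible monodromy-preserving linear system, the zero-curvature condition $\partial_t\mathcal A-\partial_x\mathcal B=[\mathcal B,\mathcal A]$ for its coefficient matrices is equivalent to the Schlesinger equations for the residues $\mathcal A_0,\mathcal A_t$; reading off the entries of these matrix equations in terms of $a_n,b_n,\theta_n,\kappa_n$ and eliminating $\theta_n$ and $\kappa_n$ via (\ref{dL_thetaDef}) and (\ref{dL_kappaDef}) should reproduce precisely (\ref{dL_deformD:d})--(\ref{dL_deformD:e}). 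The main obstacle I anticipate is not the coefficient-matching, which is routine once one notices that only the top two powers of $x$ are needed, but this last identification: one must verify that the matrix Schlesinger system carries no information beyond the two scalar equations above, i.e. that its remaining entries are either redundant or are automatically satisfied given the definitions of $\theta_n$ and $\kappa_n$.
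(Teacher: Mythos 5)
Your proposal is correct and follows essentially the same route as the paper's own proof, which (in its ``alternative'' method) likewise derives both equations by extracting selected polynomial coefficients from the deformation derivative of Proposition \ref{dL_deform} and translating them through $a_n=\gamma_{n-1}/\gamma_n$ and $b_n=\gamma_{n,1}/\gamma_n-\gamma_{n+1,1}/\gamma_{n+1}$; the only cosmetic difference is that you match the $x^{n+1}$ and $x^n$ coefficients of (\ref{dL_deformD:a}) directly, whereas the paper works with the intermediate formula (\ref{DD_2})--(\ref{DD_3}) from that proposition's proof and converts to $\theta_n,\kappa_n$ at the end via the bilinear identities. Note also that the paper offers no proof of the concluding Schlesinger assertion, so your structural sketch (with its honestly flagged remaining verification) is not a gap relative to the paper.
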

\begin{proof}
There are several methods of proof available here. The first is using the general
result of \cite{Ma_1995a}, expressing the deformation derivatives of the polynomial
coefficients in terms of a sum of the coefficient functions over the movable finite singular
points
\begin{align}
   \frac{\dot{a}_n}{a_n} 
  & = \frac{1}{2}\sum_{{r}=1}^{m}\frac{\Theta_{n}(x_{r})-\Theta_{n-1}(x_{r})}{W'(x_{r})}\dot{x}_{r} ,
  \quad n \geq 1 , \\
   \dot{b}_n
  & = \sum_{{r}=1}^{m}\frac{\Omega_{n+1}(x_{r})-\Omega_{n}(x_{r})}{W'(x_{r})}\dot{x}_{r} 
  \quad n \geq 1 .
\end{align}
The only finite singular point contributing here is $ x=-t $.

Alternatively one can find these derivatives from the polynomial derivatives by examining
selected coefficients. For example by considering the coefficients of $ x^{n-1} $ in
(\ref{DD_2}) we have
\begin{equation}
   \dot{\gamma}_{n,1} = \frac{1}{2}a\epsilon_n(-t)p_n(-t)\gamma_{n,1}
                        +a\epsilon_n(-t)p_{n-1}(-t)\gamma_{n-1} ,
\end{equation}
and therefore
\begin{equation}
   \dot{\left(\frac{\gamma_{n,1}}{\gamma_n}\right)} = a\epsilon_n(-t)p_{n-1}(-t) .
\end{equation}
Consequently, using (\ref{ops_coeffRn}), we find that
\begin{align}
   \frac{\dot{a}_n}{a_n} 
  & = \frac{1}{2}a\left[ \epsilon_{n-1}(-t)p_{n-1}(-t)-\epsilon_n(-t)p_n(-t) \right] ,
  \\
   \dot{b}_n
  & = a\left[ a_n\epsilon_{n}(-t)p_{n-1}(-t)-a_{n+1}\epsilon_{n+1}(-t)p_n(-t) \right] ,
\end{align}
which are identical to (\ref{dL_deformD:d}) and (\ref{dL_deformD:e}) respectively.
\end{proof}

\begin{corollary}
The recurrences for the polynomial coefficients are
\begin{multline}
   a^2_{n+2}-a^2_{n} \\
  = 2t+b_{n+1}\left[ 2n+a+6-t-b_{n+1} \right]-b_{n}\left[ 2n+a+2-t-b_{n} \right], \quad n \geq 1 .
  \label{dL_diff:a}
\end{multline}
and
\begin{multline}
   a^2_{n+1}\left[ 2n+a+5-t-b_n-b_{n+1} \right]
   -a^2_{n}\left[ 2n+a+1-t-b_{n-1}-b_{n} \right] \\
  = -b_{n}(b_{n}+t), \quad n \geq 1 .
  \label{dL_diff:b}
\end{multline}
The initial data $ b_0(t) $ and $ a^2_1(t) $ are given by (\ref{TTcoeff_initial}) 
with the evaluation of the moments (\ref{dL_moment}).
\end{corollary}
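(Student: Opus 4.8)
The plan is to treat the coefficient-function recurrences (\ref{ops_RR:a}) and (\ref{ops_RR:b}) as polynomial identities in $x$ and to read off relations among $a_n^2$, $b_n$ and the auxiliary quantities $\theta_n$, $\kappa_n$ by matching coefficients of powers of $x$. From the explicit forms (\ref{dL_thetaEq}), (\ref{dL_omegaEq}) together with the definitions (\ref{dL_thetaDef}), (\ref{dL_kappaDef}) I have the degree-one and degree-two polynomials $\Theta_n(x)=\theta_n-x$ and $\Omega_n(x)=-\tfrac12x^2+\tfrac12(2n+a+2-t)x+\kappa_n$, while $W(x)=x(x+t)$ by (\ref{dL_param}). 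Substituting these into the two recurrences and equating the $x^2$, $x^1$, $x^0$ coefficients reduces everything to purely algebraic relations, with no integration or analytic input; this is exactly why the resulting equations are pure difference equations, free of $t$-derivatives.

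First I would process (\ref{ops_RR:b}), $\Omega_{n+1}+\Omega_n=(x-b_n)\Theta_n$. The $x^2$ coefficients match trivially, and the $x^1$ coefficients match automatically upon using $\theta_n+b_n=2n+a+3-t$, which is just the definition (\ref{dL_thetaDef}). The $x^0$ coefficient then yields the single nontrivial relation
$$\kappa_{n+1}+\kappa_n=-b_n\theta_n.\qquad(\star)$$
Next I would process (\ref{ops_RR:a}), $(\Omega_{n+1}-\Omega_n)(x-b_n)=W+a_{n+1}^2\Theta_{n+1}-a_n^2\Theta_{n-1}$. Here $\Omega_{n+1}-\Omega_n=x+(\kappa_{n+1}-\kappa_n)$, so the $x^2$ terms cancel. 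The $x^1$ coefficient gives $\kappa_{n+1}-\kappa_n-b_n=t-a_{n+1}^2+a_n^2$, which is not new information: it is an identity following from (\ref{dL_kappaDef}) and the relation $b_n=\gamma_{n,1}/\gamma_n-\gamma_{n+1,1}/\gamma_{n+1}$ of (\ref{ops_coeffRn}). Rewriting it as
$$\kappa_{n+1}-\kappa_n=t-a_{n+1}^2+a_n^2+b_n\qquad(\star\star)$$
turns it into the tool for eliminating $\kappa_n$. The $x^0$ coefficient finally gives
$$-b_n(\kappa_{n+1}-\kappa_n)=a_{n+1}^2\theta_{n+1}-a_n^2\theta_{n-1}.\qquad(\star\star\star)$$

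To obtain (\ref{dL_diff:a}) I would telescope $(\star)$: subtracting $(\star)$ from its shift $n\mapsto n+1$ gives $\kappa_{n+2}-\kappa_n=b_n\theta_n-b_{n+1}\theta_{n+1}$, while adding two consecutive instances of $(\star\star)$ gives $\kappa_{n+2}-\kappa_n=2t-a_{n+2}^2+a_n^2+b_{n+1}+b_n$. Equating these and rearranging with $\theta_n=2n+a+3-t-b_n$ produces (\ref{dL_diff:a}) exactly. To obtain (\ref{dL_diff:b}) I would substitute $(\star\star)$ into $(\star\star\star)$, replacing $\kappa_{n+1}-\kappa_n$ by $t-a_{n+1}^2+a_n^2+b_n$; collecting the $a_{n+1}^2$ and $a_n^2$ terms and using $\theta_{n\pm1}=2(n\pm1)+a+3-t-b_{n\pm1}$ recovers (\ref{dL_diff:b}).

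The computation is routine once the right intermediate relations are isolated; the only point requiring care, and the main conceptual step, is recognising that $(\star\star)$ is forced by the definitions rather than being independent, so that it serves purely as the device for eliminating the auxiliary quantity $\kappa_n$ (equivalently $\gamma_{n,1}/\gamma_n$). Without this observation one is left with relations still entangling $\kappa_n$ with the polynomial data; with it, both difference equations close on $a_n^2$ and $b_n$ alone. Finally I would verify the stated initial data $b_0(t)$ and $a_1^2(t)$ directly from (\ref{TTcoeff_initial}) together with the moment evaluation (\ref{dL_moment}) as a separate elementary check.
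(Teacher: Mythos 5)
Your proposal is correct and follows essentially the same route as the paper: substituting the explicit forms (\ref{dL_thetaEq}), (\ref{dL_omegaEq}) into the recurrences (\ref{ops_RR:b}) and (\ref{ops_RR:a}) and demanding equality of polynomials in $x$, with only the $x^0$ coefficients yielding nontrivial information. Your one addition to the paper's terse argument is to make explicit that the $x^0$ identity from (\ref{ops_RR:b}) still involves $\kappa_n$ (equivalently $\gamma_{n,1}/\gamma_n$), so that a differencing in $n$, combined with the definitional identity $\kappa_{n+1}-\kappa_n = t-a_{n+1}^2+a_n^2+b_n$, is genuinely needed to close (\ref{dL_diff:a}) and (\ref{dL_diff:b}) on $a_n^2$ and $b_n$ alone --- a step the paper compresses into ``the $x^0$ equality gives the result.''
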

\begin{proof}
The first relation (\ref{dL_diff:a}) follows by substituting the explicit forms for
the coefficient functions, (\ref{dL_thetaEq}) and (\ref{dL_omegaEq}), into the 
relation (\ref{ops_RR:b}) and requiring equality of the polynomials in $ x $.
Equality is trivial for $ x^2 $ and $ x^1 $, whilst the non-trivial equality for
$ x^0 $ gives (\ref{dL_diff:a}). The second relation (\ref{dL_diff:b}) follows
from the same procedure applied to the recurrence relation (\ref{ops_RR:a}) and
again the only nontrivial result occurs for the $ x^0 $ part.
\end{proof}

This result can also be recovered from a specialisation of work in \cite{BR_1994}.
In their system of monic orthogonal polynomials the three term recurrence 
coefficients $ \beta_n, \gamma_n $ (not to be confused with our use of these symbols
subsequently) are related to ours by
\begin{equation}
   \beta_n = b_n, \quad \gamma_n = a^2_n .
\end{equation}
Equation (39) of this work implies 
\begin{equation}
   \gamma_{n+1}+\gamma_{n}=(2n+3)t+(2n+a+4-t)\beta_n+2\sum^{n-1}_{k=0}\beta_k-\beta^2_n ,
\end{equation}
and differencing this once leads directly to (\ref{dL_diff:a}). In addition their
equation (40) implies
\begin{equation}
  \gamma_{n+1}\beta_{n+1}=(2n+a+5-t-\beta_n)\gamma_{n+1}+2\sum^{n}_{k=1}\gamma_{k}
    +\sum^{n}_{k=0}\beta_k(\beta_{k}+t) .
\end{equation}
Again differencing this once one finds precisely (\ref{dL_diff:b}).
 
A check of the above results can be made when $ t \to 0 $ whilst all parameters are
kept fixed since this, as noted before, corresponds to the Laguerre weight with 
parameter $ a+2 $. Therefore we have 
\begin{align}
   a^2_n(0) &= n(n+a+2) ,
   \label{aSQZero} \\
   b_n(0)   &= 2n+a+3 ,
   \label{bZero} \\
   \Delta_n(0) & = \frac{\prod^n_{j=1}j!\Gamma(j+a+2)}{n!} .
   \label{DeltaZero}
\end{align}
In our normalisation we have 
\begin{equation}
   p_n(x;0) = (-1)^n\left\{ \frac{n!}{\Gamma(n+a+3)} \right\}^{1/2}L^{(a+2)}_n(x) ,
\end{equation}
where $ L^{(\alpha)}_n(x) $ are the standard associated Laguerre polynomials of
degree $ n $ and index $ \alpha $.
We see that 
the spectral derivative equations (\ref{dL_spectralD:a}) and (\ref{dL_spectralD:b}) 
reduce to the standard expressions for the derivative of the Laguerre polynomials,
the coefficients in the right-hand sides of the deformation derivatives 
(\ref{dL_deformD:a}) and (\ref{dL_deformD:b}) vanish,
the recurrence relations (\ref{dL_diff:a}) and (\ref{dL_diff:b}) are identically
satisfied, and
the right-hand sides of (\ref{dL_deformD:d}) and (\ref{dL_deformD:e}) are zero.

In fact we will need to develop expansions about $ t=0 $ in order to characterise
our quantities as particular solutions of difference and differential equations
in Section 3. To this end we have the following result.

\begin{proposition} 
For fixed $ n $, $ a\notin \ZZ_{\geq 0} $ the Hankel determinant (\ref{ops_Hdet}) with the weight
(\ref{deform_Lwgt}) has the expansion about $ t=0 $
\begin{multline}
   \Delta_n(t) = \Delta_n(0) \Big\{ 1 + \frac{a}{a+2}nt \\
   +\frac{1}{4}\frac{a}{a+2}\left(\frac{(a-1)(n+1)}{a+1}+\frac{(a+1)(n-1)}{a+3}
                            \right)nt^2 + {\rm O}(t^3) \\
   -\frac{2\Gamma(a+1)}{\Gamma(a+3)\Gamma^2(a+4)}\frac{\Gamma(a+n+3)}{\Gamma(n)}t^{a+3}
   \left(1+{\rm O}(t)\right)
   + {\rm O}(t^{2a+6}) \Big\} ,
\label{DeltaExpand}
\end{multline}
with $ |{\rm arg}(t)| < \pi $.
Consequently, under the same conditions, the three-term recurrence coefficients have
the expansions about $ t=0 $
\begin{multline}
  b_n(t) = 2n+a+3-\frac{a}{a+2}t
  + \frac{2a(2n+a+3)}{(a+3)(a+2)^2(a+1)}t^2 + {\rm O}(t^3) \\
  + \frac{2}{(a+2)(a+1)\Gamma^2(a+3)}\frac{\Gamma(a+n+3)}{\Gamma(n+1)}t^{a+3}
    \left(1+{\rm O}(t)\right)
   + {\rm O}(t^{2a+6}) ,
\label{bExpand}
\end{multline}
and
\begin{multline}
  a^2_n(t) = n(n+a+2) \Big\{ 1 
  - \frac{2a}{(a+3)(a+2)^2(a+1)}t^2 + {\rm O}(t^3) \\
  - \frac{2}{(a+1)\Gamma(a+3)\Gamma(a+4)}\frac{\Gamma(a+n+2)}{\Gamma(n+1)}t^{a+3}
    \left(1+{\rm O}(t)\right)
  + {\rm O}(t^{2a+6}) \Big\} .
\label{aSQExpand}
\end{multline}
\end{proposition}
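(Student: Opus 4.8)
The plan is to work directly from the moment evaluation (\ref{dL_momentExp}), which splits each moment into an analytic part $A_m(t):=\Gamma(a+m+3)\,{}_1F_1(-a;-a-m-2;t)$, entire in $t$, and a non-analytic part $B_m(t)$ proportional to $t^{a+m+3}$. Substituting this split into the Hankel determinant $\Delta_n=\det[\mu_{j+k-2}]_{j,k=1}^{n}$ of (\ref{ops_Hdet}) and expanding by multilinearity in the $n$ columns produces a sum over subsets $S$ of columns in which the non-analytic part is selected. The term $S=\emptyset$ is the purely analytic determinant $\det[A_{j+k-2}(t)]$ and supplies the integer-power Taylor series, while every term with $|S|\ge1$ is non-analytic; since $B_{j+k-2}\sim t^{a+j+k+1}$, the overall lowest non-analytic power $t^{a+3}$ arises uniquely from the single column $S=\{1\}$, via cofactor expansion from the corner entry $B_0(t)\sim t^{a+3}$. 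The hypothesis $a\notin\ZZ_{\ge0}$ is exactly what keeps the two families of powers disjoint and the split (\ref{dL_momentExp}) valid; for $a$ a non-negative integer the $t^{a+3}$-powers collide with the Taylor series and logarithms intervene.

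For the analytic part I would factor $\det[A_{j+k-2}(t)]=\Delta_n(0)\det(I+tN_1+t^2N_2+\cdots)$ with $N_r=M_0^{-1}M_r$, where $M_0=[\Gamma(a+j+k+1)]$ is the Laguerre$(a+2)$ moment matrix and $M_1,M_2$ are the shifted Hankel matrices $[a\,\Gamma(a+j+k)]$ and $[\tfrac12 a(a-1)\Gamma(a+j+k-1)]$ read off the ${}_1F_1$ coefficients. The identity $\det(I+tN_1+t^2N_2+\cdots)=1+t\operatorname{Tr}N_1+t^2\big[\tfrac12((\operatorname{Tr}N_1)^2-\operatorname{Tr}N_1^2)+\operatorname{Tr}N_2\big]+\mathrm{O}(t^3)$ then reduces everything to traces against the inverse moment matrix. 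Equivalently, and more transparently, I would use $\Delta_n=\prod_{j=0}^{n-1}\tilde h_j$ together with $\partial_t\log\tilde h_j=\tilde h_j^{-1}\!\int\pi_j^2\,\partial_tw\,dx$ (the monic self-derivative term dropping by orthogonality) and $\partial_tw|_{t=0}=(a/x)\,w(x;0)$, so that the linear coefficient becomes a sum of Laguerre parameter-lowering integrals $\tilde h_j^{-1}\!\int\pi_j^2x^{a+1}e^{-x}dx$ that should telescope to the stated $\tfrac{a}{a+2}n$.

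For the leading non-analytic coefficient I would evaluate the $S=\{1\}$ contribution as $B_0(t)\,M_{11}(0)$, where $M_{11}(0)=\det[\Gamma(a+j+k+1)]_{j,k=2}^{n}$ is a Laguerre$(a+4)$ Hankel determinant of size $n-1$. Using the closed-form Laguerre evaluation (\ref{DeltaZero}) (with the parameter $a+2$ replaced by $a+4$ for the minor), the ratio collapses to $\Gamma(a+n+3)/[\Gamma(n)\Gamma(a+3)\Gamma(a+4)]$, and multiplying by the $t^{a+3}$-coefficient $-2\Gamma(a+1)/\Gamma(a+4)$ of $B_0$ reproduces the non-analytic term in (\ref{DeltaExpand}). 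The recurrence-coefficient expansions then follow ``consequently'': $a_n^2$ is read off the ratio (\ref{ops_aSQDelta}), and writing $L_n:=\log[\Delta_n(t)/\Delta_n(0)]$ one has $\log a_n^2-\log a_n^2(0)=L_{n+1}+L_{n-1}-2L_n$, whose second difference annihilates the linear-in-$n$ coefficient $\tfrac{a}{a+2}n$ and so explains the absence of a $t^1$ term in (\ref{aSQExpand}), the $t^2$ and $t^{a+3}$ coefficients emerging as second differences of the corresponding coefficients in (\ref{DeltaExpand}). For $b_n$ I would either run the same multilinear computation on $\Sigma_n$ and use (\ref{ops_bDelta}), or integrate the deformation relation (\ref{dL_deformD:e}) order by order in $t$; the latter fixes every coefficient except the linear one, which is supplied by $\dot{(\gamma_{n,1}/\gamma_n)}|_{t=0}=a\,\epsilon_n(0)p_{n-1}(0)$ from the corollary above.

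The main obstacle is the second-order analytic coefficient in (\ref{DeltaExpand}): it requires the full $\mathrm{O}(t^2)$ term $\tfrac12((\operatorname{Tr}N_1)^2-\operatorname{Tr}N_1^2)+\operatorname{Tr}N_2$ with the inverse Hankel matrix (or, in the product formulation, the second logarithmic derivatives $\partial_t^2\log\tilde h_j$ together with the cross terms), after which one must verify that the accumulated $n$-dependence collapses to the displayed rational-in-$a$ polynomial in $n$. The remaining checks—that the analytic and non-analytic families do not interfere below order $t^{2a+6}$, and that the $\mathrm{O}(t^3)$ and $\mathrm{O}(t^{2a+6})$ remainders are uniform for fixed $n$—are routine once the split is in place.
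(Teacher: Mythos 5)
Your proposal is correct and, at the structural level, follows the same route as the paper: both split the moments via (\ref{dL_momentExp}) into an entire part plus a non-analytic part of order $t^{a+m+3}$, expand the Hankel determinant (\ref{ops_Hdet}) multilinearly, observe that the leading non-analytic contribution comes solely from the $(1,1)$ entry $B_0(t)$ times the unperturbed $(1,1)$ minor (a Laguerre Hankel determinant with parameter $a+4$ and size $n-1$), and evaluate that minor in closed form; your coefficient $-2\Gamma(a+1)\Gamma(a+n+3)/[\Gamma(a+3)\Gamma^2(a+4)\Gamma(n)]$ is exactly that of (\ref{DeltaExpand}), and your second-difference derivation of (\ref{aSQExpand}) and order-by-order integration of (\ref{dL_deformD:e}) for (\ref{bExpand}) do reproduce the stated coefficients (a concrete check: the second difference of the $t^{a+3}$ coefficient of $\log\Delta_n$ gives precisely $-2\Gamma(a+n+2)/[(a+1)\Gamma(a+3)\Gamma(a+4)\Gamma(n+1)]$ after using $\Gamma(a+4)=(a+3)\Gamma(a+3)$). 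Where you genuinely differ is the analytic Taylor coefficients, and there the paper has the tool that dissolves what you call your ``main obstacle'': it evaluates every determinant arising in the expansion with Normand's identity \cite{Nd_2004}, $\det(\Gamma(z_k+j))_{j,k=0,\ldots,n-1}=\prod_{j}\Gamma(z_j)\prod_{j<k}(z_k-z_j)$, valid for an \emph{arbitrary} sequence $\{z_k\}$. Replacing column $k_0$ by its order-$t$ coefficient $a\,\Gamma(a+j+k_0)$ makes that column proportional to the unperturbed column $k_0-1$, so every term with $k_0\geq 2$ vanishes and the linear coefficient is a single ratio of Normand products, namely $an/(a+2)$; at order $t^2$ the same proportionality argument leaves exactly three surviving configurations (second order in column $1$ or in column $2$, or first order in columns $1$ and $2$ simultaneously), each an explicit product, and these assemble into the quadratic coefficient of (\ref{DeltaExpand}). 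No inverse moment matrix and no traces are needed: your $\det(I+tN_1+t^2N_2+\cdots)$ expansion is valid, but evaluating ${\rm Tr}\,N_1^2$ and ${\rm Tr}\,N_2$ against $M_0^{-1}$ is precisely the hard step you defer, and Normand's identity makes it unnecessary.

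Two further remarks. Your log-derivative route to the linear coefficient is sound and even simpler than you suggest: $\partial_t\log\tilde h_j|_{t=0}=a\,\tilde h_j^{-1}\int_0^{\infty}\pi_j^2\,x^{a+1}e^{-x}\,dx=a/(a+2)$ for \emph{every} $j$, since $\int_0^{\infty}[L_j^{(\alpha)}]^2x^{\alpha-1}e^{-x}dx=\Gamma(j+\alpha+1)/(j!\,\alpha)$; each term is identical, nothing needs to telescope. But be careful with the formula you invoke to pin down the linear coefficient of $b_n$: as printed in the paper, $\dot{(\gamma_{n,1}/\gamma_n)}=a\,\epsilon_n(-t)p_{n-1}(-t)$ is missing a factor. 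Redoing the extraction of the $x^{n-1}$ coefficient from (\ref{DD_2}) gives $\dot{(\gamma_{n,1}/\gamma_n)}=a\,a_n\,\epsilon_n(-t)p_{n-1}(-t)$ with $a_n=\gamma_{n-1}/\gamma_n$, and the factor matters: at $t=0$ one has $\epsilon_n(0)p_{n-1}(0)=n/[(a+2)\sqrt{n(n+a+2)}\,]$, so only after multiplying by $a_n(0)=\sqrt{n(n+a+2)}$ do you recover $\dot\Gamma_n(0)=an/(a+2)$ and hence $\dot b_n(0)=-a/(a+2)$, as (\ref{bExpand}) requires; the uncorrected formula would give a spurious square root in $n$.
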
 
\begin{proof}
We adopt the method of expanding the Hankel determinant by expanding the moments
to leading order 
\begin{multline} 
   \mu_n(t) = \Gamma(a+n+3)+a\Gamma(a+n+2)t+\frac{1}{2}a(a-1)\Gamma(a+n+1)t^2
  + {\rm O}(t^3) \\
  + (-1)^{n+1}\frac{\Gamma(a+1)\Gamma(n+3)}{\Gamma(a+n+4)}t^{n+a+3}+{\rm O}(t^{n+a+4}) ,
\end{multline} 
as $ t \to 0 $ using (\ref{dL_momentExp}).
The determinant can be expanded to leading orders in $ t, t^{a+3} $ and the
resulting determinants evaluated using the identity \cite{Nd_2004}
\begin{equation} 
  \det(\Gamma(z_k+j))_{j,k=0,\ldots,n-1} 
  = \prod^{n-1}_{j=0}\Gamma(z_j)\prod_{0\leq j<k \leq n-1}(z_k-z_j) ,
\end{equation} 
where $ \{z_0,\ldots,z_{n-1} \} $ is an arbitrary sequence not necessarily in 
arithmetic progression.
\end{proof}

We conclude this section by noting some identities relating the polynomial coefficients
and the zeros of the polynomials. Firstly we give the Bethe Ansatz equations for the
zeros of the deformed Laguerre orthogonal polynomials which can be directly deduced from
Proposition \ref{BAE}.
\begin{corollary}
The zeros $ x_{j,n} $ of the deformed Laguerre orthogonal polynomials $ p_n(x) $ 
satisfy the functional equations
\begin{equation}
  \frac{3}{x_{j,n}}+\frac{a+1}{x_{j,n}+t}-\frac{1}{x_{j,n}-\theta_n}
  +2\sum_{k\neq j}\frac{1}{x_{j,n}-x_{k,n}} = 1, \quad 1 \leq j \leq n .
\label{dL_BAE}
\end{equation}
\end{corollary}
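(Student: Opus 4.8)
The plan is to specialise the general Bethe Ansatz equations of Proposition \ref{BAE} directly to the deformed Laguerre data, so the whole argument reduces to substituting the explicit polynomials $V,W,\Theta_n$ and performing an elementary partial fraction decomposition. First I would record the ingredients already computed in this section: from (\ref{dL_param}) we have $W(x;t)=x(x+t)$ and $2V(x;t)=-x^2+(a+2-t)x+2t$, while (\ref{dL_thetaEq}) together with the abbreviation (\ref{dL_thetaDef}) gives the linear form $\Theta_n(x)=\theta_n-x$.

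Next I would evaluate the right-hand side of Proposition \ref{BAE} term by term at $x=x_{j,n}$. Since $\Theta_n$ is linear with $\Theta'_n(x)=-1$, the logarithmic derivative collapses to $\Theta'_n(x_{j,n})/\Theta_n(x_{j,n})=1/(x_{j,n}-\theta_n)$, which will supply the $-1/(x_{j,n}-\theta_n)$ term once it is transposed to the left-hand side. For the remaining contribution I would form the combination $W'(x)+2V(x)=-x^2+(a+4-t)x+3t$ and decompose $-(W'+2V)/W$ into partial fractions over the simple poles at the two finite singularities $x=0$ and $x=-t$. The residue at $x=0$ produces the coefficient $3$, the residue at $x=-t$ produces $a+1$, and the equality of the top degrees of numerator and denominator produces the constant $1$, yielding $-\big(W'(x)+2V(x)\big)/W(x)=1-3/x-(a+1)/(x+t)$.

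Substituting both evaluations into the identity of Proposition \ref{BAE} and moving the single-pole pieces $1/(x_{j,n}-\theta_n)$, $-3/x_{j,n}$ and $-(a+1)/(x_{j,n}+t)$ across to the left-hand side leaves the constant $1$ on the right and gives (\ref{dL_BAE}) immediately. I do not expect a substantive obstacle here, since the result is labelled a corollary of Proposition \ref{BAE}; the only points requiring care are the bookkeeping of the residue signs at the two finite singularities and the constant term arising from the matching leading coefficients, both of which are routine.
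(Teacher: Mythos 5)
Your proposal is correct and coincides with the paper's intended argument: the paper offers no separate proof, stating only that the corollary "can be directly deduced from Proposition \ref{BAE}", and your substitution of $W=x(x+t)$, $2V=-x^2+(a+2-t)x+2t$, $\Theta_n=\theta_n-x$ followed by the partial fraction decomposition $-\bigl(W'+2V\bigr)/W = 1 - 3/x - (a+1)/(x+t)$ is exactly that deduction, with all residues and signs checking out.
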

According to the electrostatic interpretation,
the terms of (\ref{dL_BAE}) can be interpreted in the following way - the first is the 
interaction of the mobile unit charge at $ x_{j,n} $ with the fixed charge of size $ 3 $ at the
singularity $ x=0 $,
the second with the fixed charge of size $ a+1 $ at the singularity $ x=-t $, 
the third with a fixed charge of size $-1$ at the apparent singularity $ x=\theta_n $,
the fourth the mutual repulsion with the other mobile charges and the term on the right-hand side
is the linear confining potential.
From the partial fraction decomposition (\ref{ops_ParFrac}) specialised to the arguments
$ x=0,-t $ we have the summation identities.
\begin{proposition}
The following summations over the zeros have the explicit evaluations
\begin{align}
  \frac{\kappa_{n}-(n+1)t}{\theta_n+t} 
  & = \sum^n_{j=1}\frac{x_{j,n}}{\theta_n-x_{j,n}},
  \\
  \frac{\kappa_{n}-t}{\theta_n} 
  & = \sum^n_{j=1}\frac{t+x_{j,n}}{\theta_n-x_{j,n}},
  \\  
  \frac{1}{\theta_n+t}\left[n+\frac{\kappa_{n}-t}{\theta_n}\right]
  & = \sum^n_{j=1}\frac{1}{\theta_n-x_{j,n}},
  \label{ops_Zsum1} \\
  a^2_n
  & = \sum^n_{j=1}\frac{x_{j,n}(x_{j,n}+t)}{x_{j,n}-\theta_n} .
\end{align}
\end{proposition}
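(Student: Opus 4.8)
The plan is to read the four identities off the partial fraction decomposition (\ref{ops_ParFrac}) by exploiting the two finite singularities $x=0,-t$ of the deformed Laguerre system. First I would substitute the explicit data $W(x;t)=x(x+t)$ from (\ref{dL_param}) together with $\Theta_n(x)=\theta_n-x$ from (\ref{dL_thetaEq}) and (\ref{dL_thetaDef}) into (\ref{ops_ParFrac}), giving
\[
  \frac{p_{n-1}(x)}{p_n(x)}=-\frac{1}{a_n}\sum^n_{j=1}\frac{x_{j,n}(x_{j,n}+t)}{\theta_n-x_{j,n}}\frac{1}{x-x_{j,n}} .
\]
Evaluating the right-hand side at $x=0$ and at $x=-t$ collapses the factors $x_{j,n}(x_{j,n}+t)/(x-x_{j,n})$ and produces, respectively, $a_n^{-1}\sum_j(x_{j,n}+t)/(\theta_n-x_{j,n})$ and $a_n^{-1}\sum_j x_{j,n}/(\theta_n-x_{j,n})$, which are the sums appearing on the right of the second and first stated identities.

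The remaining ingredient is to compute the left-hand ratios $p_{n-1}(0)/p_n(0)$ and $p_{n-1}(-t)/p_n(-t)$ in closed form. The key observation is that $x=0$ and $x=-t$ are exactly the zeros of $W(x;t)$, so the left side of the spectral derivative (\ref{dL_spectralD:a}) vanishes there. Setting $x=0$ yields $(\kappa_n-t)p_n(0)=a_n\theta_n p_{n-1}(0)$, hence $p_{n-1}(0)/p_n(0)=(\kappa_n-t)/(a_n\theta_n)$; setting $x=-t$ yields $(\kappa_n-(n+1)t)p_n(-t)=a_n(\theta_n+t)p_{n-1}(-t)$, hence $p_{n-1}(-t)/p_n(-t)=(\kappa_n-(n+1)t)/(a_n(\theta_n+t))$. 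Equating these two closed forms with the corresponding zero-sums from the previous paragraph and cancelling the common factor $a_n^{-1}$ delivers the first two identities simultaneously.

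The last two identities then follow with no further input. Subtracting the first identity from the second leaves $t\sum_j(\theta_n-x_{j,n})^{-1}$ on the right, and putting the left side over the common denominator $\theta_n(\theta_n+t)$ reproduces the third identity (\ref{ops_Zsum1}) after a one-line simplification. The fourth identity is simply the companion evaluation $a_n^2=-\sum_j W(x_{j,n})/\Theta_n(x_{j,n})$ recorded immediately after (\ref{ops_ParFrac}), rewritten with the same substitutions $W(x_{j,n})=x_{j,n}(x_{j,n}+t)$ and $\Theta_n(x_{j,n})=\theta_n-x_{j,n}$.

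The argument is essentially mechanical, and the one point deserving care is the legitimacy of evaluating the partial fraction expansion at $x=0$ and $x=-t$: this requires $p_n(0)\neq0$ and $p_n(-t)\neq0$. Both hold because the zeros $x_{j,n}$ lie strictly inside the support $\RR_{+}$ of the weight (\ref{deform_Lwgt}), whereas $x=0$ is its endpoint and $x=-t$ lies outside it, so neither singular point can coincide with a zero. I expect this verification, rather than any of the algebra, to be the only genuine obstacle.
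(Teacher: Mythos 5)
Your proposal is correct and is precisely the paper's own route: the paper derives these identities by specialising the partial fraction decomposition (\ref{ops_ParFrac}) to $x=0,-t$, using the ratio evaluations (\ref{dL_Pratio:a}) and (\ref{dL_Pratio:b}) (which the paper, like you, obtains immediately from the spectral derivative (\ref{dL_spectralD:a})), and the fourth identity from the companion formula for $a_n^2$ stated right after (\ref{ops_ParFrac}). Your added check that $p_n(0)\neq 0$ and $p_n(-t)\neq 0$, since the zeros lie in the interior of $\RR_+$, is a worthwhile detail the paper leaves implicit.
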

In addition we can characterise the motion of the zeros with respect to the deformation
variable.
\begin{proposition}
The zeros $ x_{j,n}(t) $ satisfy the differential equation with respect to $ t $
\begin{equation}
   t\dot{x}_{j,n} = \frac{\theta_n+t}{\theta_n-x_{j,n}}x_{j,n} .
\label{dL_ZeroDE}
\end{equation}
\end{proposition}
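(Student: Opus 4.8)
The plan is to exploit the fact that each zero is defined implicitly by the identity $p_n\bigl(x_{j,n}(t);t\bigr)=0$, holding for all $t$ in the relevant sector. Differentiating this identity totally with respect to $t$ and applying the chain rule gives
\begin{equation}
  p'_n\bigl(x_{j,n}\bigr)\,\dot{x}_{j,n} + \dot{p}_n\bigl(x_{j,n}\bigr) = 0 ,
\nonumber
\end{equation}
so that $\dot{x}_{j,n} = -\dot{p}_n(x_{j,n})/p'_n(x_{j,n})$. The entire task then reduces to evaluating the two derivatives $p'_n$ and $\dot{p}_n$ at the point $x=x_{j,n}$, where the reduction is dramatic because $p_n(x_{j,n})=0$ kills every term proportional to $p_n$.

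Concretely, I would take the spectral derivative (\ref{dL_spectralD:a}) and set $x=x_{j,n}$, whence the $p_n$ term drops and one is left with
\begin{equation}
  x_{j,n}(x_{j,n}+t)\,p'_n(x_{j,n}) = -a_n(\theta_n-x_{j,n})\,p_{n-1}(x_{j,n}) .
\nonumber
\end{equation}
Likewise, evaluating the deformation derivative (\ref{dL_deformD:a}) at $x=x_{j,n}$ and again discarding the $p_n$ term yields
\begin{equation}
  t(x_{j,n}+t)\,\dot{p}_n(x_{j,n}) = a_n(\theta_n+t)\,p_{n-1}(x_{j,n}) .
\nonumber
\end{equation}
Forming the ratio $-\dot{p}_n(x_{j,n})/p'_n(x_{j,n})$, the common factors $a_n$, $p_{n-1}(x_{j,n})$, and $(x_{j,n}+t)$ cancel outright, leaving precisely $t\dot{x}_{j,n}=(\theta_n+t)x_{j,n}/(\theta_n-x_{j,n})$, which is (\ref{dL_ZeroDE}).

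The only point needing care, and the closest thing to an obstacle, is the legitimacy of the cancellation: I must know that $p_{n-1}(x_{j,n})\neq 0$ and that $p'_n(x_{j,n})\neq 0$. Both are standard structural facts about orthogonal polynomials and are in fact implicit in the framework already developed. The zeros of $p_n$ are real and simple, so $p'_n(x_{j,n})\neq 0$; and consecutive orthogonal polynomials share no common zeros (as is clear, for instance, from the Casoratian relation (\ref{ops_casoratian}) together with the three-term recurrence (\ref{ops_threeT}), which forces $p_{n-1}(x_{j,n})\neq 0$ whenever $p_n(x_{j,n})=0$). Hence no spurious $0/0$ arises and the derivation is rigorous. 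I expect the verification of these non-vanishing conditions to be the only subtlety; the algebraic manipulation itself is immediate once the two derivative relations are specialised to the zero.
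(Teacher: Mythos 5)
Your proof is correct, but it runs along a genuinely different track from the paper's. The paper never differentiates the zero implicitly: it equates the logarithmic derivative expansion $\dot{p}_n(x)/p_n(x)=\dot{\gamma}_n/\gamma_n-\sum_{j}\dot{x}_{j,n}/(x-x_{j,n})$ with the deformation derivative (\ref{dL_deformD:a}) divided by $p_n$, and extracts (\ref{dL_ZeroDE}) by matching residues at the simple pole $x=x_{j,n}$, using the partial fraction decomposition (\ref{ops_ParFrac}) together with (\ref{dL_param}) and (\ref{dL_thetaEq}). Your argument, total differentiation of $p_n(x_{j,n}(t);t)=0$ plus evaluation of both members of the Lax pair (\ref{dL_spectralD:a}) and (\ref{dL_deformD:a}) at the zero, is equivalent in substance, because (\ref{ops_ParFrac}) is itself nothing but the spectral derivative read off at the zeros: the residue of $p_{n-1}/p_n$ at $x_{j,n}$ is $p_{n-1}(x_{j,n})/p'_n(x_{j,n})=-W(x_{j,n})/\bigl(a_n\Theta_n(x_{j,n})\bigr)$. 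What your packaging buys is directness: no partial fractions or residue bookkeeping, and the cancellations are manifest. The price is that the non-degeneracy facts must be made explicit, namely simplicity of the zeros (which also underwrites differentiability of $x_{j,n}(t)$ via the implicit function theorem) and $p_{n-1}(x_{j,n})\neq 0$; you do state these, and note that the paper's residue argument tacitly needs the same facts. One small repair to your justification of the latter: the Casoratian (\ref{ops_casoratian}) as written involves $\epsilon_n$, hence the Stieltjes function $f$, which is not defined on the support $I$ where the zeros lie. Either observe that the $f$-terms cancel, so that (\ref{ops_casoratian}) is really the polynomial identity $p_{n-1}p^{(1)}_{n-1}-p_np^{(1)}_{n-2}=1/a_n$ valid on all of $\CC$, or bypass it entirely: the Christoffel-Darboux formula (\ref{ops_C-D}) at equal arguments gives $a_n\left[p'_n(x)p_{n-1}(x)-p'_{n-1}(x)p_n(x)\right]=\sum_{j=0}^{n-1}p_j^2(x)>0$, which at $x=x_{j,n}$ delivers both $p'_n(x_{j,n})\neq 0$ and $p_{n-1}(x_{j,n})\neq 0$ in one stroke.
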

\begin{proof}
This follows by equating
\begin{equation}
   \frac{\dot{p}_n(x)}{p_n(x)} = \frac{\dot{\gamma}_n}{\gamma_n}
   -\sum^n_{j=1}\frac{\dot{x}_{j,n}}{x-x_{j,n}} ,
\end{equation}
and (\ref{dL_deformD:a}), and then employing (\ref{ops_ParFrac}) along with (\ref{dL_param}),
(\ref{dL_thetaEq}).
\end{proof}

\section{Difference and Differential Equations}
\setcounter{equation}{0}                    

\subsection{Difference Equations}
In the first subsection we derive an alternative difference system in terms of the
new variables $ \theta_n(t), \kappa_n(t) $ as specified by 
(\ref{dL_thetaDef}), (\ref{dL_kappaDef}).
\begin{proposition}
The auxiliary functions $ \theta_n(t), \kappa_n(t) $ satisfy a system of coupled 
first order recurrence relations
\begin{gather}
  \kappa_{n+1}+\kappa_{n} = \theta_n(\theta_n+t-2n-a-3) , \quad n \geq 0,
  \label{dL_diffEQN:a} \\
  \frac{\theta_{n}}{\theta_{n}+t}\frac{\theta_{n-1}}{\theta_{n-1}+t}
  = \frac{(\kappa_n-t)(\kappa_n+t)}{[\kappa_n-(n+a+1)t][\kappa_n-(n+1)t]} ,
  \quad n \geq 1 .
  \label{dL_diffEQN:b}
\end{gather}
The initial values $ \theta_0 $ and $ \kappa_0 $ are given by 
\begin{equation}
  \theta_0(t) 
  = -2t\frac{\int^{\infty}_{0}dx\; e^{-x}x(t+x)^a}
            {\int^{\infty}_{0}dx\; e^{-x}x^2(t+x)^a},
  \quad \kappa_0 = t .
\label{dL_diffEQN:c}
\end{equation}
\end{proposition}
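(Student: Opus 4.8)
The plan is to read both recurrences off the coefficient-function relations (\ref{ops_RR:b}) and (\ref{ops_RR:c}), after rewriting the explicit forms (\ref{dL_thetaEq}), (\ref{dL_omegaEq}) in terms of the new variables (\ref{dL_thetaDef}), (\ref{dL_kappaDef}) as $ \Theta_n(x)=\theta_n-x $ and $ \Omega_n(x)=-\tfrac{1}{2}x^2+\tfrac{1}{2}(2n+a+2-t)x+\kappa_n $; everything then reduces to substituting these polynomials into identities already established in the excerpt and matching powers of $ x $. For (\ref{dL_diffEQN:a}) I would use the recurrence (\ref{ops_RR:b}), $ \Omega_{n+1}+\Omega_n=(x-b_n)\Theta_n $. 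The coefficients of $ x^2 $ and $ x^1 $ are automatically satisfied once one invokes $ \theta_n=2n+a+3-t-b_n $, while the constant term yields $ \kappa_{n+1}+\kappa_n=-b_n\theta_n $; substituting $ b_n=(2n+a+3-t)-\theta_n $ converts the right-hand side into $ \theta_n(\theta_n+t-2n-a-3) $, which is precisely (\ref{dL_diffEQN:a}).

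The relation (\ref{dL_diffEQN:b}) I would obtain as the ratio of two evaluations of the factorised identity (\ref{ops_RR:c}), $ \Omega_n^2(x_r)-V^2(x_r)=a_n^2\Theta_n(x_r)\Theta_{n-1}(x_r) $, at the two finite singular points $ x_r=0,-t $ of $ W(x)=x(x+t) $. From (\ref{dL_param}) one reads off $ 2V(0)=2t $ and $ 2V(-t)=-at $. At $ x=0 $ the identity gives $ (\kappa_n-t)(\kappa_n+t)=a_n^2\theta_n\theta_{n-1} $. At $ x=-t $, using $ \Omega_n(-t)=\kappa_n-(n+\tfrac{a}{2}+1)t $, the left-hand side is a difference of two squares that factorises as $ [\kappa_n-(n+a+1)t][\kappa_n-(n+1)t]=a_n^2(\theta_n+t)(\theta_{n-1}+t) $. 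Dividing the first of these by the second cancels the common factor $ a_n^2 $ and delivers (\ref{dL_diffEQN:b}) exactly.

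Finally I would pin down the initial data. The value $ \kappa_0=t $ is immediate from (\ref{dL_kappaDef}) together with $ a_0=0 $ and $ \gamma_{0,1}=0 $ from (\ref{ops_coeffRn1}). For $ \theta_0=a+3-t-b_0 $ with $ b_0=\mu_1/\mu_0 $ from (\ref{TTcoeff_initial}), I would integrate $ \tfrac{d}{dx}[e^{-x}x^2(x+t)^{a+1}] $ over $ [0,\infty) $, where the boundary terms vanish, and split $ (x+t)^{a+1}=t(x+t)^a+x(x+t)^a $, obtaining the moment identity $ (a+3-t)\mu_0-\mu_1=-2t\int_0^\infty e^{-x}x(x+t)^a\,dx $; dividing through by $ \mu_0 $ produces the stated closed form for $ \theta_0(t) $.

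I expect the only genuinely delicate step to be the evaluation at the moving singularity $ x=-t $, where one must complete the difference of squares correctly so that the two factors organise into $ \kappa_n-(n+a+1)t $ and $ \kappa_n-(n+1)t $. The $ x=0 $ evaluation and the derivation of (\ref{dL_diffEQN:a}) are essentially bookkeeping against the already-established recurrences, and the initial-value identity is a single integration by parts.
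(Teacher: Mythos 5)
Your proof is correct and follows essentially the same route as the paper: the second recurrence is obtained exactly as in the paper's proof (the ratio of (\ref{ops_RR:c}) evaluated at the singular points $x=0$ and $x=-t$, which are precisely the identities (\ref{dL_RR4}) and (\ref{dL_RR5})), and your first recurrence rests on the same constant-term identity in (\ref{ops_RR:b}) that the paper reaches by summing (\ref{dL_diff:a}) — your direct coefficient matching in the $\theta_n,\kappa_n$ variables merely bypasses that sum/difference detour. As a bonus, your integration-by-parts derivation of $\theta_0$ and the check $\kappa_0=t$ from (\ref{dL_kappaDef}) and (\ref{ops_coeffRn1}) supply the initial-data claims that the paper's proof leaves implicit.
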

\begin{proof}
The first of the recurrence relations (\ref{dL_diff:a}) can be exactly summed and
the result is
\begin{equation}
  a^2_{n+1}+a^2_{n}=(2n+3)t+(2n+a+4-t)b_n+2\sum^{n-1}_{i=0}b_i-b^2_n .
\label{}
\end{equation}
Recalling the second relation of (\ref{ops_coeffRn}) the summation appearing here
can done by recasting the equation in terms of the new variables and yields
\begin{equation}
  \kappa_{n+1}+\kappa_{n} = -\theta_nb_n ,
\label{}
\end{equation}
which is (\ref{dL_diffEQN:a}).
The second member of the coupled set is most easily found from the general relation
(\ref{ops_RR:c}) evaluated at the finite singular points $ x=0,-t $ and employing 
the new variables. These two key identities are
\begin{gather}
  (\kappa_n+t)(\kappa_n-t) = a^2_n\theta_n\theta_{n-1} , 
  \label{dL_RR4} \\                   
  [\kappa_n-(n+a+1)t][\kappa_n-(n+1)t] = a^2_n(\theta_n+t)(\theta_{n-1}+t) .
  \label{dL_RR5}
\end{gather} 
The ratio of these two identities yields the relation (\ref{dL_diffEQN:b}). 
\end{proof}

There are other recurrence relations which will be used subsequently, and the
first is
\begin{equation}
   a^2_n(\theta_n+\theta_{n-1}+t) = -(2n+a+2)\kappa_n+[n^2+(n+1)(a+2)]t .
  \label{dL_RR1}
\end{equation}
This follows from the subtraction of (\ref{dL_RR4}) from (\ref{dL_RR5}).
The second relation
\begin{equation}
   a^2_{n+1}-a^2_n-b_n-t = 2\kappa_n+b_n\theta_n ,
  \label{dL_RR2}
\end{equation}
is derived by writing the definition of $ \kappa_{n+1}-\kappa_n $ in terms of
the old variables and then employing (\ref{dL_diffEQN:a}). 
The last relation
\begin{equation}
   a^2_{n+1}\theta_{n+1}-a^2_n\theta_{n-1} = b_n(2\kappa_n+b_n\theta_n) ,
  \label{dL_RR3}
\end{equation}
is a consequence of (\ref{dL_diff:b}) along with the use of (\ref{dL_RR2}).

As a consequence of relations (\ref{dL_RR4}) from (\ref{dL_RR5}) we have
\begin{align}
 \frac{\theta_n}{\theta_n+t}\left[n+\frac{\kappa_n-t}{\theta_n}\right]
 & = \frac{\theta_n}{t}\left\{ \frac{\kappa_n-t}{\theta_n}-\frac{\kappa_n-(n+1)t}{\theta_n+t}
                       \right\} ,
 \nonumber\\
 & = \frac{\theta_n}{t}\left\{  a^2_n\frac{\theta_{n-1}}{\kappa_n+t}
                                -a^2_n\frac{\theta_{n-1}+t}{\kappa_n-(n+a+1)t}
                       \right\} ,
 \nonumber\\
 & = -\frac{\kappa_n-t}{\kappa_n-(n+a+1)t}\left[n+a+2+\frac{\kappa_n+t}{\theta_{n-1}} \right] ,
 \label{dL_RR6}
\end{align}
and
\begin{equation}
  n+\frac{\kappa_n-t}{\theta_n} 
  + \left.\left\{ n+a+2+\frac{\kappa_n+t}{\theta_{n-1}}\right\}\right|_{n\mapsto n+1}
   = \theta_n+t .
\label{dL_RR7}
\end{equation}

\subsection{Reduction to Painlev\'e V}\label{PV_identity}
Here we will identify the fifth Painlev\'e system as the solution to our system
of equations characterising the deformed Laguerre orthogonal polynomial system.
This is most simply seen in terms of the new variables  $ \theta_n, \kappa_n $
rather than the basic orthogonal polynomial variables $ a_n, b_n $.
\begin{proposition}\label{dL_CODE}
The auxiliary quantities $ \theta_n(t), \kappa_n(t) $ satisfy the coupled
first order ordinary differential equations
\begin{equation}
   t\dot{\theta}_{n} = 2\kappa_n+\theta_n(2n+a+3-t-\theta_n) ,
\label{PV:a}
\end{equation}
and
\begin{multline}
  t\dot{\kappa}_n = \left( \frac{1}{\theta_n+t}+\frac{1}{\theta_n} \right)\kappa^2_n
            + \left( 2n+a+3-(2n+a+2)\frac{t}{\theta_n+t} \right)\kappa_n
  \\
  -[n^2+(n+1)(a+2)]t-\frac{t^2}{\theta_n}+(n+1)(n+a+1)\frac{t^2}{\theta_n+t} .
\label{PV:b}
\end{multline}
Equations (\ref{PV:a}) and (\ref{PV:b}) can be solved in terms of the fifth
Painlev\'e system 
\begin{equation}
  \theta_n = t\frac{q}{1-q}, \qquad \kappa_n = t(1+qp) ,
\label{PV:c}
\end{equation}
where $ q,p $ are the Hamiltonian variables of the Okamoto \PV \cite{Ok_1987b} system
with the parameters
\begin{equation}
  \alpha = \frac{a^2}{2}, \quad \beta = -2, \quad 
  \gamma = -(2n+a+3), \quad \delta = -\frac{1}{2} ,
\label{PV:d}
\end{equation}
or 
\begin{equation}
  v_2-v_1 = -2, \quad v_3-v_1 = n+a, \quad v_4-v_1 = n, \quad v_3-v_4 = a .
\label{}
\end{equation} 
The solutions satisfy the boundary value data at $ t=0 $
\begin{multline}                                                                
  \theta_n(t) \mathop{=}\limits_{t \to 0}
  - \frac{2}{a+2}t-\frac{2a(2n+a+3)}{(a+3)(a+2)^2(a+1)}t^2+{\rm O}(t^3) \\
  - \frac{2}{(a+2)(a+1)\Gamma^2(a+3)}\frac{\Gamma(a+n+3)}{\Gamma(n+1)}t^{a+3}
    \left(1+{\rm O}(t)\right)+{\rm O}(t^{2a+6}) ,
\label{dL_BV:a}
\end{multline}
and
\begin{multline}                                                                
  \kappa_n(t) \mathop{=}\limits_{t \to 0}
    \frac{2n+a+2}{a+2}t
  + \frac{4n(n+a+2)a}{(a+3)(a+2)^2(a+1)}t^2+{\rm O}(t^3) \\
  + \frac{2}{(a+2)(a+1)\Gamma^2(a+3)}\frac{\Gamma(a+n+3)}{\Gamma(n)}t^{a+3}
    \left(1+{\rm O}(t)\right)+{\rm O}(t^{2a+6}) ,
\label{dL_BV:b}
\end{multline}
provided $ a\notin \ZZ_{\geq 0} $ and $ |{\rm arg}(t)| < \pi $.
\end{proposition}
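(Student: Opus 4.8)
The plan is to establish the three claims in turn: first the coupled first-order system (\ref{PV:a})--(\ref{PV:b}), then the reduction to the Okamoto Hamiltonian form, and finally the boundary data at $ t=0 $.

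First I would obtain (\ref{PV:a}) by direct differentiation. Since $ \theta_n=2n+a+3-t-b_n $ by (\ref{dL_thetaDef}), one has $ t\dot{\theta}_n=-t-t\dot{b}_n $, and inserting the deformation derivative (\ref{dL_deformD:e}) gives $ t\dot{\theta}_n=a^2_{n+1}-a^2_n-b_n-t $. Because $ 2n+a+3-t-\theta_n=b_n $, the asserted right-hand side of (\ref{PV:a}) is exactly $ 2\kappa_n+\theta_nb_n $, so (\ref{PV:a}) is precisely the algebraic identity (\ref{dL_RR2}). This step is immediate.

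Next I would derive (\ref{PV:b}). Telescoping (\ref{ops_coeffRn}) gives $ \gamma_{n,1}/\gamma_n=-\sum^{n-1}_{i=0}b_i $, so (\ref{dL_kappaDef}) reads $ \kappa_n=(n+1)t-a^2_n+\sum^{n-1}_{i=0}b_i $. Differentiating and inserting (\ref{dL_deformD:d}) and (\ref{dL_deformD:e}) (the latter sum telescoping via $ a_0=0 $), then eliminating $ \sum b_i $ in favour of $ \kappa_n $, collapses everything to the compact relation $ t\dot{\kappa}_n=\kappa_n-a^2_n(\theta_n-\theta_{n-1}) $, where I have used $ 2+b_{n-1}-b_n=\theta_n-\theta_{n-1} $. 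It then remains to re-express $ a^2_n(\theta_n-\theta_{n-1}) $ through $ \theta_n,\kappa_n,t $ alone, which I would do from the quadratic identities (\ref{dL_RR4}) and (\ref{dL_RR1}): (\ref{dL_RR4}) gives $ a^2_n\theta_{n-1}=(\kappa^2_n-t^2)/\theta_n $, and (\ref{dL_RR1}) then determines $ a^2_n $, hence $ a^2_n\theta_n $, rationally in $ \theta_n,\kappa_n,t $. Substituting back and simplifying, using the factorisation $ n^2+(n+1)(a+2)-1=(n+1)(n+a+1) $, reproduces (\ref{PV:b}) term by term. This is a routine but lengthy rational-function computation.

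The reduction to the fifth Painlev\'e system is the step I expect to be the main obstacle. Given the first-order system (\ref{PV:a})--(\ref{PV:b}), I would substitute the birational change of variables (\ref{PV:c}), $ \theta_n=tq/(1-q) $ and $ \kappa_n=t(1+qp) $, and verify that (\ref{PV:a}) becomes Okamoto's first Hamilton equation $ t\dot{q}=\partial_pK $ and (\ref{PV:b}) the second, $ t\dot{p}=-\partial_qK $, where $ K $ is the Okamoto \PV Hamiltonian of \cite{Ok_1987b}. The difficulty is essentially one of conventions: one must match $ K $ and its parameters against the reference, reading off $ \alpha=a^2/2 $, $ \beta=-2 $, $ \gamma=-(2n+a+3) $, $ \delta=-1/2 $, equivalently the Okamoto data (\ref{PV:d}). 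A useful consistency check is to eliminate $ \kappa_n $ between (\ref{PV:a}) and (\ref{PV:b}) to get a single second-order equation for $ \theta_n $ and confirm it is of \PV type with these parameters. Finally, the boundary conditions (\ref{dL_BV:a})--(\ref{dL_BV:b}) follow by inserting the small-$ t $ expansions of the recurrence coefficients into (\ref{dL_thetaDef}), (\ref{dL_kappaDef}): substituting (\ref{bExpand}) into $ \theta_n=2n+a+3-t-b_n $ cancels the constant $ 2n+a+3 $ and leaves the leading $ -2t/(a+2) $, while $ \kappa_n $ is read off either from (\ref{aSQExpand}) together with the summed form of (\ref{bExpand}) or, more directly, from (\ref{dL_RR2}) using the expansions of $ a^2_{n+1},a^2_n,b_n $ and the already-established $ \theta_n $. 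The restriction $ a\notin\ZZ_{\geq 0} $, $ |\arg(t)|<\pi $ is inherited from the validity of those expansions, whose non-analytic $ t^{a+3} $ terms are exactly what singles out this particular \PV solution rather than the generic one.
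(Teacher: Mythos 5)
Your proposal is correct and follows essentially the same route as the paper: equation (\ref{PV:a}) via (\ref{dL_deformD:e}) and the identity (\ref{dL_RR2}), equation (\ref{PV:b}) by first establishing $ t\dot{\kappa}_n=\kappa_n-a^2_n(\theta_n-\theta_{n-1}) $ (your telescoped sum is exactly the paper's relation $ t\dot{\Gamma}_n=a^2_n+\Gamma_n $) and then eliminating $ a^2_n,\theta_{n-1} $ algebraically, followed by direct substitution of (\ref{PV:c}) into Okamoto's Hamiltonian system and reading the boundary data off (\ref{bExpand}), (\ref{aSQExpand}). The only (immaterial) difference is that you eliminate via (\ref{dL_RR4}) and (\ref{dL_RR1}) where the paper uses (\ref{dL_diffEQN:b}) and (\ref{dL_RR1}); since (\ref{dL_diffEQN:b}) is just the ratio of (\ref{dL_RR4}) and (\ref{dL_RR5}), the two eliminations are equivalent.
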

\begin{proof}
If we employ (\ref{dL_RR2}) in (\ref{dL_deformD:e}) and then substitute for $ b_n $
in terms of $ \theta_n $ then the result is (\ref{PV:a}). Let us define the shorthand
notation
\begin{equation}
  \Gamma_n := \frac{\gamma_{n,1}}{\gamma_n} .
\label{Gamma_defn}                                         
\end{equation}
Furthermore when the deformation derivative (\ref{dL_deformD:e}) is summed on the free 
index the result is 
\begin{equation}                                                   
  t\dot{\Gamma}_{n} = a^2_n+\Gamma_n .
  \label{dL_PV1}
\end{equation}
Now if compute the deformation derivative of $ \kappa_n $ and use (\ref{dL_deformD:d})
along with (\ref{dL_PV1}) we arrive at
\begin{equation}
  t\dot{\kappa}_{n} = \kappa_n-a^2_n(\theta_n-\theta_{n-1}) .
  \label{dL_PV2}
\end{equation}
Now the idea is to eliminate $ a^2_n $ and $ \theta_{n-1} $, which appear in 
(\ref{dL_PV2}), through use of the recurrence relations. Equation (\ref{dL_diffEQN:b})
is a linear equation for $ \theta_{n-1} $ in terms of the unshifted variables and the
solution can be substituted into (\ref{dL_RR1}) yielding a linear relation for 
$ a^2_n $ in terms of unshifted variables. Then both solutions can be substituted
into (\ref{dL_PV2}) and the result is (\ref{PV:b}).
One can easily verify that the transformation to the Hamiltonian variables $ q,p $
(\ref{PV:c}) with the parameters (\ref{PV:d}) yields the Hamilton equations of 
motion for the Hamiltonian in \cite{Ok_1987b}.
\end{proof}

\begin{remark}
A few remarks can now be made regarding the identification of the recurrences
(\ref{dL_diffEQN:a}) and (\ref{dL_diffEQN:b}). This is different in appearance from the 
discrete integrable equations that arose in the study of the Laguerre unitary 
ensemble \cite{FW_2002a} which were explicitly identified with the system 
in the Sakai scheme, with the rational surface $ D^{(1)}_5 \to E^{(1)}_6 $,
and has a continuous limit of Painlev\'e IV . In fact we find that the variables of
the latter system can be expressed in terms of our own
\begin{equation}
   x_n = \frac{\kappa_n+\theta_n(n+a+1-t-\theta_n)}{\theta_n+t}, \quad
   y_n = -\frac{t}{\theta_n} ,
\end{equation}
and it is clear that one cannot transform (\ref{dL_diffEQN:a}) and (\ref{dL_diffEQN:b})
into this system using such a transformation. Also the two systems arise as different
Schlesinger-type transformations - in our case as a sequence where 
$ \alpha_0 \mapsto \alpha_0+1, \alpha_2 \mapsto \alpha_2-1 $ whereas in the other case as
$ \alpha_0 \mapsto \alpha_0+1, \alpha_3 \mapsto \alpha_3-1 $.
\end{remark}

\begin{remark}
In \cite{FW_2002a} two fundamental quantities were studied - the $\tau$-function
$ \tau[n](t) $ and its logarithmic derivative the $\sigma$-function 
$ V_n(t;a,\mu) $. Their relation to the objects of the present work are
\begin{equation}
   \tau[n] = c(n,a)n!e^{-nt}t^{-n^2-n(a+4)}\Delta_n(t) ,
\label{}                                         
\end{equation}
where $ c(n,a) $ is an unspecified constant and
\begin{equation}
   V_n(t;a,2) = -nt-4n+t\frac{d}{dt}\log\Delta_n(t) .
\label{}                                         
\end{equation}
We also note that
\begin{equation}
   V_n(t;a,2) = \Gamma_n+n(n+a-2-t)
\label{}
\end{equation}
and consequently
\begin{align}
   \Gamma_n & = -n(n+a+2)+t\frac{d}{dt}\log\Delta_n(t)
   \label{GammaDelta} \\
   b_n & = 2n+a+3+t\frac{d}{dt}\log\frac{\Delta_n(t)}{\Delta_{n+1}(t)}
   \label{thetaDelta}
\end{align}
\end{remark}

\begin{remark}
The new variable $ \Gamma_n(t) $ possess an expansion as $ t \to 0 $ which can be 
directly found from (\ref{DeltaExpand}) and (\ref{bExpand})
\begin{multline}              
  n(n+a+2)+\Gamma_n(t) 
  = \frac{a}{a+2}nt
  - \frac{2n(n+a+2)a}{(a+3)(a+2)^2(a+1)}t^2+{\rm O}(t^3) \\
  - \frac{2}{(a+2)(a+1)\Gamma(a+3)\Gamma(a+4)}\frac{\Gamma(a+n+3)}{\Gamma(n)}t^{a+3}
    \left(1+{\rm O}(t)\right)+{\rm O}(t^{2a+6}) ,
\label{dL_BV:c}
\end{multline}
again provided $ a\notin \ZZ_{\geq 0} $.
\end{remark}

\begin{remark}
The use of Proposition \ref{dL_CODE} is in the computation of the orthogonal
polynomials in (\ref{ops_threeT}) corresponding to the weight (\ref{deform_Lwgt}).
For this we note from (\ref{dL_thetaDef}) that
\begin{equation}
  b_n = 2n+a+3-t-\theta_n ,
\end{equation}
while (\ref{dL_kappaDef}) together with (\ref{ops_coeffRn}), (\ref{ops_coeffRn1})
show 
\begin{align}
  a^2_n & = (n+1)t-\frac{\gamma_{n,1}}{\gamma_n}-\kappa_n \\
        & = (n+1)t+\sum^{n-1}_{j=0}b_j-\kappa_n ,
\end{align}
(the quantity $ a_n $ is positive, so the positive square root of this equation
is to be taken). Further, it follows from (\ref{ops_aSQDelta}) and (\ref{ops_coeffRn})
that
\begin{equation}
  \frac{1}{\gamma^2_0\gamma^2_1 \cdots \gamma^2_{n-1}} = \Delta_n ,
\end{equation}
where each $ \gamma_j $ is the coefficient of $ x^j $ in $ p_j(x) $ as specified
by (\ref{ops_poly}). All terms in the equation (\ref{ops_Uv2}) for $ D_n(x,x) $
are then known, and the task is then to compute the integral as required by 
(\ref{LUE_2-1dbn.b}). 
\end{remark}

An alternative system of coupled first order ordinary differential equations 
which will be used for scaling to the hard edge is given in the following
proposition.
\begin{proposition}\label{dL_theta-GammaDE}
The variables $ \theta_n(t), \Gamma_n(t) $ satisfy the coupled first
order ordinary differential equations
\begin{multline}
   t\dot{\theta}_n = \theta_n 
  - \Big\{ \theta^4_n-2(2n+a+2-t)\theta^3_n
  \\
    +[4\Gamma_n+(2n+a+2-t)^2-4(n+1)t]\theta^2_n 
  \\
    +4t[\Gamma_n+n(n+a+2-t)+a+2-t]\theta_n + 4t^2 \Big\}^{1/2} ,
\label{dL_thetaDE}
\end{multline}
and
\begin{multline}
   t\dot{\Gamma}_n = (n+1)t + \frac{1}{2}\theta_n(2n+a+2-t-\theta_n)
  + \Big\{ \theta^4_n-2(2n+a+2-t)\theta^3_n 
  \\
    +[4\Gamma_n+(2n+a+2-t)^2-4(n+1)t]\theta^2_n 
  \\
    +4t[\Gamma_n+n(n+a+2-t)+a+2-t]\theta_n + 4t^2 \Big\}^{1/2} .
\label{dL_GammaDE}
\end{multline}
\end{proposition}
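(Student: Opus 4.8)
The plan is to derive both evolution equations by eliminating the auxiliary quantities $\kappa_n$ and $a^2_n$ from relations already in hand, expressing everything through $\theta_n$, $\Gamma_n$ and $t$. The $t$-derivative of $\theta_n$ is already linear in $\kappa_n$ through (\ref{PV:a}), namely $t\dot{\theta}_n = 2\kappa_n + \theta_n(2n+a+3-t-\theta_n)$, and the $t$-derivative of $\Gamma_n$ is $t\dot{\Gamma}_n = a^2_n+\Gamma_n$ by (\ref{dL_PV1}). Moreover (\ref{dL_kappaDef}) together with (\ref{Gamma_defn}) supplies the linear link $\kappa_n = (n+1)t - a^2_n - \Gamma_n$, so it suffices to express just one of $\kappa_n$, $a^2_n$ as a closed function of $\theta_n$, $\Gamma_n$ and $t$. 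Producing that single expression is exactly the step that introduces the square root.

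To obtain it I would use the singular-point identities (\ref{dL_RR1}) and (\ref{dL_RR4}) [equivalently (\ref{dL_RR5})], which are the only relations coupling the index $n$ to $n-1$. Solving (\ref{dL_RR4}) gives $a^2_n\theta_{n-1} = (\kappa^2_n-t^2)/\theta_n$; substituting this into (\ref{dL_RR1}) and clearing the denominator removes $\theta_{n-1}$ entirely, and then using $a^2_n = (n+1)t - \kappa_n - \Gamma_n$ to eliminate $a^2_n$ collapses everything to a single quadratic in $\kappa_n$,
\[
  \kappa^2_n + \theta_n(2n+a+2-t-\theta_n)\,\kappa_n + C(\theta_n,\Gamma_n,t) = 0 ,
\]
whose coefficients are polynomials in $\theta_n$, $\Gamma_n$, $t$ and the indices.

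Solving this quadratic expresses $\kappa_n$, and hence $a^2_n$ via the link, as explicit functions of $\theta_n$ and $\Gamma_n$ carrying a single square root whose radicand is the discriminant $\theta^2_n(2n+a+2-t-\theta_n)^2 - 4C$. The crux of the proof is the purely algebraic identity that this discriminant equals the quartic displayed under the radical in (\ref{dL_thetaDE}) and (\ref{dL_GammaDE}); I would verify it by matching coefficients of $\theta^4_n,\ldots,\theta^0_n$, where combinations such as $(2n+a+2-t)^2-4(n+1)t$ organise the bookkeeping. Feeding the resulting expression for $\kappa_n$ back into (\ref{PV:a}) collapses it to $t\dot{\theta}_n = \theta_n - \{\cdots\}^{1/2}$, and feeding $a^2_n = (n+1)t - \Gamma_n - \kappa_n$ back into (\ref{dL_PV1}) yields the companion equation for $t\dot{\Gamma}_n$.

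The main obstacle is twofold. First, the discriminant-equals-quartic computation is the one genuinely lengthy step, and it must be carried out with the signs and factors of $2$ tracked exactly. Second, the quadratic has two roots, so the correct branch of the square root must be selected; I would fix it by demanding agreement with the established $t\to 0$ expansions (\ref{dL_BV:a}), (\ref{dL_BV:b}) and (\ref{dL_BV:c}), equivalently with the positivity of $a^2_n$ near $t=0$, where $a^2_n(0)=n(n+a+2)$. As a consistency check I would confirm that using (\ref{dL_RR5}) in place of (\ref{dL_RR4}) reproduces the same quadratic, reflecting that the two singular-point relations are not independent.
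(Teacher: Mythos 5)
Your proposal is correct and is essentially the paper's own proof: the paper likewise eliminates $\theta_{n-1}$ between the singular-point identities (it uses the pair (\ref{dL_RR4}), (\ref{dL_RR5}), which is equivalent to your pair (\ref{dL_RR4}), (\ref{dL_RR1}) since (\ref{dL_RR1}) is just the difference of (\ref{dL_RR5}) and (\ref{dL_RR4})), arrives at exactly your quadratic
$\kappa^2_n+\theta_n(2n+a+2-t-\theta_n)\kappa_n+[(n+1)t-\Gamma_n]\theta_n(\theta_n+t)-[n^2+(n+1)(a+2)]t\theta_n-t^2=0$,
fixes the root by the $t\to 0$ expansions, and substitutes back into the $t$-derivative relations (\ref{PV:a})/(\ref{dL_PV2}) and (\ref{dL_PV1}). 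One small imprecision: positivity of $a^2_n$ near $t=0$ does not by itself select the root, because the two roots of the quadratic first differ at order $t^2$ and both give $a^2_n=n(n+a+2)+{\rm O}(t)$; the selection genuinely requires the $t^2$-order data of (\ref{dL_BV:a})--(\ref{dL_BV:c}), which is the criterion both you and the paper actually invoke. More importantly, carrying your last substitution through gives $t\dot{\Gamma}_n=(n+1)t-\kappa_n=(n+1)t+\tfrac{1}{2}\theta_n(2n+a+2-t-\theta_n)+\tfrac{1}{2}\{\cdots\}^{1/2}$, i.e.\ the radical enters with coefficient $\tfrac{1}{2}$, not the coefficient $1$ displayed in (\ref{dL_GammaDE}); that $\tfrac{1}{2}$ is correct can be checked against (\ref{dL_BV:b})--(\ref{dL_BV:c}), or by noting that $t\dot{\theta}_n+2t\dot{\Gamma}_n=2(n+1)t+\theta_n(2n+a+3-t-\theta_n)$ must be free of the radical, so the discrepancy is a typographical slip in the stated proposition (it afflicts the paper's own derivation equally) rather than a gap in your argument.
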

\begin{proof}
We proceed in a series of steps. Firstly we use (\ref{dL_RR4}) to solve for
$ \theta_{n-1} $ in terms of $ \theta_n, \kappa_n $ and $ \Gamma_n $. In
the second step we substitute this solution for $ \theta_{n-1} $ into 
(\ref{dL_RR5}) and solve the following quadratic equation for 
$ \kappa_n $ in terms of $ \theta_n $ and $ \Gamma_n $,
\begin{equation}
  \kappa^2_n+\theta_n(2n+a+2-t-\theta_n)\kappa_n
  +[(n+1)t-\Gamma_n]\theta_n(\theta_n+t)-[n^2+(n+1)(a+2)]t\theta_n-t^2=0 .
\end{equation}
The choice of the sign of the square-root branch follows from the expansions 
(\ref{dL_BV:a}) and (\ref{dL_BV:c}) on one hand, and on the other hand noting that 
as $ t \to 0 $
\begin{multline}
  \Big\{ \theta^4_n-2(2n+a+2-t)\theta^3_n \\
  +[4\Gamma_n+(2n+a+2-t)^2-4(n+1)t]\theta^2_n \\
    +4t[\Gamma_n+n(n+a+2-t)+a+2-t]\theta_n + 4t^2 \Big\}^{1/2} \\
  = \frac{2a(2n+a+3)}{(a+3)(a+2)^2(a+1)}t^2+{\rm O}(t^3) .
\label{}
\end{multline}
In the final step we use these solutions for $ \theta_{n-1} $ and $ \kappa_n $ in
(\ref{dL_PV2}) and (\ref{dL_PV1}).
\end{proof}

In preparation for the hard edge scaling limit we need to make evaluations
of the polynomials at the finite singular points. Firstly considering $ x=0 $
we note that
\begin{equation}
  \frac{\pi_n(0)}{\pi_{n-1}(0)} = a_n\frac{p_n(0)}{p_{n-1}(0)}
  = \frac{\kappa_n+t}{\theta_{n-1}}
  = a^2_n\frac{\theta_n}{\kappa_n-t} ,
\label{dL_Pratio:a}
\end{equation}
as follows immediately from (\ref{dL_spectralD:a}). Furthermore we also have
\begin{equation}
   t\dot{(\log\pi_n(0))} = t\dot{(\log p_n(0))}+\frac{1}{2}(\theta_n+t)
  = n+\frac{\kappa_n-t}{\theta_n} ,
\label{dL_PDderiv}
\end{equation}
which follows from (\ref{dL_deformD:a}) and the above result. The corresponding
result for the polynomial ratio at $ x=-t $ is
\begin{equation}
  \frac{p_n(-t)}{p_{n-1}(-t)}
  = \frac{1}{a_n}\frac{\kappa_n-(n+a+1)t}{\theta_{n-1}+t}
  = a_n\frac{\theta_n+t}{\kappa_n-(n+1)t} .
\label{dL_Pratio:b}
\end{equation}

After \cite{Ap_1993} we define the orthogonal polynomial ratios
\begin{equation}
   Q_n(x;t) := \frac{p_n(x;t)}{p_n(0;t)} ,
\label{dL_Qdefn}
\end{equation}
because we are interested in the scaling properties of the orthogonal polynomial
system at the edge of their interval of orthogonality, $ x=0 $.
Then (\ref{ops_threeT}) and Propositions \ref{dL_spectral} and \ref{dL_deform} 
can be translated into the following three corollaries.
\begin{corollary}
The three-term recurrence for $ \{Q_n\}_{n=0,1,\ldots} $ system is
\begin{equation}
  b_n(Q_{n+1}+Q_{n-1}-2Q_{n})
  +(b_n+2\frac{\kappa_n-t}{\theta_n})(Q_{n+1}-Q_{n-1})+2xQ_{n} = 0 .
\label{dL_TTRR}
\end{equation}
\end{corollary}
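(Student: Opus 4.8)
The plan is to derive (\ref{dL_TTRR}) directly from the ordinary three-term recurrence (\ref{ops_threeT}) by passing to the normalised ratios $Q_n = p_n/p_n(0)$. Dividing (\ref{ops_threeT}) through by $p_n(0;t)$ and writing $p_{n\pm1}(x) = Q_{n\pm1}(x)\,p_{n\pm1}(0)$ together with $p_n(x) = Q_n(x)\,p_n(0)$, I obtain
\[
  a_{n+1}\frac{p_{n+1}(0)}{p_n(0)}\,Q_{n+1}
  = (x-b_n)\,Q_n - a_n\frac{p_{n-1}(0)}{p_n(0)}\,Q_{n-1} .
\]
Everything then reduces to evaluating the two ratio-coefficients $a_{n+1}p_{n+1}(0)/p_n(0)$ and $a_np_{n-1}(0)/p_n(0)$ in terms of $b_n$, $\theta_n$ and $\kappa_n$.

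The lower coefficient is immediate from (\ref{dL_Pratio:a}): reading off $a_n p_n(0)/p_{n-1}(0) = a_n^2\,\theta_n/(\kappa_n-t)$ and inverting gives $a_n p_{n-1}(0)/p_n(0) = (\kappa_n-t)/\theta_n$, which I abbreviate as $s := (\kappa_n-t)/\theta_n$. For the upper coefficient I would shift (\ref{dL_Pratio:a}) by $n\mapsto n+1$, which yields $a_{n+1}p_{n+1}(0)/p_n(0) = (\kappa_{n+1}+t)/\theta_n$; this still contains the shifted quantity $\kappa_{n+1}$, and eliminating it is the only step with any content. Using the definition (\ref{dL_thetaDef}) to write $2n+a+3 = \theta_n+t+b_n$ turns the factor $\theta_n+t-2n-a-3$ appearing in (\ref{dL_diffEQN:a}) into $-b_n$, so that $\kappa_{n+1}+\kappa_n = -\theta_n b_n$. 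Hence $\kappa_{n+1}+t = -\theta_n b_n-(\kappa_n-t)$, and the upper coefficient collapses to $-b_n-s$.

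Substituting both coefficients back gives $(-b_n-s)Q_{n+1} = (x-b_n)Q_n - sQ_{n-1}$, that is $(b_n+s)Q_{n+1}+(x-b_n)Q_n - sQ_{n-1}=0$. To match the stated form I would expand the left-hand side of (\ref{dL_TTRR}): collecting terms shows the coefficient of $Q_{n+1}$ is $2(b_n+s)$, of $Q_{n-1}$ is $-2s$, and of $Q_n$ is $2(x-b_n)$, so (\ref{dL_TTRR}) is exactly twice the relation just obtained and the two agree line for line. The computation is essentially bookkeeping; the one place demanding care is the $\kappa_{n+1}$ elimination via (\ref{dL_diffEQN:a}) and (\ref{dL_thetaDef}), but I anticipate no genuine obstacle there.
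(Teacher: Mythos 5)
Your proof is correct and follows essentially the same route as the paper, which obtains this corollary by translating the three-term recurrence (\ref{ops_threeT}) into the ratio variables $Q_n = p_n/p_n(0)$ via the evaluations (\ref{dL_Pratio:a}); your coefficient bookkeeping, the identification $a_np_{n-1}(0)/p_n(0) = (\kappa_n-t)/\theta_n$, and the final factor-of-two comparison all check out. As a minor simplification, the step you flag as the only one with content — eliminating $\kappa_{n+1}$ through (\ref{dL_diffEQN:a}) and (\ref{dL_thetaDef}) — can be bypassed entirely by evaluating (\ref{ops_threeT}) at $x=0$, which gives $a_{n+1}p_{n+1}(0)/p_n(0) = -b_n - a_np_{n-1}(0)/p_n(0)$ directly.
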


\begin{corollary}\label{QQ_SD}
The spectral derivatives of $ Q_{n}, Q_{n-1} $ are
\begin{align}
  x(x+t)Q'_n & = nxQ_{n}+(\kappa_n-t)\left[ Q_{n}-Q_{n-1}+\frac{x}{\theta_n}Q_{n-1} \right] ,
\label{dL_S:c} \\
  x(x+t)Q'_{n-1} & = x[x-(n+a+2-t)]Q_{n-1}
                      +(\kappa_n+t)\left[ Q_{n}-Q_{n-1}-\frac{x}{\theta_{n-1}}Q_{n} \right] .
\label{dL_S:d}
\end{align}
\end{corollary}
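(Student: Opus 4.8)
The plan is to obtain both identities by directly transcribing the spectral derivative equations of Proposition~\ref{dL_spectral} into the normalised variables $Q_n=p_n/p_n(0)$, using that $p_n(0)$ and $p_{n-1}(0)$ are constants in $x$, so that $x(x+t)Q_n'=x(x+t)p_n'/p_n(0)$ and likewise $x(x+t)Q_{n-1}'=x(x+t)p_{n-1}'/p_{n-1}(0)$.

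First I would divide \eqref{dL_spectralD:a} through by $p_n(0)$. The diagonal term $(nx+\kappa_n-t)p_n$ passes immediately to $(nx+\kappa_n-t)Q_n$; the off-diagonal term is $a_n(\theta_n-x)p_{n-1}/p_n(0)$, which I rewrite as $a_n(\theta_n-x)\,\frac{p_{n-1}(0)}{p_n(0)}\,Q_{n-1}$ after extracting the constant $1/p_{n-1}(0)$. The crucial input is the value of the ratio $p_{n-1}(0)/p_n(0)$, which I read off from \eqref{dL_Pratio:a}: since $a_n p_n(0)/p_{n-1}(0)=(\kappa_n+t)/\theta_{n-1}=a_n^2\theta_n/(\kappa_n-t)$, we get $p_{n-1}(0)/p_n(0)=(\kappa_n-t)/(a_n\theta_n)$, so that $a_n(\theta_n-x)p_{n-1}/p_n(0)=\theta_n^{-1}(\kappa_n-t)(\theta_n-x)Q_{n-1}$. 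Expanding $(\theta_n-x)/\theta_n=1-x/\theta_n$ and collecting terms reproduces \eqref{dL_S:c}.

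Next I would repeat the procedure for \eqref{dL_spectralD:b}, this time dividing by $p_{n-1}(0)$. The $p_{n-1}$ term again passes straight to $-(-x^2+(n+a+2-t)x+t+\kappa_n)Q_{n-1}$, which I regroup as $x[x-(n+a+2-t)]Q_{n-1}-(t+\kappa_n)Q_{n-1}$. For the cross term I need $a_n p_n/p_{n-1}(0)=a_n\,\frac{p_n(0)}{p_{n-1}(0)}\,Q_n$; the same relation \eqref{dL_Pratio:a} now gives $a_n p_n(0)/p_{n-1}(0)=a_n^2\theta_n/(\kappa_n-t)=(\kappa_n+t)/\theta_{n-1}$, whence $a_n(\theta_{n-1}-x)p_n/p_{n-1}(0)=\theta_{n-1}^{-1}(\kappa_n+t)(\theta_{n-1}-x)Q_n=(\kappa_n+t)(1-x/\theta_{n-1})Q_n$. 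Substituting and regrouping yields \eqref{dL_S:d}.

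Since every step is an algebraic substitution into the already established equations of Proposition~\ref{dL_spectral}, there is no genuine obstacle here; the only point requiring care is the correct use of the two equivalent forms of the polynomial ratio at the origin in \eqref{dL_Pratio:a}, and in particular selecting, in each case, the form that cancels the explicit factor of $a_n$ so that the resulting coefficients are expressed purely through $\theta_n$, $\theta_{n-1}$, $\kappa_n$ and $t$.
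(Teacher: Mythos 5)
Your proof is correct and is essentially the paper's own argument: the paper states this corollary as a direct translation of Proposition \ref{dL_spectral} into the ratio variables $Q_n=p_n/p_n(0)$, and the translation you carry out --- dividing \eqref{dL_spectralD:a} by $p_n(0)$, dividing \eqref{dL_spectralD:b} by $p_{n-1}(0)$, and eliminating $a_n$ together with the endpoint values via the two equivalent forms of \eqref{dL_Pratio:a} --- is exactly the intended computation. The details all check out, including the choice of which form of \eqref{dL_Pratio:a} to use in each equation so that the coefficients close in $\theta_n,\theta_{n-1},\kappa_n,t$.
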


\begin{corollary}\label{QQ_DD}
The deformation derivatives of $ Q_{n}, Q_{n-1} $ are
\begin{align}
  t(x+t)\dot{Q}_n & = -x(n+\frac{\kappa_n-t}{\theta_n})Q_{n}
                      +(\kappa_n-t)\frac{\theta_n+t}{\theta_n}\left[ Q_{n-1}-Q_{n} \right] ,
\label{dL_D:c} \\
  t(x+t)\dot{Q}_{n-1} & = x[n+a+2+\frac{\kappa_n+t}{\theta_{n-1}}]Q_{n-1}
                      +(\kappa_n+t)\frac{\theta_{n-1}+t}{\theta_{n-1}}\left[ Q_{n-1}-Q_{n} \right] .
\label{dL_D:d}
\end{align}
\end{corollary}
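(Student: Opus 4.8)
The plan is to treat these two identities as the ``$x=0$ normalised'' translation of the polynomial deformation derivatives in Proposition~\ref{dL_deform}, exactly as Corollary~\ref{QQ_SD} renders the spectral derivatives of Proposition~\ref{dL_spectral}. Writing $Q_n=p_n/p_n(0)$ as in (\ref{dL_Qdefn}) and differentiating the quotient with respect to $t$ gives
\begin{equation*}
  t(x+t)\dot{Q}_n = \frac{t(x+t)\dot{p}_n}{p_n(0)}
     - (x+t)\,Q_n\,\frac{t\dot{p}_n(0)}{p_n(0)} ,
\end{equation*}
so two ingredients are needed: the numerator deformation derivative, supplied by (\ref{dL_deformD:a}), and the logarithmic deformation derivative of the normalising factor $p_n(0)$, supplied by (\ref{dL_PDderiv}).

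For the first term I would substitute (\ref{dL_deformD:a}) and divide by $p_n(0)$, rewriting the $p_{n-1}$ that appears as $Q_{n-1}\,p_{n-1}(0)/p_n(0)$; the ratio $p_{n-1}(0)/p_n(0)=(\kappa_n-t)/(a_n\theta_n)$ read off from (\ref{dL_Pratio:a}) collapses the prefactor $a_n(\theta_n+t)$ into $(\kappa_n-t)(\theta_n+t)/\theta_n$, which is precisely the coefficient of $Q_{n-1}$ claimed in (\ref{dL_D:c}). For the second term I would use (\ref{dL_PDderiv}) in the form $t\dot{p}_n(0)/p_n(0)=n+(\kappa_n-t)/\theta_n-\tfrac12(\theta_n+t)$. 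The decisive simplification is that the two contributions $-\tfrac12(x+t)(\theta_n+t)$ --- one coming from the $p_n$ part of (\ref{dL_deformD:a}), the other from the $-\tfrac12(\theta_n+t)$ inside the logarithmic derivative --- cancel identically, after which routine collection of the remaining $Q_n$ coefficient yields $-x\big(n+(\kappa_n-t)/\theta_n\big)-(\kappa_n-t)(\theta_n+t)/\theta_n$, as asserted.

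The identity (\ref{dL_D:d}) follows the same route, starting from $Q_{n-1}=p_{n-1}/p_{n-1}(0)$ and using (\ref{dL_deformD:b}); the $p_n$ term becomes a $Q_n$ term through $p_n(0)/p_{n-1}(0)=(\kappa_n+t)/(a_n\theta_{n-1})$ from (\ref{dL_Pratio:a}), immediately producing the coefficient $-(\kappa_n+t)(\theta_{n-1}+t)/\theta_{n-1}$. The one point requiring care --- and the step I expect to be the main obstacle --- is the logarithmic deformation derivative of $p_{n-1}(0)$: naively shifting $n\mapsto n-1$ in (\ref{dL_PDderiv}) introduces $\kappa_{n-1}$, which does not appear in the target. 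The clean remedy is to evaluate (\ref{dL_deformD:b}) itself at $x=0$, expressing $t^2\dot{p}_{n-1}(0)$ as a combination of $p_n(0)$ and $p_{n-1}(0)$, and then to eliminate $p_n(0)/p_{n-1}(0)$ via (\ref{dL_Pratio:a}); this delivers $t\dot{p}_{n-1}(0)/p_{n-1}(0)$ entirely in the unshifted variables $\kappa_n,\theta_{n-1}$. Substituting into the quotient rule, the $\tfrac12(x+t)(\theta_{n-1}+t)$ pieces again cancel in pairs, and after using $(\theta_{n-1}+t)/\theta_{n-1}=1+t/\theta_{n-1}$ the surviving $\kappa_n/t$ terms cancel, leaving the coefficient $x\big[n+a+2+(\kappa_n+t)/\theta_{n-1}\big]+(\kappa_n+t)(\theta_{n-1}+t)/\theta_{n-1}$ of (\ref{dL_D:d}).
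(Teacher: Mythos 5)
Your proposal is correct and is essentially the paper's own (implicit) argument: the paper states this corollary as the "translation" of Proposition \ref{dL_deform} through the normalisation $Q_n=p_n/p_n(0)$, which is exactly your quotient-rule computation using (\ref{dL_deformD:a}), (\ref{dL_deformD:b}), the ratio evaluations (\ref{dL_Pratio:a}), and the logarithmic derivative (\ref{dL_PDderiv}). Your handling of the one genuinely delicate point --- obtaining $t\dot{p}_{n-1}(0)/p_{n-1}(0)$ in unshifted variables by evaluating (\ref{dL_deformD:b}) at $x=0$ rather than shifting $n\mapsto n-1$ in (\ref{dL_PDderiv}) --- is sound, and the cancellations you describe (the $\tfrac12(x+t)(\theta+t)$ pairs and the $\kappa_n/t$ terms) do occur exactly as claimed.
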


As we noted earlier the polynomial ratio $ Q_n(x;t) $ has a product representation
\begin{equation}
  Q_n(x;t) = \prod^{n}_{j=1}\left(1-\frac{x}{x_{j,n}}\right) ,
\end{equation}
where again $ x_{j,n} $ is the $j$-th zero of the polynomial.
We can use this fact to compute sums of the inverse powers of the zeros from the above
differential equations.
\begin{proposition}
The increment of the sum of the reciprocals of the zeros going from $ n-1 $ to 
$ n $ is given by
\begin{equation}
  3t\left\{ -\sum^{n}_{j=1}\frac{1}{x_{j,n}}+\sum^{n-1}_{j=1}\frac{1}{x_{j,n-1}} \right\}
  = n+\frac{\kappa_n-t}{\theta_n}+n+a+2+\frac{\kappa_n+t}{\theta_{n-1}}-t .
\label{dL_SumRecipZero}
\end{equation}
\begin{proof}
The required increment of the sum of the reciprocals of the zeros is the order $ x $ term
in the expansion of $ \psi_n(x;t) := Q_n(x;t)/Q_{n-1}(x;t) $ about $ x=0 $, and this can 
evaluated from the same
expansion of the differential equation for $ \psi_n $ with respect to $ x $. This
latter differential equation is easily found from (\ref{dL_S:c}) and (\ref{dL_S:d}) and is
\begin{equation}
  x(x+t)\psi'_n = \frac{x-\theta_n}{\theta_n}(\kappa_n-t)
 + \big( 2\kappa_n+x[-x+2n+a+2-t] \big)\psi_n
 + \frac{x-\theta_{n-1}}{\theta_{n-1}}(\kappa_n+t)\psi^2_n .
\end{equation}
\end{proof}
\end{proposition}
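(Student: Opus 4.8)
The plan is to read off the bracketed quantity as a Taylor coefficient. From the product representation $Q_n(x;t)=\prod_{j=1}^{n}(1-x/x_{j,n})$ recorded just above, each factor expands about $x=0$ as $Q_n(x;t)=1-\big(\sum_{j=1}^{n}1/x_{j,n}\big)x+{\rm O}(x^2)$, so dividing the two such expansions gives
\begin{equation}
  \psi_n(x;t):=\frac{Q_n(x;t)}{Q_{n-1}(x;t)}
  =1+\Big\{-\sum_{j=1}^{n}\frac{1}{x_{j,n}}+\sum_{j=1}^{n-1}\frac{1}{x_{j,n-1}}\Big\}x+{\rm O}(x^2) .
\end{equation}
Thus the entire left-hand side of the claim is $3t$ times the coefficient of $x$ in $\psi_n$ about $x=0$, and the whole task reduces to computing that single coefficient without explicit knowledge of the zeros.

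To that end I would first derive a closed first-order differential equation in $x$ for $\psi_n$ directly from the spectral derivatives of $Q_n$ and $Q_{n-1}$ in Corollary \ref{QQ_SD}, namely (\ref{dL_S:c}) and (\ref{dL_S:d}). Writing $\psi_n'=Q_n'/Q_{n-1}-\psi_nQ_{n-1}'/Q_{n-1}$ and multiplying through by $x(x+t)$, I would substitute the two right-hand sides and replace every occurrence of $Q_n/Q_{n-1}$ by $\psi_n$. Collecting terms by powers of $\psi_n$ should then yield a Riccati equation whose coefficients are explicit rational functions of $x$ built from the data $\theta_n,\theta_{n-1},\kappa_n$; I expect the $\psi_n^0,\psi_n^1,\psi_n^2$ coefficients to come out as $(\kappa_n-t)(x-\theta_n)/\theta_n$, $2\kappa_n+x(-x+2n+a+2-t)$ and $(\kappa_n+t)(x-\theta_{n-1})/\theta_{n-1}$ respectively.

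With the Riccati equation in hand, the finish is a coefficient match. Substituting $\psi_n=1+c_1x+{\rm O}(x^2)$, the constant term on each side should cancel automatically (a useful check: the left-hand side $x(x+t)\psi_n'$ has no constant term, and on the right $-(\kappa_n-t)+2\kappa_n-(\kappa_n+t)=0$). The coefficient of $x^1$ on the left is $tc_1$, while on the right the $c_1$-dependent pieces from the $\psi_n^1$ and $\psi_n^2$ terms combine as $2\kappa_nc_1-2(\kappa_n+t)c_1=-2tc_1$ and the $c_1$-free pieces assemble into $(\kappa_n-t)/\theta_n+(\kappa_n+t)/\theta_{n-1}+(2n+a+2-t)$. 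Equating the two gives $3tc_1$ equal to this last expression, which is exactly the stated identity once $2n+a+2-t$ is written as $n+(n+a+2)-t$. The only real obstacle is bookkeeping: one must track the two separate $c_1$ contributions carefully so that they collapse to $-2tc_1$, thereby producing the characteristic factor $3t$ on the left; once the Riccati form is correctly assembled the extraction is routine.
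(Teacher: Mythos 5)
Your proposal is correct and follows essentially the same route as the paper's proof: identify the bracketed increment as the order-$x$ Taylor coefficient of $\psi_n=Q_n/Q_{n-1}$ via the product representation, derive the Riccati equation from the spectral derivatives (\ref{dL_S:c}) and (\ref{dL_S:d}), and extract the coefficient of $x$. The only difference is that you carry out explicitly the coefficient bookkeeping (the cancellation of the constant term and the collapse of the $c_1$-dependent pieces to $-2tc_1$, giving the factor $3t$) that the paper leaves implicit.
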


At the other finite singular point, $ x=-t $, we have as a consequence of these corollaries
\begin{equation}
  \frac{Q_n(-t;t)}{Q_{n-1}(-t;t)}
  = \frac{\kappa_n-t}{\kappa_n-(n+1)t}\frac{\theta_n+t}{\theta_n} 
  = \frac{\kappa_n-(n+a+1)t}{\kappa_n+t}\frac{\theta_{n-1}}{\theta_{n-1}+t} ,
\label{dL_singQ:a}
\end{equation}
and
\begin{equation}
  \frac{d}{dt}Q_n(-t;t)
  = \frac{t-\kappa_n-n\theta_n}{\theta_n(\theta_n+t)}Q_n(-t;t) .
\label{dL_singQ:b}
\end{equation} 
Note that the derivative with respect to $ t $ in the latter equation is a total derivative.

To complete our preparations for the hard edge scaling we need to identify two 
polynomial variables
that will scale to independent variables in the scaling limit. The first is the 
orthogonal polynomial ratio $ Q_n $, and for the second a number of choices could be
made but a simple choice is 
\begin{equation}
  R_n := Q_n-Q_{n-1} .
\label{dL_Rdefn}
\end{equation}

\begin{corollary}
The spectral derivatives of $ Q_n, R_n $ are
\begin{align}
  x(x+t)Q'_n & = x\left( n+\frac{\kappa_n-t}{\theta_n} \right)Q_{n}
               +(\kappa_n-t)\frac{\theta_n-x}{\theta_n}R_{n} ,
\label{dL_S:a} \\
  x(x+t)R'_{n} & = 
  x\left[ n+\frac{\kappa_n-t}{\theta_n}+n+a+2+\frac{\kappa_n+t}{\theta_{n-1}}-x-t \right]Q_{n}
  \label{dL_S:b} \\ 
  & +\left[ -x\left( n+\frac{\kappa_n-t}{\theta_n} \right)+x(x+t)-(a+2)x-2t \right]R_{n} .
\nonumber
\end{align}
\end{corollary}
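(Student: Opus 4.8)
The plan is to obtain both identities purely algebraically from Corollary~\ref{QQ_SD}, using only the definition $R_n=Q_n-Q_{n-1}$ (equivalently $Q_{n-1}=Q_n-R_n$); no recurrence relation nor any differential relation beyond (\ref{dL_S:c}) and (\ref{dL_S:d}) is required.

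First I would establish (\ref{dL_S:a}) as a direct rewriting of (\ref{dL_S:c}). Substituting $Q_{n-1}=Q_n-R_n$ into the bracketed term of (\ref{dL_S:c}) gives
\begin{equation*}
  Q_n-Q_{n-1}+\frac{x}{\theta_n}Q_{n-1}
  = R_n+\frac{x}{\theta_n}(Q_n-R_n)
  = \frac{x}{\theta_n}Q_n+\frac{\theta_n-x}{\theta_n}R_n .
\end{equation*}
Collecting the $Q_n$ contributions then produces the coefficient $x\bigl(n+(\kappa_n-t)/\theta_n\bigr)$, while the remaining piece is exactly $(\kappa_n-t)(\theta_n-x)/\theta_n\,R_n$, which is (\ref{dL_S:a}).

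Next, for (\ref{dL_S:b}) I would differentiate $R_n=Q_n-Q_{n-1}$ with respect to $x$, so that $R'_n=Q'_n-Q'_{n-1}$ and hence $x(x+t)R'_n=x(x+t)Q'_n-x(x+t)Q'_{n-1}$. I would then insert the explicit right-hand sides (\ref{dL_S:c}) and (\ref{dL_S:d}), eliminate $Q_{n-1}$ in favour of $Q_n-R_n$ throughout, and read off the coefficients of $Q_n$ and of $R_n$. The index term $n+(\kappa_n-t)/\theta_n$ and the pole term $(\kappa_n+t)/\theta_{n-1}$ come from $Q'_n$ and $-Q'_{n-1}$ respectively, while the polynomial pieces $-x$, $n+a+2$ and $-t$ inside the $Q_n$ bracket, together with $x(x+t)$ and $-(a+2)x$ in the $R_n$ coefficient, all originate from the $Q_{n-1}$-coefficient $x^2-x(n+a+2-t)$ of (\ref{dL_S:d}) after the substitution $Q_{n-1}=Q_n-R_n$.

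The only delicate point is the bookkeeping of the two $x$-free pieces. Before the substitution, the combinations $(\kappa_n-t)-(\kappa_n+t)$ and $(\kappa_n+t)-(\kappa_n-t)$ produce a bare $-2t$ in the coefficient of $Q_n$ and a compensating $+2t$ in the coefficient of $Q_{n-1}$. After replacing $Q_{n-1}=Q_n-R_n$ these two constants in the $Q_n$ coefficient cancel, leaving it free of a bare term, while the surviving $-2t$ is carried into the $R_n$ coefficient, matching (\ref{dL_S:b}) precisely. Tracking these $\pm 2t$ contributions is the main (though entirely mechanical) obstacle; once it is handled, comparison of the coefficients of $Q_n$ and $R_n$ on the two sides completes the proof.
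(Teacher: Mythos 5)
Your proposal is correct and is exactly the paper's intended argument: the paper's proof of this corollary consists of the single line ``This follows from Corollary \ref{QQ_SD},'' and your derivation --- substituting $Q_{n-1}=Q_n-R_n$ into (\ref{dL_S:c}) and subtracting (\ref{dL_S:d}) from (\ref{dL_S:c}), then collecting coefficients of $Q_n$ and $R_n$ --- is precisely the algebra that one-line proof leaves implicit. Your bookkeeping of the cancelling $\mp 2t$ terms checks out, so nothing further is needed.
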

\begin{proof}
This follows from Corollary \ref{QQ_SD}.
\end{proof}

\begin{corollary}
The deformation derivatives of $ Q_n, R_n $ are
\begin{align}
  t(x+t)\dot{Q}_n & = -x\left( n+\frac{\kappa_n-t}{\theta_n} \right)Q_{n}
                      -(\kappa_n-t)\frac{\theta_n+t}{\theta_n}R_{n} ,
\label{dL_D:a} \\
  t(x+t)\dot{R}_{n} & = 
     -x\left[ n+\frac{\kappa_n-t}{\theta_n}+n+a+2+\frac{\kappa_n+t}{\theta_{n-1}} \right]Q_{n}
  \label{dL_D:b} \\
  & +\left[ x\left( n+a+2+\frac{\kappa_n+t}{\theta_{n-1}} \right)
             +(\kappa_n+t)\frac{\theta_{n-1}+t}{\theta_{n-1}}
             -(\kappa_n-t)\frac{\theta_{n}+t}{\theta_{n}} \right]R_{n} .
\nonumber 
\end{align}
\end{corollary}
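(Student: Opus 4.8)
The plan is to deduce both relations directly from Corollary~\ref{QQ_DD}, in exact parallel with the way the companion spectral-derivative statement (\ref{dL_S:a})--(\ref{dL_S:b}) was obtained from Corollary~\ref{QQ_SD}. The only ingredients are the two deformation derivatives (\ref{dL_D:c}) and (\ref{dL_D:d}) for $t(x+t)\dot{Q}_n$ and $t(x+t)\dot{Q}_{n-1}$, together with the defining substitution $R_n=Q_n-Q_{n-1}$ of (\ref{dL_Rdefn}), used in the two equivalent forms $Q_{n-1}=Q_n-R_n$ and $Q_{n-1}-Q_n=-R_n$. No differential or recurrence input beyond these corollaries is required.

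First I would establish (\ref{dL_D:a}). Starting from (\ref{dL_D:c}), the bracketed factor $[Q_{n-1}-Q_n]$ is simply $-R_n$, so the second term on the right collapses to $-(\kappa_n-t)\frac{\theta_n+t}{\theta_n}R_n$, while the first term is already expressed through $Q_n$. This reproduces (\ref{dL_D:a}) immediately, with no regrouping needed.

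Next I would obtain (\ref{dL_D:b}) by forming $t(x+t)\dot{R}_n = t(x+t)\dot{Q}_n - t(x+t)\dot{Q}_{n-1}$, that is, by subtracting (\ref{dL_D:d}) from (\ref{dL_D:c}). I would then eliminate $Q_{n-1}$ throughout, replacing every factor $Q_{n-1}-Q_n$ by $-R_n$ and the isolated $Q_{n-1}$ produced by the $-x[n+a+2+\frac{\kappa_n+t}{\theta_{n-1}}]Q_{n-1}$ contribution by $Q_n-R_n$, and finally collect the coefficients of $Q_n$ and of $R_n$ separately. The $Q_n$ coefficient merges the two $-x(\cdots)$ prefactors into $-x\bigl[n+\frac{\kappa_n-t}{\theta_n}+n+a+2+\frac{\kappa_n+t}{\theta_{n-1}}\bigr]$, matching the $Q_n$ term of (\ref{dL_D:b}). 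The $R_n$ coefficient gathers the three surviving contributions into $x\bigl(n+a+2+\frac{\kappa_n+t}{\theta_{n-1}}\bigr)+(\kappa_n+t)\frac{\theta_{n-1}+t}{\theta_{n-1}}-(\kappa_n-t)\frac{\theta_n+t}{\theta_n}$, which is precisely the $R_n$ term of (\ref{dL_D:b}).

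Since every step is a linear substitution followed by regrouping, there is no genuine analytical obstacle; the proof is essentially bookkeeping. The only point demanding care is sign tracking when passing between $Q_{n-1}-Q_n$ and $-R_n$, and in particular making sure that the extra $+x\bigl(n+a+2+\frac{\kappa_n+t}{\theta_{n-1}}\bigr)R_n$ term generated by rewriting $-x(\cdots)Q_{n-1}$ as $-x(\cdots)(Q_n-R_n)$ is correctly absorbed into the $R_n$ coefficient rather than the $Q_n$ coefficient.
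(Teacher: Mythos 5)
Your proposal is correct and is exactly the paper's route: the paper's proof consists of the single line ``This follows from Corollary \ref{QQ_DD}'', and your substitution $R_n = Q_n - Q_{n-1}$ into (\ref{dL_D:c}), followed by subtracting (\ref{dL_D:d}) from (\ref{dL_D:c}) and regrouping the $Q_n$ and $R_n$ coefficients, is precisely the bookkeeping that line leaves implicit. The sign handling you flag (rewriting $-x(\cdots)Q_{n-1}$ as $-x(\cdots)(Q_n - R_n)$ and absorbing the resulting $+x(\cdots)R_n$ into the $R_n$ coefficient) is carried out correctly and reproduces (\ref{dL_D:a}) and (\ref{dL_D:b}) exactly.
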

\begin{proof}
This follows from Corollary \ref{QQ_DD}.
\end{proof}

\subsection{Inequalities and Bounds}
A key step in proving our hard edge scaling limits will be bounds on the variables 
$ \theta_n, \kappa_n $ and some auxiliary quantities. 
The first step is the following result.

\begin{lemma}\label{Lemma_Ineq}
The variables $ \theta_n(t), \kappa_n(t) $ satisfy the inequalities
\begin{gather}
  \frac{\theta_n+t}{\kappa_n-(n+1)t} < \frac{\theta_n}{\kappa_n-t} < 0 ,
  \label{dL_Ineq:a} \\
  \frac{\theta_{n-1}}{\kappa_n+t} < \frac{\theta_{n-1}+t}{\kappa_n-(n+a+1)t} < 0 ,
  \label{dL_Ineq:b}
\end{gather}
for all positive, real and bounded $ t $ and $ n \geq 1 $.
\end{lemma}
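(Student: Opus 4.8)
The plan is to reduce all four inequalities to transparent statements about the \emph{monic} orthogonal polynomials $\pi_n$ evaluated at the two finite singular points $x=0$ and $x=-t$, where every sign can be read off directly from the product representation over the zeros. The starting point is that for $t>0$ and ${\rm Re}(a)>-1$ the weight (\ref{deform_Lwgt}) is strictly positive on $(0,\infty)$, so the zeros $\{x_{j,n}\}_{j=1}^{n}$ of $p_n$ are simple and lie in $(0,\infty)$, and the zeros of $p_n$ and $p_{n-1}$ interlace. Hence $\pi_n(0)=(-1)^n\prod_{j=1}^n x_{j,n}$ and $\pi_n(-t)=(-1)^n\prod_{j=1}^n(t+x_{j,n})$, both carrying the sign $(-1)^n$.

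First I would rewrite the four ratios appearing in (\ref{dL_Ineq:a}) and (\ref{dL_Ineq:b}) by means of the polynomial-ratio identities (\ref{dL_Pratio:a}) and (\ref{dL_Pratio:b}), using $\pi_n=p_n/\gamma_n$ and $a_n=\gamma_{n-1}/\gamma_n$. These give
\begin{gather*}
  a_n^2\frac{\theta_n}{\kappa_n-t}=\frac{\pi_n(0)}{\pi_{n-1}(0)},\qquad
  a_n^2\frac{\theta_n+t}{\kappa_n-(n+1)t}=\frac{\pi_n(-t)}{\pi_{n-1}(-t)},\\
  \frac{\theta_{n-1}}{\kappa_n+t}=\frac{\pi_{n-1}(0)}{\pi_n(0)},\qquad
  \frac{\theta_{n-1}+t}{\kappa_n-(n+a+1)t}=\frac{\pi_{n-1}(-t)}{\pi_n(-t)} .
\end{gather*}
Since $\pi_n(0)/\pi_{n-1}(0)=-\prod_j x_{j,n}/\prod_j x_{j,n-1}$ and $\pi_n(-t)/\pi_{n-1}(-t)=-\prod_j(t+x_{j,n})/\prod_j(t+x_{j,n-1})$ are both strictly negative (products of positive numbers), and $a_n^2>0$, all four of the ``$<0$'' assertions follow at once.

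Writing $g:=|\pi_n/\pi_{n-1}|$, so that $\pi_n(0)/\pi_{n-1}(0)=-g(0)$ and $\pi_n(-t)/\pi_{n-1}(-t)=-g(-t)$, the two \emph{ordering} inequalities then both collapse to the single claim $g(-t)>g(0)$: for (\ref{dL_Ineq:a}) one divides $-g(-t)<-g(0)$ by $a_n^2>0$, while for (\ref{dL_Ineq:b}) one passes to reciprocals, obtaining $-1/g(0)<-1/g(-t)$. Finally I would establish $g(-t)>g(0)$ in the clean equivalent form
\[
  \prod_{j=1}^n\Bigl(1+\frac{t}{x_{j,n}}\Bigr) > \prod_{j=1}^{n-1}\Bigl(1+\frac{t}{x_{j,n-1}}\Bigr),
\]
which is immediate from interlacing: for $j=1,\dots,n-1$ one has $x_{j,n}<x_{j,n-1}$, whence $1+t/x_{j,n}>1+t/x_{j,n-1}>1$, and the left-hand product carries the additional factor $1+t/x_{n,n}>1$.

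The only genuine subtlety is the sign bookkeeping — tracking the alternating factor $(-1)^n$ and the direction of each inequality after dividing by $a_n^2$ or taking reciprocals — together with the methodological point that one should work with the polynomial ratios $\pi_n(0)/\pi_{n-1}(0)$ and $\pi_n(-t)/\pi_{n-1}(-t)$ rather than trying to determine the signs of the individual quantities $\theta_n,\kappa_n$ (which is awkward to do globally in $t$). All analytic input, namely positivity and interlacing of the zeros on $(-\infty,x_{1,n})\supset[-t,0]$, is standard for a positive weight on $(0,\infty)$; there is no hard estimate to perform, and strictness is automatic because $t>0$ and every $x_{j,n}>0$.
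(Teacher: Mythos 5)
Your proof is correct, and it shares the paper's skeleton — both arguments use the identities (\ref{dL_Pratio:a}) and (\ref{dL_Pratio:b}) to convert all four ratios into ratios of the polynomials $p_n/p_{n-1}$ (equivalently $\pi_n/\pi_{n-1}$) evaluated at the two singular points $x=0$ and $x=-t$, and both get the negativity from the fixed sign $(-1)^n p_n(-y)>0$ on the negative axis. Where you genuinely diverge is in the mechanism for the ordering step. The paper proves that the ratio $p_n(-y)/p_{n-1}(-y)$ decreases as $y$ increases by using the Christoffel--Darboux formula at equal arguments to get $p_{n-1}p_n'-p_{n-1}'p_n>0$, hence $p_n'/p_n<p_{n-1}'/p_{n-1}$ on $-\RR_+$, and then integrating this logarithmic-derivative inequality from $0$ to $-t$; you instead invoke interlacing of the zeros, $x_{j,n}<x_{j,n-1}<x_{j+1,n}$, and reduce everything to the transparent product inequality $\prod_{j=1}^n(1+t/x_{j,n})>\prod_{j=1}^{n-1}(1+t/x_{j,n-1})$. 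Your route is more elementary and makes strictness self-evident (each factor comparison is strict because $t>0$ and $x_{j,n}>0$), at the cost of importing the interlacing theorem; the paper's route needs only the three-term recurrence via Christoffel--Darboux and in fact establishes monotonicity of the ratio along the whole negative axis, not just the comparison of the two endpoint values. One small caveat applying equally to both proofs: the sign arguments require $a$ real (with $a>-1$) so that the weight is positive, a hypothesis the lemma leaves implicit; your opening phrase ``${\rm Re}(a)>-1$'' should really read $a>-1$, since positivity of $(x+t)^a$ and the existence of real interlacing zeros have no meaning for complex $a$.
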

\begin{proof}
That the ratios given in (\ref{dL_Ineq:a}) and (\ref{dL_Ineq:b}) are negative is a 
consequence of the fact that the polynomial $ p_n(x) $ evaluated on the negative
real axis, i.e. exterior to the interval of orthogonality, has a fixed sign.
Specifically $ (-1)^np_n(-y) > 0 $ for real, positive $ y $. Using the ratio
relations (\ref{dL_Pratio:a}) and (\ref{dL_Pratio:b}) we have the upper bounds.
From the Christoffel-Darboux formula (\ref{ops_C-D}) at equal arguments we note
that
\begin{equation}
   p_{n-1}(x)p'_{n}(x)-p'_{n-1}(x)p_{n}(x) > 0 ,
\end{equation}
and from the above $ p_{n-1}(x)p_{n}(x) < 0 $ for $ x \in -\RR_+ $ we conclude
\begin{equation}
   \frac{p'_{n}(x)}{p_{n}(x)} < \frac{p'_{n-1}(x)}{p_{n-1}(x)} ,
\end{equation}
under the conditions on $ x $. Integrating this inequality from $ 0 $ to 
$ -y \in -\RR_+ $ we arrive at
\begin{equation}
   \frac{p_{n}(-y)}{p_{n-1}(-y)} < \frac{p_{n}(0)}{p_{n-1}(0)} < 0.
\end{equation}
Then identifying these ratios with (\ref{dL_Pratio:a}) and (\ref{dL_Pratio:b}) in the
case $ y=t $ leads to the relative inequalities.
\end{proof} 

The above set of inequalities must all apply simultaneously and we see in fact that
it implies restrictions on the variables $ \theta_n, \kappa_n $.
\begin{lemma}
For bounded $ t \in \RR_+ $ and $ n \geq 0 $ the variables $ \theta_n, \kappa_n $ satisfy 
inequalities which place them in one of three cases, as illustrated in Figure 1 -\\
\noindent
Case I:
\begin{gather}
  0 < \theta_n ,
  \label{dL_Ineq:c} \\
  \theta_n(\theta_n+t-2n-a-3)+t < \kappa_n < t-n\theta_n ,
  \label{dL_Ineq:d}
\end{gather}
\noindent
Case II:
\begin{gather}
  -t \leq \theta_n \leq 0 ,
  \label{dL_Ineq:e} \\
  t-n\theta_n \leq \kappa_n \leq t+\min\{nt,\theta_n(\theta_n+t-2n-a-3)\} \leq (n+1)t ,
  \label{dL_Ineq:f}
\end{gather}
\noindent
Case III:
\begin{gather}
  \theta_n < -t ,
  \label{dL_Ineq:g} \\
  (n+1)t < \kappa_n < t-n\theta_n .
  \label{dL_Ineq:h}
\end{gather}
\end{lemma}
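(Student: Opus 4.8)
The plan is to observe first that the three cases are distinguished purely by the sign of $\theta_n$: Case~I is $\theta_n>0$, Case~II is $-t\le\theta_n\le0$ and Case~III is $\theta_n<-t$, and these exhaust $\RR$. Hence nothing need be proved about which value $\theta_n$ takes; the entire content is the pair of bounds on $\kappa_n$ valid in each regime. I would derive all of these from three elementary sign statements extracted from Lemma~\ref{Lemma_Ineq}, together with the summed recurrence (\ref{dL_diffEQN:a}). Throughout I abbreviate $P:=\kappa_n-t+n\theta_n$ and $S:=\kappa_n-(n+1)t$, so that the two upper bounds $\kappa_n<t-n\theta_n$ and $\kappa_n<(n+1)t$ are exactly the statements $P<0$ and $S<0$.

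First I would clear denominators in (\ref{dL_Ineq:a}). The right-hand inequality $\theta_n/(\kappa_n-t)<0$ says that $\theta_n$ and $\kappa_n-t$ carry opposite signs; in particular $\kappa_n<t$ in Case~I and $\kappa_n>t$ in Cases~II,~III. Subtracting the two sides of the left-hand inequality and using $t>0$ produces, after a short computation,
\[
  \bigl(\kappa_n-t+n\theta_n\bigr)\bigl(\kappa_n-(n+1)t\bigr)\bigl(\kappa_n-t\bigr)<0,
\]
that is $P\,S\,(\kappa_n-t)<0$. Since $S=(\kappa_n-t)-nt\le\kappa_n-t$, once the sign of $\kappa_n-t$ is fixed this product inequality will pin down the signs of $P$ and $S$ case by case.

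The missing ingredient is a bound involving $\kappa_{n+1}$, and for this I would apply the second inequality (\ref{dL_Ineq:b}) not at index $n$ but at index $n+1$. Its left-hand member then reads $\theta_n/(\kappa_{n+1}+t)<0$, so $\theta_n$ and $\kappa_{n+1}+t$ have opposite signs. Feeding this into the summed recurrence (\ref{dL_diffEQN:a}), written as $\kappa_{n+1}+t=\theta_n(\theta_n+t-2n-a-3)+t-\kappa_n$, shows that $\kappa_n-\bigl[\theta_n(\theta_n+t-2n-a-3)+t\bigr]$ has the \emph{same} sign as $\theta_n$. This is precisely the lower bound of Case~I (when $\theta_n>0$) and the second member $\theta_n(\theta_n+t-2n-a-3)$ of the minimum in Case~II (when $\theta_n<0$).

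It then remains only to assemble the pieces in each regime. In Case~I, $\theta_n>0$ gives $\kappa_n-t<0$ and $S<0$, so $S(\kappa_n-t)>0$ and the displayed inequality forces $P<0$, the upper bound; the lower bound is supplied by the previous paragraph. In Case~III, $\theta_n<-t$ gives $\kappa_n-t>0$, hence $PS<0$; because $\theta_n<-t$ also yields $t-n\theta_n>(n+1)t$, the alternative $P>0,\,S<0$ is self-contradictory, leaving $P<0$ and $S>0$, i.e. $(n+1)t<\kappa_n<t-n\theta_n$. In Case~II the same product $PS<0$ holds, but now $\theta_n\ge-t$ reverses the comparison to $t-n\theta_n\le(n+1)t$, which rules out $P<0,\,S>0$ and leaves $P>0,\,S<0$; combined with the minimum-bound above this gives $t-n\theta_n\le\kappa_n\le t+\min\{nt,\theta_n(\theta_n+t-2n-a-3)\}\le(n+1)t$. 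I expect the main obstacle to be bookkeeping rather than ideas: recognising that the $\kappa_{n+1}$-bound is exactly (\ref{dL_Ineq:b}) shifted by one index, and handling the degenerate boundaries $\theta_n=0,-t$ (where the inequalities become equalities, explaining the use of $\le$) together with the base index $n=0$, which falls outside the range of Lemma~\ref{Lemma_Ineq} and must instead be checked directly from the explicit initial data $\kappa_0=t$, $\theta_0<0$ in (\ref{dL_diffEQN:c}).
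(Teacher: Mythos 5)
Your proposal is correct and follows essentially the same route as the paper's proof: both extract the sign correlations by clearing denominators in (\ref{dL_Ineq:a}), and both obtain the remaining bound $\kappa_n \gtrless \theta_n(\theta_n+t-2n-a-3)+t$ by applying (\ref{dL_Ineq:b}) with $n\mapsto n+1$ and using the recurrence (\ref{dL_diffEQN:a}) to eliminate $\kappa_{n+1}$, then assemble the three cases by sign-chasing. The only differences are cosmetic (you package the paper's second and third implications into the single product inequality $PS(\kappa_n-t)<0$ and resolve the signs by comparison), plus your explicit attention to the degenerate boundaries and the base index $n=0$, which the paper's proof passes over silently.
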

\begin{proof}
From the three inequalities implied by (\ref{dL_Ineq:a}) we see that 
$ \theta_n \gtrless 0 $ according as $ \kappa_n \lessgtr t $, 
$ \theta_n+t \gtrless 0 $ according as $ \kappa_n \lessgtr (n+1)t $, and
$ \theta_n(\theta_n+t) \gtrless 0 $ according as $ \kappa_n \lessgtr t-n\theta_n $.
In (\ref{dL_Ineq:b}) we make the replacement $ n \mapsto n+1 $ and employ
(\ref{dL_diffEQN:a}) to eliminate $ \kappa_{n+1} $. This inequality now reads
\begin{equation}
   -t-\frac{\kappa_n-(n+1)t}{\theta_n+t} < -\frac{\kappa_n-t}{\theta_n}
   < b_n=2n+a+3-t-\theta_n .
\end{equation}
Consequently these three inequalities imply 
$ \theta_n \gtrless 0 $ according as $ \kappa_n-t \gtrless \theta_n(\theta_n+t-2n-a-3) $,
$ \theta_n+t \gtrless 0 $ according as $ \kappa_n-(n+1)t \gtrless (\theta_n+t)(\theta_n-2n-a-3) $,
and
$ \theta_n(\theta_n+t) \gtrless 0 $ according as $ \kappa_n \lessgtr \theta_n(\theta_n+t-n)+t $.
Combining these sets of inequalities leads to the three cases above. 
\end{proof}

\begin{figure}\label{Fig1}
\setlength{\unitlength}{0.24pt}
\linethickness{0.2mm}
\begin{picture}(1500,900)(0,0)
\put(1118,471){\makebox(0,0){$\theta_n$}}
\put(530,875){\makebox(0,0){$\kappa_n$}}
\put(636,376){\makebox(0,0){\footnotesize I}}
\put(460,581){\makebox(0,0){\footnotesize II}}
\put(238,657){\makebox(0,0){\footnotesize III}}
\put(3,603){\makebox(0,0){$\scriptstyle (n+1)t$}}
\put(46,517){\makebox(0,0){$\scriptstyle t$}}
\put(390,24){\makebox(0,0){$\scriptstyle -t$}}
\put(1060,150){\makebox(0,0){$\scriptstyle t-n\theta_n$}}
\put(370,120){\makebox(0,0){$\scriptstyle (\theta_n+t)(\theta_n-2n-a-3)+(n+1)t$}}
\put(629,813){\makebox(0,0){$\scriptstyle \theta_n(\theta_n+t-n)+t$}}
\put(724,496){\makebox(0,0){$\scriptstyle \theta_n(\theta_n+t-2n-a-3)+t$}}
\put(510,40){\vector(0,1){820}}
\put(398,40){\line(0,1){820}}
\put(60,474){\vector(1,0){1024}}
\put(60,517){\line(1,0){1024}}
\put(60,603){\line(1,0){1024}}
\curve(62,861, 1070,87)
\curve(274.8,863.15, 521.0,516.28333333333, 767.6,863.15)
\curve(398.0,839.5, 706.0,297.52083333333, 1014.0,839.5)
\curve(330.8,865.3, 706.0,61.020833333333, 1081.0,865.3)
\end{picture}
\caption{A pictorial form of the inequalities taking the example of $ a=1/2 $,
$ n=2 $ and $ t=5/3 $, which illustrates the generic situation for $ n+a+3>t $.}
\end{figure}
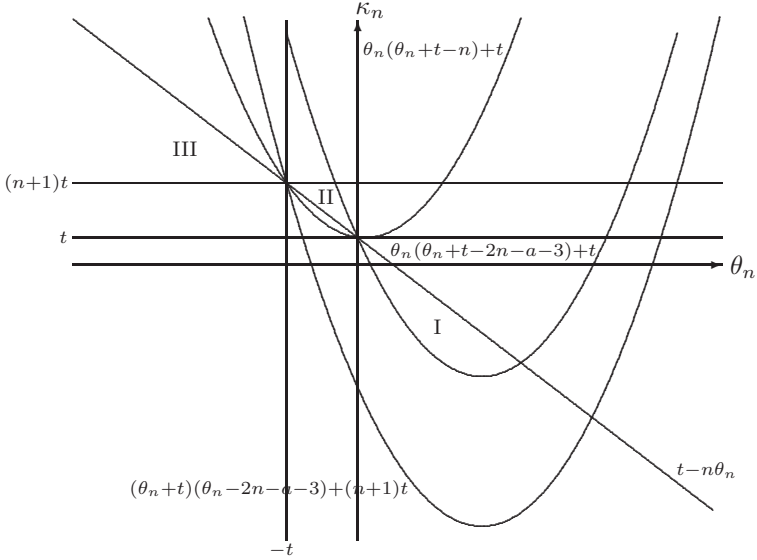

We see that Case II applies in our situation.
\begin{lemma}\label{dL_bound}
For all $ n $ and $ t \in \RR_+ $ we have
$ -t \leq \theta_n \leq 0 $ and $ t-n\theta_n \leq \kappa_n \leq (n+1)t $.
\end{lemma}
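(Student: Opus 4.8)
The plan is to prove the sharper statement $-t\le\theta_n\le0$ by a direct integral computation, and then to let the bounds on $\kappa_n$ fall out of the trichotomy of the preceding lemma; this route avoids any appeal to continuity of $\theta_n(t)$ in $t$. Throughout I would write $w(x;t)=x^2(x+t)^ae^{-x}$, let $\pi_n(\cdot\,;t)$ denote the monic orthogonal polynomials (\ref{ops_monic}), and set $N_n:=\int_0^\infty w\,\pi_n^2\,dx$, which by (\ref{ops_gammaDelta}) together with the normalisation equals $\gamma_n^{-2}=\Delta_{n+1}/\Delta_n$.

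The engine of the proof consists of two identities produced by integrating a derivative against $\pi_n^2$ and discarding the terms that orthogonality annihilates. Differentiating $N_n$ in $t$, the contribution of $\dot\pi_n$ vanishes because $\deg\dot\pi_n\le n-1$, leaving $\dot N_n=a\int_0^\infty x^2(x+t)^{a-1}e^{-x}\pi_n^2\,dx$. Likewise, integrating the exact derivative $\bigl(x^2(x+t)^ae^{-x}\pi_n^2\bigr)'$ over $(0,\infty)$ kills the boundary terms and, since $\deg\pi_n'\le n-1$, the term in $\pi_n'$ as well, yielding $a\int_0^\infty x^2(x+t)^{a-1}e^{-x}\pi_n^2\,dx=\int_0^\infty x^2(x+t)^ae^{-x}\pi_n^2\,dx-2\int_0^\infty x(x+t)^ae^{-x}\pi_n^2\,dx$.

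I would then combine these with the relation $\theta_n+t=t\,\dot N_n/N_n$, which follows from (\ref{dL_thetaDef}), (\ref{thetaDelta}) and (\ref{ops_gammaDelta}), to obtain the closed forms
\[
  \theta_n=-2t\,\frac{\int_0^\infty x(x+t)^ae^{-x}\pi_n^2\,dx}{\int_0^\infty x^2(x+t)^ae^{-x}\pi_n^2\,dx},
  \qquad
  \theta_n+t=ta\,\frac{\int_0^\infty x^2(x+t)^{a-1}e^{-x}\pi_n^2\,dx}{\int_0^\infty x^2(x+t)^ae^{-x}\pi_n^2\,dx}.
\]
The first generalises the $n=0$ evaluation (\ref{dL_diffEQN:c}) and, being $-2t$ times a ratio of strictly positive integrals, gives $\theta_n<0$ for all $t\in\RR_+$; its leading small-$t$ form reproduces $-\tfrac{2}{a+2}t$ in (\ref{dL_BV:a}) as a consistency check. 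The second gives $\theta_n+t\ge0$, hence $\theta_n\ge-t$, precisely when $a\ge0$. Thus $-t\le\theta_n\le0$ holds without any continuity argument.

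With $\theta_n$ pinned to the Case~II range $[-t,0]$, the bounds $t-n\theta_n\le\kappa_n\le(n+1)t$ follow from the preceding lemma: the strict inequalities (\ref{dL_Ineq:a}) of Lemma~\ref{Lemma_Ineq} force $\kappa_n-t$ to have the sign opposite to $\theta_n$ and $\kappa_n-(n+1)t$ the sign opposite to $\theta_n+t$, which together with $\theta_n(\theta_n+t)\le0$ deliver $t-n\theta_n\le\kappa_n\le(n+1)t$. I expect the lower bound $\theta_n\ge-t$ to be the only genuine obstacle: the two orthogonality identities are routine once the cancellations are spotted, but the conclusion rests squarely on $a\ge0$, since for $a<0$ the identical computation yields $\theta_n<-t$, i.e.\ Case~III rather than Case~II. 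I would accordingly treat $a\ge0$ as the operative hypothesis of this lemma.
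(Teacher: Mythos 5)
Your proof is correct and reaches the paper's conclusion by a genuinely different route. The paper's own proof is shorter but uses heavier machinery: it evaluates the coefficient function at the two finite singularities via the bilinear residue identities (\ref{ops_bL:c}), obtaining $\theta_n = 2t\,p_n(0)\epsilon_n(0)$ and $\theta_n+t = -at\,p_n(-t)\epsilon_n(-t)$, reads off the signs $(-1)^np_n\ge 0$, $(-1)^{n+1}\epsilon_n\ge 0$ to the left of the support from the integral representations (\ref{ops_Pint}) and (\ref{ops_Eint}), and then invokes Case II exactly as you do for the $\kappa_n$ bounds. Your two integral-ratio formulas are these same quantities in disguise: writing $\epsilon_n(0)=\int_I w(s)p_n(s)/(0-s)\,ds$ and using orthogonality gives $p_n(0)\epsilon_n(0)=-\gamma_n^2\int_0^\infty x(x+t)^ae^{-x}\pi_n^2\,dx$, and similarly at $x=-t$; but your derivation through $\dot N_n$ and the Pearson-type integration by parts never touches the associated functions or the bilinear identities, so it is more elementary and self-contained. (If you prefer not to rely on (\ref{thetaDelta}), which the paper quotes from earlier work, the relation $\theta_n+t=t\dot N_n/N_n$ also follows internally from (\ref{DD_3}) together with (\ref{ops_bL:c}).) Beyond elementarity, your route buys transparency about the parameter, and your closing caveat is a genuine catch rather than a defect of your method: the paper's formula $\theta_n+t=-at\,p_n(-t)\epsilon_n(-t)$ has exactly the same sign obstruction, so its proof also silently assumes $a\ge 0$, and the small-$t$ expansion (\ref{dL_BV:a}), $\theta_n\sim-\tfrac{2}{a+2}t$, confirms that $\theta_n<-t$ (Case III, not Case II) whenever $-1<a<0$. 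Thus the lemma as printed, ``for all $n$ and $t\in\RR_+$,'' needs the unstated hypothesis $a\ge 0$ — a point that matters later, since the paper applies consequences of this lemma at $a=-1/2$.
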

\begin{proof}
From the residue formula (\ref{ops_bL:c}) we recall that
$ \Theta_n(0)=\theta_n=2tp_n(0)\epsilon_n(0) $ and
$ \Theta_n(-t)=\theta_n+t=-atp_n(-t)\epsilon_n(-t) $. As we noted in the proof of 
Lemma (\ref{Lemma_Ineq}) it is immediate from the integral representations of the
polynomials and their associated functions, (\ref{ops_Pint}) and (\ref{ops_Eint}),
that $ (-1)^np_n(-t)\geq 0 $ and $ (-1)^{n+1}\epsilon_n(-t)\geq 0 $ for all real $ t \geq 0 $.
This places $ \theta_n $ in the range applying to Case II.
\end{proof}
 
We can also draw some conclusions concerning the zeros of the orthogonal polynomials
which will be important subsequently.
\begin{corollary}
Each zero $ x_{j,n}(t) $ is a monotonically decreasing function of $ t $ and 
interpolates between the Laguerre zero with $ t=0 $ and exponent $ a+2 $ and 
the Laguerre zero with $t=\infty $ and exponent $ 2 $,
\begin{equation}
   x_{j,n}(0) > x_{j,n}(t) > x_{j,n}(\infty) > 0 ,
\end{equation}
for all $ 1 \leq j \leq n $ and bounded $ t>0 $.
\end{corollary}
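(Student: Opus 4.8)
The plan is to read off the sign of $\dot{x}_{j,n}$ directly from the zero evolution equation (\ref{dL_ZeroDE}), which asserts
\begin{equation*}
  t\dot{x}_{j,n} = \frac{\theta_n+t}{\theta_n-x_{j,n}}\,x_{j,n} ,
\end{equation*}
and to combine this with the bounds of Lemma \ref{dL_bound}. Since the zeros of an orthogonal polynomial lie in the open support of its weight, one has $x_{j,n}(t)>0$ for every $t>0$, while Lemma \ref{dL_bound} supplies $\theta_n\le 0$; hence the denominator $\theta_n-x_{j,n}$ is strictly negative. So the whole question of monotonicity reduces to pinning down the sign of the factor $\theta_n+t$.

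Next I would establish the strict positivity $\theta_n+t>0$. The weak inequality $\theta_n+t\ge 0$ is already contained in Lemma \ref{dL_bound}, and strictness follows from the residue evaluation (\ref{ops_bL:c}) at the finite singular point $x=-t$, namely $\theta_n+t=\Theta_n(-t)=-at\,p_n(-t)\epsilon_n(-t)$: the integral representations (\ref{ops_Pint}) and (\ref{ops_Eint}) show that $p_n(-t)$ and $\epsilon_n(-t)$ are nonvanishing for $t>0$ (the point $-t$ lies exterior to the support, so no zero can occur there), whence $\theta_n+t\neq 0$ and therefore $\theta_n+t>0$ whenever $a\neq 0$. Consequently the right-hand side above is a positive quantity divided by a negative one, giving $t\dot{x}_{j,n}<0$ and, since $t>0$, $\dot{x}_{j,n}<0$. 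Thus each $x_{j,n}(t)$ is strictly decreasing on $\RR_+$, and being bounded below by $0$ it possesses a finite limit as $t\to\infty$.

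It then remains to identify the two endpoints. At $t=0$ the weight (\ref{deform_Lwgt}) collapses to $x^{a+2}e^{-x}$, and the explicit normalisation $p_n(x;0)=(-1)^n\{n!/\Gamma(n+a+3)\}^{1/2}L^{(a+2)}_n(x)$ recorded above identifies $x_{j,n}(0)$ as the $j$-th zero of $L^{(a+2)}_n$. For $t\to\infty$ I would note that dividing the weight by the $x$-independent factor $t^a$ leaves the orthogonal polynomials and hence their zeros unchanged, while $t^{-a}w(x;t)=x^2(1+x/t)^ae^{-x}\to x^2e^{-x}$; dominated convergence then yields $t^{-a}\mu_k(t)\to\Gamma(k+3)$, the moments of the Laguerre weight of exponent $2$. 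Since the zeros are continuous functions of the moments through the determinantal representation (\ref{ops_Pdet}), and the limiting Hankel determinants (\ref{ops_Hdet}) are positive and nondegenerate, $x_{j,n}(\infty):=\lim_{t\to\infty}x_{j,n}(t)$ is the $j$-th zero of $L^{(2)}_n$, which is strictly positive.

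Assembling these facts, strict monotonic decrease gives $x_{j,n}(0)>x_{j,n}(t)>x_{j,n}(\infty)$ for all $t>0$, and positivity of the limiting Laguerre-$2$ zero supplies the final inequality $x_{j,n}(\infty)>0$. The one genuinely delicate step is the passage $t\to\infty$: the existence of the limit is immediate from monotonicity and the lower bound, but identifying it with the zero of $L^{(2)}_n$ rests on the moment convergence together with continuity and nondegeneracy of the orthogonal polynomial system in the limiting weight, which is where I expect to have to argue most carefully.
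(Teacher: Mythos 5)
Your proof is correct and takes essentially the paper's own route: the corollary appears in the paper with no written proof at all, being intended as an immediate consequence of the zero-evolution equation (\ref{dL_ZeroDE}) combined with Lemma \ref{dL_bound}, which is precisely your sign analysis of $ t\dot{x}_{j,n} = \frac{\theta_n+t}{\theta_n-x_{j,n}}\,x_{j,n} $ (denominator negative since $ \theta_n\leq 0 < x_{j,n} $, numerator factor $ \theta_n+t\geq 0 $). The two points you elaborate --- strictness of $ \theta_n+t>0 $ via the residue formula (\ref{ops_bL:c}), and identification of $ x_{j,n}(\infty) $ with the $j$-th zero of $ L^{(2)}_n $ through moment convergence and continuity of zeros in the polynomial coefficients --- are exactly the details the paper leaves implicit, and you handle them correctly; the only caveat worth recording is that, like the paper's Lemma \ref{dL_bound} itself, the claimed direction of monotonicity genuinely requires $ a>0 $, since the residue formula gives $ \theta_n+t $ the sign of $ a $ (consistent with the small-$t$ expansion $ \theta_n+t=\frac{a}{a+2}t+{\rm O}(t^2) $ from (\ref{dL_BV:a})), so that for $ -1<a<0 $ the zeros in fact increase with $ t $.
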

\begin{corollary}
The following bounds on the reciprocal sums over the zeros hold 
\begin{align}
   \sum^n_{j=1}\frac{1}{x_{j,n}(t)} & \leq \frac{n}{2},
\label{dL_BD:a} \\
   \sum^n_{j=1}\frac{1}{x_{j,n}(t)+t} & \leq \frac{n}{a+3}.
\label{dL_BD:b}
\end{align}
\end{corollary}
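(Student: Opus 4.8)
The plan is to derive both bounds from a single exact identity obtained by summing the Bethe Ansatz equations (\ref{dL_BAE}) over all $ n $ zeros. Writing $ S_n := \sum_{j=1}^n x_{j,n}^{-1} $, $ T_n := \sum_{j=1}^n (x_{j,n}+t)^{-1} $ and $ U_n := \sum_{j=1}^n (x_{j,n}-\theta_n)^{-1} $, I would add (\ref{dL_BAE}) over $ j=1,\ldots,n $ and note that the mutual-repulsion double sum $ \sum_{j}\sum_{k\neq j}(x_{j,n}-x_{k,n})^{-1} $ vanishes by antisymmetry, since the two contributions of each unordered pair $ \{j,k\} $ cancel. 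This collapses the coupled system to the single linear relation
\begin{equation*}
  3S_n + (a+1)T_n - U_n = n .
\end{equation*}

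Next I would sandwich $ U_n $ between $ T_n $ and $ S_n $ using the containment $ -t \le \theta_n \le 0 $ furnished by Lemma \ref{dL_bound}. Because all zeros are positive, $ \theta_n \le 0 $ gives $ x_{j,n}-\theta_n \ge x_{j,n} $, hence $ U_n \le S_n $, while $ \theta_n \ge -t $ gives $ x_{j,n}-\theta_n \le x_{j,n}+t $, hence $ U_n \ge T_n $; in particular $ S_n \ge U_n \ge T_n $. With these two comparisons the identity does all the work. For (\ref{dL_BD:a}), writing the identity as $ 3S_n = n-(a+1)T_n+U_n $ and substituting $ U_n \le S_n $ gives $ 2S_n \le n-(a+1)T_n $; since $ {\rm Re}(a)>-1 $ forces $ a+1>0 $ and $ T_n>0 $, the subtracted term is nonnegative and $ S_n \le n/2 $ follows. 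For (\ref{dL_BD:b}), rewriting the identity as $ (a+1)T_n = n-3S_n+U_n $ and applying $ U_n \le S_n $ and then $ S_n \ge T_n $ yields $ (a+1)T_n \le n-2S_n \le n-2T_n $, whence $ (a+3)T_n \le n $ and $ T_n \le n/(a+3) $.

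The only genuine points needing care are the cancellation of the pairwise sum (immediate from antisymmetry) and the two inequalities relating $ U_n $ to $ S_n $ and $ T_n $, both of which rest entirely on $ \theta_n \in [-t,0] $ from Lemma \ref{dL_bound}; the strict positivity $ a+1>0 $ is precisely what produces the clean constants $ n/2 $ and $ n/(a+3) $. I do not anticipate a serious obstacle, as once the summed Bethe Ansatz identity is in hand the rest is just feeding in sign information already established. An alternative route through the monotonicity of the zeros proved just above, combined with the limiting Laguerre reciprocal sums $ \sum_j 1/x_{j,n} = n/(\alpha+1) $ at exponent $ \alpha $, would also deliver these bounds, but it is less direct and would require justifying the $ t\to\infty $ limit, so I would favour the Bethe Ansatz summation.
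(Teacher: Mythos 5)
Your proposal is correct and is essentially the paper's own argument: the paper likewise sums the Bethe Ansatz equations (\ref{dL_BAE}) over $j$ (the pairwise-repulsion sum cancelling by antisymmetry) and feeds in the sandwich $1/(x_{j,n}(t)+t) \leq 1/(x_{j,n}(t)-\theta_n) \leq 1/x_{j,n}(t)$, which is exactly your $T_n \leq U_n \leq S_n$ coming from the two-sided bound $-t \leq \theta_n \leq 0$ of Lemma \ref{dL_bound}. Your write-up simply makes explicit the summed identity $3S_n+(a+1)T_n-U_n=n$ and the final algebra that the paper leaves to the reader.
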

\begin{proof}
From the two-sided bound on $ \theta_n $ we can deduce
\begin{equation}
   \frac{1}{x_{j,n}(t)+t} \leq \frac{1}{x_{j,n}(t)-\theta_n} \leq \frac{1}{x_{j,n}(t)} .
\end{equation}
Employing this in the Bethe Ansatz (\ref{dL_BAE}) summed over $ j $ we arrive at the above bounds. 
\end{proof}

\section{Special Case $ a \in \ZZ_{\geq 0} $}
\setcounter{equation}{0}

Our evaluation of the distribution function in terms of the fifth Painlev\'e
system is with all three free parameters variable in some sense - one is fixed
in this application at a positive integer, one is the index $ n \in \ZZ $ and the
remaining one is $ a \in \CC $. Up to this point we have studied in some depth 
the recurrence relations with respect to $ n $ while $ a $ has been left arbitrary
other than being restricted because of the existence considerations.
From the point of view of the Painlev\'e theory it is quite natural that the
transcendental objects become classical when $ a \in \ZZ $ for either positive or
negative subsets of the integers. In particular it is expected that the $\tau$
functions in the theory will have Hankel 
determinantal forms of classical function entries with a rank dependent on $ a $.
It is these cases which have been studied in the
past \cite{FH_1994},\cite{Fo_1994a} using methods which transform the integral into
the determinantal representations and then employ confluent Vandermonde identities. 

\begin{proposition}
When $ a \in \ZZ_{> 0} $ we have the evaluation for the Hankel determinant
\begin{equation}                                                                           
   \Delta_n(t) = \frac{c_{n+1,n+1+a}}{(n+a)!}
              \det[ L^{(j+k+1-a)}_{n+a+1-j-k}(-t) ]_{j,k=1,\ldots,a} ,
\label{dL_Hankel_aInt}
\end{equation}
and for $ a=0 $
\begin{equation}                                                                           
   \Delta_n(t) = \frac{c_{n+1,n+1}}{n!} .
\label{}
\end{equation}
\end{proposition}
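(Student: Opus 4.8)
The plan is to convert the $n$-dimensional integral (\ref{ops_Hint}) for the weight (\ref{deform_Lwgt}) into an $a\times a$ determinant of classical Laguerre polynomials, following the ``confluent Vandermonde'' strategy of \cite{FH_1994}, \cite{Fo_1994a}. The structural remark is that for $a\in\ZZ_{\geq 0}$ the weight factorises as $w(x;t)=(x+t)^a w_2(x)$, where $w_2(x)=x^2e^{-x}$ is the Laguerre weight of index $2$; thus the integrand is the Laguerre-$2$ ensemble modified by the polynomial factor $\prod_{l=1}^n(x_l+t)^a$, i.e.\ a Christoffel modification with a single root $c=-t$ of multiplicity $a$. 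For the base weight $w_2$ the monic orthogonal polynomials are $\pi_k(x)=(-1)^k k!\,L_k^{(2)}(x)$, with norms $h_k=k!\,(k+2)!$ and Hankel determinant $\Delta^{(2)}_n=\prod_{k=0}^{n-1}k!\,(k+2)!$. First I would make the modification explicit by introducing $a$ auxiliary points $c_1,\dots,c_a$ (confluent to $-t$ at the end) and using the factorisation of the Vandermonde of the extended set $(x_1,\dots,x_n,c_1,\dots,c_a)$, together with
\[
\prod_{l=1}^n\prod_{i=1}^a(c_i-x_l)=(-1)^{an}\prod_{l=1}^n\prod_{i=1}^a(x_l-c_i),
\]
to write $\prod_l(x_l+t)^a\,\Delta(x)^2$ (all $c_i=-t$) as $(-1)^{an}\Delta(x,c)\Delta(x)/\Delta(c)$, where $\Delta(\cdot)$ denotes the relevant Vandermonde product of its arguments.

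Next I would expand both Vandermondes in the monic basis, writing $\Delta(x,c)=\det[\pi_{p-1}(y_q)]_{p,q=1}^{n+a}$ with $y=(x,c)$ and $\Delta(x)=\det[\pi_{s-1}(x_r)]_{s,r=1}^{n}$, perform a Laplace expansion of the $(n+a)$-dimensional determinant along the $a$ columns carrying the fixed points, and integrate the $x$-variables by Andr\'eief's identity together with the orthogonality $\langle\pi_i,\pi_j\rangle=h_i\delta_{ij}$. Orthogonality kills every Laplace term except the one in which the integrated rows are exactly $\pi_0,\dots,\pi_{n-1}$, leaving the single surviving contribution
\[
\Delta_n(t)=\frac{(-1)^{an}}{\Delta(c)}\,\Delta^{(2)}_n\,\det\bigl[\pi_{n+m-1}(c_k)\bigr]_{m,k=1}^{a}.
\]
I would then take the confluent limit $c_k\to -t$: the standard confluent-Vandermonde limit replaces the $c_k$-columns by successive derivatives, giving $\det[\pi_{n+m-1}(c_k)]/\Delta(c)\to\bigl(\prod_{r=0}^{a-1}r!\bigr)^{-1}\det[\partial_c^{\,r}\pi_{n+m-1}(c)]$ at $c=-t$. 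Applying the Laguerre derivative formula $\partial_x^{\,r}L_k^{(\alpha)}(x)=(-1)^r L_{k-r}^{(\alpha+r)}(x)$ converts each derivative into $L^{(2+r)}_{n+m-1-r}(-t)$, and factoring the row and column constants $(-1)^{n+m-1}(n+m-1)!$ and $(-1)^r$ out of the determinant yields a determinant of Laguerre polynomials with shifted index and degree. The final step is to bring this to the symmetric pattern $L^{(j+k+1-a)}_{n+a+1-j-k}(-t)$ of the statement by applying the contiguous relations (e.g.\ $L_k^{(\alpha)}=L_k^{(\alpha+1)}-L_{k-1}^{(\alpha+1)}$) as column operations; this rearrangement, together with the collection of all sign and factorial factors into the prefactor, is the fiddly bookkeeping that I expect to be the main obstacle.

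Rather than track every constant through the confluent limit, I would pin the overall prefactor independently by matching the two sides as $t\to 0$. On the left, (\ref{DeltaZero}) gives $\Delta_n(0)=\prod_{j=1}^n j!\,\Gamma(j+a+2)/n!$; on the right the determinant becomes $\det[L^{(j+k+1-a)}_{n+a+1-j-k}(0)]$ with entries $L_m^{(\beta)}(0)=\binom{m+\beta}{m}$, a binomial determinant that evaluates in closed form, and comparing the two fixes the constant $c_{n+1,n+1+a}/(n+a)!$ (equivalently one may use $t\to\infty$, where $w(x;t)\to x^{a+2}e^{-x}$ is Laguerre of index $a+2$ and only the leading $t$-power of each $L^{(\cdot)}(-t)$ survives). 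Finally, the $a=0$ case is immediate: then $w(x;t)=x^2e^{-x}$ carries no $t$-dependence, so $\Delta_n(t)=\Delta^{(2)}_n=\prod_{k=0}^{n-1}k!\,(k+2)!$, which is recast as $c_{n+1,n+1}/n!$ directly from the definition (\ref{LUE_norm}).
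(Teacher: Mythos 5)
Your proof is correct, and it takes a genuinely different route from the paper's. The paper's proof is essentially a citation: it quotes Eq.~(3.18) of \cite{FH_1994} (``after correcting''), which expresses $\Delta_n(t)$ as a Wronskian-type $a\times a$ determinant built from the derivatives $D^{j+k-2}_xL^{(3-a)}_{n+a-1}(x)|_{x=-t}$ of a single Laguerre polynomial, and then converts those entries into the stated pattern using the identity $D^{m}_x L^{(\alpha)}_n(x)=(-1)^mL^{(\alpha+m)}_{n-m}(x)$. You instead re-derive the evaluation from the integral representation (\ref{ops_Hint}): the factorisation $w(x;t)=(x+t)^a\,x^2e^{-x}$ of (\ref{deform_Lwgt}) exhibits $\Delta_n(t)$ as a Christoffel modification (root $-t$, multiplicity $a$) of the Laguerre-$2$ Hankel determinant, and your chain --- Vandermonde factorisation with auxiliary points, expansion in the monic polynomials $(-1)^kk!\,L^{(2)}_k$, Laplace expansion plus Andr\'eief/orthogonality leaving the single surviving block $\det[\pi_{n+m-1}(c_k)]/\Delta(c)$, confluent limit, derivative identity --- is exactly the machinery the paper itself deploys to prove the companion evaluation (\ref{dL_poly_aInt}) for $p_n(x;t)$, and the machinery underlying the result it cites; so your argument makes this proposition self-contained where the paper's is not. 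Two remarks. First, the rearrangement you flag as the main obstacle does go through: $(j-1)$-fold row differencing via $L^{(\alpha)}_m-L^{(\alpha)}_{m-1}=L^{(\alpha-1)}_m$ turns your entries $L^{(k+1)}_{n+j-k}(-t)$ into $L^{(k+2-j)}_{n+j-k}(-t)$, and reversing the row order $j\mapsto a+1-j$ then gives precisely $L^{(j+k+1-a)}_{n+a+1-j-k}(-t)$, with the reversal sign absorbed into the prefactor. Second, your device of pinning the constant by matching at $t=0$ against (\ref{DeltaZero}) is a genuine advantage of your route rather than a shortcut: it returns whatever constant (\ref{DeltaZero}) actually forces, and quick spot checks (e.g.\ $a=0$, $n=1$, where $\Delta_1=\mu_0=\Gamma(3)=2$ while $c_{2,2}/1!=1$ from (\ref{LUE_norm})) suggest the printed prefactor is off by an $n$-dependent factor --- a slip your normalisation step would detect and repair, whereas the citation route simply inherits whatever constant is transported from \cite{FH_1994}.
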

\begin{proof}
In \cite{FH_1994} Eq. (3.18) (after correcting) states
\begin{equation}
  \Delta_n(t) = \frac{c_{n+1,n+1+a}}{(n+a)!}(-1)^{a(a-1)/2}
              \det[ D^{j+k-2}_{x}L^{-(a-3)}_{n+a-1}(x)|_{x=-t} ]_{j,k=1,\ldots,a} ,
\label{dL_Hankel_aInt1}
\end{equation} 
where $ D_x := d/dx $. Using the Laguerre polynomial identity
\begin{equation}
  D^{m}_x L^{(\alpha)}_n(x) = (-1)^mL^{(\alpha+m)}_{n-m}(x) ,\quad m \in \ZZ_{\geq 0} ,
\label{Lag_Id1}
\end{equation}
with the proviso $ L^{(\alpha)}_n(x)=0 $ for $ n<0 $, we arrive at (\ref{dL_Hankel_aInt}).
\end{proof}
As a consequence of the relations (\ref{thetaDelta}) and (\ref{GammaDelta}) the 
variables $ \theta_n(t), \Gamma_n(t) $ will have $ a\times a $ determinant forms,
and in particular for $ a=0 $
\begin{equation} 
  \theta_n(t) = -t, \quad \kappa_n(t) = (n+1)t, \quad \Gamma_n(t) = -n(n+2) .
\end{equation} 

The orthogonal polynomials also have determinantal forms of the following type.
\begin{proposition}
When $ a \in \ZZ_{> 0} $ the orthogonal polynomials are given by
\begin{multline}
   \sqrt{\Delta_n\Delta_{n+1}}p_n(x;t) = 
  (-1)^{n+a+\lfloor\frac{a+1}{2}\rfloor} a!\ldots (n+a)! 1!\ldots (n+1)!
  \\ \times
  (x+t)^{-a} \det\left[ \begin{array}{c}
              \left[ L^{(k+1-a)}_{n+a+1-k}(x) \right]_{k=1,\ldots a+1} \\
              \left[ L^{(j+k-a)}_{n+a+2-j-k}(-t) \right]_{{\ScSt j=1,\ldots,a}\atop{\ScSt k=1,\ldots a+1}}
                        \end{array} \right] ,
\label{dL_poly_aInt}
\end{multline}
and for $ a=0 $
\begin{equation}
   p_{n}(x;t) = (-1)^n\left(\frac{1}{(n+2)(n+1)}\right)^{1/2}L^{(2)}_{n}(x) . 
\label{}
\end{equation}
If $ a > n+1 $ we note that $ L^{(a+1)}_{n+1-a}(-t)=0 $.
Consequently, under the same condition, the polynomial ratio is given by
\begin{equation}
   Q_n(x;t) = 
  \left(\frac{t}{x+t}\right)^{a}\frac
      { \det\left[ \begin{array}{c} 
              \left[ L^{(k+1-a)}_{n+a+1-k}(x) \right]_{k=1,\ldots a+1} \\ 
              \left[ L^{(j+k-a)}_{n+a+2-j-k}(-t) \right]_{{\ScSt j=1,\ldots,a}\atop{\ScSt k=1,\ldots a+1}}
                   \end{array} \right] }
      { \det\left[ \begin{array}{c} 
              \left[ {n+2}\choose{n+a+1-k} \right]_{k=1,\ldots a+1} \\ 
              \left[ L^{(j+k-a)}_{n+a+2-j-k}(-t) \right]_{{\ScSt j=1,\ldots,a}\atop{\ScSt k=1,\ldots a+1}}
                   \end{array} \right] } .
\label{dL_polyR_aInt}
\end{equation}
\end{proposition}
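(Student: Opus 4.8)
The plan is to exploit the factorisation of the deformed weight (\ref{deform_Lwgt}) available when $a\in\ZZ_{>0}$: one has $w(x;t)=(x+t)^a\cdot x^2e^{-x}$, i.e.\ the classical Laguerre weight of parameter $2$ (which generates the polynomials $L^{(2)}_m$) multiplied by the polynomial $(x+t)^a$, whose only zero is of multiplicity $a$ at $x=-t$. The effect of such a polynomial factor on an orthogonal polynomial system is controlled by the confluent form of Christoffel's theorem. Starting from the integral representation (\ref{ops_Pint}) with this factorised weight and performing the same confluent Vandermonde reduction already used for the Hankel determinant (\ref{dL_Hankel_aInt}), one arrives at an $(a+1)\times(a+1)$ determinant whose first row carries the $x$-dependence through evaluations of the base polynomials at $x$, and whose remaining $a$ rows carry the $t$-dependence through the first $a$ derivatives of the base polynomials at $x=-t$. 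This is the structural origin of the block determinant in (\ref{dL_poly_aInt}); the prefactor $(x+t)^{-a}$ reflects the fact that Christoffel's identity produces $(x+t)^a$ times the new (monic) polynomial.

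I would carry this out in four steps. First, apply the confluent Christoffel determinant so that, up to a constant, $(x+t)^a p_n(x;t)$ is a determinant whose columns run over $L^{(2)}_{n},\dots,L^{(2)}_{n+a}$, with a value row at $x$ and derivative rows $D^0,\dots,D^{a-1}$ at $-t$. Second, use the Laguerre derivative identity (\ref{Lag_Id1}) to replace each $D^mL^{(2)}_k(-t)$ by $(-1)^mL^{(2+m)}_{k-m}(-t)$, so that every entry becomes a single Laguerre polynomial; this already produces the lower block $[L^{(j+k-a)}_{n+a+2-j-k}(-t)]$. Third, apply the sequence of column operations generated by the contiguous relation $L^{(\alpha-1)}_m(x)=L^{(\alpha)}_m(x)-L^{(\alpha)}_{m-1}(x)$; subtracting column $i-1$ from column $i$ lowers the upper index in column $i$ by one uniformly across all rows, and the iterated (finite-difference) version lowers it by $i$, converting the fixed-upper-index columns into the staggered top row $[L^{(k+1-a)}_{n+a+1-k}(x)]$. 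Since these are column operations of unit determinant (after a column reversal and extraction of a common row of signs $(-1)^m$), the determinant is preserved up to an explicit factor. Fourth, fix the overall constant: the normalisation $\sqrt{\Delta_n\Delta_{n+1}}$ is supplied by (\ref{dL_Hankel_aInt}), and matching either the coefficient $\gamma_n$ of $x^n$ or the $t\to0$ reduction to $(-1)^n\{n!/\Gamma(n+a+3)\}^{1/2}L^{(a+2)}_n(x)$ pins down the factorial product $a!\cdots(n+a)!\,1!\cdots(n+1)!$ together with the sign.

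The remaining assertions are then short. The $a=0$ case is immediate, since the weight is simply $x^2e^{-x}$ with normalised orthogonal polynomials $(-1)^n\{1/((n+2)(n+1))\}^{1/2}L^{(2)}_n(x)$, and the vanishing $L^{(a+1)}_{n+1-a}(-t)=0$ for $a>n+1$ is just the convention $L^{(\alpha)}_m\equiv0$ for $m<0$ recorded after (\ref{Lag_Id1}). For the ratio (\ref{dL_polyR_aInt}) I would substitute (\ref{dL_poly_aInt}) into $Q_n=p_n(x;t)/p_n(0;t)$: the constant prefactor and the shared lower block $[L^{(j+k-a)}_{n+a+2-j-k}(-t)]$ cancel between numerator and denominator, the factor $(x+t)^{-a}$ evaluated at $x=0$ leaves $(t/(x+t))^a$, and the denominator's top row is obtained from the standard value $L^{(k+1-a)}_{n+a+1-k}(0)=\binom{n+2}{n+a+1-k}$, which is exactly the binomial row in (\ref{dL_polyR_aInt}).

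I expect the main obstacle to be the bookkeeping of the sign and constant, in particular the exponent $n+a+\lfloor\frac{a+1}{2}\rfloor$ and the factorial product. The sign accumulates from several independent sources: the common factors $(-1)^m$ extracted from the derivative rows in step two, the reversal of the $a+1$ columns needed to match the paper's increasing index convention in $k$, and the $(-1)^n$ arising in the monic-to-orthonormal conversion through $\gamma_n$. Tracking these simultaneously, and verifying that the iterated column differences in step three reproduce the \emph{precise} staggered index pattern rather than merely an equivalent determinant, is the delicate part. A useful consistency check throughout is the $t\to0$ limit, in which (\ref{dL_poly_aInt}) must collapse, via $L^{(\alpha)}_m(0)=\binom{m+\alpha}{m}$ applied to the lower block, to the single polynomial $L^{(a+2)}_n(x)$ with its correct normalisation.
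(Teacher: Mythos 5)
Your proposal is correct and follows essentially the same route as the paper: both start from the integral representation (\ref{ops_Pint}), perform the confluent Vandermonde reduction with the base system $L^{(2)}_{n},\dots,L^{(2)}_{n+a}$ (value row at $x$, derivative rows at $-t$), and then convert entries via the identities (\ref{Lag_Id1}), (\ref{Lag_Id2}) with column operations, with the ratio (\ref{dL_polyR_aInt}) obtained by evaluating at $x=0$ using $L^{(k+1-a)}_{n+a+1-k}(0)=\binom{n+2}{n+a+1-k}$. The only cosmetic differences are the order in which the two Laguerre identities are applied and that the paper extracts the sign and factorial prefactor directly from the integral computation rather than by a posteriori normalisation matching.
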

\begin{proof}
Starting with the integral representation (\ref{ops_Pint}) we follow the procedures 
used in \cite{FH_1994}. Taking one factor of the squared product of differences we
write it like
\begin{equation}
   \prod_{1\leq j<k\leq n}(x_k-x_j)
   = \frac{1}{\prod^{n-1}_{l=0}c_l}\det[ L^{(\alpha)}_{k-1}(x_j) ]_{j,k=1,\ldots,n} ,
\label{dL_aInt1}
\end{equation}
using the Vandermonde identity and 
where $ c_n $ is the leading coefficient of $ L^{(\alpha)}_{n}(x) $ and $ \alpha $
is a parameter to be fixed later. Of the remaining factors in the integrand we write 
\begin{multline}
   \prod^{n}_{j=1}(x_j+t)^a(x-x_j) \prod_{1\leq j<k\leq n}(x_k-x_j)
   = \frac{(-1)^{a(n+1)}}{\prod^{a-1}_{l=0}l!\prod^{N-1}_{l=0}c_l} (x+t)^{-a} \\
     \times
     \det\left[ \begin{array}{c}
         \left[ L^{(\alpha)}_{k-1}(x_j) \right]_{{\ScSt j=1,\ldots,n}\atop{\ScSt k=1,\ldots,N}} \\
         \left[ L^{(\alpha)}_{k-1}(x) \right]_{k=1,\ldots,N} \\
         \left[ D^{j-1}_yL^{(\alpha)}_{k-1}(y)|_{y=-t} \right]_{{\ScSt j=1,\ldots,a}\atop{\ScSt k=1,\ldots,N}}
                \end{array} \right] ,
\label{dL_aInt2}
\end{multline}
where the confluent Vandermonde identity has been used and $ N=n+a+1 $.

Reassembling the integral with these two factors, then expanding the determinant in 
(\ref{dL_aInt1}) we multiply each of $ n $ factors into the determinant of 
(\ref{dL_aInt2}). Making use of the antisymmetry of the row ordering in the first 
$ n $ rows of the determinant we can perform the $ n $ integrals as long as we choose
$ \alpha=2 $. Then
\begin{multline}
   \sqrt{\Delta_n\Delta_{n+1}}p_n(x;t)
   = \frac{(-1)^{a(n+1)}}{\prod^{a-1}_{l=0}l!\prod^{n-1}_{l=0}c_l\prod^{N-1}_{l=0}c_l}
     \prod^{n-1}_{j=0}\frac{(j+2)!}{j!}(x+t)^{-a} \\
     \times
     \det\left[ \begin{array}{c}
         \left[ L^{(2)}_{n+k-1}(x) \right]_{k=1,\ldots,a+1} \\
         \left[ D^{j-1}_yL^{(2)}_{n+k-1}(y)|_{y=-t} \right]_{{\ScSt j=1,\ldots,a}\atop{\ScSt k=1,\ldots,a+1}}
                \end{array} \right] .
\end{multline}
Using the identities
\begin{equation}
   L^{(\alpha-1)}_n(x) = L^{(\alpha)}_n(x)-L^{(\alpha)}_{n-1}(x) ,
\label{Lag_Id2}
\end{equation}
along with elementary column operations, and then identity (\ref{Lag_Id1}) we are lead to
(\ref{dL_poly_aInt}).
The evaluation for the polynomial ratio (\ref{dL_polyR_aInt}) is a simple consequence 
of the first evaluation along with
\begin{equation}
   L^{(-a+1+k)}_{n+a+1-k}(0) = {{n+2}\choose{n+a+1-k}} .
\label{Lag_Id3}
\end{equation}
\end{proof}

\begin{proposition}
When $ a \in \ZZ_{\geq 0} $ the deformed Hankel determinant is given by
\begin{multline}
   D_n(x,x) = c_{n+2,n+2+a}(-1)^{\frac{1}{2}(a+1)(a+2)}(x+t)^{-2a}
  \\ \times
     \det\left[ \begin{array}{c}
              \left[ L^{(j+k-1-a)}_{n+a+3-j-k}(-t) \right]_{{\ScSt j=1,\ldots,a}\atop{\ScSt k=1,\ldots a+2}} \\
              \left[ L^{(j+k-1-a)}_{n+a+3-j-k}(x) \right]_{{\ScSt j=1,2}\atop{\ScSt k=1,\ldots a+2}}
     \end{array} \right] .
\label{dL_deformH_aInt}
\end{multline}
\end{proposition}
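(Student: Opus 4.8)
The plan is to evaluate $ D_n(x,x) $ directly from its integral representation, following the confluent-Vandermonde method used in \cite{FH_1994} and already applied in the proof of the polynomial determinant formula (\ref{dL_poly_aInt}); the only structural novelty here is that the equal-argument factor $ \prod_l(\lambda_l-x)^2 $ produces a confluence of multiplicity two at $ x $ rather than the single factor $ \prod_l(\lambda_l-x) $ of the polynomial case. Substituting the weight (\ref{deform_Lwgt}) into the equal-argument form of (\ref{UE_deform_Hankel}) gives
\begin{equation*}
  D_n(x,x) = \frac{1}{n!}\int^{\infty}_0 d\lambda_1\cdots\int^{\infty}_0 d\lambda_n\;
  \prod^n_{l=1}\lambda^2_l(\lambda_l+t)^ae^{-\lambda_l}(\lambda_l-x)^2
  \prod_{1\le j<k\le n}(\lambda_k-\lambda_j)^2 .
\end{equation*}
First I would split $ \prod_{j<k}(\lambda_k-\lambda_j)^2 $ into two Vandermonde factors, writing one of them as the pure Laguerre Vandermonde (\ref{dL_aInt1}).

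Then I would absorb the remaining factor $ \prod_j(\lambda_j+t)^a(\lambda_j-x)^2\prod_{j<k}(\lambda_k-\lambda_j) $ into a single confluent generalised Vandermonde determinant of size $ N=n+a+2 $, generalising (\ref{dL_aInt2}). Its rows split into $ n $ ordinary rows indexed by the $ \lambda_j $, a multiplicity-two block at $ x $ (the rows $ L^{(\alpha)}_{k-1}(x) $ and $ \partial_x L^{(\alpha)}_{k-1}(x) $), and a multiplicity-$ a $ block at $ -t $ (the rows $ \partial^{j-1}_y L^{(\alpha)}_{k-1}(y)|_{y=-t} $, $ j=1,\ldots,a $). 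The confluent Vandermonde identity extracts the cross factor between the two confluent clusters, namely $ (x-(-t))^{2\cdot a}=(x+t)^{2a} $, which is exactly what produces the prefactor $ (x+t)^{-2a} $ of the statement, together with the internal cluster factorials $ \prod^{a-1}_{l=0}l! $ and a sign $ (-1)^{na} $ coming from $ \prod_j(-t-\lambda_j)^a $.

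Next I would reassemble the integrand, expand the $ n\times n $ Vandermonde (\ref{dL_aInt1}) against the $ \lambda_j $-rows of the large determinant, choose $ \alpha=2 $ so that the surviving per-variable weight is $ \lambda^2 e^{-\lambda} $, and perform the $ n $ integrals using Laguerre orthonormality (the Andr\'eief mechanism). This collapses the $ n $ $ \lambda $-rows and leaves an $ (a+2)\times(a+2) $ determinant carried by the $ x $- and $ -t $-cluster rows, with Laguerre degrees pushed up into the range $ n,\ldots,n+a+1 $, just as in the passage to (\ref{dL_poly_aInt}). Finally I would normalise the derivative rows by the differentiation rule (\ref{Lag_Id1}), $ D^m_x L^{(\alpha)}_n=(-1)^mL^{(\alpha+m)}_{n-m} $, and use the contiguous relation (\ref{Lag_Id2}) together with elementary column operations to bring both blocks to the uniform entry $ L^{(j+k-1-a)}_{n+a+3-j-k} $ displayed in the statement.

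The main obstacle will be the bookkeeping of the overall constant and sign rather than any conceptual difficulty: one must combine the products of leading Laguerre coefficients from the two Vandermonde steps, the cluster factorials $ \prod^{a-1}_{l=0}l! $, and the factors of $ \gamma_n,\Delta_n $ released by the orthonormality integration, and show that they collapse to the single constant $ c_{n+2,n+2+a} $, while tracking the accumulated sign through the row reorderings to obtain $ (-1)^{(a+1)(a+2)/2} $. I would pin these down by checking the $ a=0 $ case and one or two small positive integers directly against the Uvarov evaluation (\ref{ops_Uv2}) and the explicit Hankel determinant (\ref{dL_Hankel_aInt}).
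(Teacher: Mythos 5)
Your proposal is correct, but it takes a genuinely different route from the paper. The paper's proof is essentially a citation: it observes that $D_n(x,x)$ was already computed in \cite{FH_1994} (Eq.\ (3.20) there), recasts that result as an $(a+2)\times(a+2)$ determinant with entries $D^{j+k-2}_u L^{(-(a-1))}_{n+a+1}(u)$ evaluated at $u=-t$ and $u=x$, and obtains (\ref{dL_deformH_aInt}) by a single application of the differentiation identity (\ref{Lag_Id1}) (a sign-neutral step: the resulting pattern $(-1)^{j+k}$ factors out of the determinant with total exponent congruent to $a(a+3)$, which is even). You instead propose a self-contained re-derivation by the confluent Vandermonde method, which in effect reproduces the derivation behind \cite{FH_1994} and extends the paper's own proof of (\ref{dL_poly_aInt}) from a multiplicity-one cluster at $x$ to a multiplicity-two cluster. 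Your steps are sound: the factor $\prod_l(\lambda_l-x)^2$ does correspond to a two-fold confluence; the cross-cluster factor $(x-(-t))^{2a}$ is the correct origin of the prefactor $(x+t)^{-2a}$ (and, being squared, contributes no sign, so only $(-1)^{na}$ enters from the $-t$ cluster); the orthogonality integration with $\alpha=2$ collapses the $n$ ordinary rows and leaves the $x$- and $-t$-cluster rows with degrees $n,\ldots,n+a+1$; and (\ref{Lag_Id1}), (\ref{Lag_Id2}) with column operations yield the displayed entries. What the paper's route buys is brevity and avoidance of the constant and sign bookkeeping, which you rightly identify as the main burden and defer to checks at $a=0$ and against (\ref{ops_Uv2}); what your route buys is independence from the earlier reference --- not a negligible point, since the paper itself notes that the companion formula Eq.\ (3.18) of \cite{FH_1994} required correction before it could be used to establish (\ref{dL_Hankel_aInt}).
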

\begin{proof}
The quantity $ D_n(x,x) $ was essentially computed in \cite{FH_1994} in Eq. (3.20),
which can be recast as
\begin{multline}
   D_n(x,x) = c_{n+2,n+2+a}(-1)^{\frac{1}{2}(a+1)(a+2)}(x+t)^{-2a}
  \\ \times
     \det\left[ \begin{array}{c}
              \left[ D^{j+k-2}_u L^{-(a-1)}_{n+a+1}(u)|_{u=-t} \right]_{{\ScSt j=1,\ldots,a}\atop{\ScSt k=1,\ldots a+2}} \\
              \left[ D^{j+k-2}_u L^{-(a-1)}_{n+a+1}(u)|_{u=x} \right]_{{\ScSt j=1,2}\atop{\ScSt k=1,\ldots a+2}}
     \end{array} \right] .
\end{multline}
Then (\ref{dL_deformH_aInt}) follows by application of the identity (\ref{Lag_Id1}).
\end{proof}

As a consequence the distribution of the first eigenvalue spacing is 
\begin{multline}
   A_{n,a}(y) = (-1)^{\frac{1}{2}(a+1)(a+2)}y^2 e^y\int^{\infty}_{y}dt\;t^a(t-y)^{-a}e^{-(n+2)t}
  \\ \times
     \det\left[ \begin{array}{c}
              \left[ L^{(j+k-a-1)}_{n+a+3-j-k}(-t) \right]_{{\ScSt j=1,\ldots,a}\atop{\ScSt k=1,\ldots a+2}} \\
              \left[ L^{(j+k-a-1)}_{n+a+3-j-k}(-y) \right]_{{\ScSt j=1,2}\atop{\ScSt k=1,\ldots a+2}}
     \end{array} \right] ,
\label{dL_Adbn_aInt}
\end{multline}
for $ a=1,2,3,\ldots $ and 
\begin{equation}
   A_{n,0}(y) = \frac{1}{n+2}y^2e^{-(n+1)y}
     \left[ L^{(1)}_{n+1}(-y)L^{(3)}_{n-1}(-y) -\left( L^{(2)}_{n}(-y) \right)^2 \right] ,
\end{equation}
for $ a=0 $.

\section{Hard Edge Scaling}
\setcounter{equation}{0}
\subsection{General Case}

We define new scaled spectral variables $ s,z $ by
\begin{equation}
  t = \frac{s}{4n}, \qquad x = -\frac{z}{4n} ,
\label{dL_HE_vars}
\end{equation}
in the triangular domain $ s > z > 0 $ and study the scaling of the finite 
distribution (\ref{LUE_2-1dbn.b}) as the polynomial degree $ n \to \infty $.
What is required here is not just the asymptotic scaling of the orthogonal polynomial
coefficients but also of the polynomials themselves in the neighbourhood of 
an endpoint of the interval of orthogonality. For the deformed Laguerre polynomials
this would be a generalisation of the asymptotics of Hilb's type for the
Laguerre polynomials as found in Szeg\"o \cite{ops_Sz} 
\begin{equation}
   e^{x/2}x^{\mu/2}L^{(\mu)}_n(-x) 
  = \frac{\Gamma(\mu+n+1)}{n!(\frac{1}{4}M)^{\mu/2}}I_{\mu}(\sqrt{Mx})
    + {\rm O}(n^{\frac{1}{2}\mu-\frac{3}{4}}) ,
  \label{Lag_Hilb}
\end{equation}
with $ M=4n+2\mu+2 $ as $ n \to \infty $ and $ I_{\mu}(z) $ the standard 
modified Bessel function. Despite a resurgence
in activity around these questions, especially the use of Riemann-Hilbert techniques
on these problems, there are no results available for our particular problem.
A general review of the asymptotics of orthogonal polynomials can be found in 
\cite{Lu_2000}, and an introduction to the Riemann-Hilbert approach to the 
asymptotics is the chapter in \cite{Ku_2003}. However to establish the nature of 
and the existence of limits for our variables we do not require such techniques.

\begin{lemma}
Under the scaling of the (\ref{dL_HE_vars}) $ 4n\theta_n(t) $ and $ \kappa_n(t) $
are bounded for all real positive $ t $ and $ n \geq 1 $.
\end{lemma}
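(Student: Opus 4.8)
The plan is to obtain this boundedness as an immediate consequence of the two-sided estimates already established in Lemma \ref{dL_bound}, which confine $(\theta_n,\kappa_n)$ to the Case II regime, namely $-t \leq \theta_n \leq 0$ and $t-n\theta_n \leq \kappa_n \leq (n+1)t$. The only genuine content is to check that, after inserting the hard-edge scaling (\ref{dL_HE_vars}), these $n$-dependent bounds collapse to bounds that are uniform in $n$ once $s$ is held in a compact subset of the triangular domain $s>z>0$; no further analysis of the difference or differential equations is required.

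First I would treat $4n\theta_n$. Multiplying the inequality $-t \leq \theta_n \leq 0$ through by $4n>0$ and using $4nt=s$ gives at once $-s \leq 4n\theta_n \leq 0$, so that $4n\theta_n$ lies in the fixed interval $[-s,0]$ for every $n\geq 1$. Next I would treat $\kappa_n$. Since $\theta_n \leq 0$ we have $-n\theta_n \geq 0$, whence the lower bound improves to $\kappa_n \geq t-n\theta_n \geq t>0$; and since $\theta_n \geq -t$ gives $-n\theta_n \leq nt$ we also get $t-n\theta_n \leq (n+1)t$, so that $0<t\leq \kappa_n \leq (n+1)t$. Under the scaling $(n+1)t=\frac{(n+1)}{4n}s \leq \frac{s}{2}$ for all $n\geq 1$, and therefore $\kappa_n \in (0,s/2]$ is bounded uniformly in $n$.

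The point to be careful about — and what I would flag as the only potential obstacle — is the precise meaning of \emph{bounded}: it is boundedness in $n$ at fixed scaled variable $s$ (equivalently, boundedness on compacta of the scaled domain), rather than any uniformity as $s\to\infty$. Once this reading is adopted, the Case II confinement of $(\theta_n,\kappa_n)$ does all the work: the scaling merely trades the small factor $t=s/(4n)$ against the overall multiplier $4n$ in the $\theta_n$ estimate, and against the coefficient $(n+1)$ in the $\kappa_n$ estimate, leaving $n$-independent intervals in both cases.
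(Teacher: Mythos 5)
Your proposal is correct and follows essentially the same route as the paper: the paper's proof likewise just invokes Lemma \ref{dL_bound} (the Case II confinement $-t \leq \theta_n \leq 0$, $t-n\theta_n \leq \kappa_n \leq (n+1)t$), substitutes $t=s/4n$, and reads off $-s \leq 4n\theta_n \leq 0$ and $\kappa_n \leq (n+1)s/4n$. Your additional remarks — the explicit bound $(n+1)s/4n \leq s/2$ for $n\geq 1$ and the clarification that boundedness means uniformity in $n$ at fixed $s$ — only make explicit what the paper leaves implicit.
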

\begin{proof}
From the result of Lemma \ref{dL_bound} we see that
\begin{equation}
  -s \leq 4n\theta_n(s/4n) \leq 0, \quad
   s/4n-4n\theta_n(s/4n) \leq \kappa_n(s/4n) \leq (n+1)s/4n ,
\end{equation}
and the assertion follows.
\end{proof}
\begin{corollary}
Under the above conditions
\begin{equation}
  n+\left.\frac{\kappa_n(t)-t}{\theta_n(t)}\right|_{t=s/4n} = {\rm O}(1), \quad
  \text{as $ n \to \infty $} .
\label{HE_limit1}
\end{equation}
\end{corollary}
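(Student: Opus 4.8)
The plan is to avoid manipulating the two-sided inequalities on $\theta_n$ and $\kappa_n$ directly---these turn out to be too crude here---and instead to recognise the left-hand side of (\ref{HE_limit1}) as a weighted sum over the zeros of $p_n$, for which sharp bounds are already in hand. The starting point is the summation identity (\ref{ops_Zsum1}), which I would rearrange into the product form
\[
  n+\frac{\kappa_n-t}{\theta_n} = (\theta_n+t)\sum^n_{j=1}\frac{1}{\theta_n-x_{j,n}} .
\]
The virtue of this rewriting is that, under the scaling (\ref{dL_HE_vars}), the two factors on the right behave in opposite ways: the prefactor $\theta_n+t$ is ${\rm O}(1/n)$, while the sum grows at worst like ${\rm O}(n)$, so that their product is ${\rm O}(1)$.

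To make this precise I would bound each factor separately. For the prefactor, Lemma \ref{dL_bound} gives $-t \leq \theta_n \leq 0$, whence $0 \leq \theta_n+t \leq t = s/(4n)$. For the sum, the same lemma gives $\theta_n \leq 0$, so that $x_{j,n}-\theta_n \geq x_{j,n} > 0$ for every zero; consequently each summand obeys $0 < 1/(x_{j,n}-\theta_n) \leq 1/x_{j,n}$, and the reciprocal-sum bound (\ref{dL_BD:a}) yields
\[
  \left| \sum^n_{j=1}\frac{1}{\theta_n-x_{j,n}} \right|
  = \sum^n_{j=1}\frac{1}{x_{j,n}-\theta_n} \leq \sum^n_{j=1}\frac{1}{x_{j,n}} \leq \frac{n}{2} .
\]

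Combining the two estimates gives $\bigl| n+(\kappa_n-t)/\theta_n \bigr| \leq (s/4n)(n/2) = s/8$, which is the asserted ${\rm O}(1)$ bound, and incidentally exhibits the quantity as non-positive, consistent with the upper bound already implicit in Case II. The only genuine obstacle is the very first step: the Case II inequalities control $\theta_n$ and $\kappa_n$ individually, but not cleanly the combination $(\kappa_n-t)/\theta_n$, which is of the indeterminate form ``small over small'' whenever $\theta_n$ approaches zero. The identity (\ref{ops_Zsum1}) is precisely what dissolves this difficulty, trading the delicate ratio for a manifestly convergent sum over the zeros; thereafter the $1/n$ gain from $\theta_n+t$ played off against the linear growth of the sum is entirely routine.
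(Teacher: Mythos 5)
Your argument is correct, and every ingredient you invoke --- the summation identity (\ref{ops_Zsum1}), the two-sided bounds of Lemma \ref{dL_bound}, and the reciprocal-sum bound (\ref{dL_BD:a}) --- is established in the paper before this corollary, so there is no circularity. However, your route is genuinely different from the paper's. The paper proves (\ref{HE_limit1}) through Proposition \ref{dL_theta-GammaDE}: it first upgrades the boundedness of $4n\theta_n$ and $\kappa_n$ to $\varepsilon := n(n+a+2)-s/4+\Gamma_n(s/4n) = {\rm O}(1)$, rewrites the discriminant $\mathcal{D}$ in terms of $\varepsilon$, and then reads off the result from the closed-form expression $n+(\kappa_n-t)/\theta_n = -\tfrac{1}{2}(a+2)-t/\theta_n+\tfrac{1}{2}(\theta_n+t)-\mathcal{D}/(2\theta_n)$, asserting $\mathcal{D}/\theta_n={\rm O}(1)$. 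That route has the advantage of prefiguring the explicit limit formula (\ref{dL_HE_Cdefn}) for $C(s)$ that is needed later in Corollary \ref{HE_scale2}, but it is delicate: the terms $-t/\theta_n$ and $-\mathcal{D}/(2\theta_n)$ are not individually bounded from what is known (only $|\theta_n|\le t$), so the conclusion rests on a cancellation tied to the choice of square-root branch. Your proof sidesteps this entirely: the identity (\ref{ops_Zsum1}) converts the ``small over small'' ratio into $(\theta_n+t)\sum_{j}1/(\theta_n-x_{j,n})$, whose factors are then bounded crudely but rigorously, $0\le\theta_n+t\le s/4n$ and $\sum_{j}1/(x_{j,n}-\theta_n)\le n/2$, yielding the explicit uniform bound $s/8$ together with the correct (non-positive) sign. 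This is more elementary and in fact sharper than the ${\rm O}(1)$ statement being proved, and it is in the spirit of the paper itself, which uses (\ref{ops_Zsum1}) in the reverse direction in the proof of Proposition \ref{HE_QRscale}. What you give up relative to the paper's approach is any quantitative expression for the limiting value in terms of $\mu$ and $\nu$, but that is not required for this corollary.
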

\begin{proof}
The above lemma states that
\begin{equation}
  \theta_n(s/4n) = {\rm O}(1), \quad \text{as $ n \to \infty $},
\end{equation}
and we find as a consequence that also
\begin{equation}
  \varepsilon := n(n+a+2)-s/4+\Gamma_n(s/4n) = {\rm O}(1), 
  \quad \text{as $ n \to \infty $}.
\label{dL_GammaB}
\end{equation}
The discriminant $ {\mathcal D} $ appearing in the workings of 
Proposition \ref{dL_theta-GammaDE} can then be written as
\begin{equation}
  {\mathcal D}^2 := \theta^4_n-2(2n+a+2-t)\theta^3_n
  +[4(\varepsilon-t)+(a+2-t)^2-4nt]\theta^2_n
    +4t[\varepsilon+a+2-t]\theta_n + 4t^2,
\end{equation}
and therefore
\begin{equation}
  \left.\frac{{\mathcal D}(t)}{\theta_n(t)}\right|_{t=s/4n} = {\rm O}(1), \quad
  \text{as $ n \to \infty $} .
\end{equation}
From the formula for $ \kappa_n $ in Proposition \ref{dL_theta-GammaDE} we note 
that
\begin{equation}
 n+\frac{\kappa_n-t}{\theta_n} 
 = -\frac{a+2}{2}-\frac{t}{\theta_n}+\frac{1}{2}(\theta_n+t)
   -\frac{{\mathcal D}}{2\theta_n} , 
\end{equation}
and the result then follows.
\end{proof}

\begin{proposition}\label{HE_scale1}
For bounded $ s \in \RR_+ $ under the scaling (\ref{dL_HE_vars}) the variables $ \theta_n(t) $ 
and $ \Gamma_n(t) $ converge to limits in the following manner
\begin{align}
  \lim_{n \to \infty}4n\theta_n(t)|_{t=s/4n}
  & = \mu(s) ,
  \label{dL_HE_theta} \\
  \lim_{n \to \infty} n(n+a+2)+\Gamma_n(t)|_{t=s/4n}
  & = \nu(s) .
  \label{dL_HE_Gamma}
\end{align}
The variable $ \kappa_n(t) $ converges like
\begin{equation}
  \lim_{n \to \infty}\kappa_n(t)|_{t=s/4n} = -\frac{1}{4}\mu(s) .
  \label{dL_HE_kappa}
\end{equation}
\end{proposition}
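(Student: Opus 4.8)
The plan is to recast the exact systems as ordinary differential equations in the scaled variable $s$, to upgrade the boundedness already in hand to equicontinuity, and then to extract convergent subsequences by Arzel\`a--Ascoli whose limits are forced to coincide by a uniqueness argument. Introduce the scaled unknowns $\Theta_n(s) := 4n\theta_n(s/4n)$ and $\mathcal{E}_n(s) := n(n+a+2)+\Gamma_n(s/4n)$; since $\mathcal{E}_n = \varepsilon + s/4$ in the notation of (\ref{dL_GammaB}) (using $s/4=nt$), Lemma \ref{dL_bound} and the corollary yielding (\ref{dL_GammaB}) already show that $\Theta_n$, $\kappa_n$ and $\mathcal{E}_n$ are ${\rm O}(1)$ uniformly for $s$ in a bounded interval. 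Writing $\dot{}=d/dt$ and $'=d/ds$, the chain rule under (\ref{dL_HE_vars}) gives $\dot\theta_n=\Theta_n'$ and $\dot\Gamma_n=4n\mathcal{E}_n'$, so $t\dot\theta_n=\tfrac{s}{4n}\Theta_n'$ and $t\dot\Gamma_n=s\mathcal{E}_n'$.

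Substituting these into (\ref{dL_thetaDE}) and (\ref{dL_GammaDE}) recasts the system as $s\Theta_n'=\Theta_n-\sqrt{P_n}$ and $s\mathcal{E}_n'=(n+1)t+\tfrac12\theta_n(2n+a+2-t-\theta_n)+\tfrac{1}{4n}\sqrt{P_n}$, where $P_n:=16n^2\mathcal{D}^2$. The key computation is that $P_n$ has a finite limit: expanding the discriminant of Proposition \ref{dL_theta-GammaDE} in terms of $\Theta_n=4n\theta_n$ and $\varepsilon$, and using the identity $\Gamma_n+n(n+a+2-t)=\varepsilon$ employed in the proof of the preceding corollary, all negative powers of $n$ cancel and $P_n$ converges to a cubic polynomial in $\Theta_n$ whose coefficients are polynomials in $s$ and in the ${\rm O}(1)$ quantity $\mathcal{E}_n$. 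Hence the right-hand sides of both scaled equations are uniformly bounded on any $[\delta,S]$ with $\delta>0$, which bounds $\Theta_n'$ and $\mathcal{E}_n'$ there and gives equicontinuity of $\{\Theta_n\}$, $\{\mathcal{E}_n\}$ on $[\delta,S]$.

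The limit for $\kappa_n$ is then read off cleanly from (\ref{PV:a}). Under the scaling that equation reads $\tfrac{s}{4n}\Theta_n'=2\kappa_n+\tfrac{\Theta_n}{4n}(2n+a+3-t-\tfrac{\Theta_n}{4n})=2\kappa_n+\tfrac12\Theta_n+{\rm O}(1/n)$; since $\Theta_n'$ is bounded near any fixed $s>0$, the left side is ${\rm O}(1/n)\to0$, so every subsequential limit satisfies $0=2\lim\kappa_n+\tfrac12\mu$, which is exactly (\ref{dL_HE_kappa}). This also shows $\kappa_n$ converges once $\Theta_n$ does.

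The main obstacle is to pass from subsequential to genuine convergence, and both delicate points sit at the endpoint $s=0$. First, the derivative bound degenerates like $1/\delta$ as $\delta\to0$, so equicontinuity on the closed interval must be recovered separately; this is supplied by the small-$t$ expansions (\ref{dL_BV:a}) and (\ref{dL_BV:c}), whose coefficients converge under the scaling while the remainders ${\rm O}(t^3)$, ${\rm O}(t^{2a+6})$ scale to ${\rm O}(s^3/n^2)$ and vanish, pinning $\Theta_n(s)$ and $\mathcal{E}_n(s)$ uniformly near $s=0$. Second, the limiting equation $s\mu'=\mu-\sqrt{P_\infty}$ is singular at $s=0$ (the factor $s$ multiplies the derivative), so Picard uniqueness fails there; the resolution is to fix the solution germ at the origin from the limiting forms of the expansions and then apply standard uniqueness with a Gronwall estimate on $[\delta,S]$, letting $\delta\to0$. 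With uniqueness of the limiting initial value problem in place, every Arzel\`a--Ascoli subsequence shares the same limit $(\mu,\nu)$, so the full sequences converge, establishing (\ref{dL_HE_theta}), (\ref{dL_HE_Gamma}) and (\ref{dL_HE_kappa}).
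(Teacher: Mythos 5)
Your overall strategy --- recasting (\ref{dL_thetaDE}), (\ref{dL_GammaDE}) as ODEs in $s$ for $\Theta_n(s)=4n\theta_n(s/4n)$ and $\mathcal{E}_n(s)=n(n+a+2)+\Gamma_n(s/4n)$, extracting subsequential limits by Arzel\`a--Ascoli, and identifying the limit through the limiting system --- is genuinely different from the paper's proof, which never invokes compactness or uniqueness at all: it proves convergence directly by estimating consecutive differences in $n$ of the scaled sequences, using the recurrence (\ref{dL_RR1}) together with mean-value bounds on the derivatives (\ref{PV:a}) and (\ref{dL_PV2}) between the neighbouring scaling points $s/4n$ and $s/4(n-1)$, and then ties $\kappa_n$ to $\theta_n$ via (\ref{dL_diffEQN:b}); the identification with the Painlev\'e $\rm III'$ boundary-value problem is deferred to Proposition \ref{HE_sigmaFn}. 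Several of your computations are sound and even attractive: the cancellation making $P_n=16n^2\mathcal{D}^2$ converge to a cubic in $\Theta_n$ is correct (it reproduces exactly the radicand of (\ref{dL_HE_DEmu})), and your derivation of $\kappa_n\to-\tfrac14\mu$ from (\ref{PV:a}) is cleaner than the paper's. Incidentally, your worry about the $1/\delta$ degeneration is unnecessary: since $|\Theta_n|\le s$ by Lemma \ref{dL_bound} and consequently $P_n={\rm O}(s^2)$ uniformly, the right-hand sides of both scaled equations vanish like $s$, so the derivative bounds, hence equicontinuity, hold uniformly down to $s=0$.

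The genuine gap is in the final, decisive step --- uniqueness of the limiting problem --- and it cannot be closed with the tools you cite. First, your claim that the remainders in (\ref{dL_BV:a}), (\ref{dL_BV:c}) scale to $ {\rm O}(s^3/n^2)$ and vanish is false: the coefficient of $t^j$ in those expansions grows with $n$ (the $t^2$ coefficients are already ${\rm O}(n)$ and ${\rm O}(n^2)$ respectively), so after the substitution $t=s/4n$ the remainders contribute at order $s^3$, not $o(1)$; worse, these are fixed-$n$ asymptotics as $t\to0$ whose implicit constants depend on $n$, so they cannot legitimately be used uniformly along the scaling ray at all without new estimates. Second, and fatally, uniqueness of the limiting singular IVP genuinely fails on the data you propose to use: by the paper's own recurrence analysis of (\ref{dL_HE_sigma}) via the Puiseux expansion (\ref{HE_puiseux}), the coefficient $c_{1,3}$ of $s^{a+3}$ is a \emph{free parameter}, so there is a one-parameter family of solutions vanishing at $s=0$ all sharing the same Taylor germ (this is precisely why the boundary condition (\ref{dL_HE_BCnu_v}) must include the $s^{a+3}$ term). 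Hence ``fixing the solution germ at the origin'' requires showing that every subsequential limit has the correct non-analytic $s^{a+3}$ coefficient, which is exactly the uniform-in-$n$ information your (incorrect) remainder estimate was supposed to supply. As it stands, nothing rules out different subsequences converging to different members of this one-parameter family, and with that the passage from subsequential to full convergence --- the whole point of the uniqueness argument --- collapses.
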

\begin{proof}
Firstly we note that 
\begin{equation}
  \eta := \left. a^2_n(t)-n(n+a+2) \right|_{t=s/4n} = {\rm O}(1), \quad
  \text{as $ n \to \infty $} ,
\nonumber
\end{equation}
as follows from (\ref{dL_GammaB}). Starting with (\ref{dL_RR1}) we see that
\begin{align}
   \theta_{n}-\theta_{n-1} 
  & = 2\theta_{n}+t + \frac{(2n+a+2)\kappa_n-[n^2+(n+1)(a+2)]t}{a^2_n}, \nonumber\\
  & = \frac{1}{a^2_n}\left\{
    (2n+a+2+\eta)t+2(n+a+2)\theta_n\left(n+\frac{\kappa_n-t}{\theta_n}\right)\right\},
\label{HE_limit2}
\end{align} 
and thus 
\begin{equation}
   n\theta_{n}-(n-1)\theta_{n-1} = \theta_{n-1} 
  +  \frac{n}{a^2_n}\left\{
    (2n+a+2+\eta)t+2(n+a+2)\theta_n\left(n+\frac{\kappa_n-t}{\theta_n}\right)\right\}.
\nonumber
\end{equation}
Thus we have shown
\begin{equation}
  \left. n\theta_{n}(t)-(n-1)\theta_{n-1}(t)\right|_{t=s/4n} = {\rm O}(n^{-1}), \quad
  \text{as $ n \to \infty $} .
\nonumber
\end{equation}
Now we can write the quantity of interest 
\begin{multline}
  \left. n\theta_{n}(t)\right|_{t=s/4n}-\left. (n-1)\theta_{n-1}(t)\right|_{t=s/4(n-1)} \\
 = \left.\left[ n\theta_{n}(t)-(n-1)\theta_{n-1}(t)\right]\right|_{t=s/4n} \\
   + \left.(n-1)\theta_{n-1}(t)\right|_{t=s/4n}-\left.(n-1)\theta_{n-1}(t)\right|_{t=s/4(n-1)} ,
\nonumber
\end{multline}
so we require bounds on the difference of the last two terms on the right-hand side.
Let $ t=s/4n $ and $ t_{>}=s/4(n-1) $.
Because $ \theta_n(t) $ is continuously differentiable and its derivative is given by 
(\ref{PV:a})
\begin{multline}
  (n-1)\left|\theta_{n-1}(t)-\theta_{n-1}(t_{>})\right| \\
\begin{aligned}
  & \leq (n-1)(t_{>}-t)\max_{u \in (t,t_{>})}|\dot{\theta}_{n-1}(u)|, \nonumber\\
  & \leq \frac{s}{4n} \max_{u \in (t,t_{>})}|\dot{\theta}_{n-1}(u)|, \nonumber\\
  & \leq \frac{s}{4n} \max_{u \in (t,t_{>})}u^{-1}
         |2\kappa_{n-1}(u)+\theta_{n-1}(u)(2n+a+1-u-\theta_{n-1}(u))|,
      \nonumber\\
  & \leq \frac{s}{4n} \max_{u \in (t,t_{>})}u^{-1}|\theta_{n-1}(u)|
    \nonumber\\
  & \qquad \times
      \left|2\left(n-1+\frac{\kappa_{n-1}(u)-u}{\theta_{n-1}(u)}\right)
            +a+3-u-\theta_{n-1}(u)+\frac{2u}{\theta_{n-1}(u)}\right|, \nonumber\\
  & \leq t_{>}\max_{u \in (t,t_{>})}
      \left( 2\left|n-1+\frac{\kappa_{n-1}(u)-u}{\theta_{n-1}(u)}\right|
            +a+3+u+|\theta_{n-1}(u)|+\frac{2u}{|\theta_{n-1}(u)|}\right)
  \nonumber\\
  & = {\rm O}(n^{-1}), \quad \text{as $ n \to \infty $}.
\nonumber
\end{aligned}
\nonumber
\end{multline}
Thus the limit shown in (\ref{dL_HE_theta}) exists. Turning our attention to
(\ref{dL_HE_kappa}) we can use (\ref{dL_diffEQN:a}) to compute that
\begin{align}
  \kappa_{n+1}-\kappa_n 
  & = -2\kappa_n-b_n\theta_n, \nonumber\\
  & = -2\kappa_n-(2n+a+3-t-\theta_n)\theta_n, \nonumber\\
  & = -2\theta_n\left( n+\frac{\kappa_n-t}{\theta_n} \right) 
      -2t-(a+3)\theta_n+\theta_n(\theta_n+t),
  \nonumber
\end{align}
and therefore
\begin{equation}
  \left.\left[ \kappa_{n+1}(t)-\kappa_n(t)\right]\right|_{t=s/4n} = {\rm O}(n^{-1}), \quad
  \text{as $ n \to \infty $} .
  \nonumber
\end{equation}
Now in this case the quantity we require is
\begin{multline}
  \left. \kappa_{n+1}(t)\right|_{t=s/4(n+1)}-\left. \kappa_{n}(t)\right|_{t=s/4n} \\
 = \left.\kappa_{n+1}(t)\right|_{t=s/4(n+1)}-\left.\kappa_{n+1}(t)\right|_{t=s/4n}
   + \left.\left[ \kappa_{n+1}(t)-\kappa_{n}(t)\right]\right|_{t=s/4n} ,
\nonumber
\end{multline}
and therefore we need to bound the difference of first two terms on the right-hand side.
Let us denote $ t_{<}=s/4(n+1) $.
Again because $ \kappa_n(t) $ is continuously differentiable with derivative 
(\ref{dL_PV2}) we have
\begin{multline}
  \left|\kappa_{n+1}(t_{<})-\kappa_{n+1}(t)\right| \\
\begin{aligned}
  & \leq (t-t_{<})\max_{u \in (t_{<},t)}|\dot{\kappa}_{n+1}(u)|, \nonumber\\
  & \leq \frac{s}{4n(n+1)} \max_{u \in (t_{<},t)}|\dot{\kappa}_{n+1}(u)|, \nonumber\\
  & \leq \frac{s}{4n(n+1)} \max_{u \in (t_{<},t)}u^{-1}
         |\kappa_{n+1}(u)-a^2_{n+1}(u)(\theta_{n+1}(u)-\theta_{n}(u))|,
      \nonumber\\
  & \leq \frac{1}{n}\max_{u \in (t_{<},t)}
      \Big( |\kappa_{n+1}(u)|+(2n+a+4+|\eta(u)|)u
    \nonumber\\
  & \qquad
    +2(n+a+3)|\theta_{n+1}(u)|\left|n+1+\frac{\kappa_{n+1}(u)-u}{\theta_{n+1}(u)}\right| \Big)
    \nonumber\\
  & = {\rm O}(n^{-1}), \quad \text{as $ n \to \infty $} .
\nonumber
\end{aligned}
\nonumber
\end{multline}
In the last two steps we have used (\ref{HE_limit2}) and the subsequent estimates.
Thus the limit in (\ref{dL_HE_kappa}) follows. 
The fact that the limit of $ \kappa_n(t) $ under the hard edge scaling 
is related to the limit given in (\ref{dL_HE_theta}) follows from the relation (\ref{dL_diffEQN:b}).
The limit (\ref{dL_HE_Gamma}) is a consequence of the limits in the primary variables.
\end{proof}


In addition the following combinations of variables possess scaling limits which
will subsequently be useful.
\begin{corollary}\label{HE_scale2}
For bounded $ s\in \RR_+ $ the following limits as $ n \to \infty $ exist
\begin{align}
  \left.\frac{2\kappa_n(t)+\theta_n(t)b_n(t)}{\theta_n(t)}\right|_{t=s/4n}
  & \mathop{\to}\limits_{n \to \infty} s\frac{\dot{\mu}}{\mu} ,
  \label{dL_HE_comb1} \\
  \left.\left( n+\frac{\kappa_n(t)-t}{\theta_n(t)} \right)\right|_{t=s/4n}
  & \mathop{\to}\limits_{n \to \infty} C(s) ,
  \label{dL_HE_comb2} \\
  \left.\left( n+a+2+\frac{\kappa_n(t)+t}{\theta_{n-1}(t)} \right)\right|_{t=s/4n}
  & \mathop{\to}\limits_{n \to \infty} -C(s) ,
  \label{dL_HE_comb3} \\
  \left.n\left( n+\frac{\kappa_n(t)-t}{\theta_n(t)}
         +n+a+2+\frac{\kappa_n(t)+t}{\theta_{n-1}(t)} \right)\right|_{t=s/4n}
  & \mathop{\to}\limits_{n \to \infty} \xi(s) .
  \label{dL_HE_comb4}
\end{align}
\end{corollary}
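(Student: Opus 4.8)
The plan is to establish the four limits in the order (\ref{dL_HE_comb2}), (\ref{dL_HE_comb1}), (\ref{dL_HE_comb3}), (\ref{dL_HE_comb4}), reducing each to the three basic convergences of Proposition \ref{HE_scale1} together with the exact algebraic relations of Section 3. I abbreviate $C_n:=n+(\kappa_n-t)/\theta_n$ and $\tilde C_n:=n+a+2+(\kappa_n+t)/\theta_{n-1}$, always evaluated at $t=s/4n$; by (\ref{dL_HE_theta}) one has $\theta_n,t\to 0$ with $t/\theta_n\to s/\mu$ and $t^2/\theta_n^2\to s^2/\mu^2$ (both negative, since $\mu(s)\le 0$), and also $4n\theta_{n-1}\to\mu(s)$ by the estimate $n\theta_n-(n-1)\theta_{n-1}={\rm O}(n^{-1})$ from the proof of Proposition \ref{HE_scale1}. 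For (\ref{dL_HE_comb2}) I would start from the exact identity
\[ C_n=-\frac{a+2}{2}-\frac{t}{\theta_n}+\frac{1}{2}(\theta_n+t)-\frac{\mathcal D}{2\theta_n} \]
already used to prove (\ref{HE_limit1}): every term converges except $\mathcal D/(2\theta_n)$, and substituting the limits of Proposition \ref{HE_scale1} (with $\varepsilon\to\nu-s/4$) into the displayed formula for $\mathcal D^2$ shows that $\mathcal D^2/\theta_n^2$ tends to an explicit function of $\mu(s),\nu(s),s$. Since the branch of $\mathcal D$ was fixed by the $t\to 0$ expansion in Proposition \ref{dL_theta-GammaDE}, the sign of $\mathcal D/\theta_n$ is eventually constant, whence $C_n\to C(s)$.

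With (\ref{dL_HE_comb2}) in hand, (\ref{dL_HE_comb1}) is nearly immediate: from $b_n=2n+a+3-t-\theta_n$ (see (\ref{dL_thetaDef})),
\[ \frac{2\kappa_n+\theta_n b_n}{\theta_n}=2C_n+\frac{2t}{\theta_n}+a+3-t-\theta_n\longrightarrow 2C(s)+\frac{2s}{\mu}+a+3 . \]
To identify this value with $s\dot\mu/\mu$ I would use (\ref{PV:a}), which makes the left-hand side equal to $t\dot\theta_n/\theta_n=s\,g_n'(s)/g_n(s)$ for $g_n(s):=4n\theta_n(s/4n)\to\mu(s)$; since $g_n'$ then converges pointwise and locally boundedly, the fundamental theorem of calculus forces $\mu'=(2C+2s/\mu+a+3)\mu/s$, so the common limit is indeed $s\dot\mu/\mu$.

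For (\ref{dL_HE_comb3}), rather than substitute directly (which produces an indeterminate $n-n$ form) I would invoke the exact relation (\ref{dL_RR6}),
\[ \tilde C_n=-\frac{\kappa_n-(n+a+1)t}{\kappa_n-t}\cdot\frac{\theta_n}{\theta_n+t}\,C_n . \]
The two prefactors tend to $(\mu+s)/\mu$ and $\mu/(\mu+s)$, whose product is $1$, so $\tilde C_n\to -C(s)$. Finally (\ref{dL_HE_comb4}) rests on the product relation (\ref{dL_RR4}): setting $u:=(\kappa_n-t)/\theta_n=C_n-n$ and $v:=(\kappa_n+t)/\theta_{n-1}=\tilde C_n-(n+a+2)$ it reads $uv=a_n^2$, and writing $a_n^2=n(n+a+2)+\eta$ this rearranges exactly into
\[ n(C_n+\tilde C_n)=-\eta-(a+2)C_n+C_n\tilde C_n . \]
Here $\eta$ converges without further work, since (\ref{dL_kappaDef}) gives $\eta=(n+1)t-\kappa_n-[\Gamma_n+n(n+a+2)]\to s/4+\mu/4-\nu$ by (\ref{dL_HE_kappa}) and (\ref{dL_HE_Gamma}); letting $n\to\infty$ with $C_n\to C$ and $\tilde C_n\to -C$ then yields (\ref{dL_HE_comb4}) with $\xi(s)=-(s/4+\mu/4-\nu)-(a+2)C-C^2$.

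I expect the principal obstacle to be the convergence of $\mathcal D/\theta_n$ in the first step: one must verify that the contributions to $\mathcal D^2$ singular in $\theta_n^{-1}$ and $\theta_n^{-2}$ conspire into a finite limit---precisely where the cancellations packaged in Proposition \ref{HE_scale1} are indispensable---and that the square root is sign-definite on the scaled domain $s>z>0$. Once (\ref{dL_HE_comb2}) is secured, the remaining three limits are purely algebraic consequences of the relations (\ref{dL_thetaDef}), (\ref{dL_kappaDef}), (\ref{dL_RR6}), (\ref{dL_RR4}) and the differential relation (\ref{PV:a}), combined with Proposition \ref{HE_scale1}.
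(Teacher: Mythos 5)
Your proposal is correct, and for (\ref{dL_HE_comb2}) and (\ref{dL_HE_comb3}) it follows the paper's own route: the paper likewise obtains (\ref{dL_HE_comb2}) from the formula $C_n:=n+(\kappa_n-t)/\theta_n=-\tfrac{a+2}{2}-t/\theta_n+\tfrac{1}{2}(\theta_n+t)-\mathcal{D}/(2\theta_n)$ underlying (\ref{HE_limit1}), and (\ref{dL_HE_comb3}) from the identity (\ref{dL_RR6}). You genuinely differ on the other two limits, and in both cases your version is more complete. For (\ref{dL_HE_comb4}) the paper appeals to (\ref{dL_RR7}), i.e.\ $C_n+\tilde C_{n+1}=\theta_n+t$ with $\tilde C_n:=n+a+2+(\kappa_n+t)/\theta_{n-1}$, which still leaves one to control the increment $\tilde C_{n+1}-\tilde C_n$; your use of (\ref{dL_RR4}) instead, rewritten as $n(C_n+\tilde C_n)=-\eta-(a+2)C_n+C_n\tilde C_n$ with $\eta=a_n^2-n(n+a+2)\to s/4+\mu/4-\nu$, reduces everything to limits already established and produces the closed form $\xi=\nu-s/4-\mu/4-(a+2)C-C^2$, which (using (\ref{dL_HE_Cdefn})) is exactly the relation (\ref{dL_HE_xidefn}) that the paper only states afterwards without derivation. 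For (\ref{dL_HE_comb1}) the paper's proof is silent; your argument --- identify the left-hand side with $t\dot\theta_n/\theta_n=s\,g_n'(s)/g_n(s)$, where $g_n(s):=4n\theta_n(s/4n)$, via (\ref{PV:a}) and (\ref{dL_thetaDef}), show it converges to $2C+2s/\mu+a+3$, and then integrate the locally bounded, pointwise convergent $g_n'$ to get $\dot\mu=(2C+2s/\mu+a+3)\mu/s$ --- simultaneously proves the limit and justifies (\ref{HE_Crel}), which the paper merely records later.

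The one soft spot, which you flag yourself, is the sign of $\mathcal{D}/\theta_n$ in the proof of (\ref{dL_HE_comb2}): convergence of $\mathcal{D}^2/\theta_n^2$ alone does not force convergence of $\mathcal{D}/\theta_n$, and ``the branch was fixed by the $t\to0$ expansion'' does not by itself exclude a sign change of $\mathcal{D}$ between $t=0$ and $t=s/4n$, so the eventual constancy of the sign along the scaled sequence still needs an argument. This gap is shared with, indeed inherited from, the paper, whose proof cites only the boundedness statement (\ref{HE_limit1}) (boundedness is of course weaker than convergence); your proposal is no less rigorous than the published proof on this point, and is more explicit everywhere else.
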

\begin{proof}
The limit in (\ref{dL_HE_comb2}) is a consequence of (\ref{HE_limit1}) in a 
previous corollary.
The scaling limit of (\ref{dL_HE_comb3}) follows from that of (\ref{dL_HE_comb2})
and the identity (\ref{dL_RR6}).
The limit (\ref{dL_HE_comb4}) can be derived from (\ref{dL_RR7}) and as a result 
one can deduce that
\begin{equation}
  n+\frac{\kappa_n-t}{\theta_n}+n+a+2+\frac{\kappa_n+t}{\theta_{n-1}}
\nonumber
\end{equation}
is of order $ {\rm O}(n^{-1}) $.
\end{proof}

We note other relations amongst the scaling limit functions, namely
\begin{multline}
   2\mu C(s) = -[(a+2)\mu+2s] \\
   - \left\{ [(a+2)\mu+2s]^2+4\mu(\mu+s)\nu-\mu(\mu+s)^2 \right\}^{1/2} ,
\label{dL_HE_Cdefn}
\end{multline}
and
\begin{equation}
   \xi(s) = -\frac{sC(C+a)}{\mu+s} ,
\label{dL_HE_xidefn}
\end{equation}
and
\begin{equation}
   2C+a+3 = s\frac{\dot{\mu}-2}{\mu} .
\label{HE_Crel}
\end{equation}

\begin{proposition}\label{HE_sigmaFn}
The scaled variables $ \mu(s), \nu(s) $ are characterised by solutions to the
\PIIIprime system with parameters $ v_1=a+2, v_2=a-2 $. In particular 
\begin{equation}
  \nu(s) = -\sigma_{\rm III'}(s)+\frac{1}{4}s-a-2 ,
\label{dL_HE_nu_v}
\end{equation}
where $ \sigma_{\rm III'}(s) $ satisfies the Jimbo-Miwa-Okamoto $ \sigma$-form for 
\PIIIprime with above parameters.
The boundary conditions to uniquely specify the solution $ \nu(s) $ are
\begin{multline}
  \nu(s) \mathop{=}\limits_{s \to 0}
  \frac{a}{a+2}\frac{s}{4} 
  - \frac{a}{8(a+3)(a+2)^2(a+1)}s^2 + {\rm O}(s^3) \\
  - \frac{2}{4^{a+3}(a+2)(a+1)\Gamma(a+3)\Gamma(a+4)}s^{a+3}
    \left(1+{\rm O}(s)\right) + {\rm O}(s^{2a+6}) ,
\label{dL_HE_BCnu_v}
\end{multline}
assuming $ a \notin \ZZ_{\geq 0} $ and $ |{\rm arg}(s)| < \pi $.
\end{proposition}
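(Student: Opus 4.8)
The plan is to descend directly from the coupled first-order system (\ref{dL_thetaDE})--(\ref{dL_GammaDE}) of Proposition \ref{dL_theta-GammaDE} for $\theta_n(t),\Gamma_n(t)$, substitute the hard edge scaling (\ref{dL_HE_vars}), and pass to the limit $n\to\infty$ using the convergences (\ref{dL_HE_theta})--(\ref{dL_HE_kappa}) already supplied by Proposition \ref{HE_scale1}. First I would insert $\theta_n(t)=\mu(s)/4n+o(1/n)$ (with $s=4nt$) and $n(n+a+2)+\Gamma_n(t)=\nu(s)+o(1)$ into the quartic expression $\mathcal D^2$ under the square roots. A term-by-term balance shows that this expression is $O(n^{-2})$, with the four surviving contributions collapsing to $\mathcal D=\tilde{\mathcal D}/4n+o(1/n)$, where
\[
\tilde{\mathcal D}^2 = -\mu^3+[4\nu-2s+(a+2)^2]\mu^2+[4s\nu-s^2+4s(a+2)]\mu+4s^2 .
\]
Feeding this back into the two scaled equations --- noting that in the $\theta_n$ equation $\mathcal D$ enters at the same order as $\theta_n$ and so survives, while in the $\Gamma_n$ equation it is subleading and drops --- yields the limiting coupled system $s\dot\mu=\mu-\tilde{\mathcal D}$ and $s\dot\nu=(s+\mu)/4$, where the derivatives are now taken with respect to $s$. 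As a check, the second relation with the small-$s$ forms of $\mu,\nu$ is already consistent with the leading coefficient $a/(a+2)$ in (\ref{dL_HE_BCnu_v}).

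The second equation is algebraic-differential: it gives $\mu=4s\dot\nu-s$, hence $s\dot\mu-\mu=4s^2\ddot\nu$. Squaring the first equation then produces the single second-order ODE $16s^4\ddot\nu^2=\tilde{\mathcal D}^2$ for $\nu$ alone (the sign of the square root being fixed by the boundary data, though it is immaterial after squaring). The decisive step is to verify that the substitution $\nu=-\sigma+\tfrac14 s-a-2$ of (\ref{dL_HE_nu_v}) linearises the coefficients: one finds the clean identities $\mu=-4s\dot\sigma$, $4\nu-2s+(a+2)^2=-4\sigma-s+a^2-4$ and $4s\nu-s^2+4s(a+2)=-4s\sigma$, whereupon $\tilde{\mathcal D}^2/16s^2$ reduces term-by-term to the right-hand side of the Jimbo--Miwa--Okamoto $\sigma$-form for \PIIIprime with $v_1v_2=a^2-4$ and $\tfrac1{4^3}(v_1-v_2)^2=\tfrac14$. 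Together these fix $v_1=a+2$, $v_2=a-2$ and establish the $\sigma$-form characterisation (\ref{dL_HE_nu_v}); the full \PIIIprime system follows by the same passage to the limit applied to the Hamiltonian variables.

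For the boundary data I would scale the known $t\to0$ expansion (\ref{dL_BV:c}) of $n(n+a+2)+\Gamma_n(t)$ directly. Setting $t=s/4n$, each analytic term carries a coefficient of order $n^k$ against $t^k$, so it tends to a finite multiple of $s^k/4^k$; in particular the first two terms reproduce $\tfrac{a}{a+2}\tfrac s4$ and $-\tfrac{a}{8(a+3)(a+2)^2(a+1)}s^2$. For the non-analytic term one uses $\Gamma(a+n+3)/\Gamma(n)\sim n^{a+3}$, so that the factor $\Gamma(a+n+3)\Gamma(n)^{-1}t^{a+3}$ tends to $s^{a+3}/4^{a+3}$, yielding exactly the stated coefficient $-2\,[4^{a+3}(a+2)(a+1)\Gamma(a+3)\Gamma(a+4)]^{-1}s^{a+3}$ of (\ref{dL_HE_BCnu_v}). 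The restrictions $a\notin\ZZ_{\geq0}$ and $|\arg(s)|<\pi$ are inherited from those on (\ref{dL_BV:c}).

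The step I expect to be the genuine obstacle is not the $\sigma$-form algebra (a finite, if delicate, verification) but the analytic justification of passing the limit through the $t$-derivatives --- that is, showing $s\dot\mu(s)=\lim_{n\to\infty}4n\,t\dot\theta_n|_{t=s/4n}$ and its analogue for $\nu$, so that the limit functions genuinely solve the limiting ODEs rather than merely being pointwise limits of solutions. This needs the pointwise convergences of Proposition \ref{HE_scale1} upgraded to locally uniform convergence on compact $s$-intervals. The uniform two-sided bounds of Lemma \ref{dL_bound}, combined with the explicit first-order forms (\ref{PV:a}) and (\ref{dL_PV2}) (which bound the $t$-derivatives and hence give equicontinuity after rescaling), should supply an Arzel\`a--Ascoli argument, after which the differentiability of $\mu,\nu$ and the interchange of limit and derivative follow in the standard way.
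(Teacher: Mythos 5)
Your proposal is correct and is essentially the paper's own proof: the paper likewise formally scales the coupled system (\ref{dL_thetaDE})--(\ref{dL_GammaDE}) under (\ref{dL_HE_vars}) to obtain the two relations $4s\dot{\nu}=\mu+s$ and $s\dot{\mu}=\mu\pm\tilde{\mathcal D}$ (your $\tilde{\mathcal D}^2$ agrees exactly with the bracketed expression in (\ref{dL_HE_DEmu}), the sign being only a branch choice), eliminates $\mu$ to reach the second-order equation (\ref{dL_HE_sigma}), and reads off the boundary data by scaling (\ref{dL_BV:c}) with $\Gamma(a+n+3)/\Gamma(n)\sim n^{a+3}$, exactly as you do. Your write-up is in fact slightly more complete: you verify explicitly that the shift (\ref{dL_HE_nu_v}) converts (\ref{dL_HE_sigma}) into the Jimbo--Miwa--Okamoto $\sigma$-form with $v_1v_2=a^2-4$ and $\frac{1}{4^3}(v_1-v_2)^2=\frac{1}{4}$ (the paper only remarks its equation is ``almost'' the $\sigma$-form), you attribute each scaled relation to the correct member of the pair --- $4s\dot{\nu}=\mu+s$ comes from (\ref{dL_GammaDE}) at order one and the square-root relation from (\ref{dL_thetaDE}) at order $n^{-1}$, whereas the paper's text states this the other way around --- and you flag the limit--derivative interchange that the paper, which works avowedly formally, does not address.
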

\begin{proof}
Introducing the scaling ansatzes (\ref{dL_HE_theta}) and (\ref{dL_HE_Gamma}) 
formally into the differential equation (\ref{dL_thetaDE}) under the scaling
(\ref{dL_HE_vars}) we find that the highest order nontrivial relation
\begin{equation}
  \mu(s)+s = 4s\dot{\nu}(s) ,
\label{dL_HE_DEnu}
\end{equation}
at order $ n^{-1} $.
Proceeding in the same manner with the differential equation (\ref{dL_GammaDE})
we find the highest order relation is
\begin{equation}
  s\dot{\mu}(s) = \mu + \left\{ [(a+2)\mu+2s]^2+4\mu(\mu+s)\nu-\mu(\mu+s)^2 \right\}^{1/2} ,
\label{dL_HE_DEmu}
\end{equation}
which occurs at order $  n^{-1} $ as well. Eliminating $ \mu(s) $ using (\ref{dL_HE_DEnu})
we find that (\ref{dL_HE_DEmu}) yields
\begin{equation}
   s^2(\ddot{\nu})^2-(a+2)^2(\dot{\nu})^2
  +\dot{\nu}(4\dot{\nu}-1)(s\dot{\nu}-\nu)+\frac{1}{2}a(a+2)\dot{\nu}
  -\frac{1}{16}a^2 = 0 ,
\label{dL_HE_sigma}
\end{equation}
which is almost the Jimbo-Miwa-Okamoto $ \sigma$-form for \PIIIprime \cite{OK_1987c}.
The boundary conditions follow from the application of the scaling limit (\ref{dL_HE_Gamma}) 
to the expansion about $ t=0 $, Equation (\ref{dL_BV:c}). 
\end{proof}

In addition we find the scaling behaviour of the Hankel determinants and 
polynomial evaluations to be given by the following propositions.
\begin{proposition}
As $ n \to \infty $ under the hard edge scaling the Hankel determinants scale as
\begin{equation}
  \Delta_n(t)|_{t=s/4n} \mathop{\sim}\limits_{n \to \infty}
  1!\ldots (n-1)! \Gamma(a+3) \ldots \Gamma(n+a+2)
  \exp\left( \int^s_0\frac{du}{u}\nu(u) \right) ,
\label{dL_HE_Delta}
\end{equation}
and the monic polynomials evaluated at $ x=0 $ scale as
\begin{equation}
  \pi_n(0;t)|_{t=s/4n} \mathop{\sim}\limits_{n \to \infty}
  (-1)^n(a+3)_n\exp\left( \int^s_0\frac{du}{u}C(u) \right) .
\label{dL_HE_pi}
\end{equation}
\end{proposition}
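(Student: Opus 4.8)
The plan is to obtain both relations by integrating a logarithmic derivative with respect to the scaled variable $ s $, leaning on the convergence statements already proved in Proposition~\ref{HE_scale1} and Corollary~\ref{HE_scale2}. The exact identities (\ref{GammaDelta}) and (\ref{dL_PDderiv}) give $ t\,d/dt $ of $ \log\Delta_n(t) $ and $ \log\pi_n(0;t) $ in closed form, and applying the chain rule to the substitution $ t=s/4n $ converts these into
\[
  \frac{d}{ds}\log\Delta_n(s/4n) = \frac1s\bigl[ n(n+a+2)+\Gamma_n(t) \bigr]_{t=s/4n} ,
\]
\[
  \frac{d}{ds}\log\pi_n(0;s/4n) = \frac1s\left[ n+\frac{\kappa_n(t)-t}{\theta_n(t)} \right]_{t=s/4n} .
\]
Integrating each from $ 0 $ to $ s $ and exponentiating gives
\[
  \Delta_n(s/4n) = \Delta_n(0)\exp\left( \int^s_0\frac{du}{u}\bigl[ n(n+a+2)+\Gamma_n(u/4n) \bigr] \right) ,
\]
together with the analogous expression for $ \pi_n(0;s/4n) $ built from the integrand of (\ref{dL_PDderiv}).

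Next I would identify the prefactors with the $ s=0 $ values. For the determinant, $ \Delta_n(0) $ is the Laguerre evaluation (\ref{DeltaZero}), and the rearrangement $ \prod_{j=1}^n j!/n! = 1!\cdots(n-1)! $ together with $ \prod_{j=1}^n\Gamma(j+a+2)=\Gamma(a+3)\cdots\Gamma(n+a+2) $ reproduces exactly the product appearing in (\ref{dL_HE_Delta}). For the monic polynomial, $ \pi_n(0;0) $ is the value at the origin of the monic associated Laguerre polynomial of index $ a+2 $, which equals $ (-1)^n(a+3)_n $, matching the prefactor in (\ref{dL_HE_pi}). By (\ref{dL_HE_Gamma}) and (\ref{dL_HE_comb2}) the two integrands converge pointwise to $ \nu(u)/u $ and $ C(u)/u $ respectively, so, granting that the limit may be taken inside the integral, the claimed exponentials follow.

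The integrals converge at the lower endpoint: (\ref{dL_HE_BCnu_v}) gives $ \nu(u)/u\to a/[4(a+2)] $ as $ u\to0 $, while (\ref{HE_Crel}) with the small-$ s $ behaviour $ \mu(s)\sim -2s/(a+2) $ forces $ C(u)\to0 $, so $ C(u)/u $ is bounded near the origin as well. The genuine difficulty, and the step requiring real care, is the justification of the interchange of $ \lim_{n\to\infty} $ with the $ u $-integration, for which I would establish a uniform-in-$ n $ bound on the two integrands over $ [0,s] $ and then invoke dominated convergence. Writing $ n(n+a+2)+\Gamma_n(u/4n)=\varepsilon(u)+u/4 $ with $ \varepsilon $ the $ {\rm O}(1) $ quantity of (\ref{dL_GammaB}), one has $ \varepsilon(0)=0 $ because $ \Gamma_n(0)=-n(n+a+2) $; the remaining task is to control $ \varepsilon(u)/u $ uniformly near $ u=0 $. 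This is where the endpoint behaviour is most delicate, since differentiating $ \varepsilon $ via (\ref{dL_PV1}) produces $ (\eta(u)+\varepsilon(u))/u $ with $ \eta=a_n^2-n(n+a+2) $ also vanishing at $ u=0 $ by (\ref{aSQZero}); showing that $ \eta+\varepsilon={\rm O}(u) $ uniformly, using the explicit derivative equations (\ref{PV:a}), (\ref{dL_PV2}) and the two-sided bounds of Lemma~\ref{dL_bound}, is the main obstacle. The integrand of (\ref{dL_PDderiv}) is handled in the same way, its boundedness following from (\ref{HE_limit1}) and Lemma~\ref{dL_bound}. Once this uniform control is in place, dominated convergence yields both scaling limits.
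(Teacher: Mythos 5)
Your proposal is correct and follows essentially the same route as the paper: integrate the exact logarithmic-derivative identities (\ref{GammaDelta}) and (\ref{dL_PDderiv}), pass to the limit in the integrands via (\ref{dL_HE_Gamma}) and (\ref{dL_HE_comb2}), and read off the prefactors from the $t=0$ Laguerre normalisations (\ref{DeltaZero}) and $\pi_n(0;0)=(-1)^n(a+3)_n$. The only substantive difference is that you explicitly flag, and sketch a dominated-convergence treatment of, the interchange of $n\to\infty$ with the $u$-integration --- a point the paper's own proof passes over in silence.
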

\begin{proof}
The relation (\ref{dL_HE_Delta}) arises from integrating (\ref{GammaDelta}) with
respect to $ t $ and then employing the scaling form for the integrand as given by
(\ref{dL_HE_Gamma}). The second relation is derived by integrating (\ref{dL_PDderiv})
and using (\ref{dL_HE_comb2}) for the scaling of the resulting integrand. The 
factorial and Pochhammer prefactors arise from the normalisations at $ t=0 $.
\end{proof}

We find that the polynomial ratios have well defined scaling behaviour rather than
the polynomial themselves.
\begin{proposition}\label{HE_QRscale}
The polynomial ratios $ Q_{n}(x;t), R_{n}(x;t) $ scale as 
\begin{align}
    \lim_{n \to \infty}Q_n(x;t)|_{x=-z/4n,t=s/4n}  & = q(z;s) ,
\label{dL_HE_Q} \\
    \lim_{n \to \infty}nR_n(x;t)|_{x=-z/4n,t=s/4n} & = p(z;s) ,
\label{dL_HE_R}
\end{align}
where $ q(z;s), p(z;s) $ are entire functions of $ z $.
\end{proposition}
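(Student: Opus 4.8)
The plan is to establish the two limits in two stages: first obtain uniform bounds that exhibit $\{Q_n\}$ and $\{nR_n\}$ as normal families of entire functions of $z$, and then identify the limiting linear differential system in $z$ whose normalised solution is unique, so that every convergent subsequence has the same limit. The natural starting point is the product representation $Q_n(x;t)=\prod_{j=1}^n(1-x/x_{j,n})$, which under the scaling (\ref{dL_HE_vars}) becomes the degree-$n$ polynomial
\begin{equation*}
Q_n\!\left(-\tfrac{z}{4n};\tfrac{s}{4n}\right)=\prod_{j=1}^n\Bigl(1+\frac{z}{4n\,x_{j,n}(s/4n)}\Bigr),
\end{equation*}
hence entire in $z$; the same holds for $Q_{n-1}$ and therefore for $nR_n=n(Q_n-Q_{n-1})$ by (\ref{dL_Rdefn}).

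For the bounds I would use the reciprocal-sum estimate (\ref{dL_BD:a}). Since $|1+w|\le e^{|w|}$,
\begin{equation*}
\bigl|Q_n(-\tfrac{z}{4n};\tfrac{s}{4n})\bigr|\le\exp\Bigl(\frac{|z|}{4n}\sum_{j=1}^n\frac{1}{x_{j,n}(s/4n)}\Bigr)\le e^{|z|/8},
\end{equation*}
uniformly in $n$ and locally uniformly in $z$, with the identical bound for $Q_{n-1}$. By Montel's theorem $\{Q_n\}$ is then a normal family, so subsequential locally uniform limits exist, are entire, and have convergent derivatives. Controlling $nR_n$ is the delicate point, since it is an $n$-fold amplification of a difference of two nearly equal quantities. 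Here I would use the vanishing initial value $nR_n|_{z=0}=0$ (both ratios equal $1$ at $x=0$) together with the scaled form of (\ref{dL_S:b}): after substituting (\ref{dL_HE_vars}) and $\partial_x=-4n\partial_z$, the combination (\ref{dL_HE_comb4}) forces the coefficient of $Q_n$ to be ${\rm O}(n^{-1})$ while the coefficient of $R_n$ stays ${\rm O}(1)$; a Gronwall estimate integrated from $z=0$ then yields $R_n={\rm O}(n^{-1})$ on compact sets, so that $\{nR_n\}$ is bounded and also normal.

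To pin down the limit I would pass to the limit in the scaled spectral-derivative equations (\ref{dL_S:a}) and (\ref{dL_S:b}). Writing derivatives with respect to $z$ as $'$, and inserting the established scaling limits from Proposition \ref{HE_scale1} and Corollary \ref{HE_scale2} --- namely $4n\theta_n\to\mu$, $\kappa_n\to-\tfrac14\mu$ (equations (\ref{dL_HE_theta}), (\ref{dL_HE_kappa})), $n+(\kappa_n-t)/\theta_n\to C$, and $n\bigl(n+(\kappa_n-t)/\theta_n+n+a+2+(\kappa_n+t)/\theta_{n-1}\bigr)\to\xi$ (equation (\ref{dL_HE_comb4})) --- the two equations converge to the coupled first-order linear system
\begin{gather*}
z(s-z)\,q'=-zC\,q-(z+\mu)\,p , \\
z(s-z)\,p'=-z\Bigl(\xi+\tfrac{z-s}{4}\Bigr)q+\bigl(zC+(a+2)z-2s\bigr)p ,
\end{gather*}
for the limiting pair $(q,p)$, with regular singular points only at $z=0$ and $z=s$. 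The normalisation forces $q(0)=1$ and $p(0)=0$; an indicial computation at $z=0$ gives exponents $0$ and $-2$, so the holomorphic solution with these initial data is unique. Hence all subsequential limits coincide, the full sequences converge, and $q,p$ are entire as locally uniform limits of polynomials.

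The main obstacle is precisely the bound on $nR_n$: establishing that $Q_n-Q_{n-1}$ decays at the exact rate $n^{-1}$ rather than merely tending to $0$ requires the cancellation encoded in (\ref{dL_HE_comb4}) and careful treatment of the regular singular points $z=0,s$, where a naive Gronwall argument degenerates; the values (\ref{dL_singQ:a}) at $x=-t$ (i.e.\ $z=s$) can be used to confirm regularity there. A secondary point is that $p$, recovered from the first equation as $p=-\bigl(z(s-z)q'+zCq\bigr)/(z+\mu)$, must be checked to be entire, i.e.\ that the apparent pole at $z=-\mu$ cancels; since $-\mu\in[0,s]$ lies in the region of interest this is a genuine requirement, but it is automatic once $p$ is known to be a locally uniform limit of the entire functions $nR_n$.
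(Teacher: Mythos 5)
Your proposal is correct, but it takes a genuinely different route from the paper's own proof. The paper proves (\ref{dL_HE_Q}) by a direct telescoping estimate: it bounds the increment $\log\bigl(Q_n(-z/4n;s/4n)/Q_{n-1}(-z/4(n-1);s/4(n-1))\bigr)$ by four contributions (rescaling $4n\to 4(n-1)$, shifting $t$ to $t_{>}$, exchanging the zeros of $p_n$ for those of $p_{n-1}$, plus the isolated largest-zero term), each controlled by $z\,{\rm O}(n^{-1})$ through the zero machinery --- the reciprocal-sum bound (\ref{dL_BD:a}), the zero-dynamics equation (\ref{dL_ZeroDE}), monotonicity of the zeros, and the identities (\ref{ops_Zsum1}), (\ref{dL_SumRecipZero}) combined with the limits (\ref{dL_HE_comb2}), (\ref{dL_HE_comb4}); notably it never argues the second limit (\ref{dL_HE_R}) explicitly. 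You instead use only (\ref{dL_BD:a}) to get the uniform bound $e^{|z|/8}$, invoke Montel, and identify every subsequential limit as the unique holomorphic solution of the limiting system with $q(0)=1$, $p(0)=0$ --- effectively importing into the existence proof the content of the paper's subsequent Proposition \ref{HE_qpDer} and the indicial analysis around (\ref{HE_z=0Exp:a}), (\ref{HE_z=0Exp:b}). What your route buys: entirety in $z$ and full-sequence convergence follow from uniqueness rather than from summability of increments (the paper's per-step bounds are only ${\rm O}(n^{-1})$, which do not by themselves sum), and both (\ref{dL_HE_Q}) and (\ref{dL_HE_R}) are treated. What it costs is exactly the crux you flag: the ${\rm O}(n^{-1})$ bound on $R_n$. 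There your Gronwall sketch on the scaled (\ref{dL_S:b}) can be replaced by something cleaner that sidesteps the degeneration at $z=0,s$ altogether: solve the scaled (\ref{dL_S:a}) for $nR_n$, namely
\begin{equation*}
 (4n\theta_n+z)\,nR_n = -z(s-z)\,\partial_zQ_n - zC_nQ_n + (4n\theta_n+z)\,C_nR_n ,
 \qquad C_n := n+\frac{\kappa_n-t}{\theta_n} ,
\end{equation*}
whose right-hand side is locally uniformly bounded because $|Q_n|,|R_n|\le 2e^{|z|/8}$, $C_n={\rm O}(1)$ by (\ref{dL_HE_comb2}), and Cauchy's estimates bound $\partial_zQ_n$ from the bound on $Q_n$; this gives $nR_n={\rm O}(1)$ locally uniformly away from $z=-\mu$, and since each $nR_n$ is a polynomial the maximum principle extends the bound across a disk about $z=-\mu\in[0,s]$. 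With that substitution your normal-family argument for $\{nR_n\}$ is complete, and the rest of your identification step (convergence of derivatives by Weierstrass, convergence of coefficients by Proposition \ref{HE_scale1} and Corollary \ref{HE_scale2}, uniqueness from the exponents $0,-2$ at $z=0$) goes through as written.
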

\begin{proof}
We start with the product form of the scaled polynomial
\begin{equation}
  Q_n(-\frac{z}{4n};\frac{s}{4n})
  = \prod^{n}_{j=1}\left( 1+\frac{z}{4nx_{j,n}(s/4n)} \right) ,
\label{}
\end{equation}
and seek bounds for the logarithm of the ratio of the $n$-th scaled polynomial
to the $(n-1)$st. When $ 0< s,z < \infty $ we can write the general bound as the
sum of four contributions
\begin{multline}
  \left| \log\frac{Q_n(-z/4n;s/4n)}{Q_{n-1}(-z/4(n-1);s/4(n-1))} \right| \\
  \leq \left|\log\left(1+\frac{z}{4nx_{n,n}(t)}\right)\right| \\
  + \sum^{n-1}_{j=1} \left\{
    \left|\log\left(1+\frac{z}{4nx_{j,n}(t)}\right)-\log\left(1+\frac{z}{4(n-1)x_{j,n}(t)}\right)\right|
  \right. \\
  + \left|\log\left(1+\frac{z}{4(n-1)x_{j,n}(t)}\right)-\log\left(1+\frac{z}{4(n-1)x_{j,n}(t_{>})}\right)\right|
          \\
  \left.
  + \left|\log\left(1+\frac{z}{4(n-1)x_{j,n}(t_{>})}\right)-\log\left(1+\frac{z}{4(n-1)x_{j,n-1}(t_{>})}\right)\right| 
  \right\} ,
\label{Qinc_LHS}
\end{multline}
where $ t=s/4n $ and $ t_{>}=s/4(n-1) $. Using the inequality 
\begin{equation}
   \log\frac{1+A}{1+B} < A-B
\nonumber
\end{equation}
for $ A>B>0 $ we can find a simpler bound
\begin{multline}
  \text{LHS of (\ref{Qinc_LHS})} \leq \frac{z}{4nx_{n,n}(t)} \\
  + \sum^{n-1}_{j=1} \left\{
    \left|\frac{z}{4nx_{j,n}(t)}-\frac{z}{4(n-1)x_{j,n}(t)}\right|
  \right. \\
  + \left|\frac{z}{4(n-1)x_{j,n}(t)}-\frac{z}{4(n-1)x_{j,n}(t_{>})}\right|
          \\
  \left.
  + \left|\frac{z}{4(n-1)x_{j,n}(t_{>})}-\frac{z}{4(n-1)x_{j,n-1}(t_{>})}\right| 
  \right\} .
\label{Qinc_bound}
\end{multline}
Considering the first sum of (\ref{Qinc_bound}) we see that this is
\begin{align}
  \sum^{n-1}_{j=1}\left|\frac{z}{4nx_{j,n}(t)}-\frac{z}{4(n-1)x_{j,n}(t)}\right|
  & = \frac{z}{4n(n-1)}\sum^{n-1}_{j=1}\frac{1}{x_{j,n}(t)}, \nonumber\\
  & < \frac{z}{4n(n-1)}\sum^{n}_{j=1}\frac{1}{x_{j,n}(t)}, \nonumber\\ 
  & < \frac{z}{4n(n-1)}\frac{n}{2}
    = z{\rm O}(n^{-1}), \quad \text{as $ n \to \infty $}, \nonumber
\end{align}
for all bounded $ s $.
The second term of (\ref{Qinc_bound}) is
\begin{equation}
   \sum^{n-1}_{j=1}\left|\frac{z}{4(n-1)x_{j,n}(t)}-\frac{z}{4(n-1)x_{j,n}(t_{>})}\right|
  = \frac{z}{4(n-1)}\sum^{n-1}_{j=1}\left| 
    \frac{1}{x_{j,n}(t)}-\frac{1}{x_{j,n}(t_{>})} \right| .
\nonumber
\end{equation}
The summand appearing here can be bounded in the following way
\begin{align}
  \left|\frac{1}{x_{j,n}(t)}-\frac{1}{x_{j,n}(t_{>})}\right|
  & = \frac{|x_{j,n}(t_{>})-x_{j,n}(t)|}{x_{j,n}(t_{>})x_{j,n}(t)}, \nonumber\\
  & \leq \frac{1}{x_{j,n}(t_{>})x_{j,n}(t)}(t_{>}-t)\max_{u\in(t,t_{>})}|\dot{x}_{j,n}(u)| .
\label{Qinc_2nd}
\end{align}
Now from (\ref{dL_ZeroDE}) we note that
\begin{equation}
  |\dot{x}_{j,n}(u)| = u^{-1}\frac{\theta_n(u)+u}{-\theta_n(u)+x_{j,n}(u)}x_{j,n}(u) ,
\nonumber
\end{equation}
and so furnishes a bound on (\ref{Qinc_2nd})
\begin{align}
  \left|\frac{1}{x_{j,n}(t)}-\frac{1}{x_{j,n}(t_{>})}\right|
  & \leq \frac{1}{x_{j,n}(t_{>})x_{j,n}(t)}\frac{s}{4n(n-1)}\frac{4n}{s}
      \max_{u\in(t,t_{>})}\frac{\theta_n(u)+u}{-\theta_n(u)+x_{j,n}(u)}x_{j,n}(u),
  \nonumber\\
  & \leq \frac{1}{n-1}\frac{1}{x_{j,n}(t_{>})x_{j,n}(t)}x_{j,n}(t)
         \max_{u\in(t,t_{>})}\frac{\theta_n(u)+u}{-\theta_n(u)+x_{j,n}(u)},
  \nonumber\\
  & \leq \frac{1}{n-1}\frac{1}{x_{j,n}(t_{>})}
         \max_{u\in(t,t_{>})}\frac{\theta_n(u)+u}{-\theta_n(u)+x_{j,n}(u)} .
  \nonumber
\end{align}
This means that 
\begin{align}
  \sum^{n-1}_{j=1}\left|\frac{1}{x_{j,n}(t)}-\frac{1}{x_{j,n}(t_{>})}\right|
  & \leq \frac{1}{n-1}\sum^{n-1}_{j=1}
         \max_{u\in(t,t_{>})}\frac{\theta_n(u)+u}{-\theta_n(u)+x_{j,n}(u)}
                      \frac{1}{x_{j,n}(t_{>})}, \nonumber\\
  & \leq \frac{1}{n-1}
      \sum^{n-1}_{j=1}\max_{u\in(t,t_{>})}\frac{\theta_n(u)+u}{-\theta_n(u)+x_{j,n}(u)}
      \sum^{n-1}_{j=1}\frac{1}{x_{j,n}(t_{>})}, \nonumber\\
  & < \frac{1}{n-1}
      \sum^{n}_{j=1}\max_{u\in(t,t_{>})}\frac{\theta_n(u)+u}{-\theta_n(u)+x_{j,n}(u)}
      \sum^{n}_{j=1}\frac{1}{x_{j,n}(t_{>})}, \nonumber\\
  & < \frac{n}{2(n-1)}
      \sum^{n}_{j=1}\max_{u\in(t,t_{>})}\frac{\theta_n(u)+u}{-\theta_n(u)+x_{j,n}(u)},
  \nonumber\\
  & < \frac{n}{2(n-1)}
      \max_{u\in(t,t_{>})}\sum^{n}_{j=1}\frac{\theta_n(u)+u}{-\theta_n(u)+x_{j,n}(u)},
  \nonumber\\
  & < \frac{n}{2(n-1)}\max_{u\in(t,t_{>})}
      \left|n+\frac{\kappa_n(u)-u}{\theta_n(u)}\right|,
  \nonumber
\end{align}
where we have used (\ref{ops_Zsum1}) in the last step.
The total contribution of the second term is therefore bounded by
\begin{equation}
   \frac{zn}{8(n-1)^2}{\rm O}(1) = z{\rm O}(n^{-1}) .
\nonumber
\end{equation}
The third sum in (\ref{Qinc_bound}) is
\begin{align}
  \frac{z}{4(n-1)}\sum^{n-1}_{j=1}\left|\frac{1}{x_{j,n}(t_{>})}-\frac{1}{x_{j,n-1}(t_{>})}\right|
  & = \frac{z}{4(n-1)}\sum^{n-1}_{j=1}
      \left(\frac{1}{x_{j,n}(t_{>})}-\frac{1}{x_{j,n-1}(t_{>})}\right),
  \nonumber\\
  & < \frac{z}{4(n-1)}\left(\sum^{n}_{j=1}\frac{1}{x_{j,n}(t_{>})}
                           -\sum^{n-1}_{j=1}\frac{1}{x_{j,n-1}(t_{>})}\right) .
  \nonumber
\end{align}
From the identity (\ref{dL_SumRecipZero}) and the scaling of the variables involved as given
in (\ref{dL_HE_comb4}) we conclude that the contribution of this sum is bounded by
\begin{equation}
   \frac{z}{4(n-1)}{\rm O}(1) = z{\rm O}(n^{-1}) .
\nonumber
\end{equation}
Finally we note that the isolated term in (\ref{Qinc_bound}) is of order 
$ z{\rm O}(n^{-2}) $ as a leading estimate of the largest zero $ x_{n,n} $ is
of order $ {\rm O}(n) $. This establishes that $ Q_n $ under the hard edge scaling
of the independent variables converges to a limit as $ n \to\infty $, for all 
real, positive and bounded $ z,s $.
\end{proof}

The spectral and deformation derivatives of the $ Q_n,R_n $ system scale to the
corresponding derivatives of the $ q,p $ system as in the following result.
\begin{proposition}\label{HE_qpDer}
Specify scaled quantities as in Proposition \ref{HE_scale1}, Corollary \ref{HE_scale2}
and Proposition \ref{HE_sigmaFn}.
The spectral derivatives of the $ q,p $ system are 
\begin{align}
  (s-z)z\partial_zq & = -zCq-(\mu+z)p ,
\label{HE_S:a} \\
  (s-z)z\partial_zp & = -z\left[ \xi+\frac{1}{4}(z-s) \right]q+[-2s+z(C+a+2)]p ,
\label{HE_S:b}
\end{align}
and their deformation derivatives are 
\begin{align}
  (s-z)s\partial_sq & =  zCq+(\mu+s)p ,
\label{HE_D:a} \\
  (s-z)s\partial_sp & = z\xi q-[s(2C+a)-zC]p ,
\label{HE_D:b}
\end{align}
The boundary conditions satisfied by the solutions $ q(z;s) $ and $ p(z;s) $ of the
above system on the domain $ s \geq z \geq 0 $ along $ z=0 $ are
\begin{align}
         q(0;s) & = 1 , 
  \label{HE_BC:e} \\
         p(0;s) & = 0 ,
  \label{HE_BC:f}
\end{align}
for all $ s > 0 $.
\end{proposition}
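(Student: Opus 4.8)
The plan is to obtain the limiting system directly from the exact finite-$n$ spectral and deformation derivative relations for the polynomial ratios, namely (\ref{dL_S:a}), (\ref{dL_S:b}), (\ref{dL_D:a}) and (\ref{dL_D:b}), by substituting the scaling (\ref{dL_HE_vars}) together with the convergence results of Propositions \ref{HE_scale1} and \ref{HE_QRscale} and Corollary \ref{HE_scale2}. First I would record how the scaling acts on the differential operators: since $x=-z/4n$ and $t=s/4n$ with $n$ held fixed during differentiation, the chain rule gives $\partial_x=-4n\,\partial_z$ and $\partial_t=4n\,\partial_s$, so that the established limits $Q_n\to q$ and $nR_n\to p$ propagate to $Q'_n\sim -4n\,\partial_z q$, $R'_n\sim -4\,\partial_z p$, and analogously for the $t$-derivatives. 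I would then substitute the limiting forms $4n\theta_n\to\mu$, $\kappa_n\to-\tfrac14\mu$, the ratio limits $\tfrac{\theta_n-x}{\theta_n}\to(\mu+z)/\mu$ and $\tfrac{\theta_n+t}{\theta_n}\to(\mu+s)/\mu$, the limits (\ref{dL_HE_comb2}) and (\ref{dL_HE_comb3}) giving $n+\tfrac{\kappa_n-t}{\theta_n}\to C$ and $n+a+2+\tfrac{\kappa_n+t}{\theta_{n-1}}\to -C$, and crucially (\ref{dL_HE_comb4}), under which the full bracket appearing in (\ref{dL_S:b}) and (\ref{dL_D:b}) is $O(n^{-1})$ while $n$ times that bracket tends to $\xi$.

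The heart of the calculation is careful bookkeeping of the powers of $n$. For the $Q_n$-relations (\ref{dL_S:a}) and (\ref{dL_D:a}) every term balances at order $n^{-1}$: for instance in (\ref{dL_S:a}) the left side becomes $x(x+t)Q'_n\sim\tfrac{z(s-z)}{4n}\partial_z q$, the first term on the right is $x(n+\tfrac{\kappa_n-t}{\theta_n})Q_n\sim-\tfrac{z}{4n}Cq$, and the second is $(\kappa_n-t)\tfrac{\theta_n-x}{\theta_n}R_n\sim-\tfrac{1}{4n}(\mu+z)p$, so clearing the common factor $(4n)^{-1}$ produces (\ref{HE_S:a}) exactly, and the same pattern gives (\ref{HE_D:a}). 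For the $R_n$-relations (\ref{dL_S:b}) and (\ref{dL_D:b}) the natural scale is $n^{-2}$, because $R_n\sim p/n$; here one must verify that the superficially leading pieces involving $x(x+t)$ and the $\theta_n$-combination reorganise so that the surviving contributions are precisely $-z[\xi+\tfrac14(z-s)]q$ and $[-2s+z(C+a+2)]p$, which is exactly where the $\xi$-limit (\ref{dL_HE_comb4}) and the relation (\ref{dL_HE_xidefn}) enter. Clearing the factor $(4n^2)^{-1}$ then yields (\ref{HE_S:b}) and (\ref{HE_D:b}).

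The boundary data are immediate from the definitions: $Q_n(0;t)=p_n(0)/p_n(0)=1$ for every $n$ by (\ref{dL_Qdefn}), so $q(0;s)=1$; and $R_n(0;t)=Q_n(0;t)-Q_{n-1}(0;t)=0$ by (\ref{dL_Rdefn}), whence $p(0;s)=\lim_{n\to\infty}nR_n(0;t)=0$, giving (\ref{HE_BC:e}) and (\ref{HE_BC:f}).

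I expect the main obstacle to be the rigorous passage of the $x$- and $t$-derivatives through the limit $n\to\infty$: Proposition \ref{HE_QRscale} supplies pointwise, indeed locally uniform, convergence of $Q_n$ and $nR_n$ via the product representation and the zero-sum bounds, but this does not by itself license replacing $\partial_x Q_n$ by $-4n\,\partial_z q$. The clean way to close the argument is to exploit that $q$ and $p$ are entire in $z$, as asserted in Proposition \ref{HE_QRscale}, so that locally uniform convergence upgrades to convergence of all $z$-derivatives by Cauchy's estimates, and then to treat (\ref{HE_S:a})--(\ref{HE_D:b}) as a linear first-order system whose solution is uniquely determined by the initial data (\ref{HE_BC:e})--(\ref{HE_BC:f}) along $z=0$; the finite-$n$ identities force the limit functions to solve this system, and uniqueness identifies them with the advertised $q,p$. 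A secondary point requiring care is the consistency of computing each limit two ways, spectrally and through the deformation, which is underwritten by the compatibility already present in the finite-$n$ system and by the auxiliary relations (\ref{dL_HE_Cdefn})--(\ref{HE_Crel}) among $\mu,\nu,C,\xi$.
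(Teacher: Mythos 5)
Your proposal is correct and takes essentially the same route as the paper: the paper likewise obtains (\ref{HE_S:a})--(\ref{HE_D:b}) by scaling the finite-$n$ relations (\ref{dL_S:a}), (\ref{dL_S:b}), (\ref{dL_D:a}), (\ref{dL_D:b}) with the limits (\ref{dL_HE_theta}), (\ref{dL_HE_kappa}), (\ref{dL_HE_comb2})--(\ref{dL_HE_comb4}), and reads the boundary conditions off the definitions (\ref{dL_Qdefn}), (\ref{dL_Rdefn}) via Proposition \ref{HE_QRscale}. The only point the paper makes explicit that you leave implicit is the cancellation $(\kappa_n+t)\tfrac{\theta_{n-1}+t}{\theta_{n-1}}-(\kappa_n-t)\tfrac{\theta_{n}+t}{\theta_{n}}\sim -s(2C+a)/4n$ needed for (\ref{HE_D:b}), which indeed follows from the same limits (\ref{dL_HE_comb2}) and (\ref{dL_HE_comb3}) that you invoke.
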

\begin{proof}
The first spectral derivative (\ref{HE_S:a}) follows from the scaling of (\ref{dL_S:a})
and employing (\ref{dL_HE_theta}), (\ref{dL_HE_kappa}) and (\ref{dL_HE_comb2}). The
second member (\ref{HE_S:b}) is derived from the scaling of  (\ref{dL_S:b}) and using
(\ref{dL_HE_comb2}) and (\ref{dL_HE_comb4}). The first deformation derivative (\ref{HE_D:a})
follows from the scaling of (\ref{dL_D:a}) and utilising (\ref{dL_HE_theta}), (\ref{dL_HE_kappa})
and (\ref{dL_HE_comb2}). The second deformation derivative (\ref{HE_D:b}) arises from the
scaling of (\ref{dL_D:b}), employing (\ref{dL_HE_comb3}) and (\ref{dL_HE_comb4}) and 
noting that
\begin{equation}
  \left. (\kappa_n+t)\frac{\theta_{n-1}+t}{\theta_{n-1}}-(\kappa_n-t)\frac{\theta_{n}+t}{\theta_{n}}
  \right|_{t=s/4n}
  \mathop{\sim}\limits_{n \to \infty} -\frac{s(2C+a)}{4n} .
\nonumber
\end{equation}
The boundary conditions (\ref{HE_BC:e}) and (\ref{HE_BC:f}) follow from the definitions
(\ref{dL_Qdefn}) and (\ref{dL_Rdefn}) respectively and the scalings in Proposition
\ref{HE_QRscale}.
\end{proof}

\begin{remark}
Both the spectral derivative (\ref{dL_S:c}) and the deformation derivative (\ref{dL_D:c})
scale to
\begin{equation}
  (\mu+s)z\partial_z q + (\mu+z)s\partial_s q + zC(s)q = 0 ,
\label{dL_HE_spec-deformD}
\end{equation}
and this mixed derivative equation can be easily found from Proposition \ref{HE_qpDer} 
by eliminating the variable $ p $ between (\ref{HE_S:a}) and (\ref{HE_D:a}), i.e.
$ (\mu+s) $ times (\ref{HE_S:a}) plus $ (\mu+z) $ times (\ref{HE_D:a}).
The three-term recurrence relation (\ref{dL_TTRR}) scales to
\begin{equation}
  z^2 \partial^2_z q + 2sz\partial_z\partial_s q + s^2\partial^2_s q
  +s\frac{\dot{\mu}-2}{\mu}\left[ z\partial_z q + s\partial_s q \right]
   -\frac{1}{4}zq = 0 .
\label{dL_HE_threeT}
\end{equation}
This can also be recovered from the equations of Proposition \ref{HE_qpDer}. If we
eliminate $ q $ between (\ref{HE_S:a}) and (\ref{HE_D:a}) by adding them we find that 
\begin{equation}
   p = z\partial_zq+s\partial_sq .
\label{}
\end{equation}
Employing this we find that (\ref{dL_HE_threeT}) is equivalent to 
\begin{equation}
   z\partial_zp+s\partial_sp+(-1+s\frac{\dot{\mu}-2}{\mu})p-\frac{1}{4}zq = 0 ,
\label{}
\end{equation}
which can be found by adding (\ref{HE_S:b}) and (\ref{HE_D:b}) and noting the relation
(\ref{HE_Crel}).
\end{remark}

\begin{remark}
In addition to the boundary conditions at $ z=0 $ given by (\ref{HE_BC:e}) and (\ref{HE_BC:f})
there are also relations along $ s=z $
\begin{equation}
    \frac{d}{ds}q(s;s) = -\frac{C}{\mu+s}q(s;s) ,
\label{dL_HE_qEqual}
\end{equation}
and further
\begin{equation}        
    \frac{p(s;s)}{q(s;s)} = \frac{\xi}{C+a} = -\frac{sC}{\mu+s} . 
\label{dL_HE_qpEqual}                               
\end{equation}
However these are a consequence of the spectral and deformation derivatives and so do not
constitute independent boundary conditions.
\end{remark}

\begin{remark}
The compatibility of the two sets of derivatives, (\ref{HE_S:a}) and (\ref{HE_S:b}) on the
one hand, and (\ref{HE_D:a}) and (\ref{HE_D:b}) on the other hand, affords a check on
our results. We find that compatibility of (\ref{HE_S:a}) and (\ref{HE_D:a}) leads us
to conclude that
\begin{equation}
    \xi = s\dot{C}+\frac{1}{4}(\mu+s) ,
\label{HE_compat:a}
\end{equation}
and we also recover (\ref{HE_Crel}). Similar considerations applied to (\ref{HE_S:b}) and 
(\ref{HE_D:b}) imply that
\begin{equation}
    s\dot{\xi} = -(2C+a+2)\xi+\frac{1}{4}s(2C+a) ,
\label{HE_compat:b}
\end{equation}
and we get (\ref{HE_compat:a}) again. Using (\ref{dL_HE_xidefn}) to eliminate $ \mu $
we arrive at a coupled pair of first order ordinary differential equations
\begin{align}
   s\dot{C}   & = \xi+\frac{s}{4\xi}C(C+a) , \\
   s\dot{\xi} & = -(2C+a+2)\xi+\frac{1}{4}s(2C+a) .
\end{align}
Using the latter equation to eliminate $ C $ we obtain a second order ordinary differential 
equation for $ \xi $, which by means of the transformation
\begin{equation}
   \xi(s) = \frac{1}{4}\frac{sy(s)}{y(s)-1} ,
\label{}
\end{equation}
is transformed into the standard equation for the fifth Painlev\'e transcendent. This is
a degenerate case of \PV which reduces to the third Painlev\'e transcendent because the
parameters are 
$ \alpha_{\rm V} = 9/2, \beta_{\rm V} = -a^2/2, \gamma_{\rm V} = 1/2, \delta_{\rm V} = 0 $.
Making an independent variable transformation $ s\mapsto 2s $ so that $ \gamma_V = 1 $
we determine the \PIII parameters to be 
$ \alpha_{\rm III} = 2(2-a), \beta_{\rm III} = 2(a+3), \gamma_{\rm III} = 1, \delta_{\rm III} = -1 $
which is consistent with those in Proposition \ref{HE_sigmaFn}.
\end{remark}

The matrix form of the spectral derivatives (\ref{HE_S:a},\ref{HE_S:b}) and deformation 
derivatives (\ref{HE_D:a},\ref{HE_D:b}) yield the Lax pairs
\begin{align}
  \partial_z \Psi &= 
  \left\{ {\mathcal A}_{\infty}+\frac{{\mathcal A}_{s}}{z-s}+\frac{{\mathcal A}_{0}}{z}\right\}
  \Psi ,
\label{HE_LP:a}\\
  \partial_s \Psi &= 
  \left\{ {\mathcal B}-\frac{{\mathcal A}_{s}}{z-s}\right\} \Psi ,
\label{HE_LP:b}
\end{align}
in the matrix variable
\begin{equation}
   \Psi(z;s) = \begin{pmatrix} q(z;s) \\ p(z;s) \end{pmatrix} .
\label{HE_Psi}
\end{equation}
The system has two regular singularities at $ z=0,s $ and an irregular one at $ z=\infty $
with a Poincar\'e index of 1. This system is essentially equivalent to the isomonodromic system
of the fifth Painlev\'e equation but is the degenerate case. The residue matrices are
\begin{align}
  {\mathcal A}_{0} &= 
  \begin{pmatrix} 0 & -\frac{\displaystyle\mu(s)}{\displaystyle s} \\ 0 & -2 \end{pmatrix} ,
\label{}\\
  {\mathcal A}_{s} &= 
  \begin{pmatrix} C(s) & \frac{\displaystyle\mu(s)+s}{\displaystyle s} \\
                \xi(s) & -C(s)-a \end{pmatrix} ,
\label{}\\
  {\mathcal A}_{\infty} &= 
  \begin{pmatrix} 0 & 0 \\ \frac{1}{4} & 0 \end{pmatrix} ,
\label{}\\
  {\mathcal B} &= 
  \frac{1}{s}\begin{pmatrix} -C(s) & 0 \\ -\xi(s) & -C(s) \end{pmatrix} .
\label{}
\end{align}
Local convergent expansions about the regular singularities take the form for $ z=0 $
\begin{align}
  q(z;s) & = \sum_{m\geq 0}r^0_{m}(s)z^{\chi_0+m} ,
\label{HE_z=0Exp:a}\\
  p(z;s) & = \sum_{m\geq 0}u^0_{m}(s)z^{\chi_0+m} ,
\label{HE_z=0Exp:b}
\end{align}
for $ |z| < s $ 
and the initial relations amongst the coefficients are found to be 
\begin{equation}
   s\chi_0r^0_{0} = -\mu u^0_{0}, \quad s\chi_0u^0_{0} = -2su^0_{0} .
\label{}
\end{equation}
This implies that $ \chi_0=-2 $ or $ \chi_0=0 $ and $ u^0_{0}=0 $. The latter
case applies here as both $ q,p $ are analytic at $ z=0 $, and in addition we
also require $ p=0 $ on $ z=0 $. In addition $ r^0_{0}=1 $. The recurrence relations 
for general $ m $ are
\begin{align}
  s(m+2)u^0_{m} 
  & = (C+a+m+1)u^0_{m-1}+\left(\frac{1}{4}s-\xi\right)r^0_{m-1}-\frac{1}{4}r^0_{m-2} ,
\label{}\\
  smr^0_{m} & = -\mu u^0_{m}+(m-1-C)r^0_{m-1}-u^0_{m-1} .
\label{}
\end{align}
For $ z=s $ we have the convergent expansion
\begin{align}
  q(z;s) & = \sum_{m\geq 0}r^s_{m}(s)(s-z)^{\chi_s+m} ,
\label{HE_z=sExp:a}\\
  p(z;s) & = \sum_{m\geq 0}u^s_{m}(s)(s-z)^{\chi_s+m} ,
\label{HE_z=sExp:b}
\end{align}
for $ |z-s| < s $ and where in this case the initial relations are 
\begin{equation}
   s(-\chi_s+C)r^s_{0} = -(\mu+s)u^s_{0}, \quad (C+a+\chi_s)u^s_{0} = \xi r^s_{0} .
\label{}
\end{equation}
Combining these we get a relation which is identical to the second equality in 
(\ref{dL_HE_qpEqual}) only if $ \chi_s=0, -a $. The former case is the one we must 
choose as $ q,p $ are well-defined and finite on $ z=s $. For general $ m $ the
recurrence relations are
\begin{align}
  s(C-m)r^s_{m}+(\mu+s)u^s_{m} & = (C-m+1)r^s_{m-1}+u^s_{m-1} ,
\label{}\\
  -\xi r^s_{m}+s(C+a+m)u^s_{m} 
  & = -\left(\frac{1}{4}s+\xi\right)r^s_{m-1}+(C+a+m+1)u^s_{m-1}+\frac{1}{4}r^s_{m-2} .
\label{}
\end{align}
This system has a unique
solution only if $ s^2m(a+m) \neq 0 $ for $ s > 0 $ and $ m \geq 1 $ which in
turn means that $ a \neq -\NN $.
We also note that the two sets of coefficients are related by
\begin{equation}
   r^s_{m}(s) = (-1)^m\sum^{\infty}_{n=0} {{m+n}\choose{m}} s^nr^0_{m+n}(s) ,
\label{}
\end{equation}
and an identical relation for $ u^s_{m}(s) $.

\begin{proposition}
The determinant $ D_n(x,x) $ scales as
\begin{equation}
  D_n(x,x)|_{x=-z/4n,t=s/4n} 
  \mathop{\sim}\limits_{n \to \infty} -4\Delta_n \pi_n(0)\pi_{n+1}(0)
  \left[ q\partial_z p - p\partial_z q \right] .
\label{dL_HE_D}
\end{equation}
\end{proposition}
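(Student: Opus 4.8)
The plan is to reduce the determinant to a Wronskian of the scaled polynomial ratios and then extract its leading asymptotics. I would begin from Uvarov's evaluation (\ref{ops_Uv2}), $D_n(x,x)=\frac{\Delta_n}{\gamma_n\gamma_{n+1}}\bigl[p_n p'_{n+1}-p_{n+1}p'_n\bigr]$, and rewrite it in terms of the ratios $Q_n(x;t)=p_n(x;t)/p_n(0;t)$ of (\ref{dL_Qdefn}). Since $p_n(0)$ is independent of $x$, substituting $p_n=p_n(0)Q_n$ gives $p_n p'_{n+1}-p_{n+1}p'_n=p_n(0)p_{n+1}(0)\bigl[Q_nQ'_{n+1}-Q_{n+1}Q'_n\bigr]$, and using the monic normalisation $p_n(0)=\gamma_n\pi_n(0)$ the prefactor collapses to
\[
  D_n(x,x)=\Delta_n\pi_n(0)\pi_{n+1}(0)\bigl[Q_nQ'_{n+1}-Q_{n+1}Q'_n\bigr],
\]
which already produces the prefactor $\Delta_n\pi_n(0)\pi_{n+1}(0)$ appearing in (\ref{dL_HE_D}).

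The second step exploits the variable $R_n=Q_n-Q_{n-1}$ of (\ref{dL_Rdefn}). Writing $Q_{n+1}=Q_n+R_{n+1}$ and $Q'_{n+1}=Q'_n+R'_{n+1}$, the Wronskian simplifies exactly to $Q_nQ'_{n+1}-Q_{n+1}Q'_n=Q_nR'_{n+1}-R_{n+1}Q'_n$. This rewriting is the crucial manoeuvre: in the bare form both $Q_nQ'_{n+1}$ and $Q_{n+1}Q'_n$ are individually of order $n$ (since $Q'_n$ carries a factor $-4n$ from the change of variable $x=-z/4n$) and their leading parts cancel, whereas the $R$-form isolates the surviving ${\rm O}(1)$ contribution without any delicate subtraction.

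For the third step I would pass to the scaling (\ref{dL_HE_vars}) and insert the established limits. From Proposition \ref{HE_QRscale} one has $Q_n\to q$ and $nR_n\to p$, so $Q_n\sim q$ and, by the index-independence of the scaling limit established in the proof of that proposition, $R_{n+1}\sim p/n$. The scaled derivatives I would obtain not from an abstract interchange of limit and $\partial_z$ but directly from the spectral derivative identities (\ref{dL_S:a}) and (\ref{dL_S:b}): under the scaling their left-hand sides become $x(x+t)Q'_n=-\frac{z(s-z)}{16n^2}Q'_n$ and likewise for $R'_n$, while their right-hand sides, after inserting (\ref{dL_HE_theta}), (\ref{dL_HE_kappa}), (\ref{dL_HE_comb2}) and (\ref{dL_HE_comb4}), reproduce the scaled forms (\ref{HE_S:a}) and (\ref{HE_S:b}) of Proposition \ref{HE_qpDer}; matching the two sides yields $Q'_n\sim -4n\,\partial_z q$ and $R'_{n+1}\sim -4\,\partial_z p$. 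Combining, $Q_nR'_{n+1}-R_{n+1}Q'_n\sim q(-4\partial_z p)-(p/n)(-4n\partial_z q)=-4\bigl[q\,\partial_z p-p\,\partial_z q\bigr]$, and multiplying by the prefactor gives (\ref{dL_HE_D}).

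The main obstacle is the bookkeeping of powers of $n$ together with the index shift. One must verify that evaluating $Q_{n+1}$ and $R_{n+1}$ at the index-$n$ scaling point $x=-z/4n,\ t=s/4n$ (rather than at $4(n+1)$) leaves the leading asymptotics unchanged; this is legitimate because $C,\mu,\xi,\nu$ are $n$-independent scaling functions and the two scalings differ by ${\rm O}(n^{-1})$, exactly as in the telescoping estimates used to prove Proposition \ref{HE_QRscale}. One must also confirm that the factor of $n$ in $Q'_n\sim-4n\partial_z q$ is precisely compensated by $R_{n+1}\sim p/n$, so that the product $R_{n+1}Q'_n$ is genuinely ${\rm O}(1)$; this is the place where the normalisations in (\ref{dL_HE_Q})--(\ref{dL_HE_R}) must be tracked with care.
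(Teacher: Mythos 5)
Your proposal is correct and follows essentially the same route as the paper's own (much terser) proof: substitute the $Q_n,R_n$ variables into Uvarov's formula (\ref{ops_Uv2}) to expose the prefactor $\Delta_n\pi_n(0)\pi_{n+1}(0)$, and then apply the scaling limits of Proposition \ref{HE_QRscale}; your Wronskian $Q_nR'_{n+1}-R_{n+1}Q'_n$ is algebraically identical to the paper's $Q_{n+1}R'_{n+1}-R_{n+1}Q'_{n+1}$. The additional bookkeeping you supply (the factor $-4n$ from $x=-z/4n$, the compensating $1/n$ from $nR_n\to p$, and the index-shift estimate) is exactly the content the paper leaves implicit in citing that proposition.
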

\begin{proof}
We can employ the polynomials $ Q_n,R_n $ in the evaluation (\ref{ops_Uv2})
and find
\begin{equation}
  D_n(x,x) = \Delta_n\pi_n(0)\pi_{n+1}(0)[Q_{n+1}R'_{n+1}-R_{n+1}Q'_{n+1}] .
\nonumber
\end{equation}
Applying the scaling of Proposition \ref{HE_QRscale} to this expression we 
arrive at (\ref{dL_HE_D}). 
\end{proof}

\begin{proposition}
The distribution $ A_{n,a}(y) $ scales to
\begin{equation}
  \frac{1}{4n}A_{n,a}(y)|_{y=z/4n} \mathop{\sim}\limits_{n \to \infty} A_a(z) .
\label{dL_HE_scaleA}
\end{equation}
\end{proposition}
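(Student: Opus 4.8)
The plan is to feed the scaling (\ref{dL_HE_vars}) into the exact finite-$n$ expression (\ref{LUE_2-1dbn.b}) and then reduce every $n$-dependent piece to a quantity whose scaling limit has already been established in this section. First I set $y=z/4n$ and change the integration variable by $t=s/4n$, $dt=ds/4n$; this sends the lower limit from $y$ to $z$ and turns the confluent argument $-y$ of the Hankel determinant into $x=-z/4n$, exactly as in (\ref{dL_HE_vars}). The prefactor $y^2$, the Jacobian, and the two factors $t^a(t-y)^a$ together contribute a power $(4n)^{-(4+2a)}$, while $e^{y}\to1$ and $e^{-(n+2)t}=e^{-s/4}e^{-s/2n}\to e^{-s/4}$. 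This gives the reduced form
\[
   \frac{1}{4n}A_{n,a}\!\left(\tfrac{z}{4n}\right) \sim \frac{z^2}{(4n)^{4+2a}}\,\frac{1}{c_{n+2,n+2+a}}
   \int^{\infty}_{z}ds\; s^a(s-z)^a e^{-s/4}\,
   D_n\!\left(-\tfrac{z}{4n},-\tfrac{z}{4n}\right)\!\left[w\!\left(\lambda;\tfrac{s}{4n}\right)\right] .
\]

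Next I insert the determinant scaling (\ref{dL_HE_D}) and then replace $\Delta_n$ and the product $\pi_n(0)\pi_{n+1}(0)$ by the asymptotic forms (\ref{dL_HE_Delta}) and (\ref{dL_HE_pi}) (the factor $\pi_{n+1}(0;s/4n)$ being handled by (\ref{dL_HE_pi}) with the index shifted, the mismatch between $s/4n$ and $s/4(n+1)$ in the scaling argument contributing only at subleading order). The $s$-independent factorial and Pochhammer prefactors thereby produced combine with the explicit factorial form of $c_{n+2,n+2+a}$ read from (\ref{LUE_norm}) and with $(4n)^{-(4+2a)}$; I would check, by Stirling/Pochhammer bookkeeping, that this product converges to a finite nonzero constant. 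What remains inside the $s$-integral are the genuinely $s$-dependent objects: the exponentials $\exp(\int_0^s \frac{du}{u}\nu(u))$ and $\exp(2\int_0^s \frac{du}{u}C(u))$ from (\ref{dL_HE_Delta}) and (\ref{dL_HE_pi}), together with the scaled Wronskian $q\,\partial_z p-p\,\partial_z q$ whose existence is guaranteed by Proposition \ref{HE_QRscale}. Assembling these yields an explicit integral representation of the limit, which is the quantity $A_a(z)$ of the introduction.

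The step demanding genuine care — and the main obstacle — is the interchange of $\lim_{n\to\infty}$ with the $s$-integration over the semi-infinite range $(z,\infty)$, because Propositions \ref{HE_scale1}, \ref{HE_QRscale} and Corollary \ref{HE_scale2} are established only for bounded $s$. Pointwise convergence of the integrand at each fixed $s$ is immediate from those results, so by dominated convergence it suffices to produce an $n$-independent integrable majorant. On compact $s$-intervals this follows from the uniform two-sided bounds of Lemma \ref{dL_bound} on $\theta_n,\kappa_n$ and the reciprocal-sum bounds (\ref{dL_BD:a}), (\ref{dL_BD:b}), which are precisely the estimates driving the proof of Proposition \ref{HE_QRscale}. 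The delicate region is the large-$s$ tail: since $\nu(s)$ contains a linear term $\tfrac14 s$ by (\ref{dL_HE_nu_v}), the factor $\exp(\int_0^s\frac{du}{u}\nu(u))$ grows like $e^{s/4}$ and offsets the decay $e^{-s/4}$, so uniform integrability hinges on the subleading $s\to\infty$ behaviour of $\sigma_{\rm III'}(s)$. A cleaner route to the majorant is to estimate the tail directly at the pre-scaled level in (\ref{LUE_2-1dbn.b}), where the polynomial growth in $t$ of $D_n(-y,-y)[w(\lambda;t)]$ is controlled uniformly in $n$ by the factor $e^{-(n+2)t}$, and to transport this bound through the change of variables.
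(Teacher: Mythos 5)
Your derivation is sound, but note that it is not the route the paper takes for this particular proposition. The paper's proof here is a single sentence: the factor $1/4n$ is the only normalisation consistent with the hard edge scaling $y=z/4n$ of the independent variable (it is the Jacobian a density must acquire to have a limit), and existence of the limit is not argued at this point at all. The constructive computation you carry out --- substituting $y=z/4n$, $t=s/4n$ into (\ref{LUE_2-1dbn.b}), inserting the Wronskian scaling (\ref{dL_HE_D}) and the asymptotics (\ref{dL_HE_Delta}), (\ref{dL_HE_pi}), and balancing the factorial and Pochhammer prefactors against $c_{n+2,n+2+a}$ and $(4n)^{-(4+2a)}$ --- is precisely the paper's proof of the \emph{following} proposition, the integral representation (\ref{dL_HE_A}). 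So in effect you prove the two propositions simultaneously; what your approach buys is an actual existence argument (the paper's one-liner fixes only the normalisation), together with attention to the interchange of $n\to\infty$ with the $s$-integration, a point on which the paper is silent, its scaling limits being established only for bounded $s$.

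Two caveats on your dominated-convergence discussion. First, your reading of (\ref{dL_HE_nu_v}) is off: $\nu(s)$ does not itself contain the linear term $\tfrac14 s$ --- that term is the shift between $\nu$ and $\sigma_{\rm III'}$. The paper's large-$s$ expansion (\ref{HE_sLarge:a}) gives $\nu(s)\sim\tfrac12 a s^{1/2}$, so $e^{F(s)}$ grows only sub-exponentially (like $e^{a\sqrt{s}}$ times powers of $s$) and the factor $e^{-s/4}$ genuinely controls the tail; there is no cancellation of exponentials to fear. Second, your fallback --- bounding the tail at the pre-scaled level by playing $e^{-(n+2)t}$ against the polynomial growth in $t$ of $D_n(-y,-y)$ --- still requires uniformity in $n$ after the substitution $t=s/4n$, since both the polynomial degree (of order $na$ for $a>0$) and the exponential rate grow with $n$; that estimate is asserted rather than carried out. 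These refinements concern a step the paper skips entirely, so they do not put you below the paper's own standard of rigour, but as written they remain a sketch.
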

\begin{proof}
This is the only possible scaling consistent with the scaling of the independent 
variable to the hard edge, $ y=z/4n $. 
\end{proof}

\begin{proposition}
The distribution of the first eigenvalue spacing at the hard edge is given by
\begin{multline}
  A_a(z) = \frac{z^2}{4^{2a+3}\Gamma(a+1)\Gamma(a+2)\Gamma^2(a+3)}
  \\ \times
     \int^{\infty}_z ds\; s^a(s-z)^a
  \exp\left( -\frac{s}{4}+\int^s_0\frac{dv}{v}[\nu(v)+2C(v)] \right)
  \left[ q\partial_z p - p\partial_z q \right] .
\label{dL_HE_A}
\end{multline}
\end{proposition}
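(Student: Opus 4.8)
The plan is to feed the scaling limits established above directly into the finite-$n$ formula (\ref{LUE_2-1dbn.b}), tracking the powers of $4n$ and the normalisation constants with care. First I would change the integration variable in (\ref{LUE_2-1dbn.b}) from $t$ to $s$ through $t = s/4n$, together with $y = z/4n$, so the lower limit $t = y$ becomes $s = z$ and the integral runs over the scaled domain $s > z > 0$. Extracting the elementary powers gives $y^2 \to z^2/(4n)^2$, $t^a(t-y)^a \to s^a(s-z)^a/(4n)^{2a}$ and $dt \to ds/(4n)$, while $e^{y} \to 1$ and the Boltzmann factor obeys $e^{-(n+2)t} = e^{-s(1/4 + 1/2n)} \leq e^{-s/4}$, tending to $e^{-s/4}$ pointwise. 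Multiplying by the overall factor $1/(4n)$ demanded by (\ref{dL_HE_scaleA}) then assembles a prefactor of order $(4n)^{-(4+2a)}$ in front of the integral.

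Next I would substitute the scaling limit (\ref{dL_HE_D}) of the deformed Hankel determinant, $D_n(x,x) \sim -4\Delta_n\pi_n(0)\pi_{n+1}(0)[q\partial_z p - p\partial_z q]$, and then the limits (\ref{dL_HE_Delta}) for $\Delta_n$ and (\ref{dL_HE_pi}) for $\pi_n(0)$ and $\pi_{n+1}(0)$. The two factors $\exp(\int_0^s C(v)\,dv/v)$ coming from the monic polynomials combine with the single factor $\exp(\int_0^s \nu(v)\,dv/v)$ from $\Delta_n$, and together with the $e^{-s/4}$ of the weight they produce exactly $\exp(-s/4 + \int_0^s [\nu(v)+2C(v)]\,dv/v)$, as in (\ref{dL_HE_A}). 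The sign is consistent: since $\pi_n(0)\pi_{n+1}(0) \sim -(a+3)_n(a+3)_{n+1}\exp(2\int_0^s C/v)$, the minus sign from the product cancels the explicit minus sign in (\ref{dL_HE_D}), leaving the bracket $+[q\partial_z p - p\partial_z q]$.

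What then remains is the purely arithmetic bookkeeping of the $n$-independent constant. Using the explicit form (\ref{LUE_norm}), one rewrites $c_{n+2,n+2+a} = \frac{1}{(n+2)!}\big[\prod_{m=1}^{n+2}\Gamma(a+m)\big]\big[\prod_{l=1}^{n+2}l!\big]$; combined with the factorial-Gamma prefactor $1!\ldots(n-1)!\,\Gamma(a+3)\ldots\Gamma(n+a+2)$ of $\Delta_n$ and the Pochhammer factors $(a+3)_n(a+3)_{n+1} = \Gamma(a+n+3)\Gamma(a+n+4)/\Gamma^2(a+3)$ from the polynomials, all but finitely many of the factorials and Gamma values telescope. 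The surviving ratio is $\Gamma(a+n+3)\Gamma(a+n+4)/[\,n!(n+1)!\,\Gamma(a+1)\Gamma(a+2)\Gamma^2(a+3)\,]$, and the Stirling estimate $\Gamma(n+1+\alpha)/\Gamma(n+1) \sim n^{\alpha}$ shows this grows like $n^{2a+4}/[\Gamma(a+1)\Gamma(a+2)\Gamma^2(a+3)]$, which exactly cancels the $n^{4+2a}$ hidden in $(4n)^{-(4+2a)}$ and leaves the constant $z^2/[4^{2a+3}\Gamma(a+1)\Gamma(a+2)\Gamma^2(a+3)]$.

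The main obstacle is not any of these formal substitutions but the rigorous justification of interchanging the limit $n\to\infty$ with the $s$-integration, since (\ref{dL_HE_D}) and the limits (\ref{dL_HE_Delta})--(\ref{dL_HE_pi}) were asserted only for bounded $s$, whereas the integral extends to $s = \infty$. I would control the tail by establishing a bound on the scaled integrand that is uniform in $n$ and integrable against $e^{-s/4}$, using the two-sided bounds of Lemma \ref{dL_bound} on $\theta_n,\kappa_n$ and the reciprocal-sum estimates (\ref{dL_BD:a})--(\ref{dL_BD:b}) to bound the scaled polynomial ratios $Q_n,R_n$ (hence the bracket $[q\partial_z p - p\partial_z q]$) together with the scaled ratio of Hankel determinant and polynomial evaluations uniformly on $s \geq z$, in the same spirit as the estimates used in Proposition \ref{HE_QRscale}. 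Dominated convergence then permits the interchange and completes the proof.
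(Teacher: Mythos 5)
Your proposal follows exactly the paper's own route: scale (\ref{LUE_2-1dbn.b}) via (\ref{dL_HE_scaleA}), insert the scaled integrand (\ref{dL_HE_D}) together with (\ref{dL_HE_Delta}) and (\ref{dL_HE_pi}), and collect constants --- and your sign cancellation and the bookkeeping leading to $4^{-(2a+3)}\Gamma^{-1}(a+1)\Gamma^{-1}(a+2)\Gamma^{-2}(a+3)$ check out. The paper's proof is in fact terser and silently passes over the limit--integral interchange you flag at the end, so your dominated-convergence remark is an honest strengthening of the same argument rather than a deviation from it.
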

\begin{proof}
We apply the scaling (\ref{dL_HE_scaleA}) to Eq. (\ref{LUE_2-1dbn.b}) and utilise 
the previous relation for the scaling of the integrand (\ref{dL_HE_D}). For the first 
three factors on the right-hand side of (\ref{dL_HE_D}) we can use the scaling 
results of (\ref{dL_HE_Delta}) and (\ref{dL_HE_pi}), yielding the above integral
representation.
\end{proof}
 
We note that the factor of the integrand of (\ref{dL_HE_A}) can be written as
\begin{equation}
  q\partial_z p - p\partial_z q
  = \frac{1}{4}q^2-\frac{2}{z}qp+\frac{\mu}{sz}p^2
    -\frac{1}{\xi}(\xi q-Cp)\frac{\xi q-(C+a)p}{s-z} .
\end{equation}
The corresponding distribution $ A^{\pm}(z) $ defined in (\ref{HE_jpdf_pm}) and
(\ref{HE_A_pm}) for the special cases $ a=\pm 1/2 $ is given by
\begin{multline}
  A^{\pm}(z) = \frac{z^2}{4^{2a+2}\Gamma(a+1)\Gamma(a+2)\Gamma^2(a+3)}
  \\ \times
     \int^{\infty}_z ds\; s^{2a+1}(s-z)^{2a+1}(2s-z)^2
  \exp\left( -\frac{s^2}{4}+\int^{s^2}_0\frac{dv}{v}[\nu(v)+2C(v)] \right)
  \\ \times
  \left.\left[ q\partial_z p - p\partial_z q \right]
  \right|_{{\ScSt s\mapsto s^2}\atop{\ScSt z\mapsto z(2s-z)}} .
\label{dL_HE_A_pm}
\end{multline}

\subsection{Special Case $ a \in \ZZ_{\geq 0} $}

\begin{proposition}
In the special case $ a \in \ZZ_{\ge 0} $ we have
\begin{equation}
   \Delta_n(t)|_{t=s/4n} \mathop{\sim}\limits_{n \to \infty}
   \frac{c_{n+1,n+1+a}}{(n+a)!} (-1)^{\lfloor a/2\rfloor}(2n)^as^{-a}
   \det\left[ I_{j+2-k}(\sqrt{s}) \right]_{j,k=1,\ldots,a} ,
\label{dL_HE_aInt_Delta}
\end{equation}
and
\begin{equation}
  \nu(s) = s\frac{d}{ds}\log s^{-a}\det\left[ I_{j+2-k}(\sqrt{s}) \right]_{j,k=1,\ldots,a} ,
\label{dL_HE_aInt_nu}
\end{equation}
whilst for $ a=0 $ we find $ \nu(s)=0 $.
\end{proposition}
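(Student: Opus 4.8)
The plan is to start from the exact finite-$n$ evaluation of the Hankel determinant for integer $a$, namely (\ref{dL_Hankel_aInt}), and insert the hard edge scaling $t=s/4n$ directly into every Laguerre entry, so that
$$
\Delta_n(s/4n)=\frac{c_{n+1,n+1+a}}{(n+a)!}
\det\!\big[\,L^{(j+k+1-a)}_{n+a+1-j-k}(-s/4n)\,\big]_{j,k=1,\ldots,a}.
$$
The first observation I would exploit is that along every entry the Laguerre index and degree sum to a constant, $(j+k+1-a)+(n+a+1-j-k)=n+2$, so each entry has the shape $L^{(\alpha)}_{n+2-\alpha}(-s/4n)$ with $\alpha=j+k+1-a$. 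This is exactly the regime governed by the Hilb-type asymptotics (\ref{Lag_Hilb}): with $M=4(n+2-\alpha)+2\alpha+2$ one has $M/4\to n$ and $\sqrt{Mt}\to\sqrt s$, so after extraction of its prefactor each entry tends to a multiple of $I_{\alpha}(\sqrt s)=I_{j+k+1-a}(\sqrt s)$.

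The key step is then to feed the leading term of (\ref{Lag_Hilb}) into the determinant entrywise. Because $\alpha=j+k+1-a$ splits additively into a part depending only on $j$, a part depending only on $k$, and a constant, the leading $n$-power $n^{\alpha}$ of the Pochhammer factor $\Gamma(\alpha+m+1)/m!$, together with the prefactors $t^{-\alpha/2}$ and $(M/4)^{-\alpha/2}$, factorises into a row factor, a column factor and a common scalar. Pulling these out of the determinant produces the overall power of $2n$ and the factor $s^{-a}$ recorded in (\ref{dL_HE_aInt_Delta}); I would collect these powers carefully, since that bookkeeping is where arithmetic slips most easily. What remains inside is $\det[I_{j+k+1-a}(\sqrt s)]_{j,k=1,\ldots,a}$, a Hankel array in the Bessel index. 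Reversing the column order, $k\mapsto a+1-k$, sends the index $j+k+1-a$ to $j+2-k$ and converts this into the Toeplitz array of (\ref{dL_HE_aInt_Delta}); the reversal of $a$ columns contributes the sign $(-1)^{a(a-1)/2}=(-1)^{\lfloor a/2\rfloor}$, which supplies the stated prefactor sign.

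For the evaluation of $\nu(s)$ I would compare the two asymptotic forms of $\Delta_n$ that are now in hand: the general expression (\ref{dL_HE_Delta}), whose only $s$-dependence sits in $\exp\big(\int_0^s \frac{du}{u}\nu(u)\big)$, and the integer-$a$ expression (\ref{dL_HE_aInt_Delta}), whose only $s$-dependence is $s^{-a}\det[I_{j+2-k}(\sqrt s)]$. All the $n$-dependent factorial and $c_{n+1,n+1+a}$ prefactors are constant in $s$ and so are annihilated by $s\frac{d}{ds}\log(\cdot)$; applying this operation to both sides immediately gives
$$
\nu(s)=s\frac{d}{ds}\log\Big(s^{-a}\det[I_{j+2-k}(\sqrt s)]_{j,k=1,\ldots,a}\Big),
$$
which is (\ref{dL_HE_aInt_nu}). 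For $a=0$ the determinant is empty and equal to $1$, so $\nu(s)=0$; one should also confirm that the small-$s$ expansion of $s^{-a}\det[I_{j+2-k}(\sqrt s)]$ reproduces the boundary data (\ref{dL_HE_BCnu_v}).

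The main obstacle is the rigorous justification of the entrywise passage to the limit inside the determinant. Two points need care. First, the error in (\ref{Lag_Hilb}), which is of relative order $O(n^{-3/4})$, must be controlled uniformly over the finite range $1\le j,k\le a$ and shown to remain of strictly lower order than the leading determinant once the common prefactors are removed. Second, one must know that the limiting Bessel determinant $\det[I_{j+2-k}(\sqrt s)]$ is not identically zero in $s$, so that it genuinely captures the leading asymptotics rather than being overtaken by the subleading and non-separable corrections to the Pochhammer prefactors $\Gamma(\alpha+m+1)/m!$. Both are manageable because $a$ is fixed and finite and the Bessel--Toeplitz determinant is a nontrivial analytic function of $s$, but this uniform control is the substantive content of the argument.
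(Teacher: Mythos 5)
Your proposal is correct and takes essentially the same route as the paper: the first relation is obtained by applying the Hilb-type asymptotics (\ref{Lag_Hilb}) entrywise to the finite-$n$ determinant (\ref{dL_Hankel_aInt}), pulling out the separable row/column prefactors and reversing the column order to pass from the Hankel array $I_{j+k+1-a}$ to the Toeplitz array $I_{j+2-k}$ with the sign $(-1)^{\lfloor a/2\rfloor}$. Your derivation of $\nu(s)$ by comparing with (\ref{dL_HE_Delta}) and applying $s\frac{d}{ds}\log(\cdot)$ is the integrated form of the paper's own step, which uses the determinant asymptotics in (\ref{GammaDelta}) together with the scaling limit of $\Gamma_n$, so the two arguments coincide in substance.
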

\begin{proof}
The first relation (\ref{dL_HE_aInt_Delta}) follows from an application of the Hilb type
asymptotic formula (\ref{Lag_Hilb})
to (\ref{dL_Hankel_aInt}) and the second follows by using this result in (\ref{GammaDelta}).
\end{proof}

Another consequence of the Hilb formula is the scaling of the orthogonal polynomial ratio
as given by (\ref{dL_polyR_aInt}).
\begin{proposition}
The scaled orthogonal polynomial ratio $ q(z;s) $ is
\begin{equation}
   q(z;s) = z^{-3/2} \left(\frac{s}{s-z}\right)^{a}\frac
      { \det\left[ \begin{array}{c} 
              \left[ (z/s)^{k/2}I_{3-k}(\sqrt{z}) \right]_{k=1,\ldots a+1} \\ 
              \left[ I_{j+2-k}(\sqrt{s}) \right]_{{\ScSt j=1,\ldots,a}\atop{\ScSt k=1,\ldots a+1}}
                   \end{array} \right] }
      { \det\left[ \begin{array}{c} 
              \left[ 1/8\sqrt{s},1/2s,1/s^{3/2},0, \ldots, 0 \right] \\ 
              \left[ I_{j+2-k}(\sqrt{s}) \right]_{{\ScSt j=1,\ldots,a}\atop{\ScSt k=1,\ldots a+1}}
                   \end{array} \right] }, \quad a \geq 1
\label{dL_HE_aInt_q}
\end{equation}
and for $ a=0 $ is
\begin{equation}
  q(z;s) = \frac{8}{z}I_2(\sqrt{z}) .
\end{equation}
\end{proposition}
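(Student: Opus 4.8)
The plan is to read off $q(z;s)$ from the finite-$n$ evaluation (\ref{dL_polyR_aInt}) of the ratio $Q_n(x;t)$, inserted into the scaling limit $q(z;s)=\lim_{n\to\infty}Q_n(-z/4n;s/4n)$ established in (\ref{dL_HE_Q}) of Proposition \ref{HE_QRscale}, and then to evaluate that limit entrywise by the Hilb-type asymptotic formula (\ref{Lag_Hilb}). Under $x=-z/4n$, $t=s/4n$ the explicit prefactor $(t/(x+t))^a$ in (\ref{dL_polyR_aInt}) becomes $(s/(s-z))^a$ at once, so all the remaining work lies in the two $(a+1)\times(a+1)$ determinants; since their size is fixed, the limit of the determinant is the determinant of the entrywise limits.

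For a Laguerre entry $L^{(\mu)}_N(-w)$ the decisive structural fact is that within each block the combination $\mu+N$ is the constant $n+2$ (top row: $\mu=k+1-a$, $N=n+a+1-k$; lower block: $\mu=j+k-a$, $N=n+a+2-j-k$), so $\Gamma(\mu+N+1)=(n+2)!$ throughout, $M=4N+2\mu+2\sim4n$, and $\sqrt{Mw}\to\sqrt z$ or $\sqrt s$. Applying (\ref{Lag_Hilb}) with $(n+2)!/N!\sim n^{\,n+2-N}$ gives, for the lower block,
\[
L^{(j+k-a)}_{n+a+2-j-k}(-s/4n)\;\sim\;n^{\,j+k-a}\,2^{\,j+k-a}\,s^{-(j+k-a)/2}\,I_{j+k-a}(\sqrt s),
\]
and the analogous expression for the top row with $z$ and index $k+1-a$. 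The key simplification is that every factor carrying the antidiagonal dependence on $j+k$ splits multiplicatively because $x^{j+k}=x^jx^k$; this lets me remove a common column factor $n^k2^k$ together with pure row factors, leaving Bessel entries $z^{-k/2}I_{k+1-a}(\sqrt z)$ (top) and $s^{-k/2}I_{j+k-a}(\sqrt s)$ (lower). Reversing the column order $k\mapsto a+2-k$ then converts these indices to $3-k$ and $j+2-k$, producing exactly the numerator determinant of (\ref{dL_HE_aInt_q}); the reversal sign $(-1)^{\lfloor(a+1)/2\rfloor}$ is common to numerator and denominator and cancels. Collecting the surviving scalar prefactors (those of the numerator top row relative to the denominator, plus the column-reversal extractions), the powers of $n$, $2$ and $s$ cancel and leave precisely $z^{-3/2}$, which is the source of that prefactor.

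The denominator is handled via (\ref{Lag_Id3}): its top row $\binom{n+2}{n+a+1-k}$ vanishes for $k\le a-2$ and, after the same column-factor and reversal bookkeeping, scales to $[1/(8\sqrt s),1/(2s),1/s^{3/2},0,\dots,0]$. Equivalently—and as a clean check—since $Q_n(0;t)\equiv1$, the denominator must be the $z\to0$ value of the scaled numerator; indeed $z^{-3/2}(z/s)^{k/2}I_{3-k}(\sqrt z)\to\{1/(8\sqrt s),1/(2s),1/s^{3/2},0,\dots\}$ using $I_\nu(\sqrt z)\sim(\sqrt z/2)^\nu/\Gamma(\nu+1)$, which simultaneously confirms the boundary value $q(0;s)=1$ of (\ref{HE_BC:e}). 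The case $a=0$ is treated separately because the lower block is absent: there $w=x^2e^{-x}$ is $t$-independent, so $Q_n=L^{(2)}_n(x)/L^{(2)}_n(0)$, and a single use of (\ref{Lag_Hilb}) with $\mu=2$ and $L^{(2)}_n(0)=(n+2)(n+1)/2$ yields $q(z;s)=8I_2(\sqrt z)/z$.

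The one genuinely delicate point is the use of (\ref{Lag_Hilb}) in this double-scaling regime: the quoted formula holds for a fixed argument, whereas here the argument $w=z/4n\to0$ while the degree $n\to\infty$. I must therefore verify that its error term $O(n^{\mu/2-3/4})$ stays negligible against the $n^{\,n+2-N}$ leading behaviour once the row and column factors are removed, uniformly for $z,s$ in a bounded region. Because the polynomials occurring here (for $a\in\ZZ_{\ge0}$) are genuine Laguerre polynomials, this is governed by classical hard-edge Laguerre asymptotics; granting that uniformity, everything else is the purely algebraic factor-tracking described above.
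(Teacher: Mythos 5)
Your proposal is correct and is essentially the paper's own argument: the paper states this proposition without a proof environment, saying only that it is ``another consequence of the Hilb formula'' applied to the finite-$n$ determinant evaluation (\ref{dL_polyR_aInt}) under the scaling of Proposition \ref{HE_QRscale}, which is precisely the entrywise-limit computation you carry out. Your factor bookkeeping (the $(2n)^k s^{-k/2}$ column extractions, the column reversal $k\mapsto a+2-k$, the surviving $z^{-3/2}$) checks out, and your observation that the denominator row $[1/8\sqrt{s},1/2s,1/s^{3/2},0,\ldots,0]$ is the $z\to 0$ limit of the scaled numerator row, consistent with $q(0;s)=1$ from (\ref{HE_BC:e}), is a valid confirmation the paper does not spell out.
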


Finally the eigenvalue spacing distribution (\ref{dL_Adbn_aInt}) takes the following form
in the scaling limit.
\begin{proposition}
The distribution of the first eigenvalue spacing at the hard edge for $ a \in \ZZ_{>0} $ is
\begin{multline}
   A_a(z) = 2^{-4}z^{1/2}
   \int^{\infty}_{z}\frac{ds}{s^{1/2}}\left(\frac{s}{s-z}\right)^{a}e^{-s/4}      
   \\ \times
              \det\left[ \begin{array}{c}
   \left[ I_{j+2-k}(\sqrt{s}) \right]_{{\ScSt j=1,\ldots,a}\atop{\ScSt k=1,\ldots a+2}} \\
   \left[ (s/z)^{(2-k)/2}I_{j+2-k}(\sqrt{z}) \right]_{{\ScSt j=1,2}\atop{\ScSt k=1,\ldots a+2}}
                         \end{array} \right] .
\label{dL_HE_aInt_A}
\end{multline}
for $ a \geq 1 $ and for $ a=0 $ is
\begin{equation}
  A_0(z) = \frac{1}{4}e^{-z/4}\left[ I^2_{2}(\sqrt{z})-I_{1}(\sqrt{z})I_{3}(\sqrt{z}) \right]  .
\end{equation}
\end{proposition}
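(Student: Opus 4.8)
The plan is to obtain (\ref{dL_HE_aInt_A}) by applying the hard edge scaling (\ref{dL_HE_vars}), (\ref{dL_HE_scaleA}) directly to the finite-rank integer-$a$ evaluation (\ref{dL_Adbn_aInt}), in precisely the manner used to establish (\ref{dL_HE_aInt_Delta}) and (\ref{dL_HE_aInt_q}) via the Hilb-type asymptotics (\ref{Lag_Hilb}). Thus I would set $y=z/4n$ in $\frac{1}{4n}A_{n,a}(y)$ and change variables $t=s/4n$ in the $t$-integral of (\ref{dL_Adbn_aInt}), so that the lower limit $t=y$ becomes $s=z$, with $dt=ds/(4n)$, $t^a(t-y)^{-a}=s^a(s-z)^{-a}$, $e^{-(n+2)t}\to e^{-s/4}$ and $y^2e^y\to z^2/(4n)^2$. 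It then remains to determine the $n\to\infty$ limit of the scaled $(a+2)\times(a+2)$ Laguerre determinant.

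For each entry the upper index $\mu:=j+k-a-1$ is fixed while the lower index $m:=n+a+3-j-k\to\infty$, so (\ref{Lag_Hilb}) applies with $M=4m+2\mu+2\sim 4n$ and $\sqrt{Mt}\to\sqrt{s}$ (respectively $\sqrt{My}\to\sqrt{z}$ in the two rows evaluated at $-y$). Using $\Gamma(\mu+m+1)/m!=(n+2)!/(n+a+3-j-k)!\sim n^{\mu}$, $(M/4)^{\mu/2}\sim n^{\mu/2}$ and $t^{-\mu/2}=(4n)^{\mu/2}s^{-\mu/2}$, the first $a$ rows behave like $(2n)^{\,j+k-a-1}s^{-(j+k-a-1)/2}I_{j+k-a-1}(\sqrt{s})$ and the last two rows like $(2n)^{\,j+k-a-1}z^{-(j+k-a-1)/2}I_{j+k-a-1}(\sqrt{z})$.

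The determinant is multilinear, so I would extract the factor $(2n)^{j+k-a-1}=(2n)^{j}(2n)^{k}(2n)^{-a-1}$ row-by-row and column-by-column; a short count shows the total extracted power is $(2n)^4$, independently of $a$, which against the net $(4n)^{-4}$ produced by the prefactors $\tfrac{1}{4n}$, $y^2$, $dt$ and $t^a(t-y)^{-a}$ collapses all the $n$-dependence to the single constant $2^{-4}$. The residual $s$- and $z$-powers are then removed by analogous row and column factorizations, and a reversal of the column order $k\mapsto a+3-k$ converts the Bessel index $j+k-a-1$ into $j+2-k$, simultaneously producing the factor $(s/z)^{(2-k)/2}$ in the bottom two rows of (\ref{dL_HE_aInt_A}); the sign $(-1)^{(a+2)(a+1)/2}$ of this reversal matches the prefactor sign $(-1)^{(a+1)(a+2)/2}$ in (\ref{dL_Adbn_aInt}). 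The same scaling applied to the two-term bracket $L^{(1)}_{n+1}(-y)L^{(3)}_{n-1}(-y)-(L^{(2)}_n(-y))^2$, whose Laguerre factors carry upper indices $1,2,3$ and hence scale to $I_1,I_3,I_2$ at argument $\sqrt{z}$, yields $\tfrac14 e^{-z/4}$ times $I_2^2(\sqrt{z})-I_1(\sqrt{z})I_3(\sqrt{z})$, the common $n^4$ growth of the two products cancelling against the $n^{-4}$ of the prefactor.

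I expect the main obstacle to be the rigorous justification rather than the algebra. First, the determinant is an alternating sum of products each of which diverges (the $(2n)^{j+k}$ growth), so one must confirm that the surviving limit is governed by the leading factorization — equivalently, that the limiting Bessel determinant in (\ref{dL_HE_aInt_A}) is not identically zero — and that the subleading corrections to $\Gamma(\mu+m+1)/m!$ and to $(M/4)^{\mu/2}$ do not enter. Second, the error term in (\ref{Lag_Hilb}) is uniform only on compact argument ranges, whereas here $\sqrt{Mt}$ with $t=s/4n$ sweeps the whole half-line $s\in[z,\infty)$ as the integration variable runs; obtaining a bound uniform in $s$ and then interchanging $\lim_{n\to\infty}$ with the $s$-integral — by dominated convergence, playing the decay $e^{-s/4}$ against the known growth $I_\mu(\sqrt{s})=O(e^{\sqrt{s}})$ — is the delicate analytic step. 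Once these two points are secured, the remaining combinatorial bookkeeping of the powers of $2n$, of $s$ and $z$, and of the signs through the row and column operations is routine.
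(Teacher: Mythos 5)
Your proposal is correct and is essentially the paper's own proof: the paper obtains (\ref{dL_HE_aInt_A}) in exactly this way, by applying the Hilb formula (\ref{Lag_Hilb}) to (\ref{dL_Adbn_aInt}), and its parity observation that $\tfrac{1}{2}(a+1)(a+2)+1+\lfloor a/2\rfloor$ is always even is precisely your cancellation of the column-reversal sign $(-1)^{(a+1)(a+2)/2}$ against the prefactor of (\ref{dL_Adbn_aInt}), while your $(2n)^4$ versus $(4n)^{-4}$ count correctly produces the constant $2^{-4}$. One caveat on the $a=0$ case: scaling the bracket $L^{(1)}_{n+1}(-y)L^{(3)}_{n-1}(-y)-\bigl(L^{(2)}_{n}(-y)\bigr)^2$ exactly as written gives $I_1I_3-I_2^2$, not $I_2^2-I_1I_3$; since the stated limit $\tfrac14 e^{-z/4}\bigl[I_2^2(\sqrt z)-I_1(\sqrt z)I_3(\sqrt z)\bigr]$ is the correct (positive) one, the finite-$n$ formula for $A_{n,0}$ in Section 4 evidently carries a sign typo which your write-up silently corrects rather than addresses.
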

\begin{proof}
We apply the Hilb asymptotic formula (\ref{Lag_Hilb}) to (\ref{dL_Adbn_aInt}).
As $ \frac{1}{2}(a+1)(a+2)+1+\lfloor a/2\rfloor $ is always even for $ a \in \ZZ $ we have
(\ref{dL_HE_aInt_A}).
\end{proof}

\section{Analytical Studies at the Hard Edge}
\setcounter{equation}{0}

In this section of our study we intend to develop the analytical and non-formal theory
of the solutions to the defining ordinary and partial differential equations 
described in the previous sections. This is because we wish to compute precision numerical 
data characterising the distribution function of the first eigenvalue spacing at the hard edge 
$ A_a(z) $ for arbitrary parameter $ a $. For this purpose it is not sufficient to employ 
a single local expansion of the $\sigma$-function, about $ s=0 $ say, because it has a finite 
convergence domain and one cannot use this to evaluate the $s$-integrals
on the interval $ [0,\infty) $. For this reason we construct a patchwork of overlapping local
expansions including Taylor series expansions about regular points $ s_0 $ for positive and real 
values. A similar approach was undertaken by Pr\"ahofer and Spohn in their study 
\cite{PS_2004} of the exact scaling functions for one-dimensional stationary KPZ growth.

\subsection{The $\sigma$-function expansion about $ s=0 $}
The terms given in the expansion of the Painlev\'e \IIId $\sigma$-function (\ref{dL_HE_BCnu_v}) 
are the minimum required to specify the full non-analytic Puiseux-type expansion of the 
particular solution for $ \nu(s) $ about $ s=0 $ in the sector $ -\pi\leq {\rm arg}(s) <\pi $.
This is the primary data specifying our particular solution and we need to use this
in various ways in order to compute the distribution $ A_a(z) $.

\begin{proposition}[\cite{Ji_1982},\cite{Ki_1989}]
The Painlev\'e \IIId $\sigma$-function $ \nu(s) $ has a Puiseux-type expansion about the
fixed regular singular point $ s=0 $ of the form
\begin{equation}
  \nu(s) = 
  \sum_{k=0}^{\infty} \sum_{j=0}^{\infty} c_{k,j}s^{j+ka} ,
  \quad a \in \CC, \quad 0\leq {\rm Re}(a) <1 ,
\label{HE_puiseux}
\end{equation}
with $ |{\rm arg}(s)| < \pi $
and is convergent in a finite domain $ s \in \{z\in\CC:|z|<R,|z^a|<R_a\} $.
\end{proposition}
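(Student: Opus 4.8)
The plan is to recast (\ref{dL_HE_sigma}) as an analytic partial differential equation in two independent variables and then run a nonlinear Frobenius analysis followed by a majorant convergence estimate. First I would set $u := s$ and $w := s^a$ and seek $\nu = F(u,w)$ with $F$ a formal power series in $(u,w)$; the Euler operator $\delta := s\,d/ds = u\partial_u + aw\partial_w$ is then diagonal on monomials, $\delta(u^jw^k) = (j+ka)\,u^jw^k$. Multiplying (\ref{dL_HE_sigma}) by $s^2$ and using $s\dot\nu = \delta\nu$ and $s^2\ddot\nu = \delta(\delta-1)\nu$ turns the $\sigma$-form into the polynomial relation
\begin{equation}
[\delta(\delta-1)F]^2 - (a+2)^2(\delta F)^2 + 4(\delta F)^2(\delta F - F) - u\,\delta F(\delta F - F) + \tfrac12 a(a+2)\,u\,\delta F - \tfrac{1}{16}a^2 u^2 = 0 ,
\end{equation}
which is manifestly analytic in $(u,F,\delta F,\delta(\delta-1)F)$ with coefficients polynomial in $u$. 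A solution $F$ holomorphic on a polydisc $\{|u|<R,\,|w|<R_a\}$ is exactly (\ref{HE_puiseux}) on the stated domain, so it suffices to produce such an $F$.

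Next I would determine the coefficients $c_{k,j}$ recursively, grading monomials by the weight $\lambda_{k,j} := j+ka$. Since $\nu$ vanishes at $s=0$, every nonlinear term is a product of factors of positive weight, so the top unknown $c_{k,j}$ enters the weight-collected equation linearly with remainder a fixed polynomial in strictly lower-index coefficients. I would first treat the analytic block $k=0$: the lowest balance (weight $2$) is the quadratic indicial equation for $c_{0,1}$, whose relevant root is the one matching (\ref{dL_HE_BCnu_v}), after which the higher $c_{0,j}$ follow uniquely. For the $k\ge1$ blocks I would linearize the equation about the analytic solution $\nu_0=\sum_{j\ge1}c_{0,j}s^j$; the linearized operator is second-order Fuchsian at $s=0$ with indicial roots separated by $a$, and its free homogeneous solution $\propto s^{a+3}$ furnishes the single integration constant $c_{1,3}$, which (\ref{dL_HE_BCnu_v}) pins to its prescribed value, the nonlinear couplings then generating all $k\ge2$ terms. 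One subtlety to flag: the naive leading symbol is degenerate, because $\delta(\delta-1)$ annihilates the weight-one term $c_{0,1}s$; passing to the next order, the effective indicial symbol $P(\lambda_{k,j})$ multiplying $c_{k,j}$ emerges as a genuine degree-two polynomial, dominated by the $[\delta(\delta-1)F]^2$ contribution $\sim\lambda^2$. The hypotheses $a\notin\ZZ_{\ge0}$ and $0\le\mathrm{Re}(a)<1$ guarantee that the lattice $\{\,j+ka\,\}$ meets the roots of $P$ only at the single designed resonance $\lambda=a+3$, so no secular logarithms are forced and the pure Puiseux form is consistent.

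For convergence I would use the method of majorants. Writing the recurrence as $P(\lambda_{k,j})\,c_{k,j} = \mathcal{N}_{k,j}(\{c_{k',j'}\}_{\text{lower}})$ with $\mathcal{N}$ a universal polynomial in the lower coefficients, the two ingredients are (i) a lower bound $|P(\lambda_{k,j})| \ge \mathrm{const}\cdot(j+k)^2$ for $j+k$ large, which follows since $P$ is quadratic and $|j+ka|\gtrsim j+k$ once $\mathrm{Re}(a)\ge0$, and (ii) the fact that the nonlinear couplings strictly raise indices. Together these dominate $|c_{k,j}|$ coefficientwise by the Taylor coefficients of the solution of an auxiliary scalar analytic equation of Briot--Bouquet type, whose holomorphy on a polydisc is immediate, yielding geometric bounds $|c_{k,j}|\le C\,\rho_u^{-j}\rho_w^{-k}$ and hence convergence of $F(u,w)$ on $\{|u|<\rho_u,\,|w|<\rho_w\}$, i.e. (\ref{HE_puiseux}) on $\{|s|<R,\,|s^a|<R_a\}$.

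The main obstacle is this convergence step, and within it the uniform small-divisor control: establishing the lower bound on $|P(j+ka)|$ away from the resonance when $a$ is complex, since $j+ka$ can a priori drift toward a root of $P$. One must verify that for $a\notin\ZZ_{\ge0}$ with $0\le\mathrm{Re}(a)<1$ the only coincidence is at $\lambda=a+3$, after which $|P|$ is bounded below on the whole lattice; this is precisely what separates the two gradings and excludes logarithmic terms, the forbidden case $a\in\ZZ_{\ge0}$ being where the gradings collide and the expansion degenerates into the Bessel-determinant form treated separately. As a cross-check and as an alternative to the majorant estimate, the convergence follows abstractly from isomonodromic deformation theory: $\nu$ is the logarithmic derivative of a $\tau$-function built from the Riemann--Hilbert problem of the associated linear system, which is analytic in the stated domain, so its local expansion necessarily converges there; this is the route of \cite{Ji_1982}, refined in \cite{Ki_1989}.
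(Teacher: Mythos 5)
The paper contains no proof of this proposition at all: it is imported wholesale from \cite{Ji_1982} and \cite{Ki_1989}, where existence and convergence of the expansion are obtained from isomonodromic deformation theory (analyticity of the $\tau$-function of the associated linear problem near the fixed singularity), and the paper itself only derives, \emph{after} the statement, the coefficient recurrences (\ref{PIII_cRecur:a})--(\ref{PIII_cRecur:b}). Your proposal is therefore a genuinely different, self-contained route. Its formal part is sound and in fact reproduces exactly what the paper does downstream: your effective indicial polynomial is visible in the paper's recurrence, where $c_{q,p}$ enters linearly with the factor $-\tfrac{a(p+1+(q+1)a)(p-3+(q-1)a)}{2(a+3)(a+2)^2(a+1)}$, i.e. $P(\lambda)\propto(\lambda+1+a)(\lambda-3-a)$ with $\lambda=p+qa$; the root $\lambda=a+3$ is met only at $(q,p)=(1,3)$ (your resonance, the paper's free constant $c_{1,3}$ fixed by (\ref{dL_HE_BCnu_v})), while $\lambda=-1-a$ requires $a=-(p+1)/(q+1)$ and so is never met for the admissible $a$. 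One small slip: the weight-one balance for $c_{0,1}$ is $\bigl((a+2)c_{0,1}-\tfrac{a}{4}\bigr)^2=0$, a double root, so there is no branch choice there; the genuine two-fold branching occurs at $c_{0,2}$ (the paper's (\ref{HE_C2})), and it is that choice which the boundary data resolves.

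The genuine gap is in the convergence step, and it is not where you flagged it. The small divisors are indeed harmless: off the single resonance, $|P(j+ka)|\gtrsim (j+k)^2$ for $0\le{\rm Re}(a)<1$, exactly as you say. The problem is that the equation is \emph{quadratic} in $\delta(\delta-1)F$, so the remainder $\mathcal{N}$ at weight $\lambda$ contains sums $\sum_{\lambda_1+\lambda_2=\lambda+2}\lambda_1(\lambda_1-1)\lambda_2(\lambda_2-1)c_{\lambda_1}c_{\lambda_2}$ in which each factor carries a weight-squared derivative loss. Under your ansatz $|c_{k,j}|\le C\rho_u^{-j}\rho_w^{-k}$ this sum is of size $\sim C^2\lambda^{5}\rho^{-\lambda}$, which the lower bound $|P(\lambda)|\gtrsim\lambda^{2}$ cannot absorb: the induction fails by a polynomial factor in $\lambda$, so ``the nonlinear couplings strictly raise indices'' is not by itself enough. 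The standard cures must be made explicit: either split off a sufficiently long finite jet of the solution and run the induction with polynomially weighted bounds $|c_\lambda|\le C\rho^{-\lambda}(1+|\lambda|)^{-4}$ (the weights are then absorbed into a slightly smaller radius, recovering the geometric conclusion), or, more cleanly, do not majorize the $\sigma$-form at all but an equivalent first-order system of Briot--Bouquet type, e.g. the Hamiltonian form of ${\rm P}_{\rm III'}$, to which classical majorant theorems apply verbatim; note that the paper's own coupled system (\ref{dL_HE_DEnu})--(\ref{dL_HE_DEmu}) is unsuitable for this because the argument of its square root degenerates at $s=0$. Finally, your closing ``cross-check'' via isomonodromy is not an independent check: it \emph{is} the proof in the cited references, i.e. the only proof the paper relies on, so if your majorant step is not repaired the constructive argument adds nothing beyond the formal recurrences the paper already derives.
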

The coefficients $ c_{k,j} $ are determined by recurrences which follow from the 
substitution of expansion (\ref{HE_puiseux}) into the relation (\ref{dL_HE_sigma}).
We will assume that $ a $ is not rational for simplicity.
Considering terms in the resulting equation with $ s^p $ and $ p\in \ZZ_{\geq 0} $ then
the $ p=0 $ case implies that if $ c_{0,0} = 0 $ then this choice fixes 
\begin{equation}
  c_{0,1} = \frac{a}{4(a+2)} .
\label{HE_C1}
\end{equation}
The $ p=1 $ is automatically zero but for the $ p=2 $ case one has the two options
\begin{equation}
  c_{0,2} = -\frac{a}{8(a+3)(a+2)^2(a+1)} , \quad{\rm or}\quad 0.
\label{HE_C2}
\end{equation}
The former case applies here by comparison with (\ref{dL_HE_BCnu_v}). 
For $ p \ge 3 $ the recurrence is
\begin{multline}
 - \frac{1}{2}\frac{a(p+a+1)(p-a-3)}{(a+3)(a+2)^2(a+1)}c_{0,p}
 - \frac{2}{a+2}\sum^{p-1}_{j=2}j(p-j)c_{0,j}c_{0,p+1-j} \\
 + 8c_{0,2}\sum^{p-1}_{j=2}(j-1)(p-j)c_{0,j-1}c_{0,p+1-j} \\
 + \sum^{p-1}_{j=3}j(p+2-j)[(j-1)(p+1-j)-(a+2)^2]c_{0,j}c_{0,p+2-j} \\
 +4\sum^{p-1}_{j=3}\sum^{p+2-j}_{m=1}jm(p+1-j-m)c_{0,j}c_{0,m}c_{0,p+2-j-m} = 0,
\label{PIII_cRecur:a}
\end{multline}
which allows for $ c_{0,p} $ to be recursively found, valid for $ a \notin \ZZ $.
Terms with $ s^{-2+qa} $ for $ q=2k \geq 2 $ imply that $ c_{k,0}=0 $ given that
$ c_{0,0}=0 $ and $ a \neq 3/(k-1), -1/(k+1) $. Consequently all terms with
$ s^{-1+qa} $ vanish also.
The generic recurrence relation following from examining the $ s^{p+qa} $ term is
\begin{multline}
 \frac{1}{2}a(a+2)(p+1+qa)c_{q,p+1}
 = \sum^q_{k=0}\sum^{p+1}_{j=0}(j+ka)[p-j+(q-k)a]c_{k,j}c_{q-k,p+1-j} \\
 - \sum^q_{k=0}\sum^{p+2}_{j=0}(j+ka)[p+2-j+(q-k)a]
                               [(j-1+ka)[p+1-j+(q-k)a]-(a+2)^2]c_{k,j}c_{q-k,p+2-j} \\
 -4\sum^q_{k=0}\sum^{q-k}_{l=0}\sum^{p+2}_{j=0}\sum^{p+2-j}_{m=0}
   (j+ka)(m+la)[p+1-j-m+(q-k-l)a]c_{k,j}c_{l,m}c_{q-k-l,p+2-j-m} ,
\label{PIII_cRecur:b}
\end{multline}
for $ q,p \ge 0 $. The convention is taken that sums with upper limits less than their
lower limits are zero, or equivalently coefficients with negative indices are zero.
Contrary to appearances the highest coefficient in (\ref{PIII_cRecur:b}) will turn out to 
be $ c_{q,p} $ as $ c_{q,p+2} $ occurs with $ c_{0,0} $ as a factor and $ c_{q,p+1} $
has a factor of
\begin{equation}
  \frac{1}{2}a(a+2)+c_{0,0}-2(a+2)^2c_{0,1}-8c_{0,0}c_{0,1} ,
\end{equation}
which is zero as a consequence of $ c_{0,0}=0 $ and (\ref{HE_C1}).
We find that the coefficient $ c_{q,p} $ occurs as a linear term and has a factor of
\begin{equation}
   (p-1+qa)c_{0,1}(4c_{0,1}-1)+4(p+qa)(p-1+qa-(a+2)^2)c_{0,2}-16(p+qa)c_{0,0}c_{0,2} .
\end{equation}
With the above evaluations (\ref{HE_C1}) and (\ref{HE_C2}) this is
\begin{equation}
   -\frac{a(p+1+(q+1)a)(p-3+(q-1)a)}{2(a+3)(a+2)^2(a+1)} .
\end{equation}
This vanishes when $ q=1,p=3 $ and we find that $ c_{1,3} $ is undetermined and therefore
is a free parameter. 
With these choices a triangular subset of the coefficients 
vanish
\begin{equation}
  c_{k,j} = 0, \quad \text{$ j=0,\ldots,3k-1 $ for $ k=1, \ldots, \infty $} ,
\label{}
\end{equation}
so that the initial non-zero term in the $j-$sum is $ c_{k,3k} $. All other terms 
are fixed by $ c_{1,3} $ and $ a $.

For the variable $ \mu(s) $ we can deduce the following Puiseux-type expansion from the
above work 
\begin{multline} 
    \mu(s) \mathop{\sim}\limits_{s \to 0}
   -\frac{2}{a+2}s -\frac{a}{(a+3)(a+2)^2(a+1)}s^2 \\
   - \frac{2}{4^{a+2}(a+2)(a+1)\Gamma^2(a+3)}s^{a+3} .
\end{multline} 
In regard to the quantity $ 2C(s) $ we require as much detail about this as for $ \nu(s) $.
This variable is a $\sigma$-function for an identical problem, where the fixed exponent
$ 2 $ in the weight (\ref{deform_Lwgt}) is replaced by $ 3 $. If we define the expansion 
coefficients for this object
\begin{equation}
  2C(s) = 
  \sum_{k=0}^{\infty} \sum_{j=0}^{\infty} a_{k,j}s^{j+ka} ,
  \quad a \in \CC, \quad 0\leq {\rm Re}(a) <1 ,
\label{}
\end{equation}
then the defining recurrences for these using (\ref{HE_Crel}) are
\begin{equation} 
  a_{0,0} = 0, \quad
  a_{0,1} = 4(a+2)(a+1)c_{0,2} ,
\end{equation}
and for $ j\geq 2 $
\begin{equation} 
   a_{0,j} = 2(a+2)\Big[-(j+1)(j-2-a)c_{0,j+1}+\sum^{j}_{l=2}la_{0,j+1-l}c_{0,l} \Big] .
\end{equation}
For the case $ k \geq 1 $ we have $ a_{k,j}=0 $ for $ j=0,..,3k-1 $ and the remaining 
non-zero terms are given by 
\begin{multline} 
   a_{1,j} = 2(a+2)\Big[-(j+1+a)(j-2)c_{1,j+1} \\
   +\sum^{j}_{l=0}(l+a)a_{0,j+1-l}c_{1,l}
    +\sum^{j+1}_{l=2}la_{1,j+1-l}c_{0,l} \Big] ,
\end{multline}
for $ k=1 $ and the general case $ k\geq 2 $ by
\begin{multline} 
   a_{k,j} = 2(a+2)\Big[-(j+1+ka)(j-2+(k-1)a)c_{k,j+1} \\
   +\sum^{j}_{l=0}(l+ka)a_{0,j+1-l}c_{k,l}
    +\sum^{j+1}_{l=2}la_{k,j+1-l}c_{0,l} \\
     +\sum^{k-1}_{m=1}\sum^{j+1}_{l=0}(l+ma)a_{k-m,j+1-l}c_{m,l} \Big] .
\end{multline}
The first few terms are
\begin{multline} 
    2C(s) \mathop{\sim}\limits_{s \to 0} -\frac{a}{2(a+3)(a+2)}s
  -\frac{a(a^2-5a-18)}{8(a+4)(a+3)^2(a+2)^2(a+1)}s^2 \\
  +\frac{1}{4^{a+2}(a+2)(a+1)\Gamma(a+3)\Gamma(a+4)}s^{a+3} .
\end{multline}
and
\begin{multline} 
    \xi(s) \mathop{\sim}\limits_{s \to 0}\frac{a}{4(a+3)}s
  -\frac{3a}{8(a+4)(a+3)^2(a+2)}s^2 \\
  +\frac{3}{4^{a+7}(a+3)(a+2)(a+1)\Gamma^2(a+4)}s^{a+4} .
\end{multline}

\subsection{The $ q, p $ expansion about $ s=0 $}
In this subsection we seek local expansions about $ s=0 $ for the coefficients 
functions $ r^0_{m}(s),u^0_{m}(s),r^s_{m}(s),u^s_{m}(s) $ appearing in 
(\ref{HE_z=0Exp:a}), (\ref{HE_z=0Exp:b}), (\ref{HE_z=sExp:a}) and (\ref{HE_z=sExp:b}). 
In parallel with the transcendent quantities these will have Puisuex-type expansions
of the form
\begin{align}
  r^0_{m}(s) & = \sum_{k,j \geq 0} r^0_{m,k,j}s^{j+ka} ,
  \label{HE_z=0Exp_s=0:a} \\
  r^s_{m}(s) & = \sum_{k,j \geq 0} r^s_{m,k,j}s^{j+ka} ,
  \label{HE_z=0Exp_s=0:b}
\end{align}
with analogous expansions for the remaining two coefficients. This is immediately 
clear because the recurrence relations for these imply that they are polynomial
functions of the variables $ \mu(s), C(s), \xi(s) $. These recurrence relations
imply the following ones for the $ z=0 $ coefficients $ r^0_{m,k,j}, u^0_{m,k,j} $
\begin{equation}
   (a+m+2)u^0_{m,k,0} = \frac{1}{4}r^0_{m-1,k,0}, \quad
   mr^0_{m,k,0} = u^0_{m,k,0},
\end{equation}
for $ m \geq 1, k \geq 0 $. For the general case $ j \geq 1, k \geq 0 $ we have
\begin{multline}
  (m+2)u^0_{m,k,j-1} 
  = (a+m+1)u^0_{m-1,k,j}+\frac{1}{2}\sum^k_{q=0}\sum^j_{p=0}a_{k-q,j-p}u^0_{m-1,q,p} \\
    +\frac{1}{4}r^0_{m-1,k,j-1}
    -\sum^k_{q=0}\sum^j_{p=0}[j-p+(k-q)a]\left[c_{k-q,j-p}+\frac{1}{2}a_{k-q,j-p}\right]r^0_{m-1,q,p}
    -\frac{1}{4}r^0_{m-2,k,j},
\end{multline}
and
\begin{multline}
  mr^0_{m,k,j-1} 
  = (m-1)r^0_{m-1,k,j}-\frac{1}{2}\sum^k_{q=0}\sum^j_{p=0}a_{k-q,j-p}r^0_{m-1,q,p} \\
    +u^0_{m,k,j-1}-4\sum^k_{q=0}\sum^j_{p=0}[j-p+(k-q)a]c_{k-q,j-p}u^0_{m,q,p}-u^0_{m-1,k,j},
\end{multline}
These equations can be solved for successive values of $ m $ starting with the $ m=0 $ values
for all $ k,j $
\begin{equation}
   u^0_{0,k,j} = 0, \quad r^0_{0,k,j} = 0, \quad\text{except for $ k=j=0 $ where $ r^0_{0,0,0} = 1 $} ,
\end{equation}
which follow from $ r^0_{0}(s)=1, u^0_{0}(s)=0 $.
The next few coefficients can be read off from 
\begin{multline}
  r^0_{1}(s) \mathop{\sim}\limits_{s \to 0} \frac{1}{4(a+3)}+\frac{a}{16(a+4)(a+3)^2(a+2)}s \\
  - \frac{1}{128.4^a(a+3)(a+2)(a+1)\Gamma(a+5)\Gamma(a+4)}s^{a+3} ,
\end{multline}
\begin{multline}
  u^0_{1}(s) \mathop{\sim}\limits_{s \to 0} \frac{1}{4(a+3)}+\frac{a}{8(a+4)(a+3)^2(a+2)}s \\
  - \frac{1}{128.4^a(a+3)(a+2)(a+1)\Gamma^2(a+4)}s^{a+3} ,
\end{multline}
\begin{multline}
  r^0_{2}(s) \mathop{\sim}\limits_{s \to 0} \frac{1}{32(a+4)(a+3)}+\frac{a}{64(a+5)(a+4)(a+3)^2(a+2)}s \\
  - \frac{1}{512.4^a(a+3)(a+2)(a+1)\Gamma(a+6)\Gamma(a+4)}s^{a+3} ,
\end{multline}
\begin{multline}
  u^0_{2}(s) \mathop{\sim}\limits_{s \to 0} \frac{1}{16(a+4)(a+3)}+\frac{3a}{64(a+5)(a+4)(a+3)^2(a+2)}s \\
  - \frac{1}{512.4^a(a+3)(a+2)(a+1)\Gamma(a+5)\Gamma(a+4)}s^{a+3} .
\end{multline}

The analogous results for $ r^s_{m,k,j}, u^s_{m,k,j} $ are 
\begin{equation}
   (a+m+2)u^s_{m,k,0} = -\frac{1}{4}r^s_{m-1,k,0}, \quad
   mr^s_{m,k,0} = u^s_{m,k,0},
\end{equation}
for $ m \geq 1, k \geq 0 $. Again for the general case $ j \geq 1, k \geq 0 $ we have
\begin{multline}
  (a+m)u^s_{m,k,j-1}+\frac{1}{2}\sum^k_{q=0}\sum^{j-1}_{p=0}a_{k-q,j-1-p}u^s_{m,q,p} \\
  = (a+m+1)u^s_{m-1,k,j}+\frac{1}{2}\sum^k_{q=0}\sum^j_{p=0}a_{k-q,j-p}u^s_{m-1,q,p} \\
    +\sum^k_{q=0}\sum^{j-1}_{p=0}[j-1-p+(k-q)a]\left[c_{k-q,j-1-p}+\frac{1}{2}a_{k-q,j-1-p}\right]r^s_{m,q,p} \\
    -\frac{1}{4}r^s_{m-1,k,j-1}
    -\sum^k_{q=0}\sum^j_{p=0}[j-p+(k-q)a]\left[c_{k-q,j-p}+\frac{1}{2}a_{k-q,j-p}\right]r^s_{m-1,q,p} \\
    +\frac{1}{4}r^0_{m-2,k,j},
\end{multline}
and
\begin{multline}
  mr^s_{m,k,j-1}-\frac{1}{2}\sum^k_{q=0}\sum^{j-1}_{p=0}a_{k-q,j-1-p}r^s_{m,q,p} \\
  = (m-1)r^s_{m-1,k,j}-\frac{1}{2}\sum^k_{q=0}\sum^j_{p=0}a_{k-q,j-p}r^s_{m-1,q,p} \\
    -u^s_{m-1,k,j}+4\sum^k_{q=0}\sum^j_{p=0}[j-p+(k-q)a]c_{k-q,j-p}u^s_{m,q,p},
\end{multline}
Again these can be solved for successive values of $ m $ starting with the initial values
given by the relations
\begin{equation}
   r^s_{0,k,j} = \sum^{j}_{n=0}r^0_{n,k,j-n} ,
\end{equation}
along with an identical formula for $ u^s_{0,k,j} $.

\subsection{The $\sigma$-function expansion about $ s=\infty $}
The nature of the expansion of $ \nu(s) $ about $ s=\infty $ is rather different because
this fixed singular point is irregular in the case of the third Painlev\'e transcendent.
\begin{proposition}
The formal asymptotic expansion of $ \nu(s) $ about $ s=\infty $ has the form
\begin{equation}
  \nu(s) \mathop{\sim}\limits_{s \to \infty} \sum^{\infty}_{j=-1} d_j s^{-j/2} ,
\label{}
\end{equation}
where $ d_{-1} = \pm \frac{1}{2}a $.
\end{proposition}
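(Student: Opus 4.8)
The plan is to obtain both the shape of the expansion and the value of $ d_{-1} $ directly from the $\sigma$-form ODE (\ref{dL_HE_sigma}) satisfied by $ \nu $, by a dominant-balance analysis at $ s=\infty $. First I would test a pure power-law leading term $ \nu\sim d\,s^{\rho} $ and tabulate the orders of the five terms of (\ref{dL_HE_sigma}): with $ \dot\nu\sim d\rho s^{\rho-1} $ and $ \ddot\nu\sim d\rho(\rho-1)s^{\rho-2} $ one has $ s^2(\ddot\nu)^2 $ and $ (a+2)^2(\dot\nu)^2 $ of order $ s^{2\rho-2} $, the cubic term $ \dot\nu(4\dot\nu-1)(s\dot\nu-\nu) $ of order $ s^{2\rho-1} $ (using $ 4\dot\nu-1\sim-1 $ and $ s\dot\nu-\nu\sim d(\rho-1)s^{\rho} $ when $ \rho<1 $), the term $ \tfrac12 a(a+2)\dot\nu $ of order $ s^{\rho-1} $, and the bare constant $ -\tfrac1{16}a^2 $ of order $ s^0 $. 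The only balance that both closes and is self-consistent with the assumed leading power is $ 2\rho-1=0 $, i.e. $ \rho=\tfrac12 $, in which the cubic term and the constant are the two dominant pieces at order $ s^0 $ while everything else is $ O(s^{-1/2}) $; the competing balance $ \rho=1 $ forces $ d^2=-\tfrac{a^2}{16(a+2)^2}<0 $ and so is discarded.

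With $ \rho=\tfrac12 $ fixed, writing $ \nu=d_{-1}s^{1/2}+\cdots $ gives $ \dot\nu\sim\tfrac12 d_{-1}s^{-1/2} $ and $ s\dot\nu-\nu\sim-\tfrac12 d_{-1}s^{1/2} $, so the cubic term contributes $ \tfrac14 d_{-1}^2 $ at order $ s^0 $. Matching this against $ \tfrac1{16}a^2 $ from the constant yields $ d_{-1}^2=\tfrac14 a^2 $, hence $ d_{-1}=\pm\tfrac12 a $, exactly the claimed leading coefficient; the two signs are the two admissible asymptotic branches at the irregular point, and they collapse consistently to the trivial solution $ \nu\equiv0 $ when $ a=0 $.

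To secure the full statement that $ \nu\sim\sum_{j\ge-1}d_j s^{-j/2} $, I would substitute the complete half-integer series into (\ref{dL_HE_sigma}) and collect coefficients of $ s^{-k/2} $ for $ k=1,2,\dots $. Because $ \nu,\dot\nu,\ddot\nu $ each carry only half-integer powers once the leading term is $ s^{1/2} $, the equation closes on this ladder and no integer-spaced or logarithmic terms are forced. At order $ s^{-k/2} $ the newest unknown is $ d_{k-1} $; tracking where it first appears, only the $ -\nu $ and $ s\dot\nu $ pieces of the factor $ (s\dot\nu-\nu) $ reach order $ s^{-k/2} $, contributing a linear coefficient $ \tfrac14(k+1)\,d_{-1}\,d_{k-1} $, whereas the squared and linear terms feed $ d_{k-1} $ only at the lower order $ s^{-(k+2)/2} $. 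Hence $ d_{k-1} $ is fixed recursively by $ \tfrac14(k+1)d_{-1}d_{k-1}=(\text{polynomial in }d_{-1},\dots,d_{k-2}) $, and the first step reproduces $ d_0=d_{-1}^2-\tfrac12 a(a+2) $.

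I expect the delicate part to be not the recursion itself — its pivot $ \tfrac14(k+1)d_{-1} $ never vanishes for $ a\ne0 $, so there are no resonances and the coefficients are uniquely and log-freely determined — but rather the structural justification that this is the correct formal solution: namely ruling out the $ \rho=1 $ (and any $ \rho>1 $) balance on the grounds that they yield non-real or inconsistent leading data, and confirming that the half-integer ansatz exhausts the formal solution space of (\ref{dL_HE_sigma}) at its irregular singular point. The connection problem of deciding which sign $ \pm\tfrac12 a $ the genuine solution, fixed by the $ s=0 $ data (\ref{dL_HE_BCnu_v}), actually follows lies beyond this formal statement and would be settled by the numerical matching used elsewhere in Section 6.
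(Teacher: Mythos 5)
Your strategy is the same as the paper's: substitute a power-law ansatz into the $\sigma$-form (\ref{dL_HE_sigma}), observe that the only balance of the constant $-\tfrac{1}{16}a^2$ compatible with a leading exponent below $1$ comes from the cubic term at order $s^{2\rho-1}$, giving $\rho=\tfrac12$ and $\tfrac14 d_{-1}^2=\tfrac1{16}a^2$, i.e. $d_{-1}=\pm\tfrac12 a$ (the paper phrases this with the ansatz $\nu=ds^{k\alpha}+{\rm O}(s^{(k-1)\alpha})$, rules out $k\alpha>1$ via the contradiction $k\alpha=2/3$, and fixes the half-integer ladder by matching sub-leading orders). However, your treatment of the borderline case $\rho=1$ --- which you yourself single out as the delicate step --- contains a genuine error. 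At $\rho=1$ the term $\tfrac12 a(a+2)\dot\nu$ is also ${\rm O}(1)$ (its order $s^{\rho-1}$ degenerates to $s^0$), so the leading balance is not $-(a+2)^2d^2=\tfrac1{16}a^2$ but
\begin{equation*}
  -(a+2)^2d^2+\tfrac12 a(a+2)d-\tfrac1{16}a^2
  \;=\; -\bigl((a+2)d-\tfrac14 a\bigr)^2 \;=\; 0 ,
\end{equation*}
which has the \emph{real} double root $d=\tfrac{a}{4(a+2)}$. Hence the $\rho=1$ balance cannot be discarded on grounds of non-reality, and as written your exclusivity argument for $\rho=\tfrac12$ fails; repairing it requires a different argument (e.g.\ exploiting the degeneracy of the double root, or, as the paper effectively does, appealing to consistency with the known large-argument expansion of the Painlev\'e III transcendent and with the particular solution selected by the $s=0$ data).

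A second, less damaging slip is in your recursion: $d_{k-1}$ enters the order-$s^{-k/2}$ equation not only through the factor $(s\dot\nu-\nu)$, with coefficient $\tfrac{k+1}{4}d_{-1}$, but also through the factor $\dot\nu$ in the cubic term, which contributes $-\tfrac{k-1}{4}d_{-1}d_{k-1}$ at the same order; the pivot is therefore $\tfrac12 d_{-1}$, independent of $k$, not $\tfrac14(k+1)d_{-1}$. The two coincide at $k=1$, which is why your check of $d_0=d_{-1}^2-\tfrac12 a(a+2)$ worked, and your conclusion that there are no resonances for $a\neq0$ survives since $\tfrac12 d_{-1}\neq0$. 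With the $\rho=1$ exclusion repaired, your argument would match the paper's proof and would in fact be somewhat more complete, since the paper merely asserts that only the orders $s^{2k\alpha-1}$ and $s^{3k\alpha-2}$ can balance the constant term and does not spell out the recursion for the higher coefficients.
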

\begin{proof}
We start with the general ansatz of 
\begin{equation}
  \nu(s) = ds^{k\alpha} + {\rm O}(s^{(k-1)\alpha}) ,
\label{}
\end{equation}
where $ k \in \NN $ and $ \alpha \in \CC $ with $ 0 < {\rm Re}(\alpha) < 1 $. 
Using (\ref{dL_HE_sigma}) we find that the only terms which can balance the 
$ {\rm O}(1) $ term are the $ {\rm O}(s^{2k\alpha-1}) $ and $ {\rm O}(s^{3k\alpha-2}) $
terms. If $ k\alpha < 1 $ then the former choice applies and we find $ k\alpha = 1/2 $.
If we assume $ k\alpha > 1 $ then the latter case must be chosen but this leads to
$ k\alpha = 2/3 $ in contradiction to the hypothesis. With the correct choice of
$ k\alpha $ we find the above relation for leading coefficient, $ d_{-1} $. Considering
the sub-leading terms we note that it is only possible for the terms of 
$ {\rm O}(s^{-\alpha}) $ to balance those of $ {\rm O}(s^{-1/2}) $ so that in fact
$ \alpha = 1/2 $ and $ k=1 $. This is entirely consistent with the expansion for the
Painlev\'e III transcendent $ q_{\rm III}(t) $ about $ t=\infty $ as found in 
\cite{GLS_2002}.
\end{proof}

The choice of the sign of the leading coefficient is positive in our application.
Consequently we find the first few terms of the asymptotic expansions of the $\sigma$-function
\begin{multline}
  \nu(s) \mathop{=}\limits_{s \to \infty}
  \frac{1}{2}as^{1/2} - \frac{1}{4}a(a+4) + \frac{15}{16}as^{-1/2} + \frac{15}{16}a^2s^{-1}
  \\
  + \frac{15}{256}a(16a^2-7)s^{-3/2} + {\rm O}(s^{-2}) ,
\label{HE_sLarge:a}
\end{multline}
and the auxiliary variables
\begin{multline}
  \mu(s)+s \mathop{=}\limits_{s \to \infty}
  as^{1/2} - \frac{15}{8}as^{-1/2} - \frac{15}{4}a^2s^{-1}
  \\
  - \frac{45}{128}a(16a^2-7)s^{-3/2} + {\rm O}(s^{-2}) ,
\label{HE_sLarge:b}
\end{multline}
and 
\begin{equation}
  2C(s)+a \mathop{=}\limits_{s \to \infty}
  \frac{5}{2}as^{-1/2}
  + \frac{5}{2}a^2s^{-1}
  + \frac{5}{16}a(8a^2-21)s^{-3/2}
  + {\rm O}(s^{-2}) ,
\label{HE_sLarge:c}
\end{equation}
and 
\begin{equation}
  \xi(s) \mathop{=}\limits_{s \to \infty}
  \frac{1}{4}as^{1/2}
  - \frac{35}{32}as^{-1/2}
  - \frac{35}{16}a^2s^{-1}
  + {\rm O}(s^{-3/2}) .
\label{HE_sLarge:d}
\end{equation}

The large $s$-regime also implies a simplification of spectral derivative 
(\ref{HE_LP:a}) which becomes
\begin{equation}
  \partial_z \Psi =
  \begin{pmatrix} \renewcommand{\arraystretch}{1.2}
                  -\frac{\displaystyle a}{\displaystyle 2(z-s)} &
                   \frac{\displaystyle 1}{\displaystyle z} \\
                   \frac{1}{4} &
                  -\frac{\displaystyle a}{\displaystyle 2(z-s)}
                  -\frac{\displaystyle 2}{\displaystyle z}
  \end{pmatrix} \left\{ \mathbb{I}+{\rm O}(s^{-1/2}) \right\}
  \Psi .
\label{}
\end{equation}
Using the substitution $ \Psi \mapsto \exp(-a\ln(s-z)\mathbb{I}/2)\Psi $ this decouples 
and can be solved in terms of the modified Bessel functions. An application of the 
boundary condition (\ref{HE_BC:e}) implies that 
\begin{align}
  q(z;s) & \sim \left(\frac{s}{s-z}\right)^{a/2}\frac{8}{z}I_{2}(\sqrt{z}) ,
  \label{HE_qInfExp:a} \\
  p(z;s) & \sim \left(\frac{s}{s-z}\right)^{a/2}z
                \frac{d}{dz}\left(\frac{8}{z}I_{2}(\sqrt{z})\right) .
  \label{HE_qInfExp:b}
\end{align}

\subsection{The $\sigma$-function expansion about a regular point}
In this subsection we seek Taylor series expansions for the sigma function and derived
variables about regular points $ s_0 $, taken to be positive and real without any loss
of generality. Let us write
\begin{equation}
  \nu(s) = \sum^{\infty}_{j=0}d_j(s-s_0)^j ,
\label{HE_nuRegExp}
\end{equation}
and using (\ref{dL_HE_sigma}) we find the following recurrence relations for
the coefficients $ d_j $
\begin{equation}
  4s_0^2d_2^2 - \frac{1}{16}a^2 + \frac{1}{2}a(a+2)d_1 + d_0d_1
  - [s_0+(a+2)^2]d^2_1 + 4(s_0d_1-d_0)d^2_1 = 0 ,
\label{}
\end{equation}
and for $ n \geq 1 $, $ d_2 \neq 0 $
\begin{multline}
  4s_0^2(n+2)(n+1)d_2d_{n+2} = - \frac{1}{2}a(a+2)(n+1)d_{n+1} \\
  + [s_0+(a+2)^2]\sum^{n}_{j=0}(j+1)(n-j+1)d_{j+1}d_{n-j+1} \\
  + \sum^{n}_{j=0}(j+1)(n-j-1)d_{j+1}d_{n-j} \\
  - \sum^{n-1}_{j=1}(j+1)j(n-j+1)(n-j)d_{j+1}d_{n-j+1} \\
  - 2s_0\sum^{n-1}_{j=0}(j+2)(j+1)(n-j+1)(n-j)d_{j+2}d_{n-j+1} \\
  - s^2_0\sum^{n-1}_{j=1}(j+2)(j+1)(n-j+2)(n-j+1)d_{j+2}d_{n-j+2} \\
  - 4\sum^{n}_{i=0}\sum^{n-i}_{j=0}(i+1)(j+1)(n-i-j-1)d_{i+1}d_{j+1}d_{n-i-j} \\
  - 4s_0\sum^{n}_{i=0}\sum^{n-i}_{j=0}(i+1)(j+1)(n-i-j+1)d_{i+1}d_{j+1}d_{n-i-j+1} .
\label{}
\end{multline}
These recurrences are solved subject to the initial values of
\begin{align}
   d_0 & = \sum^{\infty}_{k=0}\sum^{\infty}_{j=3k} c_{k,j}s^{j+ka}_0 , \\
   d_1 & = \sum^{\infty}_{k=0}\sum^{\infty}_{j=3k} (j+ka)c_{k,j}s^{j-1+ka}_0 ,
\end{align}
which in turn can be found from the solutions to the recurrences (\ref{PIII_cRecur:a})
and (\ref{PIII_cRecur:b}). In addition we define
\begin{align}
   \mu(s) & = \sum_{j \geq 0}f_{j}(s-s_0)^j , \label{HE_muRegExp}\\
     C(s) & = \sum_{j \geq 0}g_{j}(s-s_0)^j , \label{HE_CRegExp}\\
   \xi(s) & = \sum_{j \geq 0}h_{j}(s-s_0)^j . \label{HE_xiRegExp} 
\end{align}
The coefficients appearing here are computed using the recurrences
\begin{equation}
  f_0=s_0(4d_1-1), \quad f_1=4d_1-1+8s_0d_2, \quad
  f_j=4[jd_j+s_0(j+1)d_{j+1}], \quad j \geq 2 ,
\end{equation}
and subject to $ f_0 \neq 0 $
\begin{align}
  2f_0 g_0 &= -2s_0-(a+3)f_0+s_0f_1, \\
  2f_0 g_1 &= -2-(a+2)f_1+2s_0f_2-2f_1g_0, \\
  2f_0 g_j &= (j-a-3)f_j+(j+1)s_0f_{j+1}-2\sum^{j-1}_{k=0}f_{j-k}g_k, \quad j \geq 2 ,
\end{align}
and provided $ s_0+f_0 \neq 0 $
\begin{align}
  (s_0+f_0)h_0 &= -s_0g_0(a+g_0), \\
  (s_0+f_0)h_1 &= -as_0g_1-2s_0g_0g_1-(a+g_0)g_0-(1+f_1)h_0, \\
  (s_0+f_0)h_j &= -as_0g_j-ag_{j-1}-s_0\sum^{j}_{k=0}g_{j-k}g_k-\sum^{j}_{k=1}g_{j-k}g_{k-1}
  \\ &\phantom{=}\qquad -(1+f_1)h_{j-1}-\sum^{j-2}_{k=0}f_{j-k}h_k, \quad j \geq 2 .
  \nonumber
\end{align}

\subsection{The $ q, p $ expansion about a regular point}
We will seek a Taylor series approximation for the scaled polynomial and
associated function $ q(z;s), p(z;s) $ about the regular point $ (z_0,s_0) $
with $ 0 < z_0 < s_0 $.
Let us write
\begin{align}
   q(z;s) & = \sum_{j,k \geq 0}r_{j,k}(z-z_0)^j(s-s_0)^k , \label{HE_qRegExp}\\
   p(z;s) & = \sum_{j,k \geq 0}u_{j,k}(z-z_0)^j(s-s_0)^k , \label{HE_pRegExp}
\end{align}
Using the first spectral derivative (\ref{HE_S:a}) we obtain the 
recurrence relation
\begin{multline}
  z_0(s_0-z_0)(j+1)r_{j+1,k} + (s_0-2z_0)jr_{j,k} - (j-1)r_{j-1,k}
  + z_0(j+1)r_{j+1,k-1} + jr_{j,k-1} \\
  = - z_0u_{j,k} - u_{j-1,k}
    - \sum^{k}_{l=0}\left[f_{k-l}u_{j,l}+g_{k-l}r_{j-1,l}+z_0g_{k-l}r_{j,l} \right] ,
\label{HE_RR_S:a}
\end{multline}
for $ j,k \geq 1 $. When $ k=0 $ and $ j \geq 1 $ we have the specialisation
\begin{multline}
  z_0(s_0-z_0)(j+1)r_{j+1,0} + [(s_0-2z_0)j+z_0g_0]r_{j,0}
  + [g_0-j+1]r_{j-1,0} \\
  = -(f_0+z_0)u_{j,0} - u_{j-1,0} .
\label{HE_RR_Ss:a}
\end{multline}
The second spectral derivative (\ref{HE_S:b}) yields the recurrence relation
\begin{multline}
  z_0(s_0-z_0)(j+1)u_{j+1,k} + [(s_0-2z_0)j+2s_0-(a+2)z_0]u_{j,k} - (a+j+1)u_{j-1,k} \\
  + z_0(j+1)u_{j+1,k-1} + (j+2)u_{j,k-1} \\
  = \frac{1}{4}z_0(s_0-z_0)r_{j,k} + \frac{1}{4}(s_0-2z_0)r_{j-1,k} - \frac{1}{4}r_{j-2,k} 
  + \frac{1}{4}z_0r_{j,k-1}+ \frac{1}{4}r_{j-1,k-1} \\
  - \sum^{k}_{l=0}h_{k-l}(r_{j-1,l}+z_0r_{j,l})
  + \sum^{k}_{l=0}g_{k-l}(u_{j-1,l}+z_0u_{j,l}) ,
\label{HE_RR_S:b}
\end{multline}
for $ j,k \geq 1 $. When $ k=0 $ and $ j \geq 1 $ we have the specialisation
\begin{multline}
  z_0(s_0-z_0)(j+1)u_{j+1,0} + [(s_0-2z_0)j+2s_0-(a+2)z_0-z_0g_0]u_{j,0}
  - [a+1+g_0+j]u_{j-1,0} \\
  = [\frac{1}{4}z_0(s_0-z_0)-h_0z_0]r_{j,0} + [\frac{1}{4}(s_0-2z_0)-h_0]r_{j-1,0}
    - \frac{1}{4}r_{j-2,0} .
\label{HE_RR_Ss:b}
\end{multline}
Using the first deformation derivative (\ref{HE_D:a}) we obtain the
recurrence relation
\begin{multline}
  s_0(s_0-z_0)(k+1)r_{j,k+1} - s_0(k+1)r_{j-1,k+1} + (2s_0-z_0)kr_{j,k}
  - kr_{j-1,k} + (k-1)r_{j,k-1} \\
  =   s_0u_{j,k} + u_{j,k-1}
    + \sum^{k}_{l=0}\left[f_{k-l}u_{j,l}+g_{k-l}r_{j-1,l}+z_0g_{k-l}r_{j,l} \right] ,
\label{HE_RR_D:a}
\end{multline}
for $ j,k \geq 1 $. When $ j=0 $ and $ k \geq 1 $ we have the specialisation
\begin{multline}
  s_0(s_0-z_0)(k+1)r_{0,k+1} + (2s_0-z_0)kr_{0,k} + (k-1)r_{0,k-1} \\
  = s_0u_{0,k} + u_{0,k-1} + \sum^{k}_{l=0}[f_{k-l}u_{0,l}+z_0g_{k-l}r_{0,l}] .
\label{HE_RR_Ds:a}
\end{multline}
The second deformation derivative (\ref{HE_D:b}) in turn gives us the recurrence
relation
\begin{multline}
  s_0(s_0-z_0)(k+1)u_{j,k+1} - s_0(k+1)u_{j-1,k+1} \\
  + [(2s_0-z_0)k+as_0]u_{j,k} - ku_{j-1,k} + (a+k-1)u_{j,k-1} \\
  =   \sum^{k}_{l=0}h_{k-l}(r_{j-1,l}+z_0r_{j,l})
    - (2s_0-z_0)\sum^{k}_{l=0}g_{k-l}u_{j,l}
    + \sum^{k}_{l=0}g_{k-l}u_{j-1,l} - 2\sum^{k-1}_{l=0}g_{k-1-l}u_{j,l} ,
\label{HE_RR_D:b}
\end{multline}
for $ j,k \geq 1 $. When $ j=0 $ and $ k \geq 1 $ we have the specialisation
\begin{multline}
  s_0(s_0-z_0)(k+1)u_{0,k+1} + [(2s_0-z_0)k+as_0]u_{0,k} + [a+k-1]u_{0,k-1} \\
  =    z_0\sum^{k}_{l=0}h_{k-l}r_{0,l}
    - (2s_0-z_0)\sum^{k}_{l=0}g_{k-l}u_{0,l}
    - 2\sum^{k-1}_{l=0}g_{k-1-l}u_{0,l} .
\label{HE_RR_Ds:b}
\end{multline}
These recurrences can be solved in the following way. First (\ref{HE_RR_Ss:a}) and
(\ref{HE_RR_Ss:b}) are solved for $ r_{j,0},u_{j,0} $ for $ j \geq 1 $ in terms of 
$ r_{0,0},u_{0,0} $. Then these solutions can be substituted into the boundary conditions 
\begin{align}
   \sum_{j \geq 0}r_{j,0} (-z_0)^j & = 1 , \\
   \sum_{j \geq 0}u_{j,0} (-z_0)^j & = 0 ,
\end{align}
and this allows for $ r_{0,0},u_{0,0} $ to be found. Next the sequence $ r_{0,k},u_{0,k} $
can be found for $ k \geq 1 $ using (\ref{HE_RR_Ds:a}) and (\ref{HE_RR_Ds:b}). Finally
the two general systems (\ref{HE_RR_S:a}),(\ref{HE_RR_S:b}) and 
(\ref{HE_RR_D:a}),(\ref{HE_RR_D:b}) can be employed to compute $ r_{j,k},u_{j,k} $
for $ j,k \geq 1 $.

\vfill\eject
\subsection{Numerical studies at the Hard Edge}
Using the integral formula for the distribution $ A_a(z) $ as given by (\ref{dL_HE_A}) 
it is possible to compute values of this and the examples of $ a=0,1,2 $ are
plotted in Figure 2. 
However we wish to characterise it in a
precise quantitative way and evaluate the moments of this distribution
\begin{equation}
   m_k := \int^{\infty}_{0}dz\; z^k A_a(z), \quad k \in \ZZ_{\geq 0} .
\end{equation}
These are easily seen to be
\begin{multline}
   m_k = \frac{1}{4^{2a+3}\Gamma(a+1)\Gamma(a+2)\Gamma^2(a+3)} \\ \times
   \int^{\infty}_{0}ds\;s^{k+2a+3}e^{-s/4}e^{F(s)}
   \int^{1}_{0}du\;u^{k+2}(1-u)^{a}G(us;s) ,
\label{HE_A_mom}
\end{multline}
where
\begin{equation}
   F(s) := \int^{s}_{0}\frac{dv}{v}[\nu(v)+2C(v)] ,
\end{equation}
and 
\begin{equation}
   G(z;s) := q\partial_z p-p\partial_z q .
\end{equation}
We note that by employing the large $s$ asymptotic form of $ q(z;s),p(z;s) $ as given in
(\ref{HE_qInfExp:a},\ref{HE_qInfExp:b}) we can deduce the asymptotic form of the spacing 
distribution is given by
\begin{equation}
   A_a(z) \mathop{\sim}\limits_{z \to \infty}
   C e^{-z/4+(a+2)\sqrt{z}}z^{-1/2-a^2/4} ,
\end{equation}
where $ C $ is a constant which cannot be found from our methods. For small $ z $ we find
that 
\begin{multline}
  A_a(z) \mathop{\sim}\limits_{z \to 0} \frac{1}{4^{2a+3}\Gamma(a+1)\Gamma(a+2)\Gamma^2(a+3)}
  \\ \times
  \int^{\infty}_0 ds\; s^{2a}\left( \frac{1}{12}-\frac{\xi(s)}{3s} \right)
  \exp\left( -\frac{s}{4}+\int^s_0\frac{dv}{v}[\nu(v)+2C(v)] \right)z^2 ,
\end{multline}
where we have used (\ref{HE_z=0Exp:a},\ref{HE_z=0Exp:b}) and the fact that 
$ u^0_1(s) = 1/12-\xi(s)/3s $. Therefore we can conclude that the moments exist for
$ {\rm Re}(k) > -3 $, $ {\rm Re}(a) > -1 $ and $ {\rm Re}(k+2a) > -4 $.
An instance where exact evaluation of the moments can be made is the case $ a=0 $ and
the first four of these are
\begin{gather}
   m_{1} = 4e^2\left[ I_{0}(2)-I_{1}(2) \right] , \quad 
   m_{2} = 32e^2I_{0}(2) , \\ 
   m_{3} = 384e^2\left[ 2I_{0}(2)+I_{1}(2) \right] , \quad 
   m_{4} = 2048e^2\left[ 13I_{0}(2)+9I_{1}(2) \right] .
\end{gather}
We investigated the distributions $ A_a(z) $ and $ A^{\pm}(z) $ for the two 
special cases of $ a=\pm 1/2 $ in some detail because of 
the motivations provided by (\ref{LUE_minushalf}) and (\ref{LUE_plushalf}).
The analogue of (\ref{HE_A_mom}) for $ A^{\pm}(z) $ is
\begin{multline}
   m^{\pm}_k = \frac{1}{2^{4a+5}\Gamma(a+1)\Gamma(a+2)\Gamma^2(a+3)} \\ \times
   \int^{\infty}_{0}ds\;s^{k/2+2a+3}e^{-s/4}e^{F(s)}
   \int^{1}_{0}du\;u^{k+2}(1-u)^{2a+1}(2-u)^2G(u(2-u)s;s) ,
\label{HE_A_pm_mom}
\end{multline}
for $ a=\pm\frac{1}{2} $.
The statistical data for $ A_a(z) $ for the cases $ a=-1/2,0,1/2,1,2 $ are given in 
Table 1 and the data for $ A^{\pm}(z) $ is given in Table 2.

Our strategy is that by employing local Puiseux-type and Taylor expansions for the 
two factors in the integrand, namely $ e^{F(s)} $ and $ G(z;s) $, within a given finite
member of the patchwork of local expansion domains the above integrals 
restricted to this domain can be exactly evaluated. This is essential as numerical
quadrature algorithms implemented in either computer algebra software or compiled language
packages (e.g. QUADPACK) have minimal attained error tolerances which cannot be reduced 
below a fixed bound.  
For the compiled language option with a floating point representation of 64 bits the
best one could expect is a relative error of around $ 10^{-15} $ 
but often it is far worse and around $ 10^{-8}-10^{-9} $. To illustrate this we have 
computed the statistical data for the $ a=1,2 $ cases using QUADPACK routines and the
results are displayed in Table 1.
In the case of the Puiseux-type expansions the integrals are
\begin{equation}
  \int_{0}^{S} ds\;s^{k+3+l+n+(m+2)a}e^{-s/4},
\end{equation} 
and
\begin{equation}
  \int^{1/2}_{0}du\;u^{k+2+n}(1-u)^{a} \quad\text{or}\quad
  \int^{1}_{1/2}du\;u^{k+2}(1-u)^{a+n} ,
\end{equation} 
which for $ k,l,m,n \in \ZZ_{\geq 0} $ can be evaluated in terms of radicals, 
the Gamma function at integer arguments, a rational function of $ a $ and the Whittaker
function $ M(\alpha,\beta;S/4) $ or its specialisations depending on $ a $.
For example in the case $ a=1/2 $ the $s$-integral reduces to the error and exponential
functions.
For the Taylor expansion case we have the double integral
\begin{equation}
  \int_{s_1}^{s_2} ds\;s^{k+3+2a}(s-s_0)^me^{-s/4}\int^{1}_{0}du\;u^{k+2}(1-u)^{a}(us-z_0)^l ,
\end{equation}
which for $ k,l,m \in \ZZ_{\geq 0} $ and $ s_0 \in (s_1,s_2) $ is evaluated in terms of
a rational function of $ a $, a polynomial function of $ s_0, z_0 $ and the Whittaker
functions $ M(\alpha,\beta;s_{1,2}/4) $.  
Therefore the only sources of error are from the truncation of the expansions and the
finite number of intervals, both of which can be adjusted to reduce the contributing
errors. 

The computations were performed, in most cases, using the computer algebra system 
Maple with a sufficiently large number of decimal digits and found that 250 digits was 
more than adequate. In addition we found that we had to tailor
the numerical parameters for each case of $ a=\pm 1/2 $ differently as the errors varied quite 
strongly with $ a $ (this was especially pronounced as $ a $ approached $ -1 $). 
We discuss the case of $ a=1/2 $ first.
In regard to the truncations about the singular point $ s=0 $, 
we found that 1890 terms in the expansion of the transcendent variable (\ref{HE_puiseux})
with $ k \leq 30, j+k \leq 120 $ yielded an error for the second derivative of 
$ \nu(s) $ at $ s=2 $ which was estimated to be 
$ 4.6\times 10^{-98} $. At most 100 terms were retained in each of the expansions of the 
transcendent variables about regular points
(\ref{HE_nuRegExp}),(\ref{HE_muRegExp}),(\ref{HE_CRegExp}) and (\ref{HE_xiRegExp})
because much fewer, of the order of $ j,k \leq 20 $, were required in the corresponding 
expansions of the linear variables.
The number of intervals in the $s$-direction was taken to be $ 19 $ with the sequence of
$ s_0 $ values being 
$ \{0,2,5/2,3,4,6,9,13,19,25,30,38,54,72,90,115,150,200,300,500\} $.
The boundaries of the $s$-interval with node $ s_0 $ were taken to be located at the
midpoints of $ s_0 $ and its neighbouring nodes.
This sequence of nodes was chosen to be close to an optimal situation yielding the largest
separation of each node from its preceding node, yet close enough so to ensure that 
the error in $ \nu''(s) $ at the node was less than $ 9.8\times 10^{-37} $.
For each $s$-interval with node $ s_0 $ two expansion points in the 
$z$-direction were chosen because a single expansion point could never ensure that all of 
the integration region would fall within the domain of convergence about that point.
The two points that together yielded the largest convergence domain were found
to be located at $ z_0=0 $ and $ z_0=s_0 $. 
Subdividing the $z$-interval into three sub-intervals was found to
contribute a variation of less than $ 3.2\times 10^{-19} $ to the normalisation.
Another criteria that the sequence of 
$ s_0 $ nodes had to satisfy was that each $ (s,z) $ integration region fell 
completely within the union of the convergence domains about $ (s_0,0) $ and 
$ (s_0,s_0) $. 
For the expansions of the linear variables about the lines $ z=0,s $ and about the
singular point $ s=0 $, as defined 
in (\ref{HE_z=0Exp_s=0:a},\ref{HE_z=0Exp_s=0:b}) 
along with (\ref{HE_z=0Exp:a},\ref{HE_z=0Exp:b},\ref{HE_z=sExp:a},\ref{HE_z=sExp:b}), 
we chose the cut-off in the sum to be $ 20 $.
The expansions of linear variables about the lines $ z=0,s $ and about a regular point 
$ s=s_0>0 $ as defined in (\ref{HE_qRegExp},\ref{HE_pRegExp}) 
were cutoff at $ 25 $.
An overall estimate of the accuracy is provided by the normalisation, which was unity
to within $ 1.6\times 10^{-18}$.
The second case with $ a=-1/2 $ was more demanding computationally.  
We needed 5150 terms in the expansion of the transcendent variable (\ref{HE_puiseux}) 
with $ k \leq 50, j+k \leq 200 $ 
and computed these with compiled code using the multiple-precision library MPFUN
\cite{Ba_1993a},\cite{Ba_1993b}. 
The error for $ \nu''(s) $ at $ s=2 $ was estimated to be around $ 7.9\times 10^{-59} $. 
Again only 100 terms were retained in each of the expansions of the transcendent variables
about regular points.
A larger number of intervals in the $s$-direction were employed, namely $ 24 $, and the 
sequence of $ s_0 $ values was taken to be 
$ \{0,2,5/2,3,7/2,4,5,6,7,9,11,14,18,22,28,36,46,58,72,90,114,144,180,220,300\} $.
This time the sequence of nodes was chosen to ensure that 
the error in $ \nu''(s) $ at each node was less than $ 3.6\times 10^{-60} $.
And again for each $s$-interval with node $ s_0 $ two expansion points in the 
$z$-direction were chosen at $ z_0=0,s_0 $. 
In the expansions of the linear variables about the lines $ z=0,s $ and about the
singular point $ s=0 $ the cut-off in the sum was chosen to be $ 20 $ as before.
The expansions of these variables about the lines $ z=0,s $ and about a regular point 
$ s=s_0>0 $ was terminated at the cutoff of $ 25 $ also.
The estimate of the accuracy provided by the normalisation was $ 4.6\times 10^{-20}$.

Because the raw moments grow rapidly with order we have computed some standard statistical
quantities instead using the definitions of the variance $ \sigma^2 $, the skewness $ \gamma_1 $ 
and the kurtosis excess $ \gamma_2 $ 
\begin{equation}
   \sigma^2 = \mu_2, \quad
   \gamma_1 = \frac{\mu_3}{\mu_2^{3/2}}, \quad
   \gamma_2 = \frac{\mu_4}{\mu_2^{2}}-3,
\end{equation}
in terms of the central moments
\begin{align}
   \mu_2 & = m_2-m^2_1, \\
   \mu_3 & = m_3-3m_1m_2+2m_1^3, \\
   \mu_4 & = m_4-4m_1m_3+6m_1^2m_2-3m_1^4.
\end{align}


\begin{figure}[h]\label{Fig2}
\vskip0cm
\begin{center}
\epsfig{file={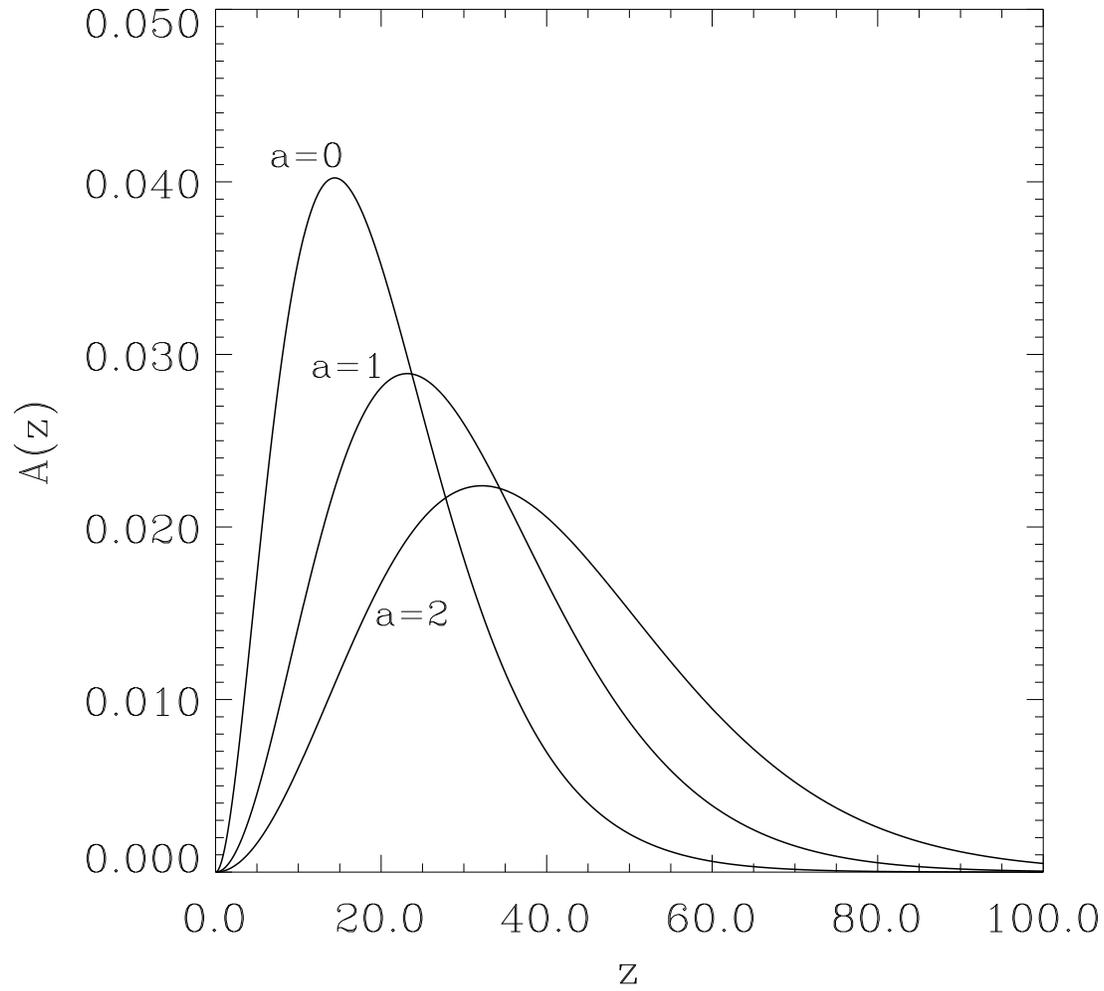},bbllx=18cm,bblly=15cm}
\end{center}
\vskip13.7cm
\caption{The distribution of the first eigenvalue spacing at the hard edge of random 
hermitian matrices $ A_a(z) $ for integral values of $ a=0,1,2 $.}
\end{figure}

\vfill\eject

%

\begin{table}[h]
\renewcommand{\arraystretch}{1.2}
\begin{sideways}
\begin{tabular}{|c|c|c|c|c|}
\hline
 \multicolumn{5}{|c|}{Statistical Data} \\
\hline
        & mean & standard deviation & skewness & kurtosis excess \\
 $ a= $ & $ m_1 $ & $ \sigma $ & $ \gamma_1 $ & $ \gamma_2 $ \\
\hline
 $ -\frac{1}{2} $ & $ 15.629510389275067 $ & $ 9.321582142378338 $ & $ 1.135887787366796 $ & $ 1.858088325290056 $ \\
 $ 0 $ & $ 20.36271491726866 $ & $ 11.152009639447587 $ & $ 1.015815254106358 $ & $ 1.46130592496617698 $ \\
 $  \frac{1}{2} $ & $ 25.125513874146918 $ & $ 12.9994050145813 $ & $ 0.927517582109394 $ & $ 1.19444057999457 $ \\
 $ 1 $ & $ 29.93907618 $ & $ 14.87749950 $ & $ 0.8589576782 $ & $ 1.001245857 $ \\
 $ 2 $ & $ 39.750797 $ & $ 18.742042 $ & $ 0.75999390 $ & $ 0.74334711 $ \\
\hline
\end{tabular}
\end{sideways}
\vskip0.5cm
\caption{Low order statistics of the distribution $ A_a(z) $ for various values of the parameter $ a $.}
\end{table}

\vskip1.0cm

\begin{table}[h]
\renewcommand{\arraystretch}{1.2}
\begin{sideways}
\begin{tabular}{|c|c|c|c|c|}
\hline
 \multicolumn{5}{|c|}{Statistical Data} \\
\hline
        & mean & standard deviation & skewness & kurtosis excess \\
 $ a= $ & $ m_1 $ & $ \sigma $ & $ \gamma_1 $ & $ \gamma_2 $ \\
\hline
 $ -\frac{1}{2} $ & $ 0.948882728945527548 $ & $ 0.38067989233932349 $ & $ 0.3755455785328836 $ & $ -0.060161637308986 $ \\
 $  \frac{1}{2} $ & $ 0.9818310311076319 $   & $ 0.405745365891523 $   & $ 0.42308992831476 $   & $ -0.0168189358644 $ \\
\hline
\end{tabular}
\end{sideways}
\vskip0.5cm
\caption{Low order statistics of the distribution $ A^{\pm}(z) $ for the special cases of the parameter 
$ a=\pm\frac{1}{2} $.}
\end{table}

\section*{Acknowledgements}
This work was supported by the Australian Research Council.

\vfill\eject

\bibliographystyle{plain}
\bibliography{moment,random_matrices,nonlinear}

\def\cprime{$'$} \def\cprime{$'$} \def\cydot{\leavevmode\raise.4ex\hbox{.}}
  \def\cprime{$'$} \def\cprime{$'$} \def\cprime{$'$} \def\cprime{$'$}
  \def\cprime{$'$} \def\cprime{$'$} \def\cprime{$'$} \def\cprime{$'$}
  \def\cprime{$'$} \def\cdprime{$''$} \def\cdprime{$''$}
  \def\cydot{\leavevmode\raise.4ex\hbox{.}}
  \def\cydot{\leavevmode\raise.4ex\hbox{.}}
  \def\cydot{\leavevmode\raise.4ex\hbox{.}}
  \def\cydot{\leavevmode\raise.4ex\hbox{.}}
  \def\cydot{\leavevmode\raise.4ex\hbox{.}}
  \def\cydot{\leavevmode\raise.4ex\hbox{.}}
  \def\cydot{\leavevmode\raise.4ex\hbox{.}}
  \def\cydot{\leavevmode\raise.4ex\hbox{.}} \def\cprime{$'$} \def\cprime{$'$}
  \def\cydot{\leavevmode\raise.4ex\hbox{.}}
  \def\cydot{\leavevmode\raise.4ex\hbox{.}} \def\cprime{$'$}
  \def\cydot{\leavevmode\raise.4ex\hbox{.}} \def\cprime{$'$} \def\cprime{$'$}
  \def\cprime{$'$} \def\cprime{$'$} \def\cprime{$'$} \def\cprime{$'$}
  \def\cprime{$'$} \def\cprime{$'$} \def\cprime{$'$} \def\cprime{$'$}
  \def\cprime{$'$} \def\cprime{$'$} \def\cydot{\leavevmode\raise.4ex\hbox{.}}
  \def\cprime{$'$} \def\cprime{$'$} \def\cprime{$'$} \def\cprime{$'$}
  \def\cprime{$'$} \def\cprime{$'$} \def\cprime{$'$} \def\cprime{$'$}
  \def\cprime{$'$} \def\cprime{$'$} \def\cprime{$'$} \def\cprime{$'$}
  \def\cprime{$'$} \def\cprime{$'$} \def\cprime{$'$} \def\cprime{$'$}
  \def\cprime{$'$} \def\cprime{$'$} \def\cprime{$'$} \def\cprime{$'$}
  \def\cprime{$'$} \def\cprime{$'$} \def\cprime{$'$} \def\cprime{$'$}
  \def\cprime{$'$} \def\cprime{$'$} \def\cydot{\leavevmode\raise.4ex\hbox{.}}
  \def\cprime{$'$} \def\cprime{$'$} \def\cprime{$'$} \def\cprime{$'$}
  \def\cprime{$'$} \def\cprime{$'$} \def\cprime{$'$} \def\cprime{$'$}
  \def\cprime{$'$} \def\cprime{$'$} \def\cprime{$'$} \def\cprime{$'$}
  \def\cprime{$'$} \def\cprime{$'$} \def\cprime{$'$}
  \def\cydot{\leavevmode\raise.4ex\hbox{.}} \def\cprime{$'$} \def\cprime{$'$}
  \def\cprime{$'$} \def\cprime{$'$} \def\cprime{$'$} \def\cprime{$'$}
  \def\cprime{$'$} \def\cprime{$'$} \def\cprime{$'$} \def\cprime{$'$}
  \def\cprime{$'$} \def\cprime{$'$} \def\cprime{$'$} \def\cprime{$'$}
  \def\cprime{$'$} \def\cprime{$'$} \def\cprime{$'$} \def\cprime{$'$}
  \def\cprime{$'$} \def\cprime{$'$} \def\cprime{$'$} \def\cprime{$'$}
  \def\cprime{$'$} \def\cprime{$'$} \def\cprime{$'$} \def\cprime{$'$}
  \def\cprime{$'$} \def\cprime{$'$} \def\cprime{$'$} \def\cprime{$'$}
  \def\cprime{$'$} \def\cprime{$'$}
\begin{thebibliography}{10}

\bibitem{Ap_1993}
A.~I. Aptekarev.
\newblock Asymptotics of orthogonal polynomials in a neighborhood of endpoints
  of the interval of orthogonality.
\newblock {\em Sb. Math.}, 76(1):35--50, 1993.

\bibitem{Ba_1993b}
D.H. Bailey.
\newblock {A} {P}ortable {H}igh {P}erformance {M}ultiprecision {P}ackage.
\newblock Technical Report {RNR}-90-022, {NAS} {A}pplied {R}esearch {B}ranch,
  {NASA} {A}mes {R}esearch {C}enter, {M}ay 1993.

\bibitem{Ba_1993a}
D.H. Bailey.
\newblock {A}utomatic {T}ranslation of {F}ortran {P}rograms to
  {M}ultiprecision.
\newblock Technical Report {RNR}-91-025, {NAS} {A}pplied {R}esearch {B}ranch,
  {NASA} {A}mes {R}esearch {C}enter, {M}ay 1993.

\bibitem{Ba_1990}
W.~C. Bauldry.
\newblock Estimates of asymmetric {F}reud polynomials on the real line.
\newblock {\em J. Approx. Theory}, 63(2):225--237, 1990.

\bibitem{BR_1994}
S.~Belmehdi and A.~Ronveaux.
\newblock Laguerre-{F}reud's equations for the recurrence coefficients of
  semi-classical orthogonal polynomials.
\newblock {\em J. Approx. Theory}, 76(3):351--368, 1994.

\bibitem{BC_1990}
S.~S. Bonan and D.~S. Clark.
\newblock Estimates of the {H}ermite and the {F}reud polynomials.
\newblock {\em J. Approx. Theory}, 63(2):210--224, 1990.

\bibitem{FN_1980}
H.~Flaschka and A.~C. Newell.
\newblock Monodromy- and spectrum-preserving deformations. {I}.
\newblock {\em Comm. Math. Phys.}, 76(1):65--116, 1980.

\bibitem{FMZ_1992}
A.~S. Fokas, U.~Mu{\u{g}}an, and X.~Zhou.
\newblock On the solvability of {P}ainlev\'e {${\rm I},\;{\rm III}$} and {${\rm
  V}$}.
\newblock {\em Inverse Problems}, 8(5):757--785, 1992.

\bibitem{rmt_Fo}
P.~J. Forrester.
\newblock {\em {L}og {G}ases and {R}andom {M}atrices}.
\newblock \hfil\break{\tt
  http://www.ms.unimelb.edu.au/$\tilde{\,}$matpjf/matpjf.html}.

\bibitem{Fo_1993}
P.~J. Forrester.
\newblock The spectrum edge of random matrix ensembles.
\newblock {\em Nucl. Phys. B}, 402:709--728, 1993.

\bibitem{Fo_1994a}
P.~J. Forrester.
\newblock Exact results and universal asymptotics in the {L}aguerre random
  matrix ensemble.
\newblock {\em J. Math. Phys.}, 35(5):2539--2551, 1994.

\bibitem{FH_1994}
P.~J. Forrester and T.~D. Hughes.
\newblock Complex {W}ishart matrices and conductance in mesoscopic systems:
  exact results.
\newblock {\em J. Math. Phys.}, 35(12):6736--6747, 1994.

\bibitem{FW_2002a}
P.~J. Forrester and N.~S. Witte.
\newblock Application of the {$\tau$}-function theory of {P}ainlev\'e equations
  to random matrices: {$\rm P\sb V$}, {$\rm P\sb {III}$}, the {LUE}, {JUE}, and
  {CUE}.
\newblock {\em Comm. Pure Appl. Math.}, 55(6):679--727, 2002.

\bibitem{ops_Fr}
G.~Freud.
\newblock {\em Orthogonal Polynomials}.
\newblock Pergamon Press, Oxford, 1971.

\bibitem{GLS_2002}
V.~I. Gromak, I.~Laine, and S.~Shimomura.
\newblock {\em Painlev\'e differential equations in the complex plane},
  volume~28 of {\em de Gruyter Studies in Mathematics}.
\newblock Walter de Gruyter \& Co., Berlin, 2002.

\bibitem{Is_2001}
M.~E.~H. Ismail.
\newblock Functional equations and electrostatic models for orthogonal
  polynomials.
\newblock In {\em Random matrix models and their applications}, volume~40 of
  {\em Math. Sci. Res. Inst. Publ.}, pages 225--244. Cambridge Univ. Press,
  Cambridge, 2001.

\bibitem{IN_1986}
A.~R. Its and V.~Yu. Novokshenov.
\newblock {\em The isomonodromic deformation method in the theory of
  {P}ainlev\'e equations}, volume 1191 of {\em Lecture Notes in Mathematics}.
\newblock Springer-Verlag, Berlin, 1986.

\bibitem{Ji_1982}
M.~Jimbo.
\newblock Monodromy problem and the boundary condition for some {P}ainlev\'e
  equations.
\newblock {\em Publ. Res. Inst. Math. Sci.}, 18(3):1137--1161, 1982.

\bibitem{JM_1981b}
M.~Jimbo and T.~Miwa.
\newblock Monodromy preserving deformation of linear ordinary differential
  equations with rational coefficients. {I}{I}.
\newblock {\em Phys. D}, 2(3):407--448, 1981.

\bibitem{KS_1999}
N.~M. Katz and P.~Sarnak.
\newblock Zeroes of zeta functions and symmetry.
\newblock {\em Bull. Amer. Math. Soc. (N.S.)}, 36(1):1--26, 1999.

\bibitem{Ki_1989}
A.~V. Kitaev.
\newblock The method of isomonodromic deformations and the asymptotics of the
  solutions of the ``complete'' third {P}ainlev\'e equation.
\newblock {\em Math. USSR Sbornik}, 62(2):421--444, 448, 1989.

\bibitem{Ku_2003}
A.~B.~J. Kuijlaars.
\newblock Riemann-{H}ilbert analysis for orthogonal polynomials.
\newblock In {\em Orthogonal polynomials and special functions (Leuven, 2002)},
  volume 1817 of {\em Lecture Notes in Math.}, pages 167--210. Springer,
  Berlin, 2003.

\bibitem{Lu_2000}
D.~S. Lubinsky.
\newblock Asymptotics of orthogonal polynomials: some old, some new, some
  identities.
\newblock In {\em Proceedings of the International Conference on Rational
  Approximation, ICRA99 (Antwerp)}, {\em Acta Appl. Math.}, 61(1-3):207--256, 2000.

\bibitem{Ma_1995a}
A.~P. Magnus.
\newblock Painlev\'e-type differential equations for the recurrence
  coefficients of semi-classical orthogonal polynomials.
\newblock In {\em Proceedings of the Fourth International Symposium on
  Orthogonal Polynomials and their Applications (Evian-Les-Bains, 1992)},
  {\em J. Comp. Appl. Math.}, 57(1-2):215--237, 1995.

\bibitem{Ma_1987}
P.~Maroni.
\newblock Prol\'egom\`enes \`a l'\'etude des polyn\^omes orthogonaux
  semi-classiques.
\newblock {\em Ann. Mat. Pura Appl. (4)}, 149:165--184, 1987.

\bibitem{Nd_2004}
J.-M. Normand.
\newblock Calculation of some determinants using the {$s$}-shifted factorial.
\newblock {\em J. Phys. A}, 37(22):5737--5762, 2004.

\bibitem{Ok_1987b}
K.~Okamoto.
\newblock Studies on the {P}ainlev\'e equations. {I}{I}. {F}ifth {P}ainlev\'e
  equation ${P}\sb {\rm {v}}$.
\newblock {\em Japan. J. Math. (N.S.)}, 13(1):47--76, 1987.

\bibitem{OK_1987c}
K.~Okamoto.
\newblock Studies on the {P}ainlev\'e equations. {I}{V}. {T}hird {P}ainlev\'e
  equation ${P}\sb {{\rm {I}{I}{I}}}$.
\newblock {\em Funkcial. Ekvac.}, 30(2-3):305--332, 1987.

\bibitem{PS_2004}
M.~Pr{\"a}hofer and H.~Spohn.
\newblock Exact scaling functions for one-dimensional stationary {KPZ} growth.
\newblock {\em J. Statist. Phys.}, 115(1-2):255--279, 2004.

\bibitem{Ru_2005}
M.~Rubinstein.
\newblock Computational methods and experiments in analytic number theory.
\newblock In {\em Recent perspectives in random matrix theory and number
  theory}, volume 322 of {\em London Math. Soc. Lecture Note Ser.}, pages
  425--506. Cambridge Univ. Press, Cambridge, 2005.

\bibitem{ops_Sz}
G.~Szeg{\"o}.
\newblock {\em Orthogonal Polynomials}.
\newblock Colloquium Publications {\bf 23}. American Mathematical Society,
  Providence, Rhode Island, third edition, 1967.

\bibitem{TW_1994}
C.~A. Tracy and H.~Widom.
\newblock Fredholm determinants, differential equations and matrix models.
\newblock {\em Comm. Math. Phys.}, 163(1):33--72, 1994.

\bibitem{TW_1994b}
C.~A. Tracy and H.~Widom.
\newblock Level spacing distributions and the {B}essel kernel.
\newblock {\em Comm. Math. Phys.}, 161(2):289--309, 1994.

\bibitem{TV_2004}
A.~M. Tulino and S.~Verd{\'u}.
\newblock Random matrix theory and wireless communications.
\newblock {\em Found. Trends Commun. Inf. Theory}, 1(1):1--182, 2004.

\bibitem{Uv_1959}
V.~B. Uvarov.
\newblock Relation between polynomials orthogonal with different weights.
\newblock {\em Dokl. Akad. Nauk SSSR}, 126:33--36, 1959.

\bibitem{Ve_1994}
J.~J.~M. Verbaarschot.
\newblock The spectrum of the {D}irac operator near zero virtuality for {$N\sb
  c=2$} and chiral random matrix theory.
\newblock {\em Nuclear Phys. B}, 426(3):559--574, 1994.

\end{thebibliography}

\end{document}